\renewcommand{\@makechapterhead}[1]{%
\vspace*{50\p@}%
{\parindent \z@\raggedright \normalfont\bfseries
\LARGE\thechapter\hspace{0.6cm} #1\par
\nobreak
\vskip 20\p@
}}
\renewcommand{\@makeschapterhead}[1]{%
\vspace*{50\p@}%
{\parindent \z@\raggedright \normalfont\bfseries
\LARGE #1\par
\nobreak
\vskip 20\p@
}}
\def\section{\@startsection{section}{1}{0mm}{1.5em}{1em}%
{\large\normalfont\bfseries}}
\def\subsection{\@startsection{subsection}{2}{0mm}{1em}{-2.7mm}%
{\normalfont\bfseries}}
\newtheorem{lemma}{Lemma}
\newtheorem{theorem}{Theorem}
\newtheorem{corollary}{Corollary}
\newtheorem{definition}{Definition}
\newenvironment{claim}{\vskip 0.08in\noindent{\bf Claim.\/}\begin{itshape}}
{\end{itshape}\vskip 0.08in}
\newenvironment{proof}{\noindent{\bf Proof.\/}}{$\Box$\vskip 0.08in}
\newenvironment{proofc}{\noindent{\bf Proof of the claim.\/}}
{$\Box$\vskip 0.08in}
\newenvironment{proofsk}{\noindent{\bf Sketch of the proof.\/}}
{$\Box$\vskip 0.08in}
\newenvironment{proofl}{\noindent{\bf Proof of the lemma.\/}}
{$\Box$\vskip 0.08in}
\newenvironment{proofle}[1]
{\noindent{\bf Proof of Lemma~\ref{#1}.\/}}{$\Box$\vskip 0.08in}
\newenvironment{proofth}[1]
{\noindent{\bf Proof of Theorem~\ref{#1}.\/}}{$\Box$\vskip 0.08in}
\newenvironment{remark}{\vskip 0.08in
\noindent{\bf Remark.\/}}{\vskip 0.08in}
\newenvironment{example}{\vskip 0.08in
\noindent{\bf Example.\/}}{\vskip 0.08in}
\newenvironment{notations}{\vskip 0.08in
\noindent{\bf Notations.\/}} {\vskip 0.08in}
\newenvironment{conjecture}{\vskip 0.08in
\noindent{\bf Conjecture.\/}} {\vskip 0.08in}
\newcommand{\hsp}{\hspace{0.09in}}
\newcommand{\vsp}{\vspace{0.05in}}
\newcommand{\al}{\alpha}
\newcommand{\be}{\beta}
\newcommand{\Ad}{A^{\bullet}}
\newcommand{\e}{\epsilon}
\newcommand{\D}{\Delta}
\newcommand{\g}{\gamma}
\newcommand{\F}{\mathcal{F}}
\newcommand{\G}{\Gamma}
\newcommand{\C}{\mathcal{C}}
\newcommand{\Co}{\overline{\C}}
\newcommand{\HC}{\mathcal{H}}
\newcommand{\HCo}{\overline{\HC}}
\newcommand{\OC}{\mathcal{O}}
\newcommand{\T}{\mathcal{T}}
\newcommand{\N}{\mathcal{N}}
\newcommand{\Q}{\mathbb{Q}}
\newcommand{\R}{\mathbb{R}}
\newcommand{\Z}{\mathbb{Z}}
\newcommand{\lk}{\operatorname{lk}}
\newcommand{\fk}{\mathbf{k}}
\newcommand{\fn}{\mathbf{n}}
\newcommand{\fs}{\mathbf{s}}
\newcommand{\s}{{\mathfrak s}}
\newcommand{\sltwo}{\mathfrak{sl}(2)}
\newcommand{\sln}{\mathfrak{sl}(n)}
\newcommand{\kernel}{\operatorname{ker}}
\newcommand{\image}{\operatorname{im}}
\newcommand{\MOD}{\operatorname{mod}}
\newcommand{\Ob}{\operatorname{Ob}}
\newcommand{\Mor}{\operatorname{Mor}}
\newcommand{\Cob}{{\mathcal Cob}}
\newcommand{\Cobd}{{\mathcal Cob}_\bullet}
\newcommand{\Cobi}{{\mathcal Cob}_{/i}}
\newcommand{\Cobfi}{{\mathcal Cob}_{f/i}}
\newcommand{\Cobdi}{{\mathcal Cob}_{\bullet/i}}
\newcommand{\Cobdl}{{\mathcal Cob}_{\bullet/l}}
\newcommand{\gCobdl}{\text{{\normalfont g}}{\mathcal Cob}_{\bullet/l}}
\newcommand{\Cobf}{{\mathcal Cob}_f}
\newcommand{\Mat}{\operatorname{Mat}}
\newcommand{\Kom}{\operatorname{Kom}}
\newcommand{\Komh}{\Kom_{/h}}
\newcommand{\Kobd}{\operatorname{Kob}}
\newcommand{\Kobdh}{\operatorname{Kob}_{/h}}
\newcommand{\Kobdpr}{{\operatorname{Kob}_{/\pm h}}}
\newcommand{\gKobd}{\operatorname{gKob}}
\newcommand{\gKobdh}{\operatorname{gKob}_{/h}}
\newcommand{\Kh}{\operatorname{Kh}}
\newcommand{\Lee}{\operatorname{Lee}}
\newcommand{\Id}{\operatorname{Id}}
\newcommand{\ls}{\left[\!\!\left[}
\newcommand{\rs}{\right]\!\!\right]}
\newcommand{\la}{\langle}
\newcommand{\ra}{\rangle}
\newcommand{\K}{{\mathcal K}}
\newcommand{\epsh}[2]{{\hspace{-3pt}\begin{array}{c}%
  \raisebox{-2.5pt}{\includegraphics[height=#1]{figs/#2.eps}}%
\end{array}\hspace{-3pt}}}
\title{Contributions to Khovanov Homology}
\author{Stephan M. Wehrli}
\date{Z\"urich, 2007}
\begin{document}
\newcounter{footnotebuffer}

\maketitle

\begin{center}
{\large\bf Abstract}
\end{center}

Khovanov homology ist a new link invariant, discovered
by M.~Khovanov~\cite{kh:first}, and used by
J.~Rasmussen~\cite{ra} to give a combinatorial proof
of the Milnor conjecture.
In this thesis, we give examples of mutant links
with different Khovanov homology.
We prove that Khovanov's chain complex
retracts to a subcomplex, whose generators are
related to spanning trees of the Tait graph,
and we exploit this result to investigate
the structure of Khovanov homology for alternating knots.
Further, we extend Rasmussen's invariant to links.
Finally, we generalize Khovanov's \cite{kh:colored}
categorifications of the
colored Jones polynomial, and study conditions
under which our categorifications are functorial
with respect to
colored framed
link cobordisms. In this context,
we develop a theory of Carter--Saito movie moves
for framed link cobordisms.

\tableofcontents
\pagestyle{headings}
\chapter*{Introduction}
\addcontentsline{toc}{chapter}{Introduction}
In his seminal paper \cite{kh:first}, M.~Khovanov introduced
a new invariant for oriented knots and links,
which
can be viewed as a ``categorification''
of the Jones polynomial \cite{jo}.
To a diagram $D$ of an oriented link $L\subset\R^3$,
Khovanov assigned a bigraded chain complex $\C^{i,j}(D)$
whose differential
is graded of bidegree $(1,0)$,
and whose homotopy type
depends only on the isotopy class of the oriented link $L$.
The graded Euler characteristic
$$
\chi_q(\C(D)):=\sum_{i,j}(-1)^iq^j\dim_\Q(\C^{i,j}(D)\otimes\Q)\,\in\Z[q,q^{-1}]
$$
is a suitably normalized version of the Jones polynomial of $L$:
$$
V(L)_{\sqrt{t}=-q}=\frac{\chi_q(\C(D))}{q+q^{-1}}
$$
The bigraded homology group $\HC^{i,j}(D)$ of the chain complex
$\C^{i,j}(D)$ provides
an invariant of oriented links, now known as Khovanov homology.
Because
Khovanov's construction is manifestly combinatorial,
Khovanov homology is algorithmically computable.

One of the remarkable properties of Khovanov homology is that
it fits into a topological quantum field theory of
$2$--knots in $4$--space.
Indeed, any smooth link cobordism
$S\subset\R^3\times [0,1]$ between two oriented
links $L_0\times\{0\}$ and $L_1\times\{1\}$
induces a chain transformation $\C(S):\C(D_0)\rightarrow\C(D_1)$,
which is a relative isotopy invariant of the cobordism $S$
when considered up to sign and homotopy.
Moreover, $\C(S)$ is graded of bidegree $(0,\chi(S))$,
where $\chi(S)$ denotes the Euler characteristic of
the surface $S$.

In \cite{le2}, E.~S.~Lee
modified Khovanov's construction by adding
additional terms to the differential.
On the basis of Lee's results,
J.~Rasmussen \cite{ra} defined a new
knot invariant $s(K)\in\Z$ and used it to give a
purely combinatorial proof of Milnor's conjecture on the
slice genus of torus knots. Previously, this conjecture had
been accessible only via Donaldson invariants, Seiberg--Witten
theory and knot Floer homology, and was considered
as a main application of these theories.
In many ways, Khovanov homology appears to
be an algebro--combinatorial replacement for
gauge theory and Heegaard Floer homology.
An explicit relation between reduced Khovanov
homology with coefficients in $\Z/2\Z$
and Heegaard Floer homology of
branched double--covers of the $3$--sphere,
in the form of a spectral sequence, was
discovered by P.~Ozsv\'ath and Z.~Szab\'o \cite{os:sequence}.

In the past few years,
several new link homology theories have emerged.
Among these
are the Khovanov--Rozansky theories
for the $\sln$ polynomials
and the HOMFLY--PT polynomial \cite{kr1,kr2},
and two categorifications of the
colored Jones polynomial,
proposed by Khovanov \cite{kh:colored}.
Moreover,
D.~Bar--Natan \cite{ba:tangles}
discovered a ``formal Khovanov bracket'',
which generalizes both Khovanov homology
and Lee homology, and which extends naturally
to tangles.

This thesis is devoted to the study of structural properties
of Khovanov homology, as well as to the generalization
of Rasmussen's invariant and its applications, and
contains contributions towards a $4$--dimensional
lift of Khovanov's theory for the colored Jones polynomial.

In Chapter~1 we review the definition of the
formal Khovanov bracket and discuss its
relation with Khovanov homology and Lee homology.

Chapter~2 deals with Rasmussen's invariant.
We give a new proof of a theorem of E.~S.~Lee \cite{le2},
which states that the Lee homology
of an $n$--component link has dimension $2^n$.
Then we extend Rasmussen's knot invariant to links,
and give examples where this invariant is
a stronger obstruction to sliceness than
the multivariable Levine--Tristram signature.

In Chapter~3,
we study the behavior of Khovanov homology under
Conway mutation. Conway mutation
is a procedure for modifying links,
which was invented by J.~Conway \cite{co}.
We present infinitely many examples
of mutant links with different Khovanov homology.
The existence of such examples is remarkable
since many classical invariants, such
as the HOMFLY--PT polynomial,
the knot signature and the
hyperbolic volume of the knot complement,
are unable to detect Conway mutation.
In particular, our examples show that
Khovanov homology is strictly stronger
than the Jones polynomial.

In \cite{ba:first}, Bar--Natan computed the ranks
of the Khovanov homology groups for all prime
knots with up to 11 crossings.
One of his surprising experimental results is
that the ranks of the Khovanov homology groups
tend to be much smaller than the ranks of the chain
groups. In Chapter~4 we give an explanation for
this phenomenon:
we prove that the complex $\C^{i,j}(D)$ retracts to
a subcomplex, whose generators are in $2:1$ correspondence
with the spanning trees of the Tait graph of $D$.
Using this result, we
give a new proof of a theorem of Lee \cite{le1},
which states that the non--trivial homology groups $\HC^{i,j}(K)$
of an alternating knot $K$ are concentrated on
two straight lines in the $ij$--plane.
Our spanning tree model has applications to Legendrian
knots (cf. \cite{wu}),
and it is of theoretical interest because spanning
trees also appear in the context of knot Floer homology \cite{os}.

Chapter~5 is purely topological.
We investigate link cobordisms equipped with
a framing, i.e. with a relative homotopy class
of non--singular normal vector fields.
The most important part of Chapter~5 is the last
section, where we give a list of movie moves for
movie presentations of framed link cobordisms.
Framed movie moves are needed if one wishes to
establish functoriality of colored Khovanov invariants
\cite{kh:colored} with respect to framed link cobordisms.

In Chapter~6, we focus on Khovanov's \cite{kh:colored}
categorification
of the non--reduced colored Jones polynomial.
By reformulating Khovanov's construction
in Bar--Natan's setting, we obtain a
``colored Khovanov bracket''.
We prove that the colored Khovanov bracket is well--defined over
integer coefficients. Moreover, we introduce a family of modified
colored Khovanov brackets,
and study conditions under which our modified
theories are functorial with respect to colored
framed link cobordisms.
Lifting the colored Jones polynomial to a functor
can be seen as a first step into the direction of
categorification of the $\sltwo$ quantum invariant
for $3$--manifolds, and might ultimately
lead to an intrinsically $3$-- or $4$--dimensional
understanding of Khovanov homology.

The material of Chapter~1 is taken from
\cite{ba:tangles}, \cite{ba:computations},
\cite{kh:first}, \cite{kh:frobenius}, \cite{le2}
and \cite{we:trees}.
Chapters 2, 5 and 6 contain the results
of my joint paper with A.~Beliakova \cite{bw},
and Chapters 3 and 4 are taken from
\cite{we:mutation} and \cite{we:trees}.

\section*{Acknowledgements}
First and foremost, I would like to thank my supervisors Anna
Beliakova and Norbert A'Campo for their constant support
and encouragement, and for their readiness to share their
advice and expertise with me. I would also like to thank
Sebastian Baader, Mikhail Khovanov and Alexander Shumakovitch for
their interest in this work and for many valuable discussions.
%
%
Dror Bar--Natan's symbol font \texttt{dbnsymb}
was used throughout this thesis. The material covered
in Chapters 2, 5 and 6 is
taken from my joint work with Anna Beliakova.
During the work on this thesis, I was partially supported
by the Swiss National Science Foundation.

\newpage\thispagestyle{empty}

\chapter{Khovanov homology}\label{cKhovanovhomology}
In this chapter, we first recall
basic concepts of knot theory.
Then we give the definitions of the
Jones polynomial and the formal Khovanov bracket,
and discuss how Khovanov homology
and Lee homology can be recovered from
the Khovanov bracket by applying
a TQFT.

\section{Links and link cobordisms}\label{slinks}
\noindent
A {\it link} in $\R^3$
is a finite collection of disjoint
circles which are smoothly embedded into $\R^3$. These circles
are called the {\it components}
of the link. If an orientation of the components is specified,
we say that the link is {\it oriented}.
For an oriented link $L$, we denote by $-L$ the same link
but with reversed orientations. A link consisting of
only one component is called a {\it knot}.
\begin{figure}[H]
\centerline{\psfig{figure=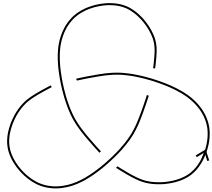,height=2.5cm}}
\caption{An oriented link diagram.}\label{flinkdiagram}
\end{figure}
To present links, one uses pictures
such as the one in Figure~\ref{flinkdiagram},
called {\it link diagrams}.
Given an oriented link diagram $D$,
we denote by $c_+(D)$ and $c_-(D)$ the numbers
its {\it positive} ($\overcrossing$) and {\it negative}
($\undercrossing$) crossings,
and by $w(D):=c_+(D)-c_-(D)$
the {\it writhe} of $D$.
(E.g. in the above figure we have $w(D)=-c_-(D)=-3$
and $c_+(D)=0$).

It is known that two link diagrams represent isotopic links
if and only if they are related
by a finite sequence of
the following local modifications, called
{\it Reidemeister moves}.

\begin{figure}[H]
\centerline{
\begin{tabular}{c@{\qquad\qquad}c@{\qquad\qquad}c}
\psfig{figure=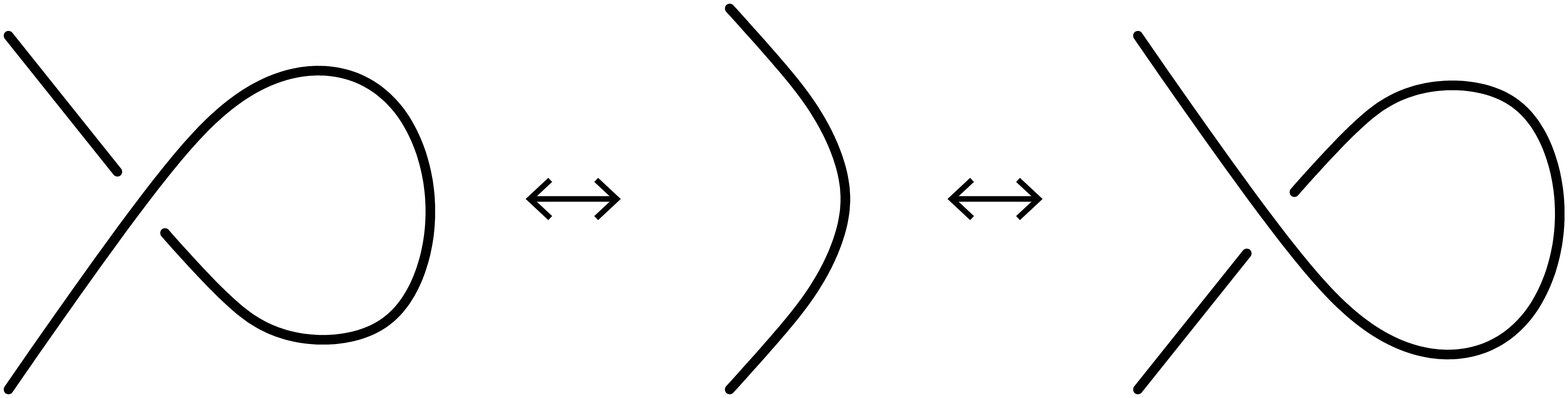,height=0.9cm}&
\psfig{figure=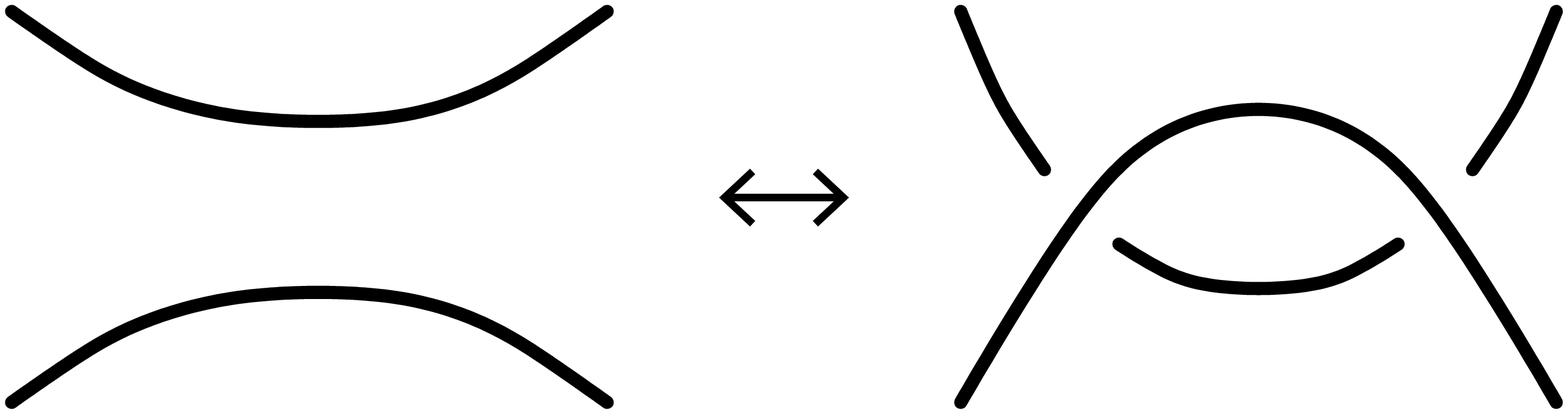,height=0.8cm}&
\psfig{figure=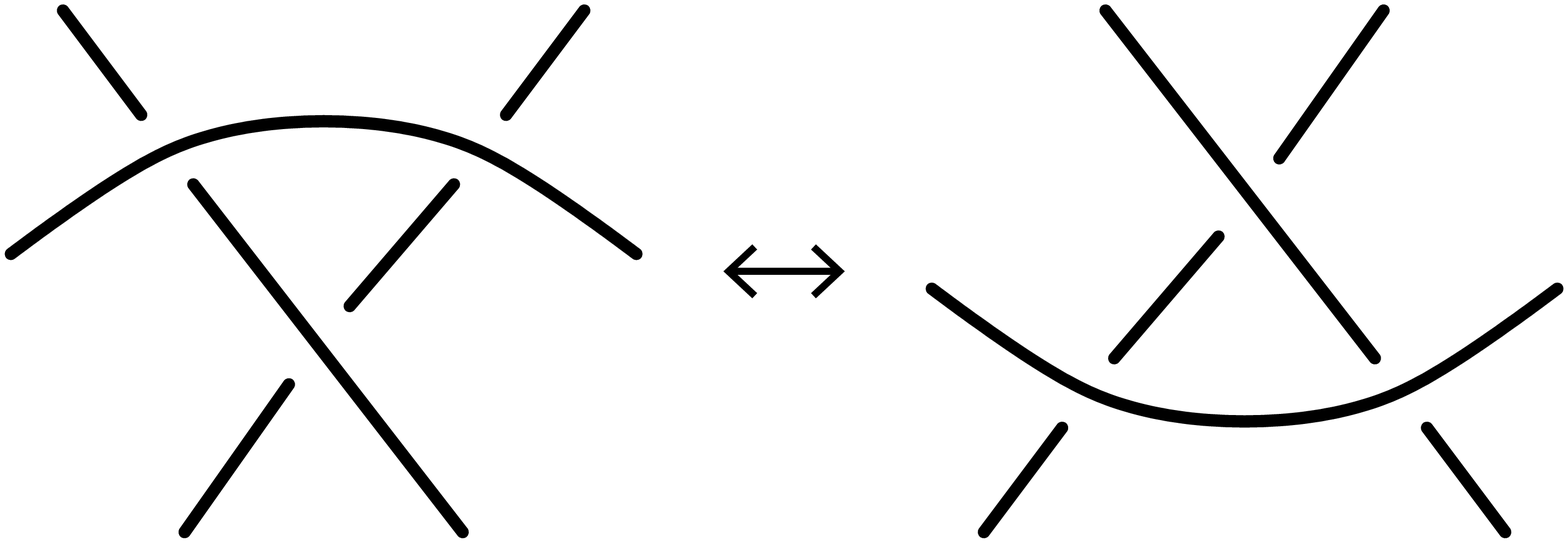,height=0.9cm}
\end{tabular}
}
\caption{The three Reidemeister moves R1, R2 and R3.}
\end{figure}

To classify links up to isotopy, one usually uses {\it link invariants},
i.e. functions whose domain is the set of
links in $\R^3$ and whose value depends only on the isotopy class
of a link. One way  of constructing a link invariant is by
defining it on the level of link diagrams and then showing
that it is invariant under Reidemeister moves.

A {\it cobordism} between two oriented links
$L_0$ and $L_1$ is a compact oriented surface
smoothly embedded in $\R^3\times[0,1]$
whose boundary lies entirely in $\R^3\times\{0,1\}$ and whose ``bottom''
boundary is $-L_0\times\{0\}$ and whose ``top'' boundary is $L_1\times\{1\}$.
For technical reasons, we assume that the surface
is in general position with respect to the projection
onto the last coordinate of $\R^3\times[0,1]$,
and parallel to $[0,1]$ near the boundary.
It convenient to view
the last coordinate of $\R^3\times[0,1]$
as {\it time coordinate}.

Assume $S\subset\R^3\times[0,1]$ is a link cobordism.
By cutting $S$ along hyperplanes
$\R^3\times\{t_i\}$, $0=t_0<t_1<\ldots<t_n=1$,
we can split $S$
into elementary pieces, such that
each piece $S\cap\R^3\times [t_{i-1},t_i]$
contains at most one critical point with
respect to the time coordinate,
and such that all $t_i$ are regular values.
Projecting the oriented links
$L_{t_i}:=S\cap (\R^3\times\{t_i\})$
down to the plane, we obtain
a sequence of oriented link diagrams $D_{t_i}$.
Altering the $t_i$,
we can assume that any two
consecutive diagrams
differ by one of the following transformations: a planar isotopy,
a Reidemeister move, or one of the Morse moves shown
in Figure~\ref{fcapcupsaddle}. 
In this case, the sequence $\{D_{t_i}\}$
is called a {\it movie presentation} for $S$, and
the individual diagrams $D_{t_i}$ are called the {\it stills}
of the movie presentation.
\begin{figure}[H]
\centerline{\psfig{figure=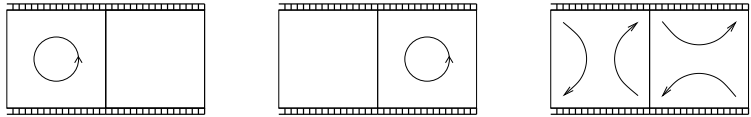,height=1.5cm}}
\caption{Morse moves corresponding to cap, cup and saddle cobordism.}
\label{fcapcupsaddle}
\end{figure}
\noindent
\begin{theorem}[\cite{cs}]\label{tmovie}
1.~Every link cobordism has a movie presentation.
2.~Two movies represent isotopic link cobordisms if and only
if they can be transformed into each other by a
finite sequence of Carter--Saito movie moves
(and by time--reordering different parts of a movie
which ``happen'' at different places).
\end{theorem}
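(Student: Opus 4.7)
The plan is to prove both parts by combining general-position arguments with Morse theory, treating the cobordism (and, for part 2, the isotopy between cobordisms) as a smooth map and classifying its generic singularities.

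For part 1, I would begin by perturbing the surface $S\subset\R^3\times[0,1]$ so that the restriction of the last coordinate $t$ to $S$ is a Morse function, and so that the projection $\pi\colon\R^3\to\R^2$ applied to the time slices $L_t:=S\cap(\R^3\times\{t\})$ yields a generic link diagram $D_t=\pi(L_t)$ for all but finitely many $t$. By a standard transversality/multi-jet argument, the finite set of ``bad'' times splits into two types: Morse critical values of $t|_S$, each attained at exactly one non-degenerate critical point (local max, min, or saddle); and values at which $D_t$ undergoes a single diagrammatic singularity (a tangency of two strands, a triple point, or a cusp of the projection), corresponding to a Reidemeister move. Placing the subdivision points $t_i$ in the regular intervals between consecutive bad times produces a movie presentation in which consecutive stills differ by a planar isotopy, a Reidemeister move, or one of the Morse moves of Figure~\ref{fcapcupsaddle}.

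For part 2, let $\{S_u\}_{u\in[0,1]}$ be an ambient isotopy of cobordisms interpolating between two given representatives with movie presentations $M_0$ and $M_1$. The plan is to run the same general-position analysis on the two-parameter family $(u,t)\mapsto \pi(L_t^u)$, where $L_t^u:=S_u\cap(\R^3\times\{t\})$. Away from a codimension-one subset $\Sigma\subset[0,1]_u$, the combinatorial type of the movie stays constant, so it suffices to analyze the local model of the family near each point of $\Sigma$ and verify that the movie changes by a single Carter--Saito move (or by an admissible time-reordering). The possible codimension-one events fall into three classes: (i) two singular times in the $t$-direction swap their order, which accounts for all commutation-type moves and the ``time-reordering'' clause; (ii) two singular times merge and then separate again, yielding cancellation/Reidemeister-Reidemeister-type moves; and (iii) a genuinely codimension-one singularity appears, such as a quadruple point, a tangent triple point, a Reidemeister move colliding with a Morse critical level, or a birth--death pair of critical points of $t|_{S_u}$. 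Each such local model is matched against one of the Carter--Saito moves.

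The main obstacle is carrying out the local classification in case (iii) rigorously and verifying that the list of resulting moves is exactly the Carter--Saito list. This amounts to a Cerf-theoretic analysis of one-parameter families of Morse functions on surfaces, combined with the Roseman-style classification of codimension-one singularities of projections of knotted surfaces into $\R^2\times[0,1]$; these two analyses must be glued along the loci where a diagrammatic singularity meets a Morse critical level. Once this local catalog is established, the global statement follows by a standard covering-and-compactness argument: partition $[0,1]_u$ into finitely many intervals, each containing at most one codimension-one event, apply the corresponding move, and concatenate. Since the full argument is long and technical, I would appeal to \cite{cs} for the detailed singularity-theoretic verification and focus here on the structural outline above.
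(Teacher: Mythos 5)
The paper does not prove this theorem; it is stated as an attribution to Carter and Saito~\cite{cs} and used as a black box. Your outline is a correct high-level account of the standard proof: Morse-theoretic general position for the time function gives part~1, and a two-parameter (Cerf-style) analysis of generic codimension-one events in a family of cobordisms gives part~2, with the classification of local models matched against the movie-move list. You correctly identify the technical core (the case-(iii) local classification, including the interaction of diagrammatic singularities with Morse critical levels) as the point that requires the detailed singularity theory of~\cite{cs}, and you defer to that reference, which is exactly what the paper does. One small remark: Carter and Saito's own exposition works via projections of the branch surface into $\R^3$ and Roseman moves for such projections before extracting the movie-move list, rather than analyzing the family $(u,t)\mapsto\pi(L_t^u)$ directly as a map to $\R^2$; both routes reach the same catalog, and your phrasing in terms of the two-parameter family of planar diagrams is a reasonable and commonly used reformulation.
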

The Carter--Saito movie moves are shown in
Figures~\ref{ftypeI}, \ref{ftypeII} and
\ref{ftypeIII}.
The moves of Type I and II
consist in replacing the
circular movies of Figures~\ref{ftypeI} and \ref{ftypeII}
by identity movies,
i.e. by movies where all stills look the same.
\newpage
\begin{figure}[H]
\centerline{\psfig{figure=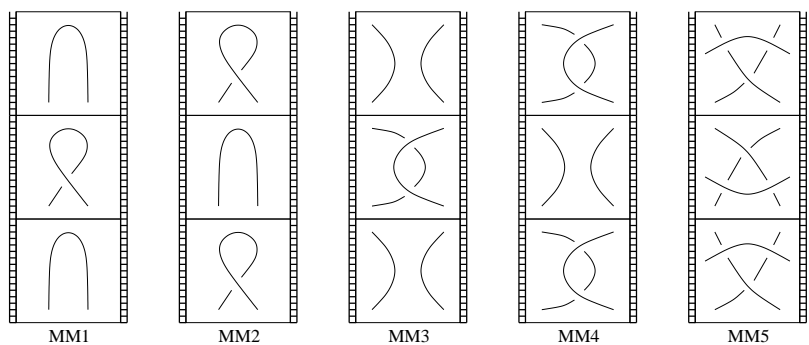,height=4.4cm}}
\vsp\par

\caption{Type I moves.}\label{ftypeI}
\end{figure}

\begin{figure}[H]
\centerline{\psfig{figure=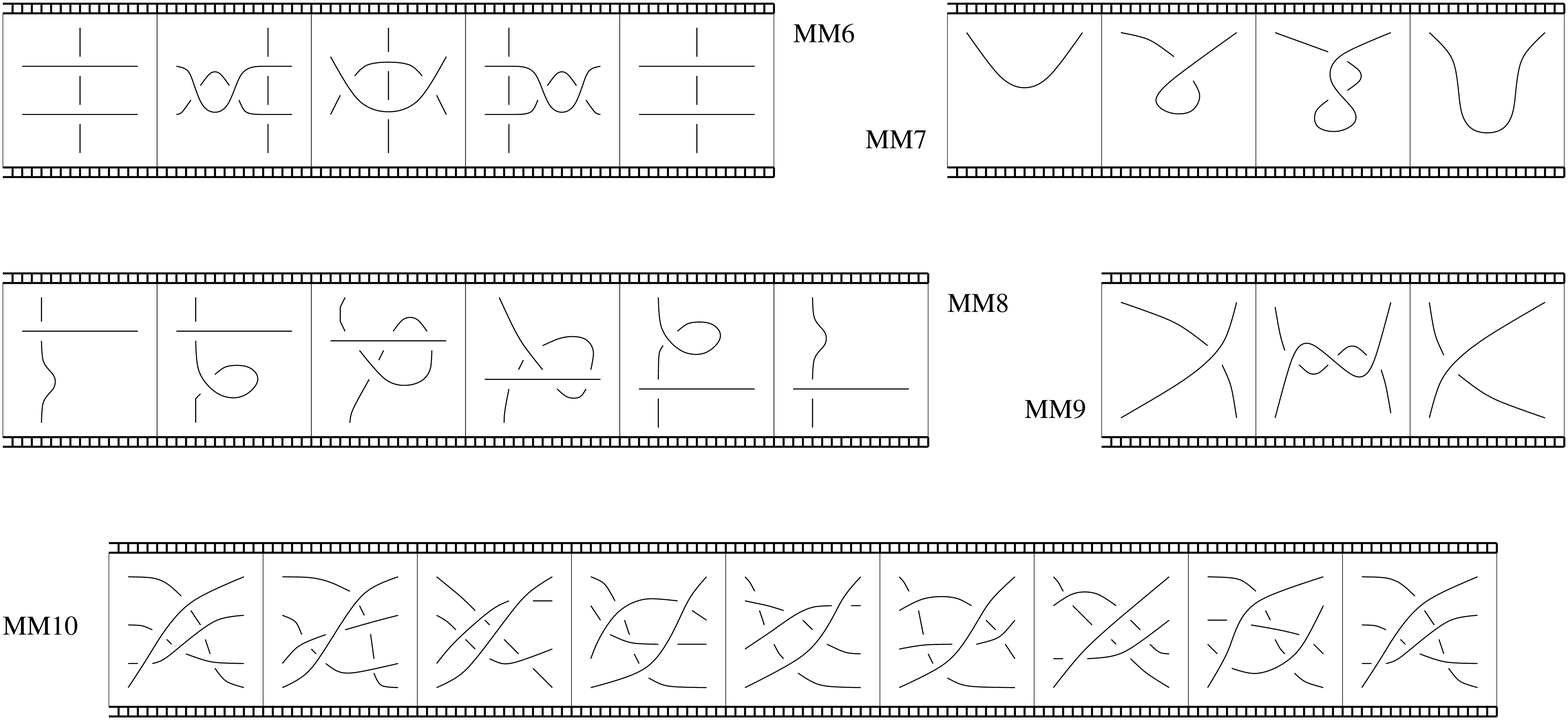,height=5.9cm}}
\vsp\par

\caption{Type II moves.}\label{ftypeII}
\end{figure}

\begin{figure}[H]
\centerline{\psfig{figure=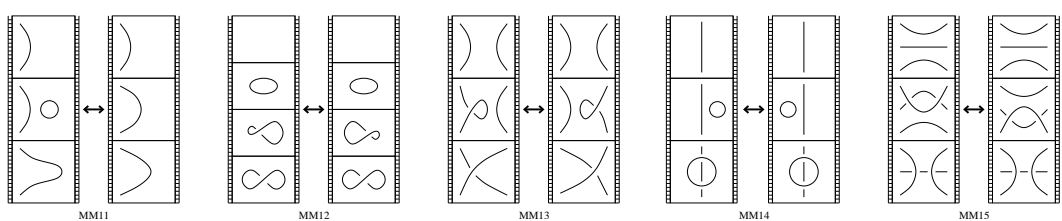,height=2.7cm}}
\vsp\par

\caption{Type III moves.}\label{ftypeIII}
\end{figure}

As it will be needed in Chapter~\ref{cframedlinkcobordisms},
we briefly recall
the definition of the linking number.
Let $L=K_1\cup K_2$ be an oriented $2$--component
link with corresponding diagram $D=D_1\cup D_2$.
Let $c_+'(D)$ and $c_-'(D)$ denote the numbers
of positive and negative crossings
at which $D_1$ and $D_2$ cross.
Note that $c_+'(D)$ and $c_-'(D)$ have
the same parity, because $D_1$ and $D_2$
necessarily cross in an even number of crossings.

The {\it linking number} of $K_1$ and $K_2$
is defined by
$$
\lk(K_1,K_2):=(c_+'(D)-c_-'(D))/2\, \in\Z\; .
$$
It is easy to see that $\lk(K_1,K_2)$
is invariant under Reidemeister moves and
hence determines an invariant of the link $L$.
Geometrically, the linking number
can be interpreted as the algebraic intersection number
of two generic compact oriented
surfaces $S_1,S_2\subset\R^3\times(\infty,0]$
satisfying
$\partial S_i=K_i\times\{0\}\subset\R^3\times\{0\}$.

\section[Kauffman bracket and Jones polynomial]%
{The Kauffman bracket and the Jones polynomial}
\label{sJones}
\noindent
The Jones polynomial is an invariant for oriented links which
was introduced by V.~Jones \cite{jo} in the year 1984.
In \cite{ka:bracket}, L.~Kauffman described an
elementary approach to the Jones polynomial using a state sum.
In this section, we recall
Kauffman's definition of
the Jones polynomial.
We use the normalization conventions of \cite{kh:first}.

Let $D$ be an unoriented link diagram.
A {\it Kauffman state} of $D$ is a diagram obtained
by replacing each crossing $\slashoverback$ of $D$ with
$\smoothing$ or $\hsmoothing$ (so that the result
is a disjoint union of circles embedded in the plane).
We denote by $\K(D)$ the set of all Kauffman states of $D$,
and by $n(D')$ the number of circles in $D'\in\K(D)$.
If $D$ has $c$ crossings, the number of Kauffman states
is $2^c$.
Given a crossing of $D$ (looking like this: $\slashoverback$),
we call $\smoothing$ its $0$--{\it smoothing} and $\hsmoothing$
its $1$--{\it smoothing}. We denote by $r(D,D')$
the number of $1$--smoothings in $D'$, where here
$D'$ can be a Kauffman state of $D$ or more generally
any link diagram obtained from $D$ by smoothing some
of the crossings while leaving the others unchanged.

The {\it Kauffman bracket} of $D$ is the Laurent polynomial
$\la D \ra\in\Z[q,q^{-1}]$ defined by
\begin{equation}\label{fKauffmandef}
\la D\ra:=\sum_{D'\in\K(D)}(-q)^{r(D,D')}(q+q^{-1})^{n(D')}.
\end{equation}
For example, the Kauffman bracket of a crossingless
diagram $D=\bigcirc^n$ consisting of $n$ disjoint circles
is just $\la \bigcirc^n\ra=(q+q^{-1})^n$.
Setting $\la D|D'\ra:=(-q)^{r(D,D')}$, we can rewrite
the above formula as
\begin{equation}\label{fKauffmanrewrite}
\la D\ra:=\sum_{D'\in\K(D)}\la D|D'\ra\la D'\ra\; .
\end{equation}
It is easy to see that the Kauffman bracket satisfies
the following rules:
\begin{eqnarray}
\label{fempty}\la \emptyset\ra &=& 1,\\
\label{fcirc}\la D\sqcup\bigcirc\ra &=& (q+q^{-1})\la D\ra,\\
\label{fskein}\la\slashoverback\ra &=&\la\smoothing\ra-q\la\hsmoothing\ra.
\end{eqnarray}
Rule \eqref{fempty} says that the empty link evaluates to $1$.
Rule \eqref{fcirc} says that $\la D\ra$ is multiplied by $(q+q^{-1})$ when
a disjoint circular component is added to $D$. In the third rule,
the three pictures $\slashoverback$, $\smoothing$ and $\hsmoothing$ stand for
three link diagrams which are identical except in a small disk,
where they look like $\slashoverback$, $\smoothing$ and $\hsmoothing$,
respectively.
The above rules determine the Kauffman bracket
completely (see Chapter~\ref{cspanningtreemodel}).

Using \eqref{fcirc} and \eqref{fskein} one can prove
the following lemma which shows that the Kauffman bracket is invariant under
Reidemeister moves when considered up to multiplication with
a unit of the ring $\Z[q,q^{-1}]$.
\begin{lemma}\label{lbracket}
The Kauffman bracket satisfies
\begin{enumerate}
\item
$\la \epsh{12pt}{leftRI} \ra=q^{-1}\la \epsh{12pt}{noRI} \ra$
and $\la \epsh{12pt}{rightRI} \ra=-q^2\la \epsh{12pt}{noRI} \ra$. \item
$\la \epsh{12pt}{RII} \ra=-q\la \epsh{12pt}{noRII} \ra$.
\item
$\la \epsh{14pt}{RIIIa}\ra=\la \epsh{14pt}{RIIIb}\ra$.
\end{enumerate}
\end{lemma}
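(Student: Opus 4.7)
My plan is to derive all three invariance properties by systematically applying the skein rule \eqref{fskein} to each local picture and then simplifying the resulting two-term combinations with the circle rule \eqref{fcirc}. These are the only tools available at this point, and they suffice because every Reidemeister picture is just a small link diagram in a disk.

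For part~1, I start with a kink, say $\epsh{12pt}{leftRI}$, and apply \eqref{fskein} to its unique crossing. One of the two smoothings yields a straight strand with a small disjoint circle floating next to it; the other yields simply the straight strand $\epsh{12pt}{noRI}$. Pulling the disjoint circle out with \eqref{fcirc} gives a factor $(q+q^{-1})$, so the linear combination dictated by the skein rule collapses to a single scalar times $\la\epsh{12pt}{noRI}\ra$. A direct bookkeeping of the two possible kinks (which differ by swapping the roles of $\smoothing$ and $\hsmoothing$) produces the two scalars $q^{-1}$ and $-q^2$ claimed in the statement.

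For part~2, I apply \eqref{fskein} to one of the two crossings of $\epsh{12pt}{RII}$. This rewrites the bracket as a difference of two brackets, each of which contains a single remaining crossing. That remaining crossing is now in an R1 configuration relative to the other strand once the first smoothing is chosen, or it resolves directly into $\epsh{12pt}{noRII}$ (possibly with an extra disjoint circle) for the other. Using \eqref{fcirc} and the first part of the lemma to simplify each of the two terms, the contributions combine so that three of the four numerical coefficients cancel and leave precisely $-q\,\la\epsh{12pt}{noRII}\ra$.

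For part~3, I resolve the same well-chosen crossing in both $\epsh{14pt}{RIIIa}$ and $\epsh{14pt}{RIIIb}$ via \eqref{fskein}. Each side of the claimed equation is then expressed as $\la\cdot\ra - q\,\la\cdot\ra$ with two diagrams on each side. I match the $0$-smoothing terms across the two sides by a planar isotopy; the two $1$-smoothing terms differ only by an R2 move, which is the content of part~2. Applying part~2 to one of them produces the same scalar multiple of the same simplified diagram as on the other side, yielding equality.

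The step I expect to be the most error-prone is part~3: the subtlety is choosing which of the three crossings to split so that, after one skein step, the two resulting diagrams on the two sides match up either tautologically or via a single R2 move. The verifications of parts~1 and~2 are mechanical once the right kink/crossing is picked; part~3 requires care because a bad choice of crossing produces diagrams that still differ by a non-trivial isotopy that cannot be reduced with the tools at hand.
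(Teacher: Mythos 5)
The paper does not actually prove Lemma~\ref{lbracket}; it states only that rules~\eqref{fcirc} and~\eqref{fskein} suffice and leaves the verification to the reader. Your strategy is exactly the intended elementary computation, and parts~1 and~2 are carried out correctly (in part~2 the key point, which you note, is that the two cup--cap contributions cancel and only the $-q\,\la\epsh{12pt}{noRII}\ra$ term survives).

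The one place that needs tightening is part~3. You write that after resolving the well-chosen crossing, ``the two 1-smoothing terms differ only by an R2 move.'' Read literally this would give, via part~2, $\la D_1\ra = -q\la D_2\ra$ rather than $\la D_1\ra = \la D_2\ra$, which is the wrong conclusion. What actually happens is that \emph{each} of the two 1-smoothing diagrams contains its own R2 bigon, and both simplify under part~2 to the \emph{same} diagram $D_3$, each acquiring the same scalar: $\la D_1\ra = -q\la D_3\ra = \la D_2\ra$. Equivalently, $D_1$ and $D_2$ are separated by two R2 moves in opposite directions whose factors $-q$ and $(-q)^{-1}$ cancel. Your closing caution about which crossing to resolve is apt: it must be the crossing between the two strands that are \emph{not} slid by the R3 move; that choice is what makes the 0-smoothing terms planar isotopic and gives each 1-smoothing term a removable bigon.
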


If $D$ is the diagram of an oriented link $L$,
we can define
\begin{equation}\label{fJonesdef}
J(D):=(-1)^{c_-(D)}q^{c_+(D)-2c_-(D)}\la D\ra.
\end{equation}
Lemma~\ref{lbracket} implies that $J(D)$ is invariant under Reidemeister moves
and hence an invariant of the link $L$. We denote this invariant
by $J(L)$ and call it the {\it Jones polynomial}
of $L$. The normalization of $J(L)$ is chosen
so that
\begin{equation}\label{fJonesnorm}
J(\emptyset)=1\qquad\text{and}\qquad J(\bigcirc)=q+q^{-1}.
\end{equation}
A triple $(L_+,L_-,L_0)$ of oriented links is called a {\it skein
triple} if the oriented links $L_+$, $L_-$ and $L_0$ possess diagrams which
are mutually identical except in a small disc, where they look like
$\overcrossing$, $\undercrossing$ and $\orsmoothing$, respectively.
Using rule \eqref{fskein}, it is easy to see that the Jones polynomial satisfies
\begin{equation}\label{fJonesskein}
q^{-2}J(L_+)-q^2J(L_-)=(q^{-1}-q)J(L_0)
\end{equation}
for any skein triple $(L_+,L_-,L_0)$. It is known
that the Jones polynomial is determined uniquely by
relations \eqref{fJonesnorm} and \eqref{fJonesskein}
and the fact that it is a link invariant.

Relation \eqref{fJonesskein} implies
that the value of the Jones polynomial depends only on
the skein equivalence class of a link, where
skein equivalence is defined as follows:
\begin{definition}[\cite{kaw}]
The {\normalfont skein equivalence} is the minimal equivalence relation
 ``$\sim$''
on the set of oriented links satisfying $L\sim L'$ whenever $L$ and $L'$
are isotopic and such that
\begin{enumerate}
\item $L_0\sim L'_0$ and $L_-\sim L'_-$ imply $L_+\sim L'_+$,
\item $L_0\sim L'_0$ and $L_+\sim L'_+$ imply $L_-\sim L'_-$,
\end{enumerate}
for any two skein triples $(L_+,L_-,L_0)$ and $(L'_+,L'_-,L'_0)$.
\end{definition}

The Laurent polynomials
$\la D\ra$ and $J(L)$ defined as above
are related to the original Kauffman bracket
$\la D\ra^{\operatorname{ori}}$ and Jones polynomial $V(L)$ by
$$
\left[A^{-c}\la D\ra^{\operatorname{ori}}\right]_{A^{-2}=-q}=\la D\ra
\qquad\text{and}\qquad V(L)_{\sqrt{t}=-q}=\frac{J(L)}{q+q^{-1}},
$$
where $c$ denotes the number of crossings of $D$.

\section{Bar--Natan's formal Khovanov bracket}
\noindent
Khovanov homology was discovered by M.~Khovanov \cite{kh:first}
in the year 1999.
In \cite{ba:tangles},
D.~Bar--Natan
proposed a generalization of
Khovanov's invariant, which he called
the formal Khovanov bracket.

In Subsections~\ref{scomplexes},
\ref{sdottedcobordisms}, \ref{sJonesgrading}
and \ref{sadditiveclosure}, we explain the target
category for the formal Khovanov bracket.
Notice that
the category $\Cobdl^3$ used in
this thesis is not the
original category of \cite{ba:tangles}.
It is similar though to a category introduced
in \cite[Section~11]{ba:tangles},
but more general
and more directly related to
Khovanov's universal rank $2$ Frobenius system \cite{kh:frobenius}.
In Subsections~\ref{sdefinitionKhovanovbracket} and
\ref{stangles}, we review the definition of the
formal Khovanov bracket and discuss how
the formal Khovanov bracket extends to tangles.

\subsection[Complexes in additive categories]%
{Complexes in additive categories.}\label{scomplexes}
Let $\C$ be an additive category. A bounded {\it (co)chain complex}
in $\C$ is a sequence of objects and morphisms of $\C$
$$
K:\quad\ldots\;\longrightarrow\hsp K^i
\hsp\stackrel{d_K^i}{\longrightarrow}\hsp K^{i+1}
\hsp\stackrel{d_K^{i+1}}{\longrightarrow}\hsp K^{i+2}
\hsp\longrightarrow\;\ldots
$$
with the property that $d_K^{i+1}\circ d_K^i=0$ for all $i\in\Z$
and $K^i=0$ for all but finitely many $i\in\Z$.
A {\it chain transformation} $f:K\rightarrow L$ between two complexes
$K$ and $L$ in $\C$ is a sequence of morphisms $f^i:K^i\rightarrow L^i$
such that $d_K^i\circ f^i=f^{i+1}\circ d_L^{i+1}$ for all $i\in\Z$.
Two chain transformations $f,g:K\rightarrow L$ are called {\it homotopic}
if there exists a {\it chain homotopy} between them, i.e.
a sequence of morphisms $h^i:K^i\rightarrow L^{i-1}$
such that $d_L^{i-1}\circ h^i+h^{i+1}\circ d_K^i=f^i-g^i$.
Let $\Kom(\C)$ denote the category whose objects are
bounded complexes in $\C$ and whose morphisms are
chain transformations, and let $\Komh(\C)$ denote the
quotient category $\Kom(\C)/\N$ where $\N$ is the ideal
of chain transformations homotopic to $0$.

Two complexes in $\C$ are said to be
{\it isomorphic} ({\it homotopic}) if they are
isomorphic as objects of $\Kom(\C)$ ($\Komh(\C)$).
A complex which is homotopic to the trivial complex
is called {\it contractible}. Equivalently, a complex
$K$ is contractible if its identity morphism
$\Id_K:K\rightarrow K$ is homotopic to $0$.

Given a complex $K=(K^i,d_K^i)$, we refer to the index $i$
as the {\it homological degree}. For every $n\in\Z$
we denote by $[n]$ the endofunctor of $\Kom(\C)$
which raises 
the homological degree by $n$, i.e.
$K[n]^{i+n}=K^i$ and $d_{K[n]}^{i+n}=d_K^i$.
(Note that our convention is opposite to the convention
used in \cite{kh:first}).

Given a chain transformation $f:K\rightarrow L$,
the {\it mapping cone} of $f$
is the complex $\G(f):=K\oplus L[1]$ with the differential
$$
d_{\G(f)}:=
\begin{pmatrix} d_K & 0 \\ f & -d_{L[1]}\end{pmatrix}
$$
i.e. $d_{\G(f)}(x,y)=(d_Kx,fx-d_{L[1]}y)$ for all $(x,y)\in K\oplus L[1]$.
It is easy to see that
the mapping cone of an isomorphism is always contractible.
Indeed, if $f$ is an isomorphism, we can define
a homotopy between $\Id_{\G(f)}$ and $0$ by
$$
h=
\begin{pmatrix} 0 & f^{-1} \\ 0 & 0\end{pmatrix}\; .
$$

Let $K$ and $L$ be two complexes in $\C$. We say that
$K$ {\it destabilizes} to $L$, or $L$ {\it stabilizes} to $K$,
if $K$ is isomorphic to $L\oplus C$ for a complex $C$
which is isomorphic to the mapping cone of an isomorphism.
Moreover, we say that two complexes $K$ and $L$
are {\it stably isomorphic}
if they become isomorphic after stabilizing.
The following lemma is taken from \cite[Lemma~2.1]{we:trees}
(but see also \cite[Lemma~4.5]{ba:tangles}).
\begin{lemma}\label{lcommutecone}
Let $K$, $L$ be complexes such that $K=K_1\oplus K_2$ and $L=L_1\oplus L_2$
for contractible complexes $K_2$ and $L_2$. Then the mapping cone $\G(f)$
of a chain transformation
$$
\begin{CD}
K=K_1\oplus K_2@>{\quad\; \mbox{f = }\begin{pmatrix} f_{11} & f_{12}\\
f_{21}&f_{22}\end{pmatrix}\quad\;}>> L_1\oplus L_2=L
\end{CD}\vsp
$$
is isomorphic to the complex $\G(f_{11})\oplus K_2\oplus L_2[1]$.
In particular, if $K_2$ and $L_2$ destabilize to the trivial complex,
then $\G(f)$ destabilizes to $\G(f_{11})$.
\end{lemma}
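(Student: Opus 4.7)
The plan is to construct an explicit isomorphism from $\G(f)$ to $\G(f_{11})\oplus K_2\oplus L_2[1]$ in $\Kom(\C)$, realised as the composition of three ``Gaussian--elimination'' conjugations. Since $K_2$ and $L_2$ are contractible, I fix contracting homotopies $h_K\colon K_2\to K_2$ and $h_L\colon L_2\to L_2$ (both of homological degree $-1$) satisfying $d_{K_2}h_K+h_Kd_{K_2}=\Id_{K_2}$ and $d_{L_2}h_L+h_Ld_{L_2}=\Id_{L_2}$. Since $f$ is a chain map and the decompositions $K=K_1\oplus K_2$, $L=L_1\oplus L_2$ respect the differentials, each of the four components $f_{ij}$ is itself a chain map; this will be used repeatedly in what follows.

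As a graded object, $\G(f)=K_1\oplus K_2\oplus L_1[1]\oplus L_2[1]$, and its differential is the block matrix
$$
d_{\G(f)}=\begin{pmatrix}d_{K_1}&0&0&0\\ 0&d_{K_2}&0&0\\ f_{11}&f_{12}&-d_{L_1}&0\\ f_{21}&f_{22}&0&-d_{L_2}\end{pmatrix}.
$$
The goal is to eliminate the three unwanted off--diagonal blocks $f_{22}$, $f_{12}$, $f_{21}$ in turn. For the first step I conjugate by the graded--object automorphism $\phi_1(x_1,x_2,y_1,y_2)=(x_1,x_2,y_1,y_2+h_Lf_{22}(x_2))$; a short computation using $d_{L_2}h_L=\Id-h_Ld_{L_2}$ and the chain--map identity $d_{L_2}f_{22}=f_{22}d_{K_2}$ shows that $\phi_1^{-1}d_{\G(f)}\phi_1$ has exactly the same entries as $d_{\G(f)}$ except that $f_{22}$ has been replaced by $0$. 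The second step uses $\phi_2(x_1,x_2,y_1,y_2)=(x_1,x_2,y_1+f_{12}h_K(x_2),y_2)$ together with the identity $d_{L_1}f_{12}=f_{12}d_{K_2}$ to kill $f_{12}$, and the third step uses $\phi_3(x_1,x_2,y_1,y_2)=(x_1,x_2,y_1,y_2+h_Lf_{21}(x_1))$ to kill $f_{21}$. After these three conjugations the differential is the block--diagonal differential of $\G(f_{11})\oplus K_2\oplus L_2[1]$, and $\phi_3\phi_2\phi_1$ is the desired isomorphism of complexes.

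The main technical point is verifying that each step leaves the previously cleared entries and the unchanged diagonal blocks untouched: the three $\phi_i$ are deliberately supported in pairwise distinct off--diagonal positions, and the order of the eliminations is chosen so that the corrections introduced when a column is modified cannot reintroduce an entry that has already been set to zero. Keeping track of signs and of the degree $-1$ of $h_K$ and $h_L$ under the shift convention $L[1]^j=L^{j-1}$ (so that the differential on $L_2[1]$ carries a minus sign) is the other place where bookkeeping needs care. The ``in particular'' assertion is then immediate: if $K_2$ and $L_2$ each destabilise to the trivial complex, so does their direct sum $K_2\oplus L_2[1]$, and hence $\G(f)$ destabilises to $\G(f_{11})$.
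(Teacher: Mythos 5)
Your proposal is correct and follows essentially the same approach as the paper: the composite of your three elimination maps, $\phi_3\phi_2\phi_1$ (which equals $\phi_1\phi_2\phi_3$, since each $\phi_i-\Id$ lands in the $L[1]$ summand and vanishes on it), is precisely the inverse of the single change-of-basis matrix $F=\left(\begin{smallmatrix}\Id_K&0\\-N&\Id_L\end{smallmatrix}\right)$ with $N=\left(\begin{smallmatrix}0&f_{12}h_K\\ h_Lf_{21}&h_Lf_{22}\end{smallmatrix}\right)$ that the paper uses. You have simply unpacked the paper's one-shot ``direct computation'' into three commuting Gaussian-elimination steps, which is a clean way to organize the same verification.
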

\begin{proof}
On the level of objects (i.e. if one ignores the differentials),
the complexes $\G(f)$ and $\G(f_{11})\oplus K_2\oplus L_2[1]$ are
both isomorphic to $K\oplus L[1]$. Thus, to prove the lemma,
it suffices to construct an automorphism
$F:K\oplus L[1]\rightarrow K\oplus L[1]$
which intertwines the differentials. We define $F$ by
$$
F=\begin{pmatrix} \Id_K &0\\-N &\Id_L\end{pmatrix}
$$
where $N:K_1\oplus K_2\rightarrow L_1[1]\oplus L_2[1]$ is given by
$$
N=\begin{pmatrix} 0 &f_{12}h_K\\h_Lf_{21}
&h_Lf_{22}\end{pmatrix}
$$
with $h_K$ and $h_L$ denoting the contracting
homotopies of the complexes $K_2$ and $L_2$, respectively.
A direct computation shows that
$F\circ d_{\G(f)}=(d_{\G(f_{11})}+d_{K_2}+d_{L_2[1]})\circ F$,
so $F$ is indeed an isomorphism between the complexes
$\G(f)$ and $\G(f_{11})\oplus K_2\oplus L_2[1]$.
\end{proof}

If $K=(K^i,d_K^i)$ is a complex in a category of modules over a ring,
one can define the $i$--th {\it homology module} of $K$
by $H^i(K):=(\kernel d_K^i)/(\image d_K^{i-1})$.
It is easy to see that
homotopic complexes have isomorphic homology modules.

\subsection[Dotted cobordisms]{Dotted cobordisms.}\label{sdottedcobordisms}
Let $D_0$, $D_1$ be two closed $1$--manifolds embedded in the plane $\R^2$.
A {\it cobordism} from $D_0$ to $D_1$ is a compact orientable
surface $S\subset\R^2\times[0,1]$ whose boundary lies entirely
in $\R^2\times\{0,1\}$ and whose ``bottom'' boundary
is $D_0\times\{0\}$ and whose ``top'' boundary is $D_1\times\{1\}$.
A {\it dotted cobordism} is a cobordism which is decorated by finitely
many distinct dots, lying in its interior.
(These dots must not be confused with the signed points
which will be
introduced in Chapter~\ref{cframedlinkcobordisms}).
Dotted cobordisms can be composed by
placing them atop of each other.
We denote by $\Cobd^3$ the category whose objects are closed
embedded $1$--manifolds and whose morphisms are dotted
cobordisms, considered up to boundary--preserving isotopy.

We also define a quotient $\Cobdl^3$ of $\Cobd^3$,
as follows. $\Cobdl^3$ has the same objects as $\Cobd^3$
but its morphisms are formal $\Z$--linear combinations of morphisms
of $\Cobd^3$, considered modulo the following local relations:
\begin{figure}[H]
\centerline{
\psfig{figure=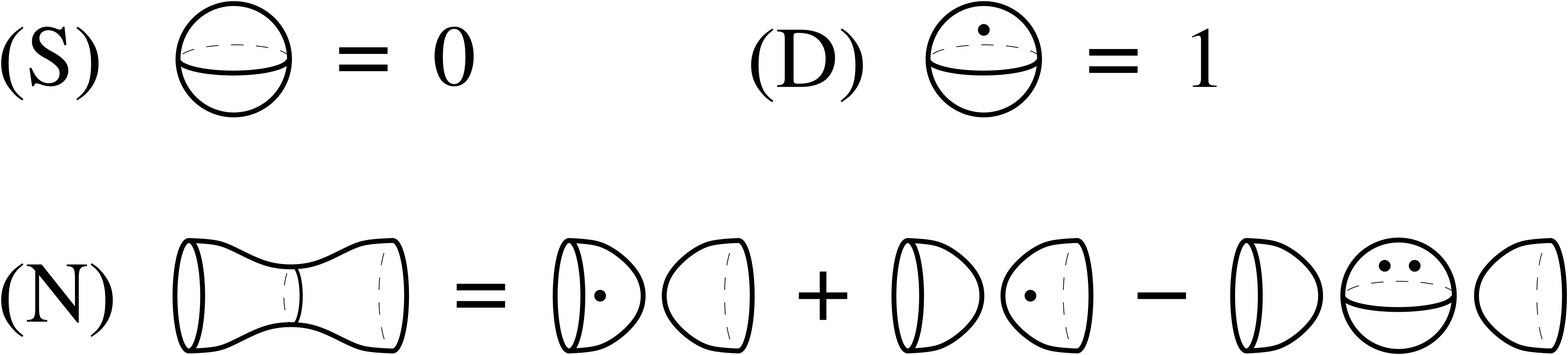,height=2.2cm}
}
\caption{(S), (D) and (N) relation.}
\end{figure}
\noindent
Relation (S) means that any cobordism, which has
a sphere without dots among its connectivity components,
is set to zero. Relation (D) means
that a sphere decorated by a single dot can be
removed from a cobordism without changing the class
of the coborsism in $\Cobdl^3$. Finally,
(N) is the {\it neck--cutting relation}.
It can be used to reduce the genus of a cobordism,
at the expense of introducing some extra dots.
Note that (S), (D) and (N) imply the following relations:
\begin{figure}[H]
\centerline{
\psfig{figure=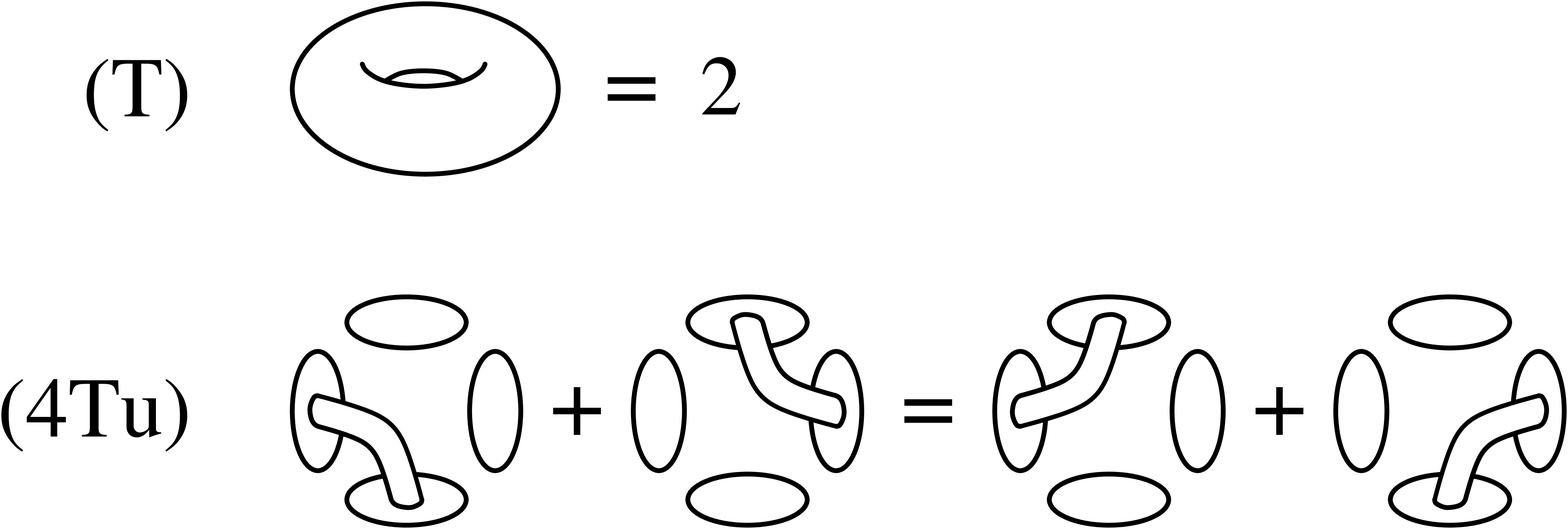,height=3cm}
}
\caption{(T) and (4Tu) relation.}
\end{figure}
\noindent
If we impose the additional relation that a sphere decorated
by exactly two dots is zero
(
$\raisebox{-0.2cm}{\psfig{figure=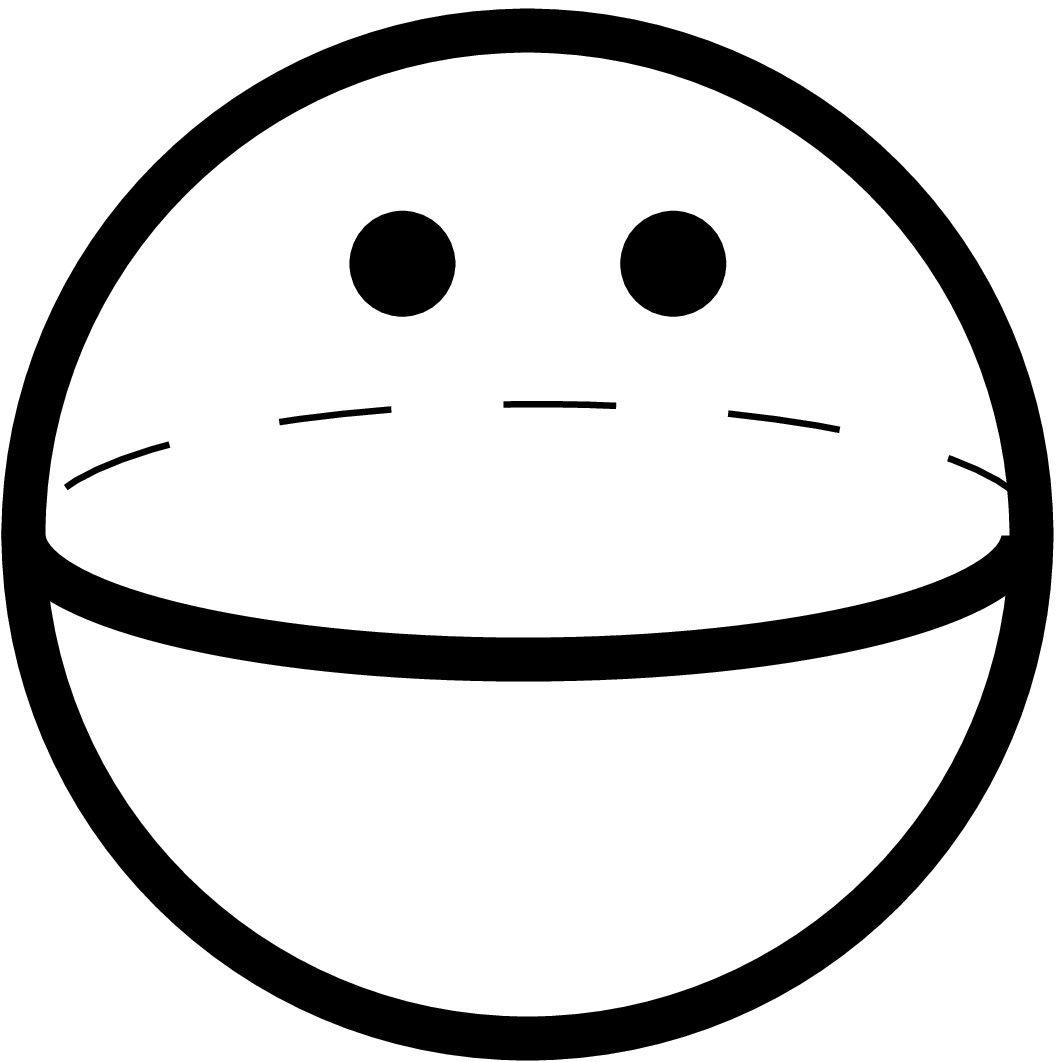,height=0.6cm}}=0$
), then
the (S) relation becomes a consequence
of the relations (D) and (N). Moreover, forming
the connected sum with a torus becomes equivalent to
inserting a dot at the connected sum point and then
multiplying by $2\in\Z$. Hence we essentially get
back the theory of \cite[Section~11]{ba:tangles},
\cite{ba:computations}.

\begin{notations}
We will use the following notations for the
generating morphisms of $\Cobdl^3$.
The symbol
$\HSaddleSymbol$ stands for a
saddle cobordism
from $\smoothing$ to $\hsmoothing$.
More specifically,
$\splitsaddle$ stands for a saddle which splits
a single component into two, and
$\dumbbell$ stands for a saddle which merges
two components into one.
$\fourwheel:\emptyset\rightarrow\bigcirc$
and $\fourinwheel:\bigcirc\rightarrow\emptyset$
denote the cup and the cap cobordism, and
$\dotcircle:\bigcirc\rightarrow\bigcirc$
denotes
the ``multiplication'' of
$\bigcirc$ by a dot, i.e.
the identity cobordism $\bigcirc\times [0,1]$
decorated by a single dot.
\end{notations}

\subsection[Jones grading]{Jones grading.}\label{sJonesgrading}
In this subsection, we enhance the category $\Cobdl^3$
by introducing a grading. We essentially
follow \cite[Section~6]{ba:tangles}.

Given a dotted cobordism $S$,
we define its {\it Jones degree} by
$$
\deg(S):=\chi(S)-2\delta(S)
$$
where $\chi(S)$ denotes the Euler characteristic of
$S$ and $\delta(S)$
denotes
the number of dots on $S$.
Since the (S), (D) and (N) relations are
degree--homogeneous, the Jones degree
descends
to $\Cobdl^3$, turning morphism sets of
$\Cobdl^3$
into graded $\Z$--modules.

We construct a
graded category $(\Cobdl^3)'$.
The objects of $(\Cobdl^3)'$
are pairs $(D,n)$, one
for each object $D\in\Ob(\Cobdl^3)$ and each
integer $n\in\Z$.
As ungraded $\Z$--modules, the morphism sets
of $(\Cobdl^3)'$
are the same as in $\Cobdl^3$, i.e.
$$
\Mor((D_0,n_0),(D_1,n_1)):=\Mor(D_0,D_1)\; .
$$
But the Jones degree of $S\in\Mor((D_0,n_0),(D_1,n_1))$
is defined by
$$
\deg(S):=\chi(S)-2\delta(S)+n_1-n_0\; .
$$
Note that $\deg(S)$ is additive under composition
of morphisms.

For $m\in\Z$, we denote by $\{m\}$ the endofunctor
of $(\Cobdl^3)'$ which ``raises\footnote{Our convention
is opposite to the convention in \cite{kh:first}.}
the grading'' by $m$,
i.e. $(D,n)\{m\}:=(D,n+m)$.
To simplify notations, we will write $D$ instead of
$(D,0)$  (and consequently $D\{n\}$ instead of $(D,n)$).

In what follows,
we suppress the prime from $(\Cobdl^3)'$
and just call it $\Cobdl^3$.
We denote by $\gCobdl^3$
the subcategory of
$\Cobdl^3$ which has the same objects as $\Cobdl^3$, but
whose morphisms are required to be graded of Jones degree $0$.

\subsection[Additive closure and delooping]%
{Additive closure and delooping.}\label{sadditiveclosure}
For every pre--additive category $\C$, there is an associated
additive category $\Mat(\C)$, called its {\it additive closure}.
The objects of $\Mat(\C)$ are finite sequences $(\OC_i)_{i=1}^n$
of objects $\OC_i\in\Ob(\C)$, which we write as
formal direct sums $\bigoplus_{i=1}^n\OC_i$.
The morphisms $F:\bigoplus_j\OC_j\rightarrow\bigoplus_i\OC'_i$
are matrices $[F_{i,j}]$ of morphisms $F_{i,j}:\OC_j\rightarrow\OC'_i$.
Composition of morphisms is modeled on ordinary matrix multiplication:
$$
[F\circ G]_{i,k}:=\sum_j F_{i,j}\circ G_{j,k}
$$

The following lemma is Bar--Natan's Lemma~4.1 \cite{ba:computations},
with the only difference that we use a slightly more general
definition for the category $\Cobdl^3$.

\begin{lemma}\label{ldelooping}
(Delooping)
Let $D'$ be an object in $\gCobdl^3$ containing a circle $\bigcirc$,
and let $D$ be the object obtained by removing this circle from $D'$.
Then $D'$ is isomorphic in $\Mat(\gCobdl^3)$ to $D\{+1\}\oplus D\{-1\}$.
\end{lemma}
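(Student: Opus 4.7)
My plan is to exhibit an explicit isomorphism in $\Mat(\gCobdl^3)$ between $D'$ and $D\{+1\}\oplus D\{-1\}$, built out of cups and caps on the distinguished circle $\bigcirc$ decorated with dots, and then verify the two composition equations using the local relations (S), (D), and (N).

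The building blocks I would use are: the dotted cap $\phi_+:D'\to D\{+1\}$, the undotted cap $\phi_-:D'\to D\{-1\}$, the undotted cup $\psi_+:D\{+1\}\to D'$, and the dotted cup $\psi_-:D\{-1\}\to D'$, each extended by the identity cobordism on $D$. A routine check using $\deg(S)=\chi(S)-2\delta(S)+n_1-n_0$ shows that all four are homogeneous of Jones degree zero, hence morphisms of $\gCobdl^3$. Assembling them into the column $\Phi=(\phi_+,\phi_-)^T$ and the row $\Psi=(\psi_+,\psi_-)$ gives the candidate pair in $\Mat(\gCobdl^3)$.

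For the composition $\Psi\circ\Phi=\Id_{D'}$, only the $\bigcirc$-factor is nontrivial, and there $\psi_+\phi_++\psi_-\phi_-$ is the sum of two disjoint cap-below-cup cobordisms $\bigcirc\to\bigcirc$ with a single dot placed once on the cap and once on the cup---precisely the right-hand side of the neck-cutting relation (N) applied to the identity cylinder on $\bigcirc$. Hence the sum equals $\Id_{\bigcirc}$, and tensoring with $\Id_D$ yields $\Id_{D'}$.

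The other composition $\Phi\circ\Psi$ produces a $2\times 2$ matrix whose four entries are disjoint unions of $\Id_D$ with a $2$-sphere carrying $0$, $1$, or $2$ dots. The two diagonal entries are one-dotted spheres, which collapse to $\Id_D$ by (D); the off-diagonal entry $\phi_-\psi_+$ contains an undotted sphere and vanishes by (S). The remaining entry $\phi_+\psi_-$ produces a sphere decorated by two dots, and this is where I expect the main obstacle to lie. I would dispose of it either by invoking the additional relation mentioned just above the statement (namely that the two-dotted sphere vanishes), or more intrinsically by deriving this vanishing directly: applying (N) to the equatorial neck of a two-dotted sphere whose dots sit one in each hemisphere, and then evaluating each of the two resulting terms with (D), yields the identity $Y=2Y$, where $Y$ denotes the two-dotted sphere, forcing $Y=0$. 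Once this final vanishing is in hand, $\Phi\circ\Psi$ is the identity matrix, and $\Phi$, $\Psi$ are mutually inverse isomorphisms in $\Mat(\gCobdl^3)$.
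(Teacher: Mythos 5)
Your overall strategy---an explicit $2\times 1$ column of caps and a $1\times 2$ row of cups, with dots used to match the Jones degrees---is the same as the paper's, and the degree bookkeeping and the use of (N) to verify $\Psi\circ\Phi=\Id_{D'}$ are in the right spirit. But there is a genuine gap in the other composite, and both ways you propose to close it fail in this category.

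The relation $\eps{0.25in}{ddot}=0$ is \emph{not} part of the definition of $\Cobdl^3$. The paragraph just before the lemma discusses it only as an \emph{optional extra} relation that one could impose to recover the theory of \cite[Section~11]{ba:tangles}, \cite{ba:computations}; the whole point of the present $\Cobdl^3$ is that it does \emph{not} impose it, so as to model Khovanov's universal rank-two Frobenius system. Indeed, as recorded in the section on the universal functor, $R_\emptyset=\Mor(\emptyset,\emptyset)\cong\Z[h,t]$ with $h$ corresponding precisely to the two-dotted sphere; so the two-dotted sphere is a nonzero, non-torsion element of $R_\emptyset$. Consequently your alternative ``intrinsic'' derivation of $Y=2Y$ cannot be right either: it tacitly uses a two-term neck-cutting relation, but if that were the form of (N) it would immediately force $h=0$, contradicting $R_\emptyset\cong\Z[h,t]$. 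The version of (N) in force here must carry an extra $h$-correction (equivalent to Khovanov's comultiplication $\D_\emptyset({\bf 1})={\bf 1}\otimes X+X\otimes{\bf 1}-h\,{\bf 1}\otimes{\bf 1}$), and with that version your cut-the-equator computation returns $Y=h+h-h=h$, not $0$.

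The actual fix is to change the map into $D\{+1\}$, not to try to kill the two-dotted sphere. Keep $\phi_-$ (undotted cap), $\psi_+$ (undotted cup) and $\psi_-$ (dotted cup) as you have them, but replace $\phi_+$ by the degree-zero combination ``dotted cap minus $h$ times undotted cap'' (the $h$ here being a disjoint two-dotted sphere). Then $\phi_+\psi_+ = (\text{one-dot sphere}) - h\cdot(\text{no-dot sphere}) = 1 - 0 = 1$ by (D) and (S); $\phi_+\psi_- = (\text{two-dot sphere}) - h\cdot(\text{one-dot sphere}) = h - h = 0$; $\phi_-\psi_+ = 0$ by (S); $\phi_-\psi_- = 1$ by (D); and $\psi_+\phi_+ + \psi_-\phi_-$ is exactly the right-hand side of the corrected (N) applied to the identity cylinder on $\bigcirc$. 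So with that single modification your argument goes through, and it is then the same proof as the paper's.
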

\begin{proof} It suffices to show that the circle $\bigcirc$
is isomorphic to
$\emptyset\{+1\}\oplus\emptyset\{-1\}$.
The isomorphisms are given by
\vsp

\ \\
\centerline{\psfig{figure=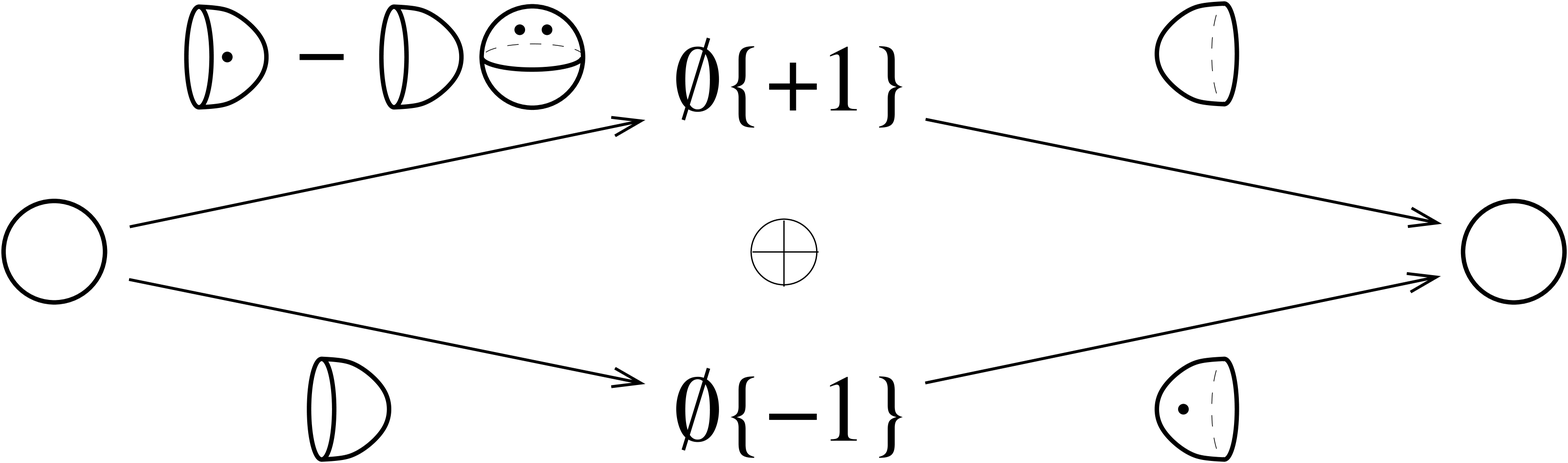,height=3cm}}
\vsp

\noindent
Using relations (S), (D) and (N), it is easy to see that
the above morphisms are mutually inverse isomorphism.
\end{proof}

Let $\Kobd:=\Kom(\Mat(\Cobdl^3))$ denote the category of
bounded complexes in $\Mat(\Cobdl^3)$, and
$\Kobdh:=\Komh(\Mat(\Cobdl^3))$ its homotopy category.
Likewise, let
$\gKobd:=\Kom(\Mat(\gCobdl^3))$
and $\gKobdh:=\Komh(\Mat(\gCobdl^3))$.

\subsection[Definition of the Khovanov bracket]%
{Definition of the Khovanov bracket.}\label{sdefinitionKhovanovbracket}
Let $D$ be an unoriented link diagram with $c$ crossings.
Recall that the Kauffman states of $D$ are the
diagrams obtained by replacing every crossing
of $D$ by its $0$--smoothing or its $1$--smoothing.
After numbering the crossings of $D$, we can
parametrize the Kauffman states of $D$ by $c$--letter
strings of $0$'s and $1$'s, specifying the smoothing
chosen at each crossing.
Let $D_s$ denote the Kauffman
state corresponding to the $c$--letter
string $s\in\{0,1\}^c$, and let
$r(s):=r(D,D_s)$ and $n(s):=n(D_s)$
denote respectively the number of $1$'s in $s$
and the number of circles in $D_s$.
We can arrange the Kauffman states
of $D$ at the vertices of a $c$--dimensional cube.
In Figure~\ref{figcube}, the cube is displayed in such a way that
two vertices which have the same number of $1$'s
(i.e. the same $r(s)$) lie vertically above each
other.
\begin{figure}[H]
\begin{center}
\epsfysize=6cm \epsffile{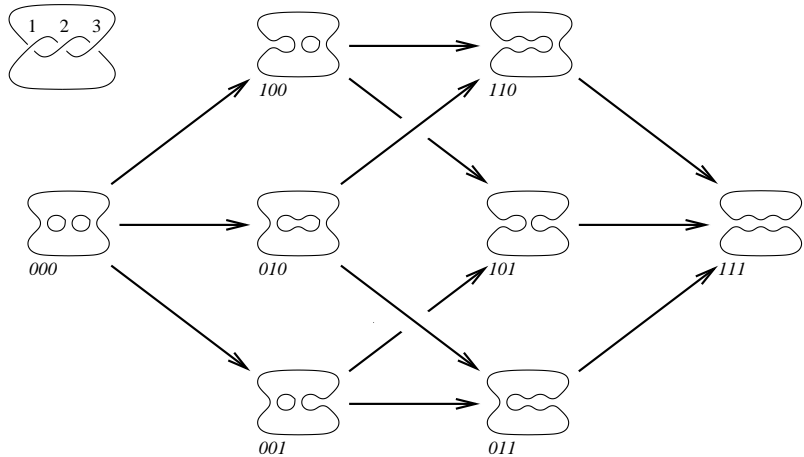}
\end{center}
\caption{The cube of resolutions for the trefoil.}
\label{figcube}
\end{figure}

Two vertices $s$ and $t$ are connected by an
edge (directed from $s$ to $t$) if they differ by a single
letter which is a $0$ in $s$ and a $1$ in $t$.
For such $s$ and $t$ the corresponding Kauffman
states $D_s$ and $D_t$ differ at a single crossing $\slashoverback$
which is a $0$--smoothing in $D_s$ and a $1$--smoothing
in $D_t$. To the edge connecting
$s$ and $t$, we associate a cobordism
$S_s^t:D_s\rightarrow D_t$, defined
as follows: in a neighborhood of the crossing
$\slashoverback$,
the cobordism
$S_s^t\subset\R^2\times [0,1]$ is a
saddle cobordism $\HSaddleSymbol:\smoothing\rightarrow\hsmoothing$.
Outside that neighborhood,
it is vertical (parallel to $[0,1]$).

Regarding the Kauffman states $D_s$ and the
cobordisms $S_s^t$
as objects and morphisms, we can view the above cube
as a commutative diagram in the category $\Cobdl^3$.
Indeed, for every square
\[
\xymatrix{
&D_t\ar[rd]^{S_t^u}\\
D_s\ar[ru]^{S_s^t}\ar[rd]_{S_s^{t'}}&&D_u\\
&D_{t'}\ar[ru]_{S_{t'}^u}
}
\]
we have $S_t^u\circ S_s^t= S_{t'}^u\circ S_s^{t'}$ because
distant saddles can be reordered by isotopy.
We can make all squares of the cube
anticommute by multiplying each morphism
$S_s^t$  by $(-1)^{\la s,t\ra}$, where $\la s,t\ra$ denotes the number
of $1$'s in $s$ (or in $t$) preceding the letter
which is a $0$ in $s$ and a $1$ in $t$.
If we replace each $D_s$ by $D_s\{r(s)\}$,
the Jones degree of
$S_s^t$ becomes $\deg(S_s^t)=\chi(S)+r(t)-r(s)=-1+(r(s)+1)-r(s)=0$,
and hence $S_s^t$ becomes a morphism
in the category $\gCobdl^3$.

Now we ``flatten'' the cube by taking
the direct sum of all objects and morphisms
which lie vertically above each other.
The result is a chain complex in the category
$\Mat(\gCobdl^3)$.
The $i$--th ``chain space'' is
given by
\begin{equation}\label{fformaldef}
\ls D \rs^i:=\bigoplus_{s:r(s)=i}
D_s\{i\}\;\in\Ob(\Mat(\gCobdl^3))
\end{equation}
The $i$--th differential
$d^i:\ls D \rs^i\rightarrow \ls D \rs^{i+1}$
is given as follows:
for two vertices $s$ and $t$ with $r(s)=i$ and
$r(t)=i+1$, the matrix element
$(d^i)_{t,s}$ is equal to
$(-1)^{\la s,t\ra}S_s^t$
whenever $s$ and $t$ are connected by
an edge, and zero otherwise.

Since squares of the cube anticommute, we get
$d^{i+1}\circ d^i=0$, whence
$(\ls D \rs^i,d^i)$ is indeed
a chain complex.
We call this chain complex the
{\it formal Khovanov bracket} of
$D$.

Note that the signs $(-1)^{\la s,t\ra}$ depend on
the numbering of the crossings of $D$.
However,
one can prove that different
numberings
lead to isomorphic complexes.

\begin{lemma}\label{lkhovanovbracket}
The formal Khovanov bracket satisfies:
\begin{enumerate}
\item
$\ls \epsh{12pt}{leftRI}\rs$ destabilizes to
$\ls \epsh{12pt}{noRI} \rs\{-1\}$. Likewise,
$\ls \epsh{12pt}{rightRI}\rs$ destabilizes to
$\ls \epsh{12pt}{noRI} \rs[1]\{2\}$.
\item
$\ls \epsh{12pt}{RII}\rs$ destabilizes to
$\ls\epsh{12pt}{noRII}\rs[1]\{1\}$.
\item
$\ls \epsh{14pt}{RIIIa}\rs$ is
stably isomorphic to
$\ls\epsh{14pt}{RIIIb}\rs$.
\end{enumerate}
\end{lemma}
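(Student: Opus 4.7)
My approach is to prove the three statements in order, reducing each to a mechanical application of the two main tools already at hand: the delooping isomorphism (Lemma~\ref{ldelooping}) and the cancellation Lemma~\ref{lcommutecone}. In every case the Khovanov bracket of the left-hand diagram is a small cube of resolutions; the strategy is (i) expand this cube explicitly, (ii) spot a resolution that carries an extra circle, (iii) deloop that circle to split the chain space into two parts whose grading shifts differ by $2$, (iv) recognise that one block of the resulting matrix differential is (after applying the relations (S), (D), (N)) an isomorphism, and (v) invoke Lemma~\ref{lcommutecone} to destabilise the contractible mapping cone away.

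For part 1 (move R1), the bracket $\ls \epsh{12pt}{leftRI}\rs$ is a two-term complex $D_0\{0\}\stackrel{S}{\longrightarrow}D_1\{1\}$, where $D_0$ is the diagram $\epsh{12pt}{noRI}$ with a small circle adjoined and $D_1\cong\epsh{12pt}{noRI}$. Delooping the circle in $D_0$ replaces it by $\epsh{12pt}{noRI}\{+1\}\oplus\epsh{12pt}{noRI}\{-1\}$, and a direct computation using (D) shows that the component of the saddle ending at $\epsh{12pt}{noRI}\{+1\}$ is the identity while the component ending at the $\{-1\}$ summand vanishes by (S). Lemma~\ref{lcommutecone} then strips off the identity block and leaves precisely $\ls \epsh{12pt}{noRI}\rs\{-1\}$. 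The right R1 case is completely parallel, the only change being which resolution carries the circle and which degree survives; here the surviving complex is concentrated in homological degree $+1$ and shifted by $\{2\}$.

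Part 2 (move R2) is the same idea carried out on the $2\times 2$ cube of smoothings of the two crossings. Exactly one of the four vertices, call it $D_{01}$ (say), contains an extra circle disjoint from the ``through-strands''; delooping that circle splits the $D_{01}$ summand into two copies of $\epsh{12pt}{noRII}$ with shifts $\{r(01)+1\}$ and $\{r(01)-1\}$. Tracking the four saddle cobordisms around this square through the (D) and (S) relations, one finds that the two squares adjacent to the delooped vertex each contain an identity morphism on $\epsh{12pt}{noRII}$, so Lemma~\ref{lcommutecone} can be applied twice. After the two cancellations only a single surviving summand is left, and bookkeeping of the shifts shows it is $\ls\epsh{12pt}{noRII}\rs[1]\{1\}$ as claimed.

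Part 3 (move R3) I would deduce from part 2 rather than prove by brute force on the eight-vertex cube. Both $\ls \epsh{14pt}{RIIIa}\rs$ and $\ls\epsh{14pt}{RIIIb}\rs$ contain, in the middle layer of their cubes, a pair of crossings that form an R2 tangle; one can single out one crossing on each side, write the bracket as a mapping cone over the saddle at that crossing, and apply Lemma~\ref{lcommutecone} together with the R2 destabilisation of part 2 to both mapping cones. After this reduction the two resulting mapping cones have, up to contractible summands, the \emph{same} source and target complexes (namely the brackets of the two diagrams that differ by sliding a strand past the vertex), and the chain transformations themselves agree because distant saddles commute. Hence the two sides are stably isomorphic. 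The main obstacle I anticipate is sign bookkeeping: because each Reidemeister-3 cube has eight vertices and twelve edges, the signs $(-1)^{\langle s,t\rangle}$ induced by any chosen ordering of the crossings must be tracked carefully when matching the two sides. The earlier remark that different crossing orderings yield isomorphic complexes, together with choosing a convenient ordering that renumbers the ``pivot'' crossing first on both sides, lets one make the signs match without pain.
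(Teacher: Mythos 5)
Your overall plan---deloop the circle that appears in one resolution, cancel an isomorphism component of the resulting differential, and derive R3 from R2 by commuting mapping cones past destabilization---is the standard Bar--Natan route, and since the paper itself defers the proof to \cite{kh:first} and \cite{ba:tangles} this is a reasonable strategy. There are, however, two genuine gaps in the execution.

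In part 1 you assert that after delooping ``the component ending at the $\{-1\}$ summand vanishes by (S).'' This is false. Writing the delooping isomorphism $\epsh{12pt}{noRI}\{+1\}\oplus\epsh{12pt}{noRI}\{-1\}\to\epsh{12pt}{noRI}\sqcup\bigcirc$ as $(\iota,\,X\circ\iota)$ (cup, dotted cup), the composite of the dotted cup with the merging saddle is the identity cobordism of $\epsh{12pt}{noRI}$ decorated by a single dot---the ``multiply by $X$'' morphism. No closed dotless sphere occurs, so (S) does not kill it, and this component is nonzero in $\Cobdl^3$. The destabilization still holds, but only after a change of basis that absorbs the $X$-entry; that is, you need the Gaussian elimination (cancellation) lemma of \cite{ba:computations}, which is not stated in the paper and is not the same as Lemma~\ref{lcommutecone}. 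Lemma~\ref{lcommutecone} commutes a mapping cone with a \emph{pre-existing} destabilization of its source or target complexes; it does not cancel an isomorphism entry inside a differential, and in parts~1 and~2 the complexes you feed it are concentrated in a single homological degree and so have no nonzero contractible summands for it to act on. In part~3, by contrast, Lemma~\ref{lcommutecone} \emph{is} the right tool, and the reduction to R2 via cones is sound in outline; but the claim that ``the chain transformations themselves agree because distant saddles commute'' glosses over the real content. The saddle you single out lies precisely at the crossing where the two sides of R3 differ---it is not distant from anything---and showing that the two post-R2 cones actually match, including the maps, is the substance of the categorified Kauffman trick rather than a consequence of reordering commuting events.
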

In the lemma,
$[.]$ and $\{.\}$ denote the shift of the
homological degree and the Jones degree, respectively.
For a proof of the lemma, see \cite{kh:first} or \cite{ba:tangles}.

If $D$ is an oriented link diagram, we define
\begin{equation}\label{fKhdef}
\Kh(D):=\ls D\rs[-c_-(D)]\{c_+(D)-2c_-(D)\}\;\in\Ob(\gKobd)
\end{equation}
Lemma~\ref{lkhovanovbracket} implies:
\begin{theorem}\label{tkhovanovinvariant}
The complex $\Kh(D)$ is a link invariant up to (graded) isomorphism
and stabilization.
\end{theorem}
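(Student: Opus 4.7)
The plan is to verify invariance under each of the three Reidemeister moves by direct arithmetic with the homological and Jones shifts appearing in the definition \eqref{fKhdef} of $\Kh(D)$. Since Lemma~\ref{lkhovanovbracket} already carries the analytic content (destabilization for R1, R2 and stable isomorphism for R3), and since the shift endofunctors $[n]$ and $\{m\}$ commute with direct sums and hence preserve the relations of stabilization and stable isomorphism, the whole proof reduces to checking that the shifts cancel against the writhe corrections $[-c_-(D)]\{c_+(D)-2c_-(D)\}$. Stability and stable isomorphism are transitive relations containing isomorphism, so it suffices to exhibit, for each Reidemeister move $D \rightsquigarrow D'$, a stable isomorphism $\Kh(D)\cong\Kh(D')$.

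First I would fix sign conventions for the pictures of Lemma~\ref{lkhovanovbracket}. By comparing with Lemma~\ref{lbracket} and formula \eqref{fJonesdef}, the diagram $\epsh{12pt}{leftRI}$ (which picks up a factor $q^{-1}$ in the Kauffman bracket) must be the one whose extra crossing is positive, so that $c_+(\epsh{12pt}{leftRI})=c_+(\epsh{12pt}{noRI})+1$; similarly $\epsh{12pt}{rightRI}$ has an extra negative crossing. For R1 of the first type, Lemma~\ref{lkhovanovbracket}(1) gives $\ls D'\rs$ destabilizing to $\ls D\rs\{-1\}$, and applying $[-c_-(D)]\{c_+(D)-2c_-(D)+1\}$ produces $\ls D\rs[-c_-(D)]\{c_+(D)-2c_-(D)\}=\Kh(D)$. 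For R1 of the second type, $\ls D'\rs$ destabilizes to $\ls D\rs[1]\{2\}$, and the writhe shift $[-c_-(D)-1]\{c_+(D)-2c_-(D)-2\}$ exactly cancels the $[1]\{2\}$, again yielding $\Kh(D)$.

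For R2 the extra two crossings are of opposite sign, so both $c_+$ and $c_-$ increase by $1$. Lemma~\ref{lkhovanovbracket}(2) gives $\ls D'\rs$ destabilizing to $\ls D\rs[1]\{1\}$, and the writhe shift for $D'$ equals $[-c_-(D)-1]\{c_+(D)-2c_-(D)-1\}$, which cancels $[1]\{1\}$ exactly. For R3 the writhe is preserved (each crossing keeps its sign under the move), so Lemma~\ref{lkhovanovbracket}(3) directly gives a stable isomorphism $\Kh(D')\cong\Kh(D)$ after applying the common shift $[-c_-(D)]\{c_+(D)-2c_-(D)\}$.

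Two small subtleties remain. First, the formal Khovanov bracket depends a priori on the numbering of the crossings used to fix the edge signs $(-1)^{\langle s,t\rangle}$; the remark after \eqref{fformaldef} says that different numberings give isomorphic complexes, and this is what allows us to regard $\ls D\rs$ as well-defined up to isomorphism before we even start. Second, all the local moves of Lemma~\ref{lkhovanovbracket} are stated for pictures that differ from $D$ only inside a disc, so one must observe that the rest of the diagram contributes a common tensor factor whose presence does not affect the destabilizations; this is a direct consequence of Lemma~\ref{lcommutecone} applied vertex-by-vertex in the cube of resolutions. The main obstacle is therefore not analytic but notational: one must be sure that the conventions for $[n]$, $\{m\}$, sign of crossings and orientation of loops in $\epsh{12pt}{leftRI}$ versus $\epsh{12pt}{rightRI}$ are consistent throughout, after which the argument is purely bookkeeping.
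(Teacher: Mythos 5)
Your proposal is correct and follows exactly the route the paper intends: the paper simply writes ``Lemma~\ref{lkhovanovbracket} implies'' before the theorem, and your shift arithmetic (verifying that $[-c_-]\{c_+-2c_-\}$ cancels the $\{-1\}$, $[1]\{2\}$ and $[1]\{1\}$ shifts for R1 and R2, and that R3 preserves the writhe) is precisely the bookkeeping that makes this implication work; your sign identification of which curl in Lemma~\ref{lkhovanovbracket}(1) carries a positive crossing, deduced by comparison with Lemma~\ref{lbracket} and \eqref{fJonesdef}, is also the right one. The one remark I'd make is that your second ``subtlety'' --- locality of the moves --- is already absorbed into Lemma~\ref{lkhovanovbracket} itself (in Bar--Natan's proof, cited by the paper, the lemma lives in the tangle setting where composition via $\sharp$ automatically propagates destabilizations from the small disc to the full diagram), so no separate appeal to Lemma~\ref{lcommutecone} is required, and the phrase ``vertex-by-vertex in the cube'' would need unpacking if one did want to argue that way.
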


\begin{remark}
Assume $\slashoverback$, $\smoothing$ and $\hsmoothing$
are three link diagrams which are identical except
in a small disk, where they look like
$\slashoverback$, $\smoothing$ and $\hsmoothing$,
respectively. Then the cube of the diagram $\slashoverback$
contains two
codimension $1$ subcubes,
which after flattening become the complexes $\ls\smoothing\rs$
and $\ls\hsmoothing\rs [1]\{1\}$. The cobordisms
associated to the edges
connecting the two subcubes can be
assembled to a chain transformation
$\ls\HSaddleSymbol\rs:
\ls\smoothing\rs\rightarrow\ls\hsmoothing\rs\{1\}$,
such that $\ls\slashoverback\rs$
is canonically isomorphic to
the mapping cone of this chain transformation:
\begin{equation}\label{fsaddlecone}
\ls \slashoverback\rs=\G\left(\ls\smoothing\rs
\stackrel{\ls\HSaddleSymbol\rs}\longrightarrow
\ls\hsmoothing\rs\{1\}\right)
\end{equation}
\eqref{fsaddlecone} is an analogue of
the relation
$\la \slashoverback\ra=\la\smoothing\ra-q\la\hsmoothing\ra$
of Section~\ref{sJones}.
\end{remark}

\subsection[Tangles]{Tangles.}\label{stangles}
The formal
Khovanov bracket can be extended to {\it tangles},
i.e. to ``parts of link diagrams'' bounded
within a circle.
\begin{figure}[H]
\begin{center}
\epsfysize=4cm \epsffile{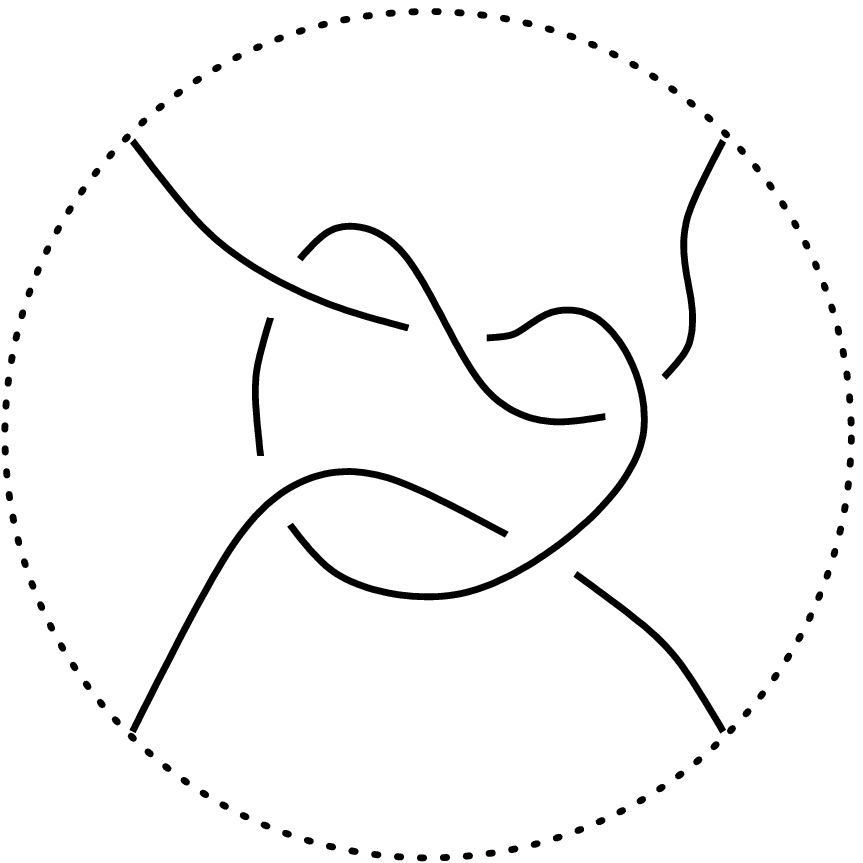}
\end{center}
\caption{A tangle.}\label{figatangle}
\end{figure}\noindent
Assume $T$ is a tangle,
whose boundary $\partial T$
consists of finitely many points lying on the dotted
circle.
Then the Khovanov bracket $\Kh(T)$ is a chain complex
in the category $\Mat(\gCobdl^3(\partial T))$,
where $\gCobdl^3(\partial T)$ is
defined in analogy with $\gCobdl^3$,
with the difference that
now the dotted cobordisms are confined within
a cylinder and that they have a vertical
boundary component $\partial T\times [0,1]$.
The Jones degree of a dotted cobordism
$S:D_0\{n_0\}\rightarrow D_1\{n_1\}$
with vertical boundary $\partial T\times [0,1]$
is defined by
$$
\deg(S)=\chi(S)-2\delta(S)+\frac{1}{2}|\partial T|+n_1-n_0
$$
where $|\partial T|$ denotes the number of points in
$\partial T$.

The Khovanov bracket for tangles has good composition
properties: suppose $T_1$ and $T_2$ are two tangles,
which can be glued side by side to form a bigger
tangle $T_1 T_2$. Then there is a corresponding composition
``$\sharp$'' of formal Khovanov brackets
such that $\Kh(T_1 T_2)=\Kh(T_1)\sharp\Kh(T_2)$
(see \cite{ba:tangles} for details).

\section{Functoriality}\label{sfunctoriality}
\noindent
Let $\Cob^4$ denote the category whose objects are
oriented link diagrams, and whose morphisms are
movie presentations.
Composition of movies is given by ``playing'' one
movie after the other, identifying the last still
of the first movie with the first still of the second.

We can extend the formal Khovanov bracket to a functor
$\Kh:\Cob^4\rightarrow\Kobd$ as follows.
On objects, we define $\Kh$ as in \eqref{fKhdef}.
To define $\Kh$ on morphisms, it suffices
to assign chain transformations to
Reidemeister moves, and to cap, cup and saddle moves.
For the Reidemeister moves, we take the chain transformations
implicit in the proof of Lemma~\ref{lkhovanovbracket}.
For the cap, cup and saddle, we take the
natural chain transformations induced by the corresponding
morphisms
$\fourinwheel$, $\fourwheel$ and $\HSaddleSymbol$
in the category $\Cobdl^3$.

Let $\Cobi^4$ denote the quotient of
$\Cob^4$ by Carter--Saito moves, and
$\Kobdpr$ the projectivization of $\Kobdh$
(i.e. the category which has the same objects as $\Kobdh$,
but where every morphism is identified with its negative).
\begin{theorem}\label{tfunctor}
$\Kh$ descends to a functor $\Kh:\Cobi^4\rightarrow\Kobdpr$.
\end{theorem}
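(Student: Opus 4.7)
The plan is to verify that $\Kh$, viewed as a functor from $\Cob^4$ to $\Kobdh$, sends each pair of movies related by a Carter--Saito move to the same morphism in $\Kobdpr$, i.e., to chain transformations that agree up to homotopy and an overall sign. By the tangle composition property $\Kh(T_1T_2)=\Kh(T_1)\sharp\Kh(T_2)$ from Section~\ref{stangles}, every Carter--Saito move can be localized: since the move only modifies the link diagram inside a small disc, it suffices to compare the two associated chain transformations on the tangle $T$ lying in that disc. This reduces the theorem to a finite list of purely local identities in $\Kobdpr$, one for each of the moves in Figures~\ref{ftypeI}, \ref{ftypeII} and \ref{ftypeIII}.

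For the Type~I and Type~II moves, the non-trivial movie is a ``loop'' of Reidemeister moves, and the assertion is that its assigned composition is homotopic, up to sign, to the identity. For each Reidemeister move $R$ the proof of Lemma~\ref{lkhovanovbracket} produces an explicit pair of mutually inverse chain equivalences: one exhibiting the destabilization of the source complex and one exhibiting the reverse stabilization. So the task is to check that on the local tangle, the composite of the forward chain map for $R$ with the backward chain map for $R^{-1}$ is homotopic to $\pm\,\Id$. This is an algebraic exercise using the delooping isomorphisms (Lemma~\ref{ldelooping}), Lemma~\ref{lcommutecone}, and the relations (S), (D) and (N) in $\Cobdl^3$. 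The same machinery handles Type~II, whose local complexes still have bounded size after delooping.

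The Type~III moves are genuinely different: two non-trivial movies are equated, and one must exhibit a chain homotopy between their two associated chain transformations. Each such move involves at most three crossings, so the local complex $\ls T\rs$ has at most $2^3=8$ summands and the two chain transformations can be written out as explicit matrices of dotted cobordisms. Constructing the desired homotopy then reduces to finding cobordisms between the appropriate Kauffman states whose differential-compensated sum matches the difference of the two transformations; the (N) relation systematically produces candidates with the correct Jones degree. I expect this to be the main obstacle: the bookkeeping of the signs $(-1)^{\la s,t\ra}$ and of dotted-cobordism identities is substantial, and passing to $\Kobdpr$ is essential because different conventions for the destabilizing chain equivalences of Lemma~\ref{lkhovanovbracket} can flip a global sign that cannot be absorbed by a local homotopy. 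Once these local identities are verified, as carried out in detail in \cite{ba:tangles} (with the original argument in \cite{kh:first}), the functoriality statement follows by assembling the local checks along an arbitrary movie presentation.
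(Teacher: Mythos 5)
Your proposal matches the approach the paper endorses: the text does not give its own proof of Theorem~\ref{tfunctor} but defers to Jacobsson \cite{ja}, Khovanov \cite{kh:cobordisms} and Bar--Natan \cite{ba:tangles}, and your sketch (localize via tangle composition, then verify each Carter--Saito move by an explicit chain homotopy up to sign) is exactly the Jacobsson-style argument the paper describes, augmented with Bar--Natan's tangle locality. One small addendum worth keeping in mind: the paper singles out Bar--Natan's proof as the one that ``remains valid in our slightly different setting'' because it is more conceptual and largely avoids the brute-force Type~III homotopy computations you flag as the main obstacle, so if you were to carry the details through in the category $\Cobdl^3$ used here, the Bar--Natan route is the safer one to emulate.
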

For proofs of Theorem~\ref{tfunctor}, see
\cite{ja}, \cite{kh:cobordisms} and \cite{ba:tangles}.
Jacobsson's proof is based on checking explicitly
that the chain transformations associated to the
two sides of the Carter--Saito moves are
homotopic up to sign. Bar--Natan's proof is
more conceptual and remains valid
in our slightly different setting.

A {\it dotted link cobordism} is a link cobordism
decorated by finitely many distinct dots.
There is a notion of movie presentation for dotted
link cobordisms, allowing us to define a category
$\Cobd^4$ whose objects are oriented link diagrams
and whose morphisms are movie presentations of
dotted link cobordisms. We can extend
the functor $\Kh:\Cob^4\rightarrow\Kobd$
to $\Cobd^4$, by viewing dots
on a link cobordism as dots in $\Cobdl^3$.

Let $\Cobdi^4$ denote the quotient of $\Cobd^4$
by Carter--Saito moves and by displacement
of dots (i.e. by sliding a dot across a crossing).
The following lemma shows that
$\Kh:\Cobd^4\rightarrow\Kobd$
descends to a functor $\Kh:\Cobdi^4\rightarrow\Kobdpr$
if one imposes the
additional relation
$\raisebox{-0.2cm}{\psfig{figure=figs/ddot.eps,height=0.6cm}}=0$
on the category $\Cobdl^3$.
\begin{lemma}[\cite{ba:mutation}]\label{ldotslide}
Assume $\raisebox{-0.2cm}{\psfig{figure=figs/ddot.eps,height=0.6cm}}=0$.
Then the chain transformations
$\Kh(\dotbcrossing)$ and $\Kh(\dotacrossing)$ induced by ``multiplying''
by dot before and after a crossing $\slashoverback$
are homotopic up to sign.
\end{lemma}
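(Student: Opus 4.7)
The plan is to construct an explicit null--homotopy of the chain endomorphism $\Kh(\dotbcrossing)+\Kh(\dotacrossing)$ on the mapping cone $\ls\slashoverback\rs$; this gives $\Kh(\dotbcrossing)\simeq -\Kh(\dotacrossing)$ in $\Kobdh$, which is exactly equality in the projectivization $\Kobdpr$. By \eqref{fsaddlecone} the bracket $\ls\slashoverback\rs$ is, near the crossing, the mapping cone $\G\bigl(\ls\HSaddleSymbol\rs\colon\ls\smoothing\rs\to\ls\hsmoothing\rs\{1\}\bigr)$ with $\smoothing$ in homological degree $0$. I would first identify the two chain endomorphisms: following the distinguished strand through the crossing, $\dotbcrossing$ dots the arc of $\smoothing$ on which that strand enters (call it the bottom arc, yielding an endomorphism $\alpha$) and the corresponding arc of $\hsmoothing$ (the left arc, yielding $\gamma$), while $\dotacrossing$ dots the complementary arcs (top of $\smoothing$, giving $\beta$, and right of $\hsmoothing$, giving $\delta$). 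That $(\alpha,\gamma)$ and $(\beta,\delta)$ commute with $\ls\HSaddleSymbol\rs$ is immediate from the fact that the saddle cobordism is a connected surface, so a dot placed on any of its boundary arcs represents the same morphism of $\Cobdl^3$ up to boundary--preserving isotopy.

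The candidate homotopy $h$ is the reverse saddle $\HSaddleSymbol^{*}\colon\hsmoothing\{1\}\to\smoothing$, and the heart of the argument is to evaluate the two double--saddle compositions. Geometrically, $\HSaddleSymbol^{*}\circ\HSaddleSymbol$ is the cobordism obtained by gluing two saddle cobordisms along the arcs of $\hsmoothing$; near the crossing it contains a cylindrical neck which one can cut via the neck--cutting relation (N). The resulting decomposition is a sum of two terms, each boundary--preserving isotopic in $\R^{2}\times[0,1]$ to the identity cobordism on $\smoothing$ carrying a single dot on one of its two local arcs, and so equals $\alpha+\beta$. The symmetric computation gives $\HSaddleSymbol\circ\HSaddleSymbol^{*}=\gamma+\delta$. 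Consequently $h\,\ls\HSaddleSymbol\rs+\ls\HSaddleSymbol\rs\,h=(\alpha+\beta,\gamma+\delta)=\Kh(\dotbcrossing)+\Kh(\dotacrossing)$ on the mapping cone, which is the required null--homotopy.

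The assumption $\raisebox{-0.2cm}{\psfig{figure=figs/ddot.eps,height=0.6cm}}=0$ enters precisely to guarantee that (N) produces this clean two--term dot sum: in the Bar--Natan universal setting neck--cutting carries an additional contribution proportional to the identity (coming from the two--dotted sphere), and this residual term would spoil the null--homotopy. I expect the main technical obstacle to be the topological bookkeeping: one has to verify that the double--saddle composite has a well--defined cylindrical neck to which (N) can be applied, and that the two capped halves match up, via the correct boundary--preserving isotopy, with the two dot placements $\alpha,\beta$ (respectively $\gamma,\delta$) on the identity. Once this is done, a single application of (N) on each side closes the argument.
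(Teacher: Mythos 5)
The paper does not give its own proof of this lemma (it simply cites Bar--Natan's note \cite{ba:mutation}), so there is no in-text argument to compare against; but your proof is correct and is, as far as I can tell, the intended argument. Your homotopy --- the reverse saddle placed in the upper-right corner of the cone $\G\bigl(\ls\HSaddleSymbol\rs\colon\ls\smoothing\rs\to\ls\hsmoothing\rs\{1\}\bigr)$ --- has the right Jones degree ($\chi(\HSaddleSymbol^*)-1=-2$, matching the degree $-2$ dot maps), and the two compositions $\HSaddleSymbol^*\circ\HSaddleSymbol$ and $\HSaddleSymbol\circ\HSaddleSymbol^*$ do reduce, by (N) with $h=0$, to the two dotted identity cobordisms on the local arcs, exactly as you claim. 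Two small points of care worth making explicit: first, the separating circle to which (N) is applied is the core circle of the annulus $\HSaddleSymbol^*\circ\HSaddleSymbol$ whose two boundary circles each contain one local arc of $\smoothing$ at both the bottom and the top; cutting and capping there is what yields the two identity rectangles (cutting at the $\hsmoothing$ level would give a cup--cap, not the identity). Second, you only record the diagonal piece $h\circ\ls\HSaddleSymbol\rs+\ls\HSaddleSymbol\rs\circ h$; for a crossing sitting inside a bigger diagram the full homotopy identity also has an off-diagonal entry $d_{\ls\smoothing\rs}\circ h^*-h^*\circ d_{\ls\hsmoothing\rs\{1\}}$, which vanishes because the reverse saddle is distant from the other crossings and hence commutes, with matching cube signs, with both internal differentials. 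If one works at the tangle level (where $\ls\smoothing\rs$ and $\ls\hsmoothing\rs$ have no internal differential) this is vacuous and the global statement follows by the planar composition of tangle brackets; either way the argument is complete.
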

More precisely, one can show that $\Kh(\dotbcrossing)$
is homotopic to $-\Kh(\dotacrossing)$.

\section{Homology theories}
\noindent
Let $\Cob$ denote
the category whose objects are closed oriented $1$--manifolds
and whose morphisms are abstract (i.e. non--embedded) oriented
$2$--cobordisms, considered up to homeomorphism relative
to their boundary. $\Cob$ is a tensor category with tensor
product given by disjoint union.
A {\it $(1+1)$--dimensional topological quantum field theory (TQFT)}
is a monoidal functor
$$
\F:\Cob\longrightarrow R\text{-}\MOD\; ,
$$
where $R\text{-}\MOD$ is the category of finite projective
modules over a commutative unital ring $R$.

Assume $\F$
is a $(1+1)$--dimensional TQFT which extends to dotted
cobordisms, in a way compatible with the (S), (D) and (N)
relations. Then $\F$ induces a functor
$\F:\Cobdl^3\rightarrow R\text{-}\MOD$.
Every such functor extends
to a functor
$$
\F:\Kobd\longrightarrow\Kom(R\text{-}\MOD)\; .
$$
Applying $\F$ to $\Kh(D)\in\Ob(\Kobd)$, we obtain an ordinary chain
complex $\F\Kh(D)$ in the category of $R$--modules.
The isomorphism class of the homology
of this complex is a link invariant, which
is often
more tractable than the
original Khovanov bracket.

Below, we will first recall the well--known
correspondence between $(1+1)$--dimensional TQFTs and
Frobenius systems, and then give examples of TQFTs
descending to $\Cobdl^3$ and discuss their
associated link homology theories.

\subsection[Frobenius systems]{Frobenius systems.}
Algebraically, $(1+1)$--dimensional TQFTs can be described
in terms of (commutative) Frobenius systems.
A (commutative) {\it Frobenius system} is a $4$--tuple
$(R,A,\e,\D)$ where $R$, $A$, $\e$ and $\D$ are
the following objects and morphisms.
$A$ is a commutative unital $R$--algebra,
such that the natural $R$--module map $\iota:R\rightarrow A$
given by $\iota(1)=1$ is injective.
$\e:A\rightarrow R$ is a map of $R$--modules,
and $\D$ is a coassociative and cocommutative
map $\D:A\rightarrow A\otimes_{R} A$ of $A$--bimodules
such that $(\e\otimes\Id)\circ\D=\Id$ (see \cite{kh:frobenius}).

Given a commutative Frobenius system, we can define
a $(1+1)$--dimensional TQFT $\F$ by assigning $R$ to the
empty $1$--manifold, $A$ to the circle, $A\otimes_R A$ to
the disjoint union of two circles etc. On generating
morphisms of $\Cob$ (cup, cap, splitting and merging saddle)
we define $\F$ by
$\F(\fourwheel):=\iota$, $\F(\fourinwheel):=\e$,
$\F(\splitsaddle):=\D$ and
$\F(\dumbbell):=m$, where $m$ is the multiplication of $A$.

For our purposes, we need a TQFT
$\F:\Cob\rightarrow R\text{-}\MOD$ which extends to
dotted cobordisms, in a way compatible with the (S), (D) and (N)
relations. It is easy to see that for such a TQFT
the corresponding Frobenius algebra $A$ has to be
a free $R$--module of rank $2$. Indeed, let
${\bf 1}=\iota(1)\in A$ denote the unit of $A$,
and let $X\in A$ denote the image of $1\in R$
under the map
$\F( \raisebox{-0.2cm}{\psfig{figure=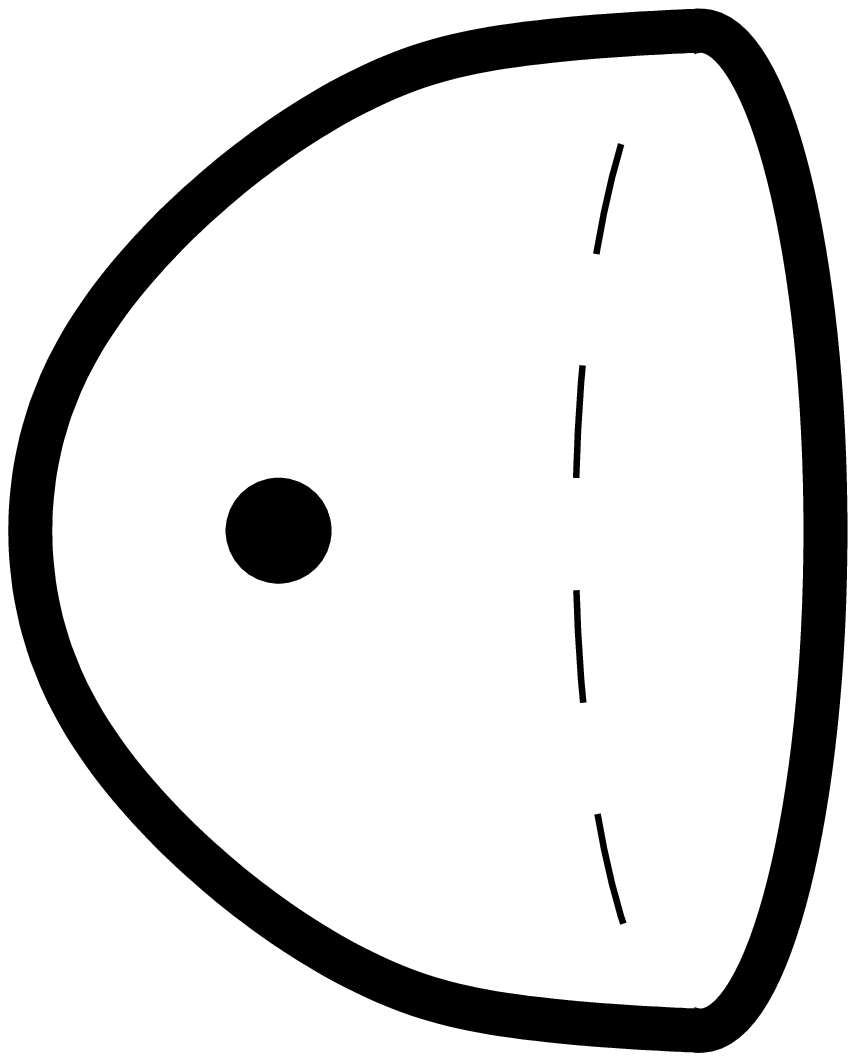,height=0.6cm}} ):
R\rightarrow A$, i.e. under the map induced
by a cup cobordism decorated by a single dot.
A look at the delooping--isomorphism in the proof
of Lemma~\ref{ldelooping} reveals that
$\{{\bf 1},X\}$ is an $R$--basis of $A$.

\subsection[The universal functor]{The universal functor.}
The universal functor
$\F_{\emptyset}:\Cobdl^3\rightarrow R_{\emptyset}\text{-}\MOD$
is defined as follows.
On objects, $\F_{\emptyset}$ is given by
$$
\F_{\emptyset}(D):=\Mor(\emptyset,D)
$$
where $\Mor(\emptyset,D)$ denotes the set of morphisms
from $\emptyset$ to $D$ in the category $\Cobdl^3$.
Note that $\Mor(\emptyset,D)$ is a graded $\Z$--module.
On morphisms, $\F_{\emptyset}$ is defined by composition
on the left. That is, if $S\in\Mor(D,D')$ then
$\F_{\emptyset}(S):\Mor(\emptyset,D)\rightarrow\Mor(\emptyset,D')$
maps $S'\in\Mor(\emptyset,D)$ to $S\circ S'\in\Mor(\emptyset,D')$
(compare \cite[Definition~9.1]{ba:tangles}).

Let us study the Frobenius system
$(R_\emptyset,A_\emptyset,\e_\emptyset,\D_\emptyset)$
associated to $\F_{\emptyset}$.
By definition of $\F_{\emptyset}$, the
ring $R_\emptyset$ and the Frobenius algebra $A_\emptyset$
are given by
$$
R_{\emptyset}=\Mor(\emptyset,\emptyset),
\qquad A_{\emptyset}=\Mor(\emptyset,\bigcirc)
$$
where
the multiplication maps of $R_{\emptyset}$ and $A_{\emptyset}$
are given by disjoint union and by composition with
the merging saddle ($\dumbbell$), respectively.
The
$R_{\emptyset}$--module structure on $A_{\emptyset}$
is induced by disjoint union.
There are isomorphisms
\begin{equation}\label{funiversal1}
R_{\emptyset}\cong\Z[h,t],\qquad
A_{\emptyset}\cong R_{\emptyset}[X]/(X^2-hX-t{\bf 1})
\end{equation}
given as follows.
Under the first isomorphism,
$h$ corresponds to
\raisebox{-0.2cm}{\psfig{figure=figs/ddot.eps,height=0.6cm}}
(a sphere decorated by two dots) and
$t$ corresponds to
\raisebox{-0.2cm}{\psfig{figure=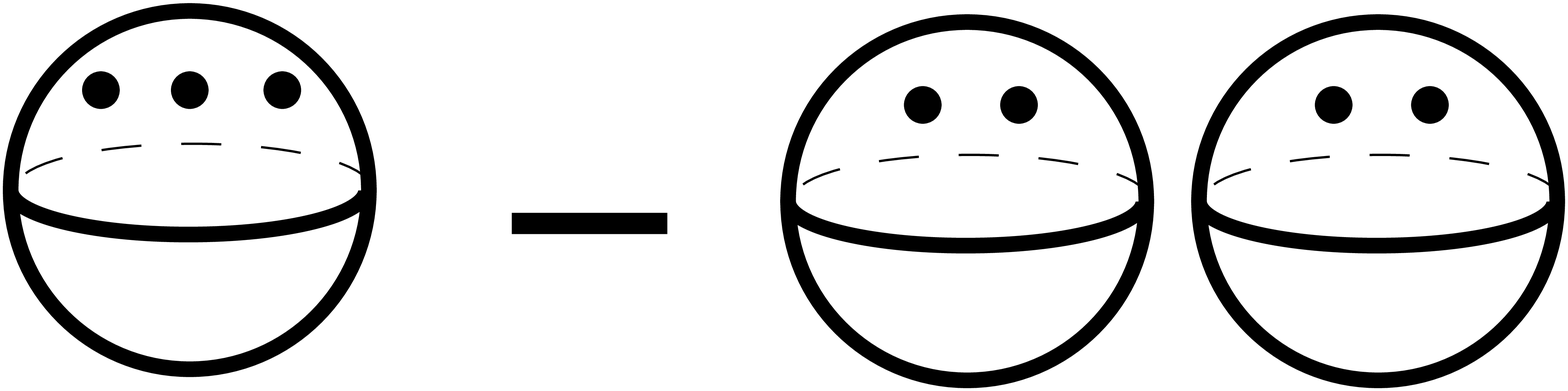,height=0.6cm}}
(a sphere decorated by three dots minus the
disjoint union of two spheres decorated
by two dots).
The second isomorphism in \eqref{funiversal1}
sends a cup decorated
by $n$ dots to $X^n$. In particular, the empty cup
corresponds to ${\bf 1}$.

The isomorphisms become graded
if one defines
$$
\deg(h):=-2,\qquad \deg(t):=-4,\qquad \deg({\bf 1}):=+1,\qquad
\deg(X):=-1\; .
$$
On tensor products
$A_{\emptyset}\otimes_{R_{\emptyset}}\ldots
\otimes_{R_{\emptyset}} A_{\emptyset}$ the grading is given by
$\deg(a_1\otimes\ldots\otimes a_n):=\deg(a_1)+\ldots+\deg(a_n)$.

Khovanov \cite{kh:frobenius} observed that $A_{\emptyset}$
is the polynomial ring in $X$ and $Y:=h-X$,
and $R_{\emptyset}$ is the ring of symmetric functions
in $X$ and $Y$, with $h$ and $-t$ the elementary symmetric
functions.
With this interpretation, we can describe the isomorphism
$R_{\emptyset}\cong\Z[h,t]$ more explicitly,
as follows.
Let $S\in R_\emptyset=\Mor(\emptyset,\emptyset)$
be a closed cobordism. Using the (N) relation, we can reduce
the genus of $S$. Moreover, it is sufficient to
consider the case where $S$ is connected.
Hence we may assume that $S$ is a sphere decorated
by $n$ dots. In this case,
$S\in R_\emptyset$ corresponds to
$$
[n;X,Y]:=\frac{X^n-Y^n}{X-Y}\;\in\Z[h,t]\; .
$$
To see this, compare the recursion relations
\begin{eqnarray*}
 [0;X,Y] &=& 0\, ,\\
\, [1;X,Y] &=& 1\, ,\\
\, [n+1;X,Y] &=& h[n;X,Y]+t[n-1;X,Y]
\end{eqnarray*}
with the (S) and (D) relations and with
the geometric relation corresponding to
$X^2=hX+t{\bf 1}$ (i.e. with the relation saying
that two dots are the same as
$h$ times one dot plus $t$ times no dot).

The structural maps
$\e_{\emptyset}:A_{\emptyset}\rightarrow R_{\emptyset}$ and
$\D:A_{\emptyset}\rightarrow A_{\emptyset}\otimes_{R_{\emptyset}} A_{\emptyset}$
are given by
\begin{equation}\label{funiversal2}
  \e_{\emptyset}:
  \begin{cases}
    {\bf 1} &\mapsto\quad 0 \\
    X       &\mapsto\quad 1
  \end{cases}
  \qquad\quad
  \D_{\emptyset}:
  \begin{cases}
    {\bf 1}&\mapsto\quad
 {\bf 1}\otimes X + X\otimes {\bf 1} - h{\bf 1}\otimes {\bf 1}\\
     X     &\mapsto\quad X\otimes X + t{\bf 1}\otimes {\bf 1}
  \end{cases}
\end{equation}
Khovanov \cite{kh:frobenius} proved that the Frobenius
system $(R_{\emptyset},A_{\emptyset},\e_{\emptyset},\D_{\emptyset})$
determined by \eqref{funiversal1} and \eqref{funiversal2} is universal
among all rank two Frobenius system, in the sense that every other rank
two Frobenius system can be obtained from this one by {\it base change}
(i.e. extending coefficients of $A$ by using a
morphism $\psi:R\rightarrow R'$ of commutative
unital rings to replace $A$ by $A':=A\otimes_R R'$)
and {\it twisting} (replacing $\e(x)$ by $\e(yx)$
and $\D(x)$ by $\D(y^{-1}x)$ for a fixed invertible
element $y\in A$).

\subsection[Khovanov's functor]{Khovanov's functor.}
Khovanov's \cite{kh:first}
functor $\F_{\Kh}$ is obtained
from the universal functor $\F_{\emptyset}$ by setting $h$ and $t$ to zero
(or equivalently by base change via
$\psi:R_{\emptyset}=\Z[h,t]\rightarrow\Z$, $\psi(h)=\psi(t)=0$).
The resulting Frobenius system is
$$
R_{\Kh}=\Z,\qquad
A_{\Kh}=\Z[X]/(X^2)
$$
Since the relations $h=0$ and $t=0$ are homogeneous,
the grading on $A_{\emptyset}$ descends to $A_{\Kh}$.
The degrees of ${\bf 1},X\in A_{\Kh}$ are $\deg({\bf 1})=1$ and $\deg(X)=-1$.
The structure maps are given by
$$
  \e_{\Kh}:
  \begin{cases}
    {\bf 1} &\mapsto\quad 0 \\
    X       &\mapsto\quad 1
  \end{cases}
  \qquad\quad
  \D_{\Kh}:
  \begin{cases}
    {\bf 1}&\mapsto\quad
 {\bf 1}\otimes X + X\otimes {\bf 1}\\
     X     &\mapsto\quad X\otimes X
  \end{cases}
$$

The geometric interpretation of $h=0$ and $t=0$
is as follows: $h=0$ corresponds to
$\raisebox{-0.2cm}{\psfig{figure=figs/ddot.eps,height=0.6cm}}=0$.
The relation
$\raisebox{-0.2cm}{\psfig{figure=figs/ddot.eps,height=0.6cm}}=0$
and the (N) relation imply
that addition of a handle is equivalent to
insertion of a dot followed by multiplication by $2$
(see Subsection~\ref{sdottedcobordisms}).
Moreover, $h=0$
implies $Y^2=X^2=t{\bf 1}$,
and therefore
$$
[2n;X,Y]_{h=0}=t^n[0;X,Y]_{h=0}=0
$$
and
$$
[2n+1;X,Y]_{h=0}=t^n[1;X,Y]_{h=0}=t^n\;.
$$
Geometrically this means that a sphere
decorated by an even number of dots
is set to zero, and a sphere decorated by $2n+1$ dots
is identified with $t^n$.
Combined with $t=0$, this implies that any
sphere containing more than one dot is set to zero.
More generally, every dotted cobordism
containing a closed component $S$ with
$\chi(S)-2\delta(S)<0$ is set to zero.

Let $\C(D):=\F_{\Kh}(\Kh(D))$ and
$\Co(D):=\F_{\Kh}(\ls D\rs)$.
The complexes $\C(D)$ and $\Co(D)$
are Khovanov's original chain complexes
(see \cite{kh:first}, where Khovanov also introduced
a more general theory,
which is related to the theory discussed
here by twisting and base change).
Let $\HC(D):=H(\C(D))$ and $\HCo(D):=H(\Co(D))$
denote the homology groups
of $\C(D)$ and $\Co(D)$, respectively.

Since $A_{\Kh}$ is graded, the chain groups
$\C^i(D)$ are graded $\Z$--modules, i.e.
$\C^i(D)=\bigoplus_{j\in\Z}\C^{i,j}(D)$, and
since
the differentials $d_{\Kh}^i:\C^i(D)\rightarrow\C^{i+1}(D)$
preserve the grading,
there is an induced grading on homology.
The isomorphism class of
$\HC(D)=\bigoplus_{i,j\in\Z}\HC^{i,j}(D)$
is an oriented link invariant, known as {\it Khovanov homology}.

Given a graded $\Z$--module $M=\bigoplus_{j\in\Z} M^j$, Khovanov
assigns a {\it graded dimension} by
$$
\dim_q(M):=\sum_j q^j\dim_{\Q}(M^j\otimes_{\Z}\Q)\; .
$$
For example, $\dim_q(A_{\Kh})=\dim_q(\Z{\bf 1}\oplus\Z X)=q+q^{-1}$.

\begin{theorem}\label{tEuler}
The graded Euler characteristic
$\chi_q(\C(D)):=\sum_i (-1)^i\dim_q(C^i(D))$
is equal to the Jones polynomial $J(D)$.
\end{theorem}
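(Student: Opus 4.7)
The plan is to unwind the definitions and reduce the computation of $\chi_q(\C(D))$ to the state sum formula for the Kauffman bracket, then apply the writhe--dependent shift to recover the Jones polynomial normalization.

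First I would write $\C(D)=\F_{\Kh}(\Kh(D))$ and use the definition $\Kh(D)=\ls D\rs[-c_-(D)]\{c_+(D)-2c_-(D)\}$ to factor out the global shifts. Since the functor $[-c_-(D)]$ reindexes chain spaces by $c_-(D)$ and $\{c_+(D)-2c_-(D)\}$ shifts the Jones grading uniformly, I would obtain
\begin{equation*}
\chi_q(\C(D))=(-1)^{c_-(D)}\,q^{c_+(D)-2c_-(D)}\,\chi_q\bigl(\F_{\Kh}(\ls D\rs)\bigr),
\end{equation*}
so in view of \eqref{fJonesdef} it suffices to prove $\chi_q\bigl(\F_{\Kh}(\ls D\rs)\bigr)=\la D\ra$.

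Next I would compute $\F_{\Kh}$ on the individual terms of \eqref{fformaldef}. By construction, the TQFT $\F_{\Kh}$ sends a disjoint union of $n$ circles to $A_{\Kh}^{\otimes n}$, and since $\dim_q(A_{\Kh})=q+q^{-1}$ and the grading is additive on tensor products, we get $\dim_q\F_{\Kh}(D_s)=(q+q^{-1})^{n(s)}$. The internal shift $\{i\}$ in \eqref{fformaldef} contributes a factor of $q^i$, and the contribution of the Kauffman state $D_s$ to $\chi_q$ appears in homological degree $i=r(s)$, carrying the sign $(-1)^{r(s)}$. Summing over all Kauffman states,
\begin{equation*}
\chi_q\bigl(\F_{\Kh}(\ls D\rs)\bigr)=\sum_{s\in\{0,1\}^c}(-1)^{r(s)}q^{r(s)}(q+q^{-1})^{n(s)}=\sum_{D'\in\K(D)}(-q)^{r(D,D')}(q+q^{-1})^{n(D')},
\end{equation*}
which is exactly $\la D\ra$ by \eqref{fKauffmandef}.

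Combining these two steps yields $\chi_q(\C(D))=(-1)^{c_-(D)}q^{c_+(D)-2c_-(D)}\la D\ra=J(D)$. There is no real obstacle here: the argument is bookkeeping of homological signs and Jones--grading shifts, provided one correctly tracks that the shift conventions $[n]$ and $\{m\}$ used in Subsection~\ref{sJonesgrading} and Subsection~\ref{sdefinitionKhovanovbracket} interact with $\chi_q$ by multiplication with $(-1)^n$ and $q^m$ respectively. The only non--formal input is the graded dimension of $A_{\Kh}$, which is visible from the explicit $R$--basis $\{\mathbf 1,X\}$ with degrees $+1$ and $-1$.
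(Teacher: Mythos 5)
Your proof is correct and follows essentially the same route as the paper: first reduce to computing $\chi_q(\Co(D))=\chi_q\bigl(\F_{\Kh}(\ls D\rs)\bigr)$ via the global shifts in \eqref{fKhdef}, then expand using \eqref{fformaldef} and $\dim_q(A_{\Kh})=q+q^{-1}$ to recover the state sum \eqref{fKauffmandef} for the Kauffman bracket. The only cosmetic difference is the order in which you organize the two reductions; the underlying bookkeeping is identical.
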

\begin{proof}
Applying $\F_{\Kh}$ to
\eqref{fformaldef}, we get
$$
\Co^i(D)=\bigoplus_{s:r(s)=i}\F_{\Kh}(D_s\{i\})\; .
$$
Since $\dim_q(\F_{\Kh}(D_s\{i\}))=
q^i\dim_q(A_{\Kh}^{\otimes n(s)})=q^i(q+q^{-1})^{n(s)}$,
this implies
$$
\chi_q(\Co(D))=\sum_i (-1)^i \sum_{s:r(s)=i}q^i(q+q^{-1})^{n(s)}=\la D\ra\; .
$$
Now the theorem follows because
$$
\chi_q(\C(D))=
(-1)^{c_-(D)}q^{c_+(D)-2c_-(D)}\chi_q(\Co(D))
$$
and because of the definition of the Jones polynomial.
\end{proof}

Alternatively, Theorem~\ref{tEuler}
can be proved
by observing that
$\chi_q(\Co(D))$ satisfies the
defining rules \eqref{fempty},
\eqref{fcirc} and \eqref{fskein}
for the Kauffman bracket. Indeed,
$\chi_q(\Co(D))$ satisfies
\begin{eqnarray*}
\chi_q(\Co(\emptyset))&=&\dim_q(\Z)=1\, ,\\
\chi_q(\Co(D\sqcup\bigcirc)) &=& (q+q^{-1})\chi_q(\Co(D))\, ,\\
\chi_q(\Co(\slashoverback))
&=&\chi_q(\Co(\smoothing))-q\chi_q(\Co(\hsmoothing))\, .
\end{eqnarray*}
The second equation follows
from Lemma~\ref{ldelooping} (Subsection~\ref{sadditiveclosure})
and the third
equation is a consequence of the mapping
cone formula~\eqref{fsaddlecone}.

\subsection[Lee's functor]{Lee's functor.}\label{sLeesfunctor}
Lee's theory is obtained from the universal theory by
setting $h=0$ and $t=1$ and by changing
coefficients to $\Q$. Hence
$$
R_{\Lee}=\Q,\qquad
A_{\Lee}=\Q[X]/(X^2-1)\; .
$$
The structure maps $\e_{\Lee}$ and $\D_{\Lee}$ are given by
$$
  \e_{\Lee}:
  \begin{cases}
    {\bf 1} &\mapsto\quad 0 \\
    X       &\mapsto\quad 1
  \end{cases}
  \qquad\quad
  \D_{\Lee}:
  \begin{cases}
    {\bf 1}&\mapsto\quad
 {\bf 1}\otimes X + X\otimes {\bf 1}\\
     X     &\mapsto\quad X\otimes X + {\bf 1}\otimes {\bf 1}
  \end{cases}
$$
Note that the grading on $A_\emptyset$ does not descend
to a grading on
$A_{\Lee}$ because the relation $t=1$ is not homogeneous.
However, $A_{\Lee}$ has the structure of a filtered Frobenius
algebra, with filtration given by
$$
0=F^3A_{\Lee}\subseteq F^1A_{\Lee}\subseteq F^{-1}A_{\Lee}=A_{\Lee}
$$
where $F^1A_{\Lee}=\Q {\bf 1}\subset A_{\Lee}$.

Lee's chain complex is defined by $\C'(D):=\F_{\Lee}(\Kh(D))$.
Since $A_{\Lee}$ is filtered, the chain groups ${\C'}^i(D)$
are filtered vector spaces, and the differentials
preserve the filtration. The filtration on $\C'(D)$
induces a filtration on the homology groups ${\HC'}^i(D):=H^i(\C'(D))$.
Explicitly, if
$$
0\;\subseteq\;\ldots\;\subseteq
\;F^{j+2}{\C'}^i(D)\;\subseteq
\;F^{j}{\C'}^i(D)\;\subseteq
\;F^{j-2}{\C'}^i(D)\;\subseteq
\;\ldots\;\subseteq
{\C'}^i(D)
$$
denotes the filtration on chain level, then
$F^j{\HC'}^i(D)\subseteq {\HC'}^i(D)$
is defined as the space of
all homology classes which have a
representative in $F^j{\C'}^i(D)$.
For a homology class $x\in{\HC'}^i(D)$,
we write $\deg(x)=j$ if $x$ has a representative in
$F^j{\C'}^i(D)$ but not in $F^{j+2}{C'}^i(D)$.

Following Lee \cite{le2},
we introduce a new basis $\{a,b\}$ for $A_{\Lee}$, defined by
$a:=X+{\bf 1}$ and $b:=X-{\bf 1}$. Written in this basis, the
expressions for the comultiplication and the multiplication
become a little bit simpler:
\begin{equation}\label{fLee}
  \D_{\Lee}: \begin{cases}
    a \mapsto a\otimes a &\\
    b \mapsto b\otimes b &
  \end{cases}
  \qquad
  m_{\Lee}: \begin{cases}
    a\otimes a\mapsto 2a &
    b\otimes b\mapsto -2b \\
    a\otimes b\mapsto 0 &
    b\otimes a\mapsto 0
  \end{cases}
\end{equation}
Note that the spaces
$\F_{\Lee}(D_s)=A_{\Lee}^{\otimes n(s)}\subset\C'(D)$
are spanned by tensor products of $a$'s and $b$'s.
It is convenient to view such tensor products
as colorings of the circles of $D_s$ by
$a$ or $b$.
We call a Kauffman state $D_s$,
equipped with such a coloring,
an {\it enhanced Kauffman state}\footnote{%
The notion of enhanced Kauffman states was
introduced by O.~Viro \cite{vi} in a slightly different
context.}.
Since the vector space
$\C'(D)$ is the direct sum
$\C'(D)=\bigoplus\F_{\Lee}(D_s)$,
the enhanced Kauffman states of $D$
provide a basis for $\C'(D)$.
Written in this basis, the differential
of Lee's complex takes an easy
form, which is essentially given
by \eqref{fLee}.

\setcounter{footnotebuffer}{\value{footnote}}
\chapter{Rasmussen invariant for links}\label{cRasmussenforlinks}
\setcounter{footnote}{\value{footnotebuffer}}

In this chapter, we give a new proof
of a theorem due to E.~S.~Lee, which states
that the Lee homology of an $n$--component link
has dimension $2^n$ (see \cite{we:trees},\cite{bm}
for similar proofs).
Then we define Rasmussen's invariant for links
and give examples where this invariant
is a stronger obstruction to sliceness
than the multivariable Levine--Tristram
signature.

\section{Canonical generators for Lee homology}
\label{scanonical}
\noindent
Let $L$ be a link with $n$ components
and let $D$ be a diagram of $L$.
According to Subsection~\ref{sLeesfunctor},
the enhanced Kauffman states
of $D$ provide a basis for $\C'(D)$.
In \cite{le2}, Lee used this basis
to construct a bijection
between generators of $\HC'(D)$ and
possible orientations of $L$.

This bijection can be described as follows.
Given an orientation of $L$,
we smoothen all crossings of $D$
in the way consistent with the orientation $o$.
The result is a Kauffman state $D_o$
whose circles are oriented.
We can turn $D_o$ into an enhanced Kauffman
state, as follows.
First,
we color the regions between the
circles of $D_o$ alternately
black and white, so that the unbounded region
is white, and such that
any two adjacent regions are oppositely colored.
Then
we color each oriented circle of $D_o$ with
$a$ or $b$ depending on whether region to its right
is black or white.
We denote the resulting enhanced Kauffman state
by $\s_o$
(cf. \cite{ra}).

\begin{theorem}\label{tdegeneration}
The homology classes $[\s_o]$ form a basis
for Lee homology $\HC'(L)$.
In particular, if $L$ has $n$ components, then
there are $2^n$ possible orientations $o$,
and hence the dimension of $\HC'(L)$ equals
$2^n$.
\end{theorem}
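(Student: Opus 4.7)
The plan is to decompose Lee's complex $\C'(D)$ as a direct sum of subcomplexes indexed by colorings of the arcs of $D$ by the basis $\{a,b\}$ of $A_{\Lee}$, and to identify those subcomplexes that contribute to homology.

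First, the change of basis \eqref{fLee} exhibits $A_{\Lee}$ as a product of two one-dimensional Frobenius subalgebras $\Q\cdot a$ and $\Q\cdot b$: comultiplication is diagonal and $m(a\otimes b)=m(b\otimes a)=0$. Hence for every coloring $c\colon\{\text{arcs of }D\}\to\{a,b\}$, the subspace $\C'_c(D)\subset\C'(D)$ spanned by those enhanced Kauffman states whose circle colorings restrict to $c$ on every arc is a subcomplex, and
$$
\C'(D)\;=\;\bigoplus_{c}\C'_c(D).
$$

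Next I would analyze the decomposition one crossing at a time. At a crossing $p$, call a local smoothing \emph{compatible} with $c$ if the two arcs it joins carry the same color under $c$. A direct case analysis on the four local arc colors shows that either exactly one smoothing at $p$ is compatible (which happens iff the two arcs of each strand at $p$ carry different colors), or both are compatible (which forces all four local arcs to share a single color). Call $c$ \emph{valid} if the first alternative holds at every crossing. Valid colorings are then in bijection with orientations $o$ of $L$ via the checkerboard rule, and for $c=c_o$ the subcomplex $\C'_{c_o}(D)$ is one-dimensional, spanned by $\sigma_o$, with zero differential. Consequently $\HC'_{c_o}(D)=\Q\,[\sigma_o]$, and since distinct orientations yield distinct colorings $c_o$, the classes $[\sigma_o]$ lie in distinct direct summands and are therefore automatically linearly independent.

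The main obstacle is to show that $\C'_c(D)$ is acyclic for every invalid $c$. If some crossing has neither smoothing locally compatible with $c$, then $\C'_c(D)=0$; otherwise pick a crossing $p$ where $c$ violates the alternation condition. By the case analysis, all four arcs at $p$ then share a common color, say $a$. By the mapping cone description in the remark following Theorem~\ref{tkhovanovinvariant}, $\C'_c(D)$ is the mapping cone of the saddle chain map $f\colon\C'_c(D_0)\to\C'_c(D_1)\{1\}$, where $D_0$ and $D_1$ are the diagrams obtained from $D$ by fixing the $0$- and the $1$-smoothing at $p$, respectively. Locally the saddle acts on the forced color factor as $\Delta(a)=a\otimes a$ or $m(a\otimes a)=2a$ of \eqref{fLee}, each invertible over $\Q$; combined with the evident bijection between compatible states on the two sides of $p$, this makes $f$ a chain isomorphism. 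Its mapping cone is then contractible (as noted in Subsection~\ref{scomplexes}), so $\C'_c(D)$ is acyclic. Combining, $\HC'(L)=\bigoplus_o \Q\,[\sigma_o]$ has dimension $2^n$.
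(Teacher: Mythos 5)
Your proof is correct and follows essentially the same route as the paper: decompose $\C'(D)$ by edge-colorings (the paper's "admissible edge-colorings"), observe that the differential preserves colors, kill the subcomplex at any crossing where both smoothings are compatible via the mapping-cone-of-an-isomorphism argument, and identify the surviving one-dimensional summands with the canonical generators $\s_o$. The only cosmetic difference is that you sum over all arc-colorings (handling the trivially-zero non-admissible ones separately) and phrase the crossing analysis as "valid/invalid," whereas the paper restricts at the outset to admissible colorings and calls the two cases Type~I and Type~II.
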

\begin{proof}
The proof is based on
admissible edge--colorings of $D$.
By an {\it admissible edge--coloring}, we
mean a coloring of the edges of $D$ by
the colors $a$ or $b$, such that
every crossing of $D$ admits a smoothing consistent with the
coloring. We say that an
admissible edge--colorings is of
{\it Type~I} if at least
one of the crossings is one--colored
(i.e. all four edges touching at the crossing
have the same color), and of {\it Type~II}
if all crossings are two--colored.

Given an admissible edge--coloring $c$,
we denote by
$V(c)$ the subspace of $\C'(D)$
generated by all enhanced Kauffman states whose
circles are colored in agreement with $c$.
Since Lee's differential preserves the
colors (see \eqref{fLee}),
$V(c)$ is actually
a subcomplex.
Hence we have a decomposition
$$
\HC'(D)=\bigoplus_
{\mbox{{\scriptsize $c$ admissible}}} H(V(c))
$$
where $H(V(c))$ denotes the homology of $V(c)$.
The spaces $H(V(c))$ can be computed explicitly,
as follows.

First, assume that $c$ is of Type~I.
Select a one--colored crossing.
Since both smoothings of this crossing
are consistent with $c$,
the subcomplex
$V(c)$ is isomorphic to the
mapping cone of a chain transformation
between
the two smoothings.
A look at \eqref{fLee} shows that
this chain transformation is an isomorphism.
Hence $V(c)$ is contractible and
consequently $H(V(c))=0$.

Now assume that $c$ is of Type~II.
Then there is a unique enhanced
Kauffman state $\s_c$ consistent with $c$,
and therefore $H(V(c))=V(c)=\Q\s_c$.

To complete the proof, one has to check that the
$\s_c$ arising from Type~II colorings
are precisely the canonical generators $\s_o$.
The proof of this fact is easy and
therefore omitted.
\end{proof}

\begin{remark}
Note that
the decomposition $\HC'(D)=\bigoplus H(V(c))$
does not respect the filtration of $\HC'(D)$.
\end{remark}

\section{The generalized Rasmussen invariant}
\noindent
Let $L$ be an oriented link with diagram $D$,
and let $[\s_o]$ and $[\s_{\bar o}]$
the canonical
generators of the Lee homology corresponding
to the orientation of $L$ and to the opposite
orientation, respectively.

By Lemma~3.5 in \cite{ra}, the filtered degrees of
$[\s_o+\s_{\bar o}]$ and $[\s_o-\s_{\bar o}]$ differ by
two modulo 4.
Further, we can show that they  differ by exactly two.
(Indeed, multiplying by $X\in A_{\Lee}$ at any fixed
edge of $D$ induces an automorphism of $\C'(D)$ of
filtered degree $-2$, which interchanges
$[\s_o+\s_{\bar o}]$ and $[\s_o-\s_{\bar o}]$.
The {\it Rasmussen invariant} $s(L)$ of the link $L$
is given by
$$s(L):=\frac{\deg([\s_o+\s_{\bar o}])+\deg([\s_o-\s_{\bar o}])}{2}.$$
Note that $s(L)={\rm min}(\deg([\s_o+\s_{\bar o}]),
\deg([\s_o-\s_{\bar o}]))+1$ and
that the Rasmussen invariant of the $n$--component unlink
is $1-n$.

Let $S$ be a link cobordism from $L_1$ to $L_2$ such that
every connected component of $S$ has a boundary in $L_1$.
Then the Rasmussen estimate generalizes to
\begin{equation}\label{re}
|s(L_2)-s(L_1)|\leq -\chi(S)\; .
\end{equation}
Indeed, arguing
as in \cite{ra} we obtain the estimate $s(L_2)\geq s(L_1)+\chi(S)$.
By reflecting $S\subset \R^3\times [0,1]$ along $\R^3\times\{1/2\}$,
we obtain a cobordism from $L_2$ to $L_1$ with the same
Euler characteristic as $S$.
This gives us the estimate $s(L_1)\geq s(L_2)+\chi(S)$.

\begin{lemma}\label{lproperties}
Let $\bar L$ be the mirror image of $L$ and $\#$, $\sqcup$ denote the
connected sum and the disjoint union, respectively. Then
\begin{eqnarray}
\label{eunion} s(L_1\sqcup L_2)&=&s(L_1)+s(L_2)-1 \\
\label{esum} s(L_1)+s(L_2)-2&\leq&s(L_1\#L_2)\;\:\leq\;\: s(L_1)+s(L_2)\\
 \label{emirror} -2|L|+2&\leq &s(L)+s(\bar L)\;\:\leq\;\: 2
\end{eqnarray}
\end{lemma}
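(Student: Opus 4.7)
The plan is to prove (\ref{eunion}) first, as the central step; the other two inequalities will then follow by applying the generalized Rasmussen estimate (\ref{re}) to explicit cobordisms.

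For (\ref{eunion}), the first observation is that $\deg([\s_o])=s(L)-1$ for every link $L$ and every orientation $o$. Indeed, $[\s_o]=\tfrac{1}{2}\big([\s_o+\s_{\bar o}]+[\s_o-\s_{\bar o}]\big)$, and the two classes on the right are linearly independent with filtered degrees $s(L)+1$ and $s(L)-1$ in some order (as recorded in the paragraph preceding the lemma). A direct check from the definition of the filtered degree shows that the sum of two homology classes of distinct filtered degrees has filtered degree equal to the smaller one, so $\deg([\s_o])=s(L)-1$. Next I would use the multiplicativity of the Khovanov bracket under disjoint union: applying the monoidal functor $\F_{\Lee}$ yields an isomorphism $\C'(L_1\sqcup L_2)\cong\C'(L_1)\otimes_\Q\C'(L_2)$ of filtered chain complexes under which $\s_{(o_1,o_2)}=\s_{o_1}\otimes\s_{o_2}$. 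A filtered Künneth argument over $\Q$ then identifies $\HC'(L_1\sqcup L_2)$ with $\HC'(L_1)\otimes\HC'(L_2)$ as filtered vector spaces, giving the additivity $\deg([\s_{o_1}]\otimes[\s_{o_2}])=\deg([\s_{o_1}])+\deg([\s_{o_2}])$. Combining,
\[
s(L_1\sqcup L_2)-1=\deg([\s_{(o_1,o_2)}])=\big(s(L_1)-1\big)+\big(s(L_2)-1\big),
\]
which rearranges to (\ref{eunion}).

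For (\ref{esum}), a single band move performed away from the existing diagrams realizes $L_1\# L_2$ from $L_1\sqcup L_2$ via a pair-of-pants cobordism $S$ of Euler characteristic $-1$, whose unique connected component has two boundary circles in $L_1\sqcup L_2$ and one in $L_1\# L_2$. Since each component of $S$ has boundary on both source and target, the two-sided estimate (\ref{re}) applies and yields $|s(L_1\# L_2)-s(L_1\sqcup L_2)|\le 1$; substituting (\ref{eunion}) gives the claimed two-sided bound. For (\ref{emirror}), the key topological input is that $K_i\#\bar K_i$ is slice for each component $K_i$ of $L$, so $K_i\sqcup\bar K_i$ bounds a pair-of-pants $P_i\subset B^4$ whose third boundary circle is an unknot $U_i$. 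Taking $S:=\bigsqcup_i P_i$ produces a cobordism from $L\sqcup\bar L$ to the $|L|$-component unlink $U_{|L|}$ of Euler characteristic $-|L|$, each of whose pair-of-pants components has two boundary circles on the source side and one on the target side. Applying (\ref{re}) and substituting $s(U_{|L|})=1-|L|$ together with (\ref{eunion}) yields $|2-|L|-s(L)-s(\bar L)|\le|L|$, which rearranges to (\ref{emirror}).

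The main technical obstacle is the filtered Künneth step for (\ref{eunion}): one must verify that the tensor-product filtration at the chain level descends to the tensor-product filtration on homology, and that $\deg([x]\otimes[y])=\deg([x])+\deg([y])$ for the relevant canonical classes, with no exotic increase of the filtered degree on the tensor product. Over the field $\Q$, and with Theorem~\ref{tdegeneration} providing an explicit canonical basis on both factors, this reduces to a bookkeeping check comparing $q$-degrees of specific chain-level representatives rather than a deep homological-algebra statement.
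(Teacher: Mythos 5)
Your proof is correct and follows essentially the same route as the paper's: for~\eqref{eunion} you invoke the filtered tensor decomposition $\C'(L_1\sqcup L_2)\cong\C'(L_1)\otimes\C'(L_2)$ together with $\deg([\s_o])=s(L)-1$, and for~\eqref{esum} and~\eqref{emirror} you apply the two-sided estimate~\eqref{re} to a merging saddle cobordism and to a genus-zero cobordism from $L\sqcup\bar L$ to the $|L|$-component unlink, respectively. The only substantive difference is one of explicitness: you isolate the filtered K\"unneth identification $\HC'(L_1\sqcup L_2)\cong\HC'(L_1)\otimes\HC'(L_2)$ and the resulting degree additivity $\deg([\s_{o_1}]\otimes[\s_{o_2}])=\deg([\s_{o_1}])+\deg([\s_{o_2}])$ as the real content of~\eqref{eunion}, whereas the paper passes over this step (citing Rasmussen's Corollary~3.6 without comment) — so flagging it is a genuine improvement in care rather than a divergence. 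You also correctly observe that applying~\eqref{re} in both directions requires each component of $S$ to meet both ends of the cobordism, which is stronger than what the paper's hypothesis literally states but is satisfied in both applications (the merging saddle plus cylinders, and the pair-of-pants components built from slice disks of $K_i\#\bar K_i$); the paper's looser phrase ``$|L|$ saddle cobordisms'' is just shorthand for the same Euler-characteristic-$(-|L|)$ surface you construct. Two small wording slips: the saddle cobordism realizing $L_1\sqcup L_2\to L_1\#L_2$ has a ``unique'' pair-of-pants component only when $L_1,L_2$ are knots (otherwise there are extra cylinder components, which is harmless), and one should be careful that the slice statement is for $K$ summed with the \emph{reversed} mirror, so the orientation conventions need to be matched to the paper's $\bar L$; neither affects the argument.
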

Here, $|L|$ denotes the number of components of $L$.
Note that the first inequality of \eqref{emirror} becomes
an equality if $L$ is an unlink.
In the case where
$L_1$, $L_2$ and $L$ are knots, the second
inequality of \eqref{esum} and the first
inequality of \eqref{emirror} are equalities (see \cite{ra}).
\vsp

\begin{proofl}
Let $o_1,o_2$ and $o$ denote the orientations of
$L_1,L_2$ and $L_1\sqcup L_2$, respectively.
The filtered modules $\C'(L_1\sqcup L_2)$ and
$\C'(L_1)\otimes \C'(L_2)$
are isomorphic by an isomorphism
which sends $\s_o$ to $\s_{o_1}\otimes \s_{o_2}$.
Hence \eqref{eunion} follows
from $\deg([\s_o])={\rm min}(\deg([\s_o+\s_{\bar o}]),
\deg([\s_o-\s_{\bar o}]))=s(L_1\sqcup L_2)-1$ and
$\deg([\s_{o_i}])={\rm min}(\deg([\s_{o_i}+\s_{\bar {o_i}}]),
\deg([\s_{o_i}-\s_{\bar {o_i}}]))=s(L_i)-1$
(cf. \cite[Corollary 3.6]{ra}).
\eqref{esum} follows from \eqref{re} and
\eqref{eunion} because $L_1\sqcup L_2$ and $L_1\# L_2$ are related by a
saddle cobordism. Similarly, \eqref{emirror}
can be deduced from \eqref{re}
and \eqref{eunion} because there is a cobordism, consisting of $|L|$
saddle cobordisms, which connects $L\sqcup\bar{L}$ to the
$|L|$--component unlink.
\end{proofl}

\section{Obstructions to sliceness}
\noindent
A knot $K\subset\R^3\times\{0\}$ is called a
{\it slice knot} if it bounds a smooth disk
$S\subset\R^3\times (-\infty,0]$.
The notion of sliceness admits different generalizations to links.
We say that an oriented link $L$ is slice in
{\it the weak sense} if there exists
an oriented smooth connected
surface $S\subset \R^3\times (-\infty,0]$
of genus zero, such that $\partial S=L$.
$L$ is slice in {\it the strong sense}
if every component bounds a smooth
disk in $\R^3\times (-\infty,0]$
and all these disks are disjoint.
Recently, D.~Cimasoni and V.~Florens \cite{cf} unified
different notions of sliceness by introducing colored links.

The Rasmussen invariant of links is an obstruction
to  sliceness.

\begin{lemma}
Let $L$ be slice in the weak sense, then
$$|s(L)|\leq  |L|-1 .$$
\end{lemma}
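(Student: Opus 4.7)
The plan is to reduce the inequality to an application of the generalized Rasmussen estimate \eqref{re} with the unknot as reference link. Since $L$ is slice in the weak sense, there is a connected oriented genus zero surface $S\subset\R^3\times(-\infty,0]$ with $\partial S=L\times\{0\}$. Because $S$ has genus $0$ and $|L|$ boundary components, its Euler characteristic is $\chi(S)=2-|L|$.

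The first step is to convert $S$ into a bona fide cobordism between two nonempty links, so that \eqref{re} applies. After rescaling the time direction so that $S$ lies in $\R^3\times[0,1]$ with $L\times\{1\}$ on top and no boundary at the bottom, I would remove a small open disk from the interior of $S$ near the bottom to obtain a surface $S'$. This $S'$ is a connected cobordism from a single unknot $U$ (the boundary circle of the removed disk) to $L$, and
\[
\chi(S')=\chi(S)-1=1-|L|.
\]

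Next I would invoke \eqref{re}. Since $S'$ is connected and meets both boundary slices, every component of $S'$ has boundary in $U$, and the same holds for the reflected cobordism $\overline{S'}\colon L\to U$ when viewed with $L$ as the bottom. Both directions of \eqref{re} therefore apply and yield
\[
|s(L)-s(U)|\;\leq\;-\chi(S')\;=\;|L|-1.
\]
Finally, $s(U)=0$ by the formula $s(\text{$n$--component unlink})=1-n$ recorded just after the definition of the Rasmussen invariant. Combining these gives $|s(L)|\leq|L|-1$, as claimed.

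I do not anticipate a serious obstacle: the generalized estimate \eqref{re} and the value of $s$ on the unknot are already in hand, and the only careful point is checking the ``every component meets $L_i$'' hypothesis of \eqref{re}. This is automatic because the slice surface $S$ is connected, so the punctured surface $S'$ is connected as well, and thus has boundary in each of $U$ and $L$.
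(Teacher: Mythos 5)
Your proof is correct and takes essentially the same approach as the paper: the paper observes that weak sliceness gives an oriented genus-zero cobordism from $L$ to the unknot and applies \eqref{re}, while you supply the details (removing a disk to produce the cobordism $S'$, computing $\chi(S')=1-|L|$, checking the hypothesis of \eqref{re}, and using $s(U)=0$) that the paper leaves implicit.
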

\begin{proof}
If $L$ is slice in the weak sense,
then there exist an oriented genus $0$ cobordism from $L$ to the unknot.
Applying \eqref{re} to this cobordism we get the result.
\end{proof}

The multivariable Levine--Tristram signature defined in \cite{cf}
is also an obstruction to sliceness. However,
for knots with trivial Alexander polynomial,
the Levine--Tristram signature is constant and equal to the
ordinary signature.
Therefore, for a disjoint union of such knots
the Rasmussen link invariant is often  a
better obstruction than the multivariable signature.
Using Shumakovitch's
list of knots with trivial Alexander polynomial,
but non--trivial Rasmussen invariant \cite{sh3}  and {\it Knotscape},
one can easily construct examples. E.g.
the multivariable signature of
$K_{15n_{28998}}\sqcup K_{15n_{40132}}\sqcup K_{13n_{1496}}$ vanishes
identically,
however
$s(K_{15n_{28998}}\sqcup K_{15n_{40132}}\sqcup K_{13n_{1496}})=4>3-1$,
hence this split link
is not slice in the weak sense.
Similarly, the Rasmussen invariant, but not the signature,
 is an obstruction to
sliceness for the following split links:
$K_{15n_{113775}}\sqcup K_{14n_{7708}}$, $K_{15n_{58433}}\sqcup
K_{15n_{58501}}$, etc.

\setcounter{footnotebuffer}{\value{footnote}}
\chapter{Conway mutation}\label{cConwaymutation}
\setcounter{footnote}{\value{footnotebuffer}}
In this chapter, we present an easy example of mutant links
with different Khovanov homology.
The existence of such an example is important because it
shows that Khovanov homology cannot be defined with a skein
rule similar to the skein relation for the Jones polynomial.

\section{Definition}
\noindent
The mutation of links was originally defined in \cite{co}. We will use the
definition given in \cite{mu}. In Figure \ref{ftangleinv}, $T$ denotes
an oriented $(2,2)$--tangle (i.e. a tangle which has four endpoints
on the dotted circle, as in Figure~\ref{figatangle}).
\begin{figure}[H]
\centerline{\psfig{figure=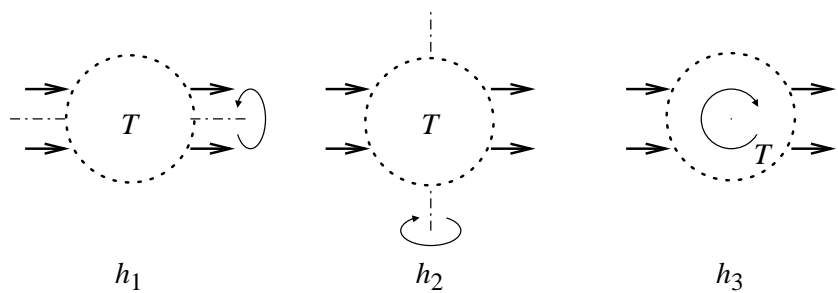,height=3.8cm}}
\caption{The half--turns $h_1$, $h_2$ and $h_3$}\label{ftangleinv}
\end{figure}
\noindent
Let $h_1$, $h_2$ and $h_3$ be the
half--turns about the indicated axes.
Define three involutions $\rho_1$, $\rho_2$ and $\rho_3$ on the
set of oriented $(2,2)$--tangles by
$\rho_1T:=h_1(T)$, $\rho_2T:=-h_2(T)$ and $\rho_3T:=-h_3(T)$
(where $-h_2(T)$ and $-h_3(T)$ are the oriented $2$--tangles
\begin{figure}[h]
\centerline{\psfig{figure=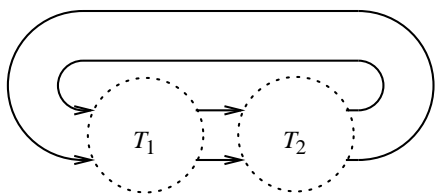,height=2.5cm}}
\caption{The closure of the composition of $T_1$ and $T_2$}\label{ftanglecom}
\end{figure}
obtained from $h_2(T)$ and $h_3(T)$ by reversing the
orientations of all strings).
For two oriented (2,2)--tangles $T_1$ and
$T_2$, denote by $T_1 T_2$ the composition of $T_1$ and $T_2$ and by
$(T_1 T_2)^\wedge$ the closure of $T_1 T_2$  (see Figure \ref{ftanglecom}).

Two oriented links $L$ and $L'$ are called {\it Conway mutants} if
there are two oriented $(2,2)$--tangles $T_1$ and $T_2$ such that for an
involution  $\rho_i$
 $(i=1,2,3)$ the links $L$ and $L'$ are respectively isotopic to $(T_1
 T_2)^\wedge$
 and $(T_1\rho_i T_2)^\wedge$.

\begin{theorem} \label{tconway}
Let $L$ and $L'$ be Conway mutants. Then $L$ and $L'$ are skein equivalent.
\end{theorem}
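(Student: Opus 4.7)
The plan is to prove by induction on $n := c(T_2)$ that for every oriented tangle $T_1$ and every $i \in \{1,2,3\}$, the closures $(T_1 T_2)^\wedge$ and $(T_1 \rho_i T_2)^\wedge$ are skein equivalent; the case where the mutation is applied to $T_1$ is symmetric. The crucial preliminary observation is that each $\rho_i$ acts on $T_2$ by an orientation-preserving rigid half-turn, optionally composed with a global orientation reversal of the strands of $T_2$ (the $-$ sign in the definitions of $\rho_2$ and $\rho_3$). Both operations preserve crossing signs---the half-turn because it is orientation-preserving on $\R^3$, and the global orientation reversal because it reverses both strands at every crossing simultaneously---and both commute with oriented $0$-smoothing. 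Hence $\rho_i$ sends every crossing of $T_2$ to a crossing of $\rho_i T_2$ of the same sign, and the associated skein triples in $(T_1 T_2)^\wedge$ and $(T_1 \rho_i T_2)^\wedge$ match up term by term.

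For the base case $n = 0$, the tangle $T_2$ is one of the oriented crossingless $(2,2)$-tangles, and a direct inspection shows that $\rho_i T_2$ is ambient isotopic to $T_2$ as an oriented tangle. The sign conventions $\rho_1 = h_1$, $\rho_2 = -h_2$, $\rho_3 = -h_3$ are arranged precisely so that this holds in all three cases. Consequently $(T_1 T_2)^\wedge$ and $(T_1 \rho_i T_2)^\wedge$ are isotopic oriented links, hence skein equivalent.

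For the inductive step with $n \geq 1$, fix a crossing $c$ of $T_2$ and write $T_2^+$, $T_2^-$, $T_2^0$ for the tangles obtained by replacing $c$ by a positive crossing, a negative crossing, or its oriented $0$-smoothing. Setting $L_\bullet := (T_1 T_2^\bullet)^\wedge$ and $L'_\bullet := (T_1 \rho_i T_2^\bullet)^\wedge$, we obtain two matched skein triples $(L_+, L_-, L_0)$ and $(L'_+, L'_-, L'_0)$. Since $c(T_2^0) = n-1$, the induction hypothesis yields $L_0 \sim L'_0$. Skein rules (1) and (2) together then give the pivotal biconditional: under the standing hypothesis $L_0 \sim L'_0$, one has $L_+ \sim L'_+$ if and only if $L_- \sim L'_-$. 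The induction hypothesis supplies the standing hypothesis at every one of the $n$ crossings of $T_2$, so iterating the biconditional across all crossings shows that the truth of the assertion ``$(T_1 T_2^{\vec\epsilon})^\wedge \sim (T_1 \rho_i T_2^{\vec\epsilon})^\wedge$'' is independent of the sign assignment $\vec\epsilon \in \{+,-\}^n$ at the crossings of $T_2$.

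To finish, I would choose a sign assignment $\vec\epsilon^\ast$ making $T_2^{\vec\epsilon^\ast}$ descending with respect to a fixed traversal of its two strands, so that $T_2^{\vec\epsilon^\ast}$ is ambient isotopic to a crossingless tangle---this is the standard tangle analog of the unknotting-via-crossing-changes fact. The assertion for $T_2^{\vec\epsilon^\ast}$ then reduces to the base case, and by the biconditional the assertion for $T_2$ follows. The main obstacle I expect is bookkeeping the orientations carefully so that the matched skein triples really are matched (especially with the asymmetric $-$ sign in $\rho_2, \rho_3$) and so that the descending-to-trivial reduction respects the induction; these are both routine but somewhat tedious checks.
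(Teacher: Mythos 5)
Your proof is correct and follows essentially the same approach as the paper: induction on the number of crossings of $T_2$, matched skein triples giving the biconditional $L_+\sim L'_+ \Leftrightarrow L_-\sim L'_-$ under the inductive hypothesis $L_0\sim L'_0$, and then reduction to the crossingless base case by untying $T_2$ via crossing changes. The only cosmetic differences are that you make explicit the sign-preservation property of the $\rho_i$ (which the paper leaves implicit) and use $c(T_2)=0$ rather than $c(T_2)\leq 1$ as the base case.
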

\begin{proof} The proof goes by induction on the number
$c$ of crossings of $T_2$. For $c\leq 1$, $T_2$ and $\rho_i T_2$ are isotopic,
whence $L\sim L'$. For $c>1$, modify a crossing of $T_2$ to obtain a skein
triple of tangles $(T_+,T_-,T_0)$ (with either $T_+=T_2$ or $T_-=T_2$, depending
on whether the crossing is positive or negative). Denote by $(L_+,L_-,L_0)$ and
$(L'_+,L'_-,L'_0)$ the skein triples corresponding to $(T_+,T_-,T_0)$ and
$(\rho_i T_+,\rho_i T_-,\rho_i T_0)$ respectively (i.e. $L_+=(T_1T_+)^\wedge$,
$L_-=(T_1T_-)^\wedge$ and so on). By induction, $L_0\sim L'_0$. Therefore, by
the definition of skein equivalence, $L_+\sim L'_+$ if and only if
$L_-\sim L'_-$. In other words, switching a crossing of $T_2$ does not affect
the truth or falsity of the assertion. Since $T_2$ can be untied by switching
crossings, we are back in the case $c\leq 1$.
\end{proof}

\begin{corollary}\label{cJonesmutation}
The Jones polynomial is invariant under Conway mutation.
\end{corollary}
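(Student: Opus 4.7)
The plan is to combine Theorem~\ref{tconway} with the observation made just after the definition of skein equivalence in Section~\ref{sJones}, namely that the Jones polynomial depends only on the skein equivalence class of an oriented link. By Theorem~\ref{tconway}, any two Conway mutants $L$ and $L'$ satisfy $L\sim L'$. So the content reduces to verifying that $J$ descends to skein equivalence classes, and then the corollary is immediate.

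To see that $J$ is well-defined on skein equivalence classes, I would argue as follows. Consider the relation $L\sim L'$ defined by ``$J(L)=J(L')$''. This relation certainly holds whenever $L$ and $L'$ are isotopic, since $J$ is an isotopy invariant. Moreover, relation~\eqref{fJonesskein} gives
\[
q^{-2}J(L_\pm)=q^2J(L_\mp)+(\pm 1)(q^{-1}-q)J(L_0),
\]
so if $J(L_0)=J(L'_0)$ and $J(L_-)=J(L'_-)$ for two skein triples $(L_+,L_-,L_0)$ and $(L'_+,L'_-,L'_0)$, then $J(L_+)=J(L'_+)$ as well; the symmetric implication also holds. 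Hence the relation ``$J(L)=J(L')$'' satisfies all the closure properties used to define skein equivalence, and so it contains skein equivalence. Applying this to $L$ and $L'$ obtained from Theorem~\ref{tconway} yields $J(L)=J(L')$, proving the corollary.

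There is essentially no obstacle here: both ingredients are already stated in the text, and the corollary is a one-line consequence. The only step requiring any genuine content---that $J$ factors through skein equivalence---is the short induction sketched above, which mirrors the inductive argument already used in the proof of Theorem~\ref{tconway}.
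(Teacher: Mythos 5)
Your proof is correct and follows the paper's (implicit) route: Theorem~\ref{tconway} gives skein equivalence of Conway mutants, and Section~\ref{sJones} already asserts, on the strength of relation~\eqref{fJonesskein}, that $J$ depends only on the skein equivalence class; your closure-property argument simply spells out why that assertion holds. One small algebraic slip: your displayed formula is valid only for $\pm=+$, since solving~\eqref{fJonesskein} for $J(L_-)$ gives $q^{2}J(L_-)=q^{-2}J(L_+)-(q^{-1}-q)J(L_0)$ rather than $q^{-2}J(L_-)=q^{2}J(L_+)-(q^{-1}-q)J(L_0)$, but this does not affect the argument, which only needs that any one of $J(L_+),J(L_-),J(L_0)$ is determined by the other two.
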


\section[Mutation non--invariance]%
{Mutation non--invariance of Khovanov homology}
\noindent
Let $P(L)$ denote the graded Poincar\'e polynomial
of the complex $\C(L)$, i.e. let
$$
P(L)(t,q):=\sum_{i,j}t^iq^j\dim_{\Q}(\HC^{i,j}(L)\otimes\Q)\;
\in\Z[t^{\pm 1},q^{\pm 1}]\; . $$
By Theorem~\ref{tEuler},
we have $P(L)(-1,q)=J(L)(q)$, and by
Corollary~\ref{cJonesmutation},
$J(L)$ is invariant under Conway mutation.
On the other hand, the following
theorem gives examples of mutant links which are separated by
$I(L)(t):=P(L)(t,1)$.

\begin{theorem} \label{tmain}
Let $K_i$ $(i=1,2)$ be a $(2,n_i)$ torus link, with $n_i>2$.
Then the oriented links
$$
L:=\bigcirc\sqcup(K_1\# K_2)\\
\quad \text{and}\quad L':=K_1\sqcup K_2
$$
are Conway mutants with $I(L)\neq I(L')$.
Here, $\bigcirc$ denotes
the trivial knot and $K_1\# K_2$ is the connected sum of the oriented
links $K_1$ and $K_2$. Note that the connected sum is well--defined even if
$K_i$ has two components, because in this case the link $K_i$ is symmetric
in its components.
\end{theorem}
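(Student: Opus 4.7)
The proof has two independent parts. First, I plan to establish the mutant relation by constructing explicit $(2,2)$-tangles $T_1, T_2$ and exhibiting a mutation $\rho_i$ with $(T_1 T_2)^\wedge \simeq L'$ and $(T_1 \rho_i T_2)^\wedge \simeq L$. I will place each torus link $K_j$ inside a tangle box together with two external arcs running from the link to the boundary of the box; these four arc-endpoints supply the tangle boundary. Because each $(2, n_j)$-torus link admits a $180^\circ$ rotational symmetry, the mutation $\rho_i$ fixes the interior $K_j$ up to isotopy but permutes the four boundary arcs. By choosing the arcs so that one closure reconnects them to give $K_1 \sqcup K_2$ and the other performs a band-sum of $K_1$ with $K_2$ while freeing an unknotted loop, we obtain the required realizations of $L'$ and $L$.

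Second, I plan to establish the Khovanov distinction by computing $I(L)$ and $I(L')$ separately. The K\"unneth isomorphism for disjoint unions in rational Khovanov homology yields $I(L')(t) = I(K_1)(t)\,I(K_2)(t)$ and $I(L)(t) = 2\,I(K_1 \# K_2)(t)$. The Khovanov homology of the alternating torus links $T(2, n_j)$ is thin and explicit, so $I(K_j)(t)$ is known in closed form. For $I(K_1 \# K_2)(t)$, I intend to apply the skein exact triangle to a crossing adjacent to the connected-sum sphere: one resolution produces $K_1 \sqcup K_2$, while the other is a simpler alternating link whose homology is again accessible. A direct comparison should then show that $2\,I(K_1 \# K_2)(t)$ and $I(K_1)(t)\,I(K_2)(t)$ do not coincide whenever $n_1, n_2 > 2$; the discrepancy arises because the connecting homomorphism in the skein triangle shifts the homological degree, breaking the naive multiplicativity one might hope for under connected sum.

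The main obstacle I anticipate is the construction in the first part: writing down the explicit tangle decomposition and verifying pictorially that mutation yields the desired closures (rather than some spurious reconnection). The Khovanov computation, while tedious in bigraded bookkeeping, is a routine application of the skein triangle once the mutation is in hand. It will be worthwhile to first verify the distinction in the base case $n_1 = n_2 = 3$ (the granny knot disjoint union unknot versus two disjoint trefoils), to confirm that signs and degree shifts work out as expected before attempting the general statement.
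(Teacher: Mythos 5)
Your first half (realizing $L$ and $L'$ as mutants via a tangle decomposition of the two torus links) is the same as the paper's, which simply exhibits this in Figure~\ref{fmutant}. Your second half, however, takes a substantially harder route than the paper, and the part you call ``routine'' is actually where your plan is most at risk. You propose to compute $I(K_1\#K_2)$ explicitly via the skein exact triangle. But the skein long exact sequence for Khovanov homology involves a connecting homomorphism whose rank is not given for free; without determining it, the triangle constrains $\HC(K_1\#K_2)$ but does not pin it down, and the specialization to $I(\cdot) = P(\cdot)(t,1)$ does not behave like an Euler characteristic (there is no ``$I$-version'' of the skein relation that collapses to a closed formula). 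So the ``direct comparison'' you are planning is not obviously achievable, and at minimum it needs considerably more detail than you have sketched.

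The paper avoids computing $I(K_1\#K_2)$ at all. It observes that $I(\bigcirc)=2$, so by multiplicativity under disjoint union $I(L)=2\,I(K_1\#K_2)$ is divisible by $2$ as a Laurent polynomial in $t$. On the other side, Khovanov's explicit formula (Proposition~35 of \cite{kh:first}) for $(2,n)$ torus links gives $I(K_i)=2+t^{-2}+t^{-3}+\cdots$, which for $n_i>2$ has an odd coefficient (the coefficient of $t^{-2}$ is $1$); hence $I(K_i)$ is not divisible by $2$, and since $\mathbb{F}_2[t,t^{-1}]$ is an integral domain neither is the product $I(L')=I(K_1)I(K_2)$. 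Divisible versus not-divisible by $2$ gives $I(L)\neq I(L')$ with no knowledge of $I(K_1\#K_2)$ whatsoever. You should replace your planned skein-triangle computation with this parity argument; as written, your proposal relies on a computation you have not verified can be carried out, whereas the parity trick is short, complete, and makes clear why the hypothesis $n_i>2$ is exactly what is needed.
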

\begin{proof}
From Figure \ref{fmutant} it is apparent that $L$ and $L'$
are Conway mutants.
\begin{figure}
\centerline{\psfig{figure=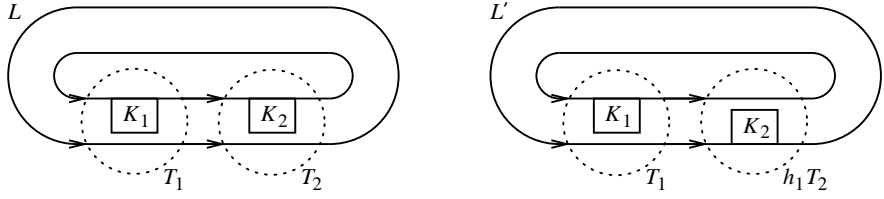,height=2.5cm}}
\caption{Figure : $L$ and $L'$ are Conway mutants}\label{fmutant}
\end{figure}
The Khovanov complex of the trivial knot is
$$
\ldots\hsp\longrightarrow\hsp 0\hsp \longrightarrow\hsp
0\hsp\longrightarrow\hsp A_{\Kh}\hsp\longrightarrow\hsp
0\hsp\longrightarrow\hsp 0\hsp\longrightarrow\hsp\ldots
$$
Since $\operatorname{rank}(A_{\Kh})=2$, we get
$I(\bigcirc)=2$, and
since $P$ is multiplicative under disjoint union
(see \cite[Proposition 33]{kh:first}),
this implies $I(L)=2 I(K_1\# K_2)$. On the
other hand, \cite[Proposition 35]{kh:first} tells us that
$$
I(K_i)=2+t^{-2}+t^{-3}+\ldots+t^{-(n_i-1)}+t^{-n_i}
$$
for odd $n_i$, and
$$
I(K_i)=2+t^{-2}+t^{-3}+\ldots+t^{-(n_i-1)}+2t^{-n_i}
$$
for even $n_i$. Since we assume $n_i>2$,
we get that
$I(K_i)$ is not divisible by $2$.
It follows that
$I(L')=I(K_1)I(K_2)$ is not divisible
by $2$, and hence $I(L')\neq I(L)$.
\end{proof}

Theorems~\ref{tconway} and \ref{tmain} immediately imply:
\begin{corollary}
The skein equivalence class of a link does not determine
its Khovanov homology.
In particular, Khovanov homology is strictly stronger
than the Jones polynomial.
\end{corollary}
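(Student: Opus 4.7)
The corollary is an immediate combination of the two preceding theorems, so my proof plan consists essentially of assembling them. The plan is to exhibit the pair $L = \bigcirc \sqcup (K_1 \# K_2)$ and $L' = K_1 \sqcup K_2$ (for torus links $K_i$ of type $(2,n_i)$ with $n_i > 2$) produced in Theorem~\ref{tmain}, and to verify that they simultaneously witness both assertions of the corollary.

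First I would invoke Theorem~\ref{tconway}, which guarantees that Conway mutants are skein equivalent. Since Theorem~\ref{tmain} exhibits $L$ and $L'$ as Conway mutants, we conclude $L \sim L'$, i.e.\ they lie in the same skein equivalence class. On the other hand, the very same theorem establishes $I(L) \neq I(L')$, and $I$ is by definition an invariant extracted from Khovanov homology (specifically, $I(L)(t) = \sum_{i,j} t^i \dim_\Q(\HC^{i,j}(L)\otimes\Q)$). Hence the Khovanov homologies of $L$ and $L'$ must differ, proving that the skein equivalence class of a link does not determine its Khovanov homology.

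For the second statement, I would argue as follows. The Jones polynomial was characterized in Section~\ref{sJones} by the normalization \eqref{fJonesnorm} together with the skein relation \eqref{fJonesskein}, and as remarked there this characterization implies that $J(L)$ depends only on the skein equivalence class of $L$. Consequently $J(L) = J(L')$ for the pair above, whereas $\HC(L) \not\cong \HC(L')$. Since on the other hand Theorem~\ref{tEuler} shows that $J(L) = P(L)(-1,q)$ is fully recoverable from $\HC(L)$, it follows that Khovanov homology is strictly stronger than the Jones polynomial.

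There is no real obstacle here: every nontrivial piece of work has already been carried out in Theorems~\ref{tconway} and~\ref{tmain}. The only thing one must be careful about is making explicit the logical direction—the Jones polynomial is a skein invariant (so equal on $L$ and $L'$) while $I$, although also extracted from the complex $\C$, is a finer invariant that distinguishes them.
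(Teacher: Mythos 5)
Your proof is correct and matches the paper's intent exactly: the paper states that Theorems~\ref{tconway} and~\ref{tmain} ``immediately imply'' the corollary without elaborating, and your write-up simply spells out the assembly (mutants are skein equivalent, the Poincar\'e specialization $I$ distinguishes them, and the Jones polynomial is both skein-determined and recoverable from Khovanov homology via the graded Euler characteristic).
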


\begin{remark} Theorem \ref{tmain} remains true if we allow
$(2,n_i)$ torus links
$K_i$ with $n_i<-2$ (to see this, use \cite[Corollary 11]{kh:first},
which  relates the Khovanov homology of a link to the
Khovanov homology of its mirror image).
However, the condition $|n_i|>2$ is necessary.
In fact, if one of the
$|n_i|$ is $\leq 1$, then the corresponding torus link $K_i$ is trivial and
hence $L$ and $L'$ are isotopic.
If one of  the $|n_i|$, say $|n_2|$, is equal to $2$,
then $K_2$ is a Hopf link
and hence $L$ and $L'$ are related
to $\bigcirc\sqcup K_1$ by Hopf link addition (see
Section~\ref{shopflinkaddition}).
Now it follows from Theorem~\ref{thopflinkaddition}
(Section~\ref{shopflinkaddition})
that $\HC^{i,j}(L)$ and $\HC^{i,j}(L')$
are both isomorphic to
$\HC^{i+2,j+5}(\bigcirc\sqcup K_1)\oplus
 \HC^{i,j+1}(\bigcirc\sqcup K_1)$.
\end{remark}

\begin{remark}
As yet, it is not known whether there are mutant knots ($1$--component
links) with different Khovanov homology.
An argument of D.~Bar--Natan \cite{ba:mutation},
which would show invariance of Khovanov homology under
knot mutation, was remarked to be incomplete
by the author.
\end{remark}

\section{Computer Calculations with KhoHo}
\noindent
Tables \ref{t1} and \ref{t2} show the Khovanov homology of $L$ and
$L'$ for the case $n_1=n_2=3$. The tables where generated using
A.~Shumakovitch's program \texttt{KhoHo} \cite{sh:khoho}.
The entry in the $i$--th column and the
$j$--th row looks like $\hbox{$a$[$b$]}\over\lower 3pt\hbox{$c$}$, where $a$
is the rank of the homology group $\HC^{i,j}$, $b$ the number of factors
$\Z/2\Z$ in the decomposition of $\HC^{i,j}$ into
 $p$--subgroups,
and $c$ the rank of the chain group $\HC^{i,j}$.
The numbers above the horizontal
arrows denote the ranks of the chain differentials.

In the examples,
only $2$--torsion occurs. The reader may verify that not only
the ranks but also the torsion parts of the $\HC^{i,j}$
are different for $L$ and $L'$.
The ranks of $\C^{i,j}(L)$ and $\C^{i,j}(L')$
agree because there is
a natural one--to--one correspondence between the Kauffman
states of $L$ and $L'$.

\begin{table}[b]
\begin{center} 

\def\Cal#1{{\fam2#1}}

\let\TSp\thinspace
\let\NSp\negthinspace
\def\DSp{\thinspace\thinspace}
\def\QSp{\thinspace\thinspace\thinspace\thinspace}

\offinterlineskip
\def\gobble#1{}
\def\hline{\noalign{\hrule}}
\def\dblhline{height 0.16667em\gobble&&
&\vrule&
&\vrule&
&\vrule&
&\vrule&
&\vrule&
&\vrule&
\cr
\noalign{\hrule}}

\def\group#1#2#3#4{${\raise 1pt%
\hbox{#4#1\ifnum#2=0\else[#2]\fi}\over\lower 3pt\hbox{#4#3}}$}
\def\putdown#1{\smash{\vtop{\null\hbox{#1}}}}

\null\vfill
$$\vbox{\ialign{%
\vrule\TSp\vrule #\strut&\DSp\hfil #\DSp\vrule\TSp\vrule&\kern.75em
\TSp\hfil #\hfil\TSp&\hfil #\hfil&
\TSp\hfil #\hfil\TSp&\hfil #\hfil&
\TSp\hfil #\hfil\TSp&\hfil #\hfil&
\TSp\hfil #\hfil\TSp&\hfil #\hfil&
\TSp\hfil #\hfil\TSp&\hfil #\hfil&
\TSp\hfil #\hfil\TSp&\hfil #\hfil&
\TSp\hfil #\hfil\TSp\kern.75em\vrule\TSp\vrule\cr
\hline\dblhline
height 11pt depth 4pt&&
-6&\vrule&
-5&\vrule&
-4&\vrule&
-3&\vrule&
-2&\vrule&
-1&\vrule&
0\cr\hline\dblhline
height 0.2em depth 0.2em\gobble&&
&\vrule depth0pt&
&\vrule depth0pt&
&\vrule depth0pt&
&\vrule depth0pt&
&\vrule depth0pt&
&\vrule depth0pt&
\cr
height 0pt\gobble&&
&&
&&
&&
&&
&&
&&\cr
&-2&
&\smash{\vrule height 15pt}&
&\smash{\vrule height 15pt}&
&\smash{\vrule height 15pt}&
&\smash{\vrule height 15pt}&
&\smash{\vrule height 15pt}&
&\smash{\vrule height 15pt}&
\group{1}{0}{1}{\bf}\cr
height 0.3em\gobble&&
&\smash{\vrule height 10pt}&
&\smash{\vrule height 10pt}&
&\smash{\vrule height 10pt}&
&\smash{\vrule height 10pt}&
&\smash{\vrule height 10pt}&
&\smash{\vrule height 10pt}&
\cr
\hline
height 0.2em depth 0.2em\gobble&&
&\vrule depth0pt&
&\vrule depth0pt&
&\vrule depth0pt&
&\vrule depth0pt&
&\vrule depth0pt&
&\vrule depth0pt&
\cr
height 0pt\gobble&&
&&
&&
&&
&\QSp\putdown{2}\QSp&
&\QSp\putdown{4}\QSp&
&\QSp\putdown{2}\QSp&\cr
&-4&
&\smash{\vrule height 15pt}&
&\smash{\vrule height 15pt}&
&\smash{\vrule height 15pt}&
\group{0}{0}{2}{}&\rightarrowfill&
\group{0}{0}{6}{}&\rightarrowfill&
\group{0}{0}{6}{}&\rightarrowfill&
\group{2}{0}{4}{\bf}\cr
height 0.3em\gobble&&
&\smash{\vrule height 10pt}&
&\smash{\vrule height 10pt}&
&\smash{\vrule height 10pt}&
&\smash{\vrule height 10pt}&
&\smash{\vrule height 10pt}&
&\smash{\vrule height 10pt}&
\cr
\hline
height 0.2em depth 0.2em\gobble&&
&\vrule depth0pt&
&\vrule depth0pt&
&\vrule depth0pt&
&\vrule depth0pt&
&\vrule depth0pt&
&\vrule depth0pt&
\cr
height 0pt\gobble&&
&\QSp\putdown{1}\QSp&
&\QSp\putdown{5}\QSp&
&\DSp\putdown{10}\DSp&
&\DSp\putdown{18}\DSp&
&\DSp\putdown{13}\DSp&
&\QSp\putdown{5}\QSp&\cr
&-6&
\group{0}{0}{1}{}&\rightarrowfill&
\group{0}{0}{6}{}&\rightarrowfill&
\group{0}{0}{15}{}&\rightarrowfill&
\group{0}{0}{28}{}&\rightarrowfill&
\group{2}{0}{33}{\bf}&\rightarrowfill&
\group{0}{0}{18}{}&\rightarrowfill&
\group{1}{0}{6}{\bf}\cr
height 0.3em\gobble&&
&\smash{\vrule height 10pt}&
&\smash{\vrule height 10pt}&
&\smash{\vrule height 10pt}&
&\smash{\vrule height 10pt}&
&\smash{\vrule height 10pt}&
&\smash{\vrule height 10pt}&
\cr
\hline
height 0.2em depth 0.2em\gobble&&
&\vrule depth0pt&
&\vrule depth0pt&
&\vrule depth0pt&
&\vrule depth0pt&
&\vrule depth0pt&
&\vrule depth0pt&
\cr
height 0pt\gobble&&
&\QSp\putdown{6}\QSp&
&\DSp\putdown{24}\DSp&
&\DSp\putdown{36}\DSp&
&\DSp\putdown{38}\DSp&
&\DSp\putdown{14}\DSp&
&\QSp\putdown{4}\QSp&\cr
&-8&
\group{0}{0}{6}{}&\rightarrowfill&
\group{0}{0}{30}{}&\rightarrowfill&
\group{0}{0}{60}{}&\rightarrowfill&
\group{0}{0}{74}{}&\rightarrowfill&
\group{2}{2}{54}{\bf}&\rightarrowfill&
\group{0}{0}{18}{}&\rightarrowfill&
\group{0}{0}{4}{}\cr
height 0.3em\gobble&&
&\smash{\vrule height 10pt}&
&\smash{\vrule height 10pt}&
&\smash{\vrule height 10pt}&
&\smash{\vrule height 10pt}&
&\smash{\vrule height 10pt}&
&\smash{\vrule height 10pt}&
\cr
\hline
height 0.2em depth 0.2em\gobble&&
&\vrule depth0pt&
&\vrule depth0pt&
&\vrule depth0pt&
&\vrule depth0pt&
&\vrule depth0pt&
&\vrule depth0pt&
\cr
height 0pt\gobble&&
&\DSp\putdown{15}\DSp&
&\DSp\putdown{45}\DSp&
&\DSp\putdown{44}\DSp&
&\DSp\putdown{28}\DSp&
&\QSp\putdown{5}\QSp&
&\QSp\putdown{1}\QSp&\cr
&-10&
\group{0}{0}{15}{}&\rightarrowfill&
\group{0}{0}{60}{}&\rightarrowfill&
\group{1}{0}{90}{\bf}&\rightarrowfill&
\group{2}{0}{74}{\bf}&\rightarrowfill&
\group{0}{2}{33}{\bf}&\rightarrowfill&
\group{0}{0}{6}{}&\rightarrowfill&
\group{0}{0}{1}{}\cr
height 0.3em\gobble&&
&\smash{\vrule height 10pt}&
&\smash{\vrule height 10pt}&
&\smash{\vrule height 10pt}&
&\smash{\vrule height 10pt}&
&\smash{\vrule height 10pt}&
&\smash{\vrule height 10pt}&
\cr
\hline
height 0.2em depth 0.2em\gobble&&
&\vrule depth0pt&
&\vrule depth0pt&
&\vrule depth0pt&
&\vrule depth0pt&
&\vrule depth0pt&
&\vrule depth0pt&
\cr
height 0pt\gobble&&
&\DSp\putdown{20}\DSp&
&\DSp\putdown{39}\DSp&
&\DSp\putdown{20}\DSp&
&\QSp\putdown{6}\QSp&
&&
&&\cr
&-12&
\group{0}{0}{20}{}&\rightarrowfill&
\group{1}{0}{60}{\bf}&\rightarrowfill&
\group{1}{1}{60}{\bf}&\rightarrowfill&
\group{2}{0}{28}{\bf}&\rightarrowfill&
\group{0}{0}{6}{}&\smash{\vrule height 15pt}&
&\smash{\vrule height 15pt}&
\cr
height 0.3em\gobble&&
&\smash{\vrule height 10pt}&
&\smash{\vrule height 10pt}&
&\smash{\vrule height 10pt}&
&\smash{\vrule height 10pt}&
&\smash{\vrule height 10pt}&
&\smash{\vrule height 10pt}&
\cr
\hline
height 0.2em depth 0.2em\gobble&&
&\vrule depth0pt&
&\vrule depth0pt&
&\vrule depth0pt&
&\vrule depth0pt&
&\vrule depth0pt&
&\vrule depth0pt&
\cr
height 0pt\gobble&&
&\DSp\putdown{15}\DSp&
&\DSp\putdown{13}\DSp&
&\QSp\putdown{2}\QSp&
&&
&&
&&\cr
&-14&
\group{0}{0}{15}{}&\rightarrowfill&
\group{2}{1}{30}{\bf}&\rightarrowfill&
\group{0}{1}{15}{\bf}&\rightarrowfill&
\group{0}{0}{2}{}&\smash{\vrule height 15pt}&
&\smash{\vrule height 15pt}&
&\smash{\vrule height 15pt}&
\cr
height 0.3em\gobble&&
&\smash{\vrule height 10pt}&
&\smash{\vrule height 10pt}&
&\smash{\vrule height 10pt}&
&\smash{\vrule height 10pt}&
&\smash{\vrule height 10pt}&
&\smash{\vrule height 10pt}&
\cr
\hline
height 0.2em depth 0.2em\gobble&&
&\vrule depth0pt&
&\vrule depth0pt&
&\vrule depth0pt&
&\vrule depth0pt&
&\vrule depth0pt&
&\vrule depth0pt&
\cr
height 0pt\gobble&&
&\QSp\putdown{5}\QSp&
&&
&&
&&
&&
&&\cr
&-16&
\group{1}{0}{6}{\bf}&\rightarrowfill&
\group{1}{1}{6}{\bf}&\smash{\vrule height 15pt}&
&\smash{\vrule height 15pt}&
&\smash{\vrule height 15pt}&
&\smash{\vrule height 15pt}&
&\smash{\vrule height 15pt}&
\cr
height 0.3em\gobble&&
&\smash{\vrule height 10pt}&
&\smash{\vrule height 10pt}&
&\smash{\vrule height 10pt}&
&\smash{\vrule height 10pt}&
&\smash{\vrule height 10pt}&
&\smash{\vrule height 10pt}&
\cr
\hline
height 0.2em depth 0.2em\gobble&&
&\vrule depth0pt&
&\vrule depth0pt&
&\vrule depth0pt&
&\vrule depth0pt&
&\vrule depth0pt&
&\vrule depth0pt&
\cr
height 0pt\gobble&&
&&
&&
&&
&&
&&
&&\cr
&-18&
\group{1}{0}{1}{\bf}&\smash{\vrule height 15pt}&
&\smash{\vrule height 15pt}&
&\smash{\vrule height 15pt}&
&\smash{\vrule height 15pt}&
&\smash{\vrule height 15pt}&
&\smash{\vrule height 15pt}&
\cr
height 0.3em\gobble&&
&\smash{\vrule height 10pt}&
&\smash{\vrule height 10pt}&
&\smash{\vrule height 10pt}&
&\smash{\vrule height 10pt}&
&\smash{\vrule height 10pt}&
&\smash{\vrule height 10pt}&
\cr
\hline
\dblhline
}}$$


\caption{\vspace {4pt} Ranks of $\HC^{i,j}$ and $\C^{i,j}$ and ranks
of the differentials for the disjoint union of the unknot and the
granny--knot}\label{t1}\end{center}
\end{table}

\begin{table}
\begin{center}

\def\Cal#1{{\fam2#1}}

\let\TSp\thinspace
\let\NSp\negthinspace
\def\DSp{\thinspace\thinspace}
\def\QSp{\thinspace\thinspace\thinspace\thinspace}

\offinterlineskip

\def\gobble#1{}
\def\hline{\noalign{\hrule}}
\def\dblhline{height 0.16667em\gobble&&
&\vrule&
&\vrule&
&\vrule&
&\vrule&
&\vrule&
&\vrule&
\cr
\noalign{\hrule}}

\def\group#1#2#3#4{${\raise 1pt%
\hbox{#4#1\ifnum#2=0\else[#2]\fi}\over\lower 3pt\hbox{#4#3}}$}
\def\putdown#1{\smash{\vtop{\null\hbox{#1}}}}

\null\vfill
$$\vbox{\ialign{%
\vrule\TSp\vrule #\strut&\DSp\hfil #\DSp\vrule\TSp\vrule&\kern.75em
\TSp\hfil #\hfil\TSp&\hfil #\hfil&
\TSp\hfil #\hfil\TSp&\hfil #\hfil&
\TSp\hfil #\hfil\TSp&\hfil #\hfil&
\TSp\hfil #\hfil\TSp&\hfil #\hfil&
\TSp\hfil #\hfil\TSp&\hfil #\hfil&
\TSp\hfil #\hfil\TSp&\hfil #\hfil&
\TSp\hfil #\hfil\TSp\kern.75em\vrule\TSp\vrule\cr
\hline\dblhline
height 11pt depth 4pt&&
-6&\vrule&
-5&\vrule&
-4&\vrule&
-3&\vrule&
-2&\vrule&
-1&\vrule&
0\cr\hline\dblhline
height 0.2em depth 0.2em\gobble&&
&\vrule depth0pt&
&\vrule depth0pt&
&\vrule depth0pt&
&\vrule depth0pt&
&\vrule depth0pt&
&\vrule depth0pt&
\cr
height 0pt\gobble&&
&&
&&
&&
&&
&&
&&\cr
&-2&
&\smash{\vrule height 15pt}&
&\smash{\vrule height 15pt}&
&\smash{\vrule height 15pt}&
&\smash{\vrule height 15pt}&
&\smash{\vrule height 15pt}&
&\smash{\vrule height 15pt}&
\group{1}{0}{1}{\bf}\cr
height 0.3em\gobble&&
&\smash{\vrule height 10pt}&
&\smash{\vrule height 10pt}&
&\smash{\vrule height 10pt}&
&\smash{\vrule height 10pt}&
&\smash{\vrule height 10pt}&
&\smash{\vrule height 10pt}&
\cr
\hline
height 0.2em depth 0.2em\gobble&&
&\vrule depth0pt&
&\vrule depth0pt&
&\vrule depth0pt&
&\vrule depth0pt&
&\vrule depth0pt&
&\vrule depth0pt&
\cr
height 0pt\gobble&&
&&
&&
&&
&\QSp\putdown{2}\QSp&
&\QSp\putdown{4}\QSp&
&\QSp\putdown{2}\QSp&\cr
&-4&
&\smash{\vrule height 15pt}&
&\smash{\vrule height 15pt}&
&\smash{\vrule height 15pt}&
\group{0}{0}{2}{}&\rightarrowfill&
\group{0}{0}{6}{}&\rightarrowfill&
\group{0}{0}{6}{}&\rightarrowfill&
\group{2}{0}{4}{\bf}\cr
height 0.3em\gobble&&
&\smash{\vrule height 10pt}&
&\smash{\vrule height 10pt}&
&\smash{\vrule height 10pt}&
&\smash{\vrule height 10pt}&
&\smash{\vrule height 10pt}&
&\smash{\vrule height 10pt}&
\cr
\hline
height 0.2em depth 0.2em\gobble&&
&\vrule depth0pt&
&\vrule depth0pt&
&\vrule depth0pt&
&\vrule depth0pt&
&\vrule depth0pt&
&\vrule depth0pt&
\cr
height 0pt\gobble&&
&\QSp\putdown{1}\QSp&
&\QSp\putdown{5}\QSp&
&\DSp\putdown{10}\DSp&
&\DSp\putdown{18}\DSp&
&\DSp\putdown{13}\DSp&
&\QSp\putdown{5}\QSp&\cr
&-6&
\group{0}{0}{1}{}&\rightarrowfill&
\group{0}{0}{6}{}&\rightarrowfill&
\group{0}{0}{15}{}&\rightarrowfill&
\group{0}{0}{28}{}&\rightarrowfill&
\group{2}{0}{33}{\bf}&\rightarrowfill&
\group{0}{0}{18}{}&\rightarrowfill&
\group{1}{0}{6}{\bf}\cr
height 0.3em\gobble&&
&\smash{\vrule height 10pt}&
&\smash{\vrule height 10pt}&
&\smash{\vrule height 10pt}&
&\smash{\vrule height 10pt}&
&\smash{\vrule height 10pt}&
&\smash{\vrule height 10pt}&
\cr
\hline
height 0.2em depth 0.2em\gobble&&
&\vrule depth0pt&
&\vrule depth0pt&
&\vrule depth0pt&
&\vrule depth0pt&
&\vrule depth0pt&
&\vrule depth0pt&
\cr
height 0pt\gobble&&
&\QSp\putdown{6}\QSp&
&\DSp\putdown{24}\DSp&
&\DSp\putdown{36}\DSp&
&\DSp\putdown{38}\DSp&
&\DSp\putdown{14}\DSp&
&\QSp\putdown{4}\QSp&\cr
&-8&
\group{0}{0}{6}{}&\rightarrowfill&
\group{0}{0}{30}{}&\rightarrowfill&
\group{0}{0}{60}{}&\rightarrowfill&
\group{0}{0}{74}{}&\rightarrowfill&
\group{2}{2}{54}{\bf}&\rightarrowfill&
\group{0}{0}{18}{}&\rightarrowfill&
\group{0}{0}{4}{}\cr
height 0.3em\gobble&&
&\smash{\vrule height 10pt}&
&\smash{\vrule height 10pt}&
&\smash{\vrule height 10pt}&
&\smash{\vrule height 10pt}&
&\smash{\vrule height 10pt}&
&\smash{\vrule height 10pt}&
\cr
\hline
height 0.2em depth 0.2em\gobble&&
&\vrule depth0pt&
&\vrule depth0pt&
&\vrule depth0pt&
&\vrule depth0pt&
&\vrule depth0pt&
&\vrule depth0pt&
\cr
height 0pt\gobble&&
&\DSp\putdown{15}\DSp&
&\DSp\putdown{45}\DSp&
&\DSp\putdown{44}\DSp&
&\DSp\putdown{28}\DSp&
&\QSp\putdown{5}\QSp&
&\QSp\putdown{1}\QSp&\cr
&-10&
\group{0}{0}{15}{}&\rightarrowfill&
\group{0}{0}{60}{}&\rightarrowfill&
\group{1}{0}{90}{\bf}&\rightarrowfill&
\group{2}{0}{74}{\bf}&\rightarrowfill&
\group{0}{2}{33}{\bf}&\rightarrowfill&
\group{0}{0}{6}{}&\rightarrowfill&
\group{0}{0}{1}{}\cr
height 0.3em\gobble&&
&\smash{\vrule height 10pt}&
&\smash{\vrule height 10pt}&
&\smash{\vrule height 10pt}&
&\smash{\vrule height 10pt}&
&\smash{\vrule height 10pt}&
&\smash{\vrule height 10pt}&
\cr
\hline
height 0.2em depth 0.2em\gobble&&
&\vrule depth0pt&
&\vrule depth0pt&
&\vrule depth0pt&
&\vrule depth0pt&
&\vrule depth0pt&
&\vrule depth0pt&
\cr
height 0pt\gobble&&
&\DSp\putdown{20}\DSp&
&\DSp\putdown{40}\DSp&
&\DSp\putdown{20}\DSp&
&\QSp\putdown{6}\QSp&
&&
&&\cr
&-12&
\group{0}{0}{20}{}&\rightarrowfill&
\group{0}{0}{60}{}&\rightarrowfill&
\group{0}{2}{60}{\bf}&\rightarrowfill&
\group{2}{0}{28}{\bf}&\rightarrowfill&
\group{0}{0}{6}{}&\smash{\vrule height 15pt}&
&\smash{\vrule height 15pt}&
\cr
height 0.3em\gobble&&
&\smash{\vrule height 10pt}&
&\smash{\vrule height 10pt}&
&\smash{\vrule height 10pt}&
&\smash{\vrule height 10pt}&
&\smash{\vrule height 10pt}&
&\smash{\vrule height 10pt}&
\cr
\hline
height 0.2em depth 0.2em\gobble&&
&\vrule depth0pt&
&\vrule depth0pt&
&\vrule depth0pt&
&\vrule depth0pt&
&\vrule depth0pt&
&\vrule depth0pt&
\cr
height 0pt\gobble&&
&\DSp\putdown{15}\DSp&
&\DSp\putdown{13}\DSp&
&\QSp\putdown{2}\QSp&
&&
&&
&&\cr
&-14&
\group{0}{0}{15}{}&\rightarrowfill&
\group{2}{1}{30}{\bf}&\rightarrowfill&
\group{0}{1}{15}{\bf}&\rightarrowfill&
\group{0}{0}{2}{}&\smash{\vrule height 15pt}&
&\smash{\vrule height 15pt}&
&\smash{\vrule height 15pt}&
\cr
height 0.3em\gobble&&
&\smash{\vrule height 10pt}&
&\smash{\vrule height 10pt}&
&\smash{\vrule height 10pt}&
&\smash{\vrule height 10pt}&
&\smash{\vrule height 10pt}&
&\smash{\vrule height 10pt}&
\cr
\hline
height 0.2em depth 0.2em\gobble&&
&\vrule depth0pt&
&\vrule depth0pt&
&\vrule depth0pt&
&\vrule depth0pt&
&\vrule depth0pt&
&\vrule depth0pt&
\cr
height 0pt\gobble&&
&\QSp\putdown{6}\QSp&
&&
&&
&&
&&
&&\cr
&-16&
\group{0}{0}{6}{}&\rightarrowfill&
\group{0}{2}{6}{\bf}&\smash{\vrule height 15pt}&
&\smash{\vrule height 15pt}&
&\smash{\vrule height 15pt}&
&\smash{\vrule height 15pt}&
&\smash{\vrule height 15pt}&
\cr
height 0.3em\gobble&&
&\smash{\vrule height 10pt}&
&\smash{\vrule height 10pt}&
&\smash{\vrule height 10pt}&
&\smash{\vrule height 10pt}&
&\smash{\vrule height 10pt}&
&\smash{\vrule height 10pt}&
\cr
\hline
height 0.2em depth 0.2em\gobble&&
&\vrule depth0pt&
&\vrule depth0pt&
&\vrule depth0pt&
&\vrule depth0pt&
&\vrule depth0pt&
&\vrule depth0pt&
\cr
height 0pt\gobble&&
&&
&&
&&
&&
&&
&&\cr
&-18&
\group{1}{0}{1}{\bf}&\smash{\vrule height 15pt}&
&\smash{\vrule height 15pt}&
&\smash{\vrule height 15pt}&
&\smash{\vrule height 15pt}&
&\smash{\vrule height 15pt}&
&\smash{\vrule height 15pt}&
\cr
height 0.3em\gobble&&
&\smash{\vrule height 10pt}&
&\smash{\vrule height 10pt}&
&\smash{\vrule height 10pt}&
&\smash{\vrule height 10pt}&
&\smash{\vrule height 10pt}&
&\smash{\vrule height 10pt}&
\cr
\hline
\dblhline
}}$$


\caption{\vspace{4pt} Ranks of $\HC^{i,j}$ and $\C^{i,j}$ and ranks
of the differentials for the disjoint union of two trefoil
 knots}\label{t2}\end{center}
\end{table}

\setcounter{footnotebuffer}{\value{footnote}}
\chapter{The spanning tree model}\label{cspanningtreemodel}
\setcounter{footnote}{\value{footnotebuffer}}
In \cite{th}, M.~Thistlethwaite described a relation
between the Kauffman bracket of a knot diagram $D$
and the Tutte polynomial of the Tait graph of $D$.
He showed that the Kauffman bracket admits an
expansion as a sum over terms corresponding to
spanning trees of the Tait graph.

In \cite{we:trees}, the author constructed
an analogue of this expansion for Khovanov homology.
Independently,
A.~Champanerkar and I.~Kofman \cite{ck}
proposed a similar construction,
based on a technically different argument.

In this chapter, we first
review the spanning tree expansion for
the Kauffman bracket.
Our approach is different from Thistlethwaite's,
making no explicit reference to the Tutte polynomial.
In Section~\ref{s3},
we show how our ideas lead to a
spanning tree model for the Khovanov bracket.
In the remaining sections, we give several applications,
among these a new proof of E.~S.~Lee's \cite{le1} theorem
on the support of the Khovanov homology of alternating knots,
and a short proof of a theorem on the behavior of
the Khovanov bracket under Hopf link addition.

\section{Spanning tree model for the Kauffman bracket}\label{streeKauffman}
\subsection[A simpler formula for the Kauffman bracket]%
{A simpler formula for the Kauffman bracket.}
Suppose $D$ is an unoriented link diagram
whose crossings are numbered.
Recall that the Kauffman bracket of $D$ satisfies
\begin{equation}\label{fKauffmanrewritea}
\la D\ra=\sum_{D'\in\K(D)}\la D|D'\ra\la D'\ra
\end{equation}
where $\la D|D'\ra=(-q)^{r(D,D')}$.
Formula~\eqref{fKauffmanrewritea}
can be deduced recursively from the rule
$\la\slashoverback\ra=\la\smoothing\ra-q\la\hsmoothing\ra$,
as follows:
first, we expand $\la D\ra$
as a sum of two terms by applying
$\la\slashoverback\ra=\la\smoothing\ra-q\la\hsmoothing\ra$
to crossing number $1$. Next, we expand each these
two terms by applying
$\la\slashoverback\ra=\la\smoothing\ra-q\la\hsmoothing\ra$
to crossing number $2$. Continuing like this,
we finally reach the Kauffman states and
hence recover \eqref{fKauffmanrewritea}.
The procedure is visualized in the binary
tree below.

\begin{figure}[H]
\centerline{\psfig{figure=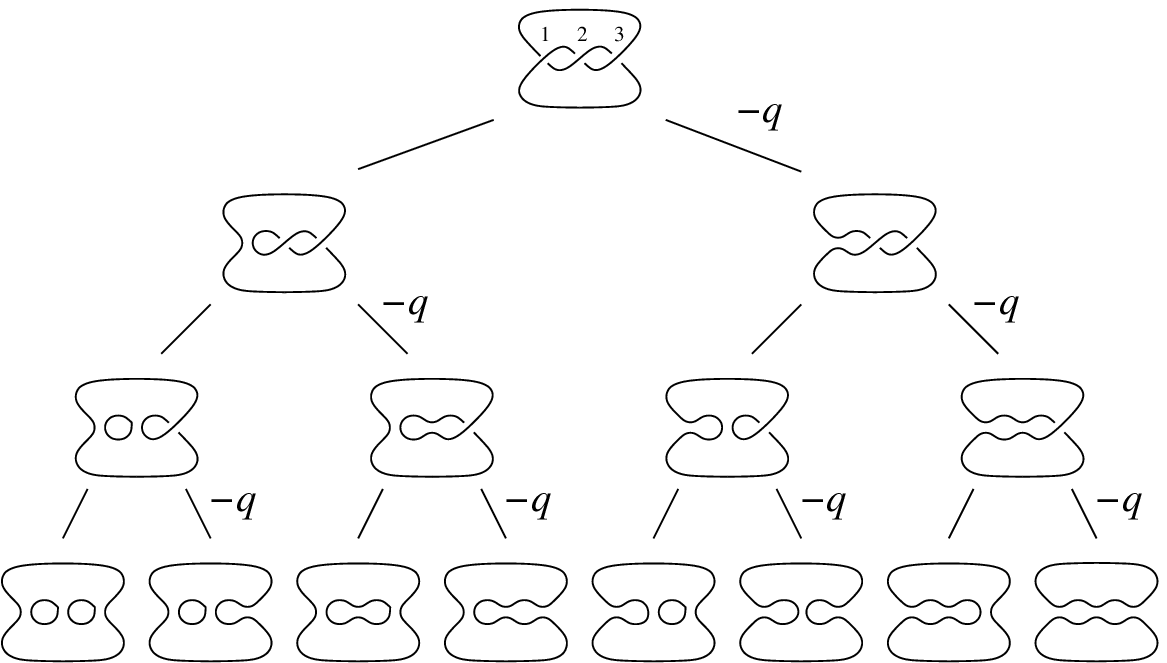,height=5.5cm}}

\caption{Binary tree used to deduce \eqref{fKauffmanrewritea}
from $\la\slashoverback\ra=\la\smoothing\ra-q\la\hsmoothing\ra$.}
\end{figure}

In case $D$ is connected, we can compute
the Kauffman bracket of $D$ more efficiently,
by modifying the above procedure as follows:
as before, we successively expand terms by
applying the relation
$\la\slashoverback\ra=\la\smoothing\ra-q\la\hsmoothing\ra$
to the crossings.
But before expanding a term,
we check the connectivity
of the two diagrams $\smoothing$ and $\hsmoothing$
appearing on the right--hand side of
$\la\slashoverback\ra=\la\smoothing\ra-q\la\hsmoothing\ra$.
If one of them is disconnected, we do not expand the
crossing $\slashoverback$ in the given term,
and instead continue with the next crossing.
The improved procedure is visualized in Figure~\ref{figimproved}.

\begin{figure}[H]
\centerline{\psfig{figure=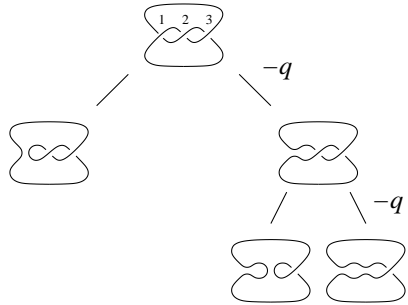,height=4cm}}

\caption{Binary tree used to deduce \eqref{fKauffmanefficient}.}
\label{figimproved}
\end{figure}
The improved procedure leads to the expansion
\begin{equation}\label{fKauffmanefficient}
\la D\ra=\sum_{D'\in\T(D)}\la D|D'\ra\la D'\ra\; .
\end{equation}
where $\T(D)$ denotes the set of all link diagrams
sitting at the leaves of the tree in Figure.
Note that $\T(D)$ depends on the
numbering of the crossings of $D$.

To turn \eqref{fKauffmanefficient} into an explicit
formula, we have to calculate the Kauffman brackets
$\la D'\ra$.
Let $D'$ be an element of $\T(D)$.
By construction,
$D'$ is connected and
every crossing of $D'$ is {\it splitting}
(i.e. connects two otherwise disconnected parts of $D'$).
Therefore, $D'$ represents the unknot
and it can be transformed into the trivial
diagram using Reidemeister move R1 only.
We call a diagram with this property {\it R1--trivial}.
After orienting $D'$ arbitrarily, we get
$J(D')=J(\bigcirc)=q+q^{-1}$ and hence
\begin{equation}\label{fKauffmantrivial}
\la D'\ra=(-1)^{c_-(D')}q^{2c_-(D')-c_+(D')}(q+q^{-1})\; .
\end{equation}
Inserting \eqref{fKauffmantrivial} into
\eqref{fKauffmanefficient}, we obtain
\begin{equation}\label{fKauffmanexplicit}
\la D\ra=\sum_{D'\in\T(D)}(-q)^{r(D,D')}
(-1)^{c_-(D')}q^{2c_-(D')-c_+(D')}(q+q^{-1})\; .
\end{equation}

Note that the set of Kauffman states
$\K(D)$ is the disjoint union of all
sets $\K(D')$, for all $D'\in\T(D)$.
We construct a map
$$
\begin{array}{ccc}
\K(D) &\longrightarrow &\T(D) \\
S       &\longmapsto &D_S
\end{array}
$$
by defining $D_S$ to be the unique element of $\T(D)$
satisfying $S\in\K(D_S)$.
Let $\K_1(D)\subset\K(D)$ denote the set of all
Kauffman states which consist of exactly one circle.
When restricted to
$\K_1(D)\subset\K(D)$,
the above map becomes a bijection.
Indeed, since $D'\in\T(D)$ is R1--trivial,
we have $\#\K_1(D')=1$ and hence $D'$ has
a unique preimage in $\K_1(D)$.

We may rewrite \eqref{fKauffmanefficient} as
\begin{equation}\label{fKauffmanefficient1}
\la D\ra=\sum_{S\in \K_1(D)}\la D|D_S\ra\la D_S\ra.
\end{equation}
Formula \eqref{fKauffmanexplicit} becomes
\begin{equation}\label{fKauffmanexplicit1}
\begin{split}
\la D\ra &=\sum_{S\in \K_1(D)}
(-q)^{r(D,D_S)}(-1)^{c_-(D_S)}q^{2c_-(D_S)-c_+(D_S)}(q+q^{-1})\\
&=\sum_{S\in \K_1(D)}
(-1)^{r(D,S)-w(D_S)}q^{r(D,S)-2w(D_S)}(q+q^{-1})
\end{split}
\end{equation}
where the second equality follows by
observing that
$r(D,D_S)=r(D,S)-r(D_S,S)=r(D,S)-c_+(D_S)$
and by writing $w(D_S)$
for $c_+(D_S)-c_-(D_S)$.\footnote{Note
that $w(D_S)$ was defined with opposite
sign in \cite{we:trees}.}

\subsection[The relation with spanning trees]
{The relation with spanning trees.}\label{sreltrees}
Assume that the regions of $D$ are colored black and
white in a checkerboard fashion, such that any two neighbored
regions have opposite colors, and such that the
unbounded region is colored white.
The {\it Tait graph} $\G_D$ is the planar graph whose vertices
are the black regions and whose edges correspond to the crossings
of $D$ (see Figure~\ref{figtaitgraph}).

\begin{figure}[H]
\centerline{\psfig{figure=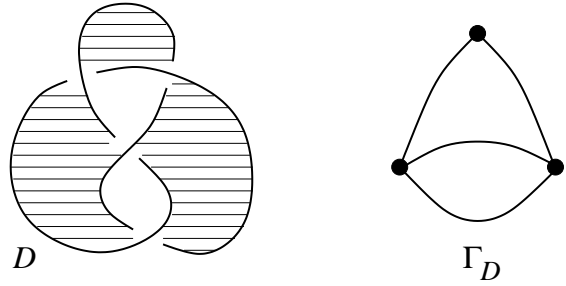,height=3.7cm}}

\caption{The Tait graph.}
\label{figtaitgraph}
\end{figure}
\noindent
Given a smoothing of a crossing of $D$, we call it a
{\it black} or a {\it white smoothing}
depending on whether it connects black or white regions of $D$.

Let $\T(\G_D)$ denote the set of all spanning trees of $\G_D$.
There is a bijection
$$
\begin{array}{ccc}
\T(\G_D) &\longrightarrow &\K_1(D) \\
T       &\longmapsto &S_T
\end{array}
$$
defined as follows:
to a tree $T$ we associate
the connected Kauffman state $S_T$ obtained
by choosing the black smoothing for
precisely those crossings which correspond to an edge
of $T$, and the white smoothing for all
other crossings. Using the above bijection,
we can can rewrite formula \eqref{fKauffmanefficient1} as
$$
\la D\ra=\sum_{T\in \T(\G_D)}\la D|D_T\ra\la D_T\ra
$$
where we have abbreviated $D_T$ for $D_{S_T}$.

The correspondence between spanning trees
and elements of $\K_1(D)$ leads to an easy proof of
the following lemma.
\begin{lemma}\label{lblack}
The number of black smoothings is the same in all $S\in\K_1(D)$.
\end{lemma}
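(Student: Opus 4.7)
The plan is to deduce the lemma directly from the bijection $\T(\G_D)\to\K_1(D)$, $T\mapsto S_T$, that was set up just before the statement. Under this bijection, the number of black smoothings in $S_T$ is by construction equal to the number of edges of the spanning tree $T$. Since every spanning tree of a finite connected graph has exactly $|V(\G_D)|-1$ edges, this count depends only on the graph $\G_D$ and not on the particular tree $T$.

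More concretely, first I would recall that the map $T\mapsto S_T$ is a bijection (so that every $S\in\K_1(D)$ is of the form $S_T$ for a unique spanning tree $T$ of $\G_D$). Then I would observe that the definition of $S_T$ selects the black smoothing precisely at those crossings of $D$ which correspond to edges of $T$, and the white smoothing at the remaining crossings. Consequently, the number of black smoothings of $S_T$ equals $|E(T)|$. Since any spanning tree of $\G_D$ has $|V(\G_D)|-1$ edges, this number is the same for every $T\in\T(\G_D)$, and hence the same for every $S\in\K_1(D)$.

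There is essentially no obstacle here: the content is combinatorial and follows immediately once the correspondence with spanning trees has been installed. The only small point to verify carefully is that the Tait graph $\G_D$ is indeed connected (so that spanning trees exist and all have the same size), which follows from the fact that $D$ is a connected link diagram and the checkerboard coloring places all black regions in a single connected planar piece once one records adjacencies through crossings.
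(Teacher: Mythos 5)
Your proof is correct and follows the same approach as the paper: both use the bijection $T\mapsto S_T$ between spanning trees of $\G_D$ and elements of $\K_1(D)$, note that black smoothings of $S_T$ correspond to edges of $T$, and then invoke the standard fact that every spanning tree of a connected graph has $|V(\G_D)|-1$ edges. Your additional remark on connectivity of $\G_D$ is a fine supplement, though the paper takes it for granted.
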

\begin{proof}
Since black smoothings in $S_T$
correspond to edges of $T$,
it suffices to show that all spanning
trees of $\G_D$ have the same number of edges.
But this is obvious, because the number of edges
in any spanning tree is just one less
than the number of vertices of $\G_D$.
\end{proof}
An alternative proof of Lemma~\ref{lblack} uses
Kauffman's Clock Theorem \cite{ka:formal}.
By the Clock Theorem,
any two elements
of $\K_1(D)$ are related by a finite sequence of
state transpositions (see Figure~\ref{figclock}).
The lemma follows because
state transpositions
do not change the number of black smoothings.

Of course, the lemma also implies that the
number of white smoothings is the
same in all $S\in\K_1(D)$.

\begin{figure}
\centerline{
\begin{tabular}{c@{\qquad}c@{\qquad}c}
\psfig{figure=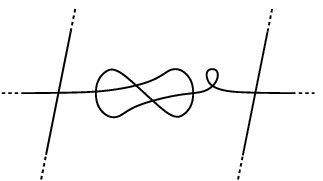,width=3.7cm}&
\psfig{figure=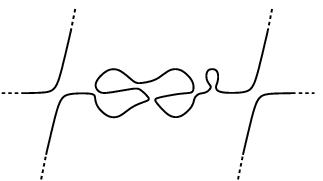,width=3.7cm}&
\psfig{figure=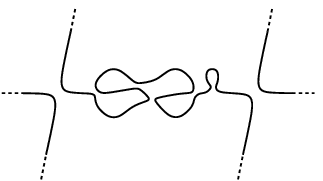,width=3.7cm}
\end{tabular}
}

\caption{A knot projection, and two smoothings related by
a state transposition.}\label{figclock}
\end{figure}

\section{Spanning tree model for the Khovanov bracket}\label{s3}
\noindent
In this section, we discuss how the construction of
Section~\ref{streeKauffman} transfers to the formal Khovanov bracket.
The main result is stated in the following theorem.

\begin{theorem}\label{t2.2}
Let $D$ be a connected link diagram. Then
the formal Khovanov bracket $\ls D\rs$ destabilizes
to a subcomplex $ST(D)$. On the level of objects
(i.e. if one ignores the differential),
$ST(D)$ is isomorphic to
\begin{equation}\label{fspanningtree}
ST(D)\cong\bigoplus_{S\in \K_1(D)}
U[r(D,S)-w(D_S)]\{r(D,S)-2w(D_S)\}
\end{equation}
where $U:=\ls\bigcirc\rs$ denotes the formal
Khovanov bracket of the trivial diagram consisting
of a single circle.
We call $ST(D)$ the
{\normalfont spanning tree subcomplex}
of $\ls D\rs$.
\end{theorem}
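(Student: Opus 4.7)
The plan is to lift the spanning tree expansion of the Kauffman bracket from Section~\ref{streeKauffman} to the level of complexes, using the mapping cone formula~\eqref{fsaddlecone} as the categorical analogue of $\la\slashoverback\ra=\la\smoothing\ra-q\la\hsmoothing\ra$. I induct on the number of crossings of $D$ and construct $ST(D)$ recursively along the way. Without loss of generality, I number the crossings so that a \emph{good} crossing (one whose two smoothings both produce connected diagrams) comes first whenever one exists. The base case is $D=\bigcirc$, where $\ls D\rs=U$ and the unique element of $\K_1(D)$ has $r(D,S)=0$, $w(D_S)=0$, matching \eqref{fspanningtree}.

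Case A is that the first crossing is good, with smoothings $D_0$ and $D_1$. The mapping cone formula gives $\ls D\rs\cong\G\bigl(\ls D_0\rs\to\ls D_1\rs\{1\}\bigr)$. By induction each $\ls D_i\rs$ splits as $ST(D_i)\oplus C_i$ with $C_i$ destabilizing to~$0$, and Lemma~\ref{lcommutecone} applied to the $2\times 2$ matrix form of the saddle map shows that $\ls D\rs$ destabilizes to the mapping cone $\G(f_{11})$ of the restricted corner $f_{11}\colon ST(D_0)\to ST(D_1)\{1\}$. I then set $ST(D):=\G(f_{11})$. To match \eqref{fspanningtree} on the level of objects, I use the bijection $\K_1(D)\leftrightarrow\K_1(D_0)\sqcup\K_1(D_1)$ given by restricting a state to the remaining crossings, and check that the identities $r(D,S)=r(D_i,\widetilde S)+i$, $D_S=(D_i)_{\widetilde S}$, $w(D_S)=w((D_i)_{\widetilde S})$ convert the inductive shifts on $ST(D_0)$ and on $ST(D_1)\{1\}[1]$ into the uniform shift $[r(D,S)-w(D_S)]\{r(D,S)-2w(D_S)\}$.

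Case B is that every crossing of $D$ has at least one disconnected smoothing. As argued in Section~\ref{streeKauffman}, $D$ is then R1-trivial and reduces to $\bigcirc$ by Reidemeister-I moves; by Lemma~\ref{lkhovanovbracket}(1) each such move destabilizes $\ls D\rs$, so iteratively $\ls D\rs$ destabilizes to $U[-w(D)]\{-2w(D)\}$. For such $D$ the set $\K_1(D)$ contains a unique state $S$ with $D_S=D$, $r(D,S)=0$, $w(D_S)=w(D)$, so setting $ST(D):=U[-w(D)]\{-2w(D)\}$ matches the single summand of~\eqref{fspanningtree}.

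The main obstacle is the bookkeeping in Case A: verifying that the corner $f_{11}$ furnishes the correct differential on $ST(D)$ and that the homological and quantum degree shifts inherited through the two branches combine uniformly into the formula~\eqref{fspanningtree}. Conceptually, the entire induction hinges on Lemma~\ref{lcommutecone}: it is what allows the destabilization (discarding of the contractible summands $C_0$ and $C_1$) to commute with forming the mapping cone, so that $ST(D)$ can be built cleanly from $ST(D_0)$ and $ST(D_1)$ rather than dragging the discarded pieces along throughout the induction.
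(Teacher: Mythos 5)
Your argument is correct and essentially reproduces the paper's own proof: both reduce to R1--trivial diagrams via Lemma~\ref{lkhovanovbracket}(1) as the base case, and both propagate the destabilization through the mapping cone formula~\eqref{fsaddlecone} using Lemma~\ref{lcommutecone} as the key commutation step, with the degree shifts tracked exactly as in~\eqref{fKauffmanexplicit1}. The only cosmetic difference is that you organize the recursion as induction on the number of crossings with a case split (good crossing present vs.\ not), whereas the paper inducts upward from the leaves of the fixed binary tree of Figure~\ref{figimproved}; these unwind to the same calculation.
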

Theorem~\ref{t2.2} can be viewed as
a ``categorification'' of formula~\eqref{fKauffmanexplicit1}.
Indeed, since
$U\cong\emptyset\{1\}\oplus\emptyset\{-1\}$
by Lemma~\ref{ldelooping},
the shifts of the gradings in \eqref{fspanningtree}
agree with the powers of $-1$ and $q$ in \eqref{fKauffmanexplicit1}.
Before proving the theorem, we
mention two corollaries.

\begin{corollary}\label{cspanningtree}
Let $D$ be a connected link diagram. Then
$\Co(D)$ destabilizes to the subcomplex
$\F_{\Kh}(ST(D))\subset\Co (D)$. As a bigraded
module, $\F_{\Kh}(ST(D))$ is isomorphic to
$$
\F_{\Kh}(ST(D))\cong
\bigoplus_{S\in \K_1(D)}
A_{\Kh}[r(D,S)-w(D_S)]\{r(D,S)-2w(D_S)\}\; .
$$
$\F_{\Kh}(ST(D))$ will be called the
{\normalfont spanning tree subcomplex}
of $\Co(D)$.
\end{corollary}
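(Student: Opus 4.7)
The plan is to derive this corollary as an immediate consequence of Theorem~\ref{t2.2}, by simply applying the Khovanov functor $\F_{\Kh}$ to the statement. The main observation is that $\F_{\Kh}:\Kobd\rightarrow\Kom(\Z\text{-}\MOD)$ is an additive functor that is compatible with both the homological shift $[\cdot]$ and the Jones grading shift $\{\cdot\}$, so it transports the spanning tree decomposition from $\Mat(\gCobdl^3)$ to the category of chain complexes of graded $\Z$--modules.

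First, I would apply $\F_{\Kh}$ to the embedding $ST(D)\hookrightarrow\ls D\rs$ of Theorem~\ref{t2.2} to obtain a subcomplex $\F_{\Kh}(ST(D))\subset\F_{\Kh}(\ls D\rs)=\Co(D)$. Next, I would verify that destabilization is preserved: by Theorem~\ref{t2.2}, $\ls D\rs$ is isomorphic (in $\Kobd$) to $ST(D)\oplus C$ where $C$ is a finite direct sum of mapping cones of isomorphisms. Since $\F_{\Kh}$ is additive, $\F_{\Kh}$ sends such a decomposition to an analogous decomposition $\Co(D)\cong\F_{\Kh}(ST(D))\oplus\F_{\Kh}(C)$, and because $\F_{\Kh}$ takes isomorphisms to isomorphisms and commutes with the mapping cone construction, $\F_{\Kh}(C)$ is again a direct sum of mapping cones of isomorphisms. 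Hence $\Co(D)$ destabilizes to $\F_{\Kh}(ST(D))$.

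Finally, I would compute the underlying bigraded module. Applying $\F_{\Kh}$ to the object-level isomorphism \eqref{fspanningtree} and using $\F_{\Kh}(U)=\F_{\Kh}(\ls\bigcirc\rs)=A_{\Kh}$ (which holds because $\ls\bigcirc\rs$ is concentrated in homological degree $0$ with chain object $\bigcirc$, and $\F_{\Kh}$ sends the circle to the Frobenius algebra $A_{\Kh}$), together with the facts that $\F_{\Kh}$ commutes with direct sums and with the shift functors $[\cdot]$ and $\{\cdot\}$, one obtains
$$
\F_{\Kh}(ST(D))\;\cong\;\bigoplus_{S\in\K_1(D)}A_{\Kh}[r(D,S)-w(D_S)]\{r(D,S)-2w(D_S)\}
$$
as bigraded $\Z$--modules, which is the claimed formula.

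There is no real obstacle here beyond bookkeeping: all the content is already in Theorem~\ref{t2.2}, and the corollary amounts to noting that the additive functor $\F_{\Kh}$ respects every piece of structure (direct sums, shifts, and mapping cones of isomorphisms) appearing in the statement. The only point requiring mild care is confirming that stabilization is genuinely functorial, i.e.\ that applying $\F_{\Kh}$ to a complex of the form ``mapping cone of an isomorphism'' yields again a contractible complex, which is immediate from Lemma~\ref{lcommutecone} applied inside $\Kom(\Z\text{-}\MOD)$ (or by observing that the explicit contracting homotopy $h$ exhibited after Lemma~\ref{lcommutecone} survives application of any additive functor).
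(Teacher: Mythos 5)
Your proof is correct and follows the same route the paper (implicitly) takes: the corollary is obtained from Theorem~\ref{t2.2} simply by applying the additive functor $\F_{\Kh}$, which preserves direct sums, the shift functors $[\cdot]$ and $\{\cdot\}$, and sends mapping cones of isomorphisms to contractible complexes. The identification $\F_{\Kh}(U)=A_{\Kh}$ and the resulting bigraded formula are exactly as you state.
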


Using that
$A_{\Kh}=\Z {\bf 1}\oplus\Z X$
and $\deg({\bf 1})=+1$ and $\deg(X)=-1$,
we get
the following estimate
for the ranks of the Khovanov homology groups:
\begin{corollary}\label{ctreeestimate}
Let $D$ be a connected link diagram. Then
$$
\dim_{\Q}(\HCo(D)\otimes\mathbb Q)\leq 2(\#\K_1(D))\; .
$$
Moreover, the rank of
$\HCo^{i,j}(D)$ is bounded from above by the number of
$S\in\K_1(D)$ with $r(D,S)-w(D_S)=i$ and $r(D,S)=2i-j\pm 1$.
\end{corollary}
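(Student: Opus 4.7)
The plan is to derive both inequalities directly from Corollary~\ref{cspanningtree}, using only the standard fact that destabilization preserves the homotopy type (hence the homology) of a complex, together with the rank inequality $\dim_{\Q}H(K)\otimes\Q\leq\dim_{\Q}K\otimes\Q$ for any bounded complex of free $\Z$--modules.

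First, I would invoke Corollary~\ref{cspanningtree} to conclude that $\Co(D)$ is isomorphic, in $\Kom(\Z\text{-}\MOD)$, to the direct sum of $\F_{\Kh}(ST(D))$ and a contractible complex. Passing to homology, this yields $\HCo^{i,j}(D)\cong H^{i,j}(\F_{\Kh}(ST(D)))$, and the rank of the latter is at most the rank of the bigraded chain group $\F_{\Kh}(ST(D))^{i,j}$ tensored with $\Q$. Summing over $(i,j)$ and noting that $\dim_{\Q}(A_{\Kh}\otimes\Q)=2$ immediately gives the first inequality $\dim_\Q\HCo(D)\otimes\Q\leq 2(\#\K_1(D))$.

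For the bigraded statement, I would unwind the degree shifts in the formula
$$\F_{\Kh}(ST(D))\cong\bigoplus_{S\in\K_1(D)}A_{\Kh}[r(D,S)-w(D_S)]\{r(D,S)-2w(D_S)\}.$$
Since $A_{\Kh}=\Z\mathbf{1}\oplus\Z X$ with $\deg(\mathbf{1})=+1$ and $\deg(X)=-1$, each summand indexed by $S$ contributes exactly one generator in each of the bidegrees $(r(D,S)-w(D_S),\,r(D,S)-2w(D_S)+1)$ and $(r(D,S)-w(D_S),\,r(D,S)-2w(D_S)-1)$. Hence the rank of $\F_{\Kh}(ST(D))^{i,j}$ equals the number of $S\in\K_1(D)$ with $r(D,S)-w(D_S)=i$ and $r(D,S)-2w(D_S)=j\pm 1$. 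Eliminating $w(D_S)=r(D,S)-i$ from the second equation rewrites this condition as $r(D,S)=2i-j\pm 1$, which is exactly the claimed form.

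There is no real obstacle here: the content of the corollary is packaged entirely inside Corollary~\ref{cspanningtree}, and all that remains is the bookkeeping of degree shifts. The only minor point to watch is the sign convention for $w(D_S)$ noted in the footnote after formula~\eqref{fKauffmanexplicit1}, which must be used consistently when translating the expression $r(D,S)-2w(D_S)\pm 1$ into the condition $r(D,S)=2i-j\pm 1$.
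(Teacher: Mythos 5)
Your proof is correct and matches the paper's approach exactly: the paper presents the corollary as an immediate consequence of Corollary~\ref{cspanningtree} together with the observation that $A_{\Kh}=\Z\mathbf{1}\oplus\Z X$ with $\deg(\mathbf{1})=+1$, $\deg(X)=-1$, and you simply spell out the degree bookkeeping that the paper leaves implicit.
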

Corallary~\ref{ctreeestimate}
shows that the ranks of the homology
groups $\HCo^{i,j}(D)$
tend to be
much smaller
than the ranks of the chain groups $\Co^{i,j}(D)$.
This is consistent with
Bar--Natan's experimental observation \cite{ba:first}.
\vsp\vsp

\begin{proofth}{t2.2}
To prove the theorem, we reformulate
the arguments which led us to formula
\eqref{fKauffmanexplicit1} in Section~\ref{streeKauffman}
in the setting of the formal Khovanov bracket.

First, we
consider a diagram $D'\in\T(D)$ sitting
at a leaf of the binary tree of Figure~\ref{figimproved}.
Since $D'$ is R1--trivial,
part 1 of Lemma~\ref{lkhovanovbracket}
(Subsection~\ref{sdefinitionKhovanovbracket})
implies that
$\ls D'\rs$ destabilizes to a subcomplex
isomorphic to
$
U[c_-(D')]\{2c_-(D')-c_+(D')\}
$.
Comparing
this with \eqref{fKauffmantrivial},
we see that the theorem is true for the diagrams
sitting at the leaves of the tree.

Now we proceed inductively, going
up the tree.
Let $D_1$ be a diagram
sitting at an internal node of the tree,
and let $D_2$ and $D_3$ be the two diagrams
sitting right below that node.
By induction,
the complexes $\ls D_2\rs$ and $\ls D_3\rs$
destabilize to subcomplexes
$ST(D_2)$ and $ST(D_3)$.
Moreover,
$\ls D_1\rs$ is isomorphic to the mapping
cone of a chain transformation between
$\ls D_2\rs$ and $\ls D_3\rs\{1\}$
(see \eqref{fsaddlecone}).
By Lemma~\ref{lcommutecone}
(Subsection~\ref{scomplexes}),
forming the mapping cone ``commutes''
with destabilization.
Therefore, $\ls D_1\rs$ destabilizes to
a subcomplex $ST(D_1)$ which is isomorphic
to the mapping cone of a chain transformation
between $ST(D_2)$ and $ST(D_3)\{1\}$.
In particular, on the level of objects
we have
$ST(D_1)\cong ST(D_2)\oplus ST(D_3)\{1\}[1]$.
Using this as a substitute for the relation
$\la D_1\ra=\la D_2\ra-q\la D_3\ra$,
and arguing as in Section~\ref{streeKauffman}, we get the theorem.
\end{proofth}

\begin{remark}
Let $D$ be a link diagram.
After selecting a point
$P$ on an edge of $D$,
we can endow $\Co(D)$ with the
structure of an
$A_{\Kh}$--module, as follows:
multiplication by ${\bf 1}\in A_{\Kh}$
is the identity map;
multiplication by $X\in A_{\Kh}$
is induced by ``multiplying'' with a dot
at point $P$.
The {\it reduced Khovanov complexes}
are the complexes
$\Co(D)\otimes_{A_{\Kh}} \Z X$ and
$\Co(D)\otimes_{A_{\Kh}}(A_{\Kh}/\Z X)$,
where $\Z X\subset A_{\Kh}$ denotes
the $A_{\Kh}$--submodule of $A_{\Kh}$
generated by $X\in A_{\Kh}$.
If one performs the R1 moves in the
proof of Theorem~\ref{t2.2}
far away from $P$,
the isomorphism in
Corollary~\ref{cspanningtree}
becomes an isomorphism of
$A_{\Kh}$--modules.
By tensoring with $\Z X$ and $A_{\Kh}/\Z X$,
one gets spanning tree models
for the reduced Khovanov complexes.
\end{remark}

\begin{remark}
Spanning trees of the Tait graph
also appear as generators of
the knot Floer complex \cite{os}.
Hence the spanning tree model might shed
some light on the relation between
Khovanov homology and knot Floer homology.
\end{remark}

\section{Hopf link addition}\label{shopflinkaddition}
\noindent
In this section, we apply the spanning tree model
to prove a theorem, which was originally proved
(for Khovanov homology) by M.~Asaeda and J.~Przytycki \cite{ap}.
As mentioned in \cite{we:mutation}, the theorem
also follows from \cite[Corollary~10]{kh:first}.

\begin{theorem}\label{thopflinkaddition}
Assume the link diagram $D\#H$ is obtained from
a link diagram $D$ by Hopf link
addition (see Figure~\ref{fighopfadd}).
Then the complex $\ls D\#H\rs$
destabilizes to the direct sum
$\ls D \rs[0]\{-1\}\oplus\ls D\rs[2]\{3\}$.
\end{theorem}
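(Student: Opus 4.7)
The plan is to apply the mapping cone formula \eqref{fsaddlecone} to the two crossings of the Hopf link, simplify each of the four resulting corners of the $2\times 2$ cube of resolutions, and then cancel matched pairs of summands using Lemma~\ref{lcommutecone}. Applying \eqref{fsaddlecone} twice expresses $\ls D\#H\rs$ as an iterated mapping cone over a $2\times 2$ cube with corners $D_{ij}$ at homological degree $[i+j]$ and $q$-shift $\{i+j\}$. A direct local inspection of the four smoothings of the Hopf tangle shows that $D_{00}$ and $D_{11}$ are both isotopic to $D\sqcup\bigcirc$ while $D_{01}$ and $D_{10}$ are both isotopic to $D$ itself with no Reidemeister~1 twists introduced; this identification is consistent with the factorization $\la H\ra=(q+q^{-1})(q^{-1}+q^3)$ of the Hopf Kauffman bracket.

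Lemma~\ref{ldelooping} then replaces $\ls D_{00}\rs$ and $\ls D_{11}\rs$ by $\ls D\rs\{1\}\oplus\ls D\rs\{-1\}$, so that after delooping the cube becomes
\[
\begin{array}{ccc}
\ls D\rs\{1\}\oplus\ls D\rs\{-1\} & \longrightarrow & \ls D\rs[1]\{1\} \\
\Big\downarrow & & \Big\downarrow \\
\ls D\rs[1]\{1\} & \longrightarrow & \ls D\rs[2]\{3\}\oplus\ls D\rs[2]\{1\}.
\end{array}
\]
Every arrow is the composition of a saddle cobordism with the appropriate delooping isomorphism, and is a morphism in $\gCobdl^3$ of Jones degree $0$; under the delooping each saddle unfolds into a $1\times 2$ or $2\times 1$ matrix whose entries are constrained by the degree count on dotted cobordisms. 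In particular, the top saddle $D_{00}\to D_{01}$ contributes an identity component from the $\ls D\rs\{+1\}$ summand (in which the extra circle carries the label~$\mathbf{1}$) into $\ls D\rs[1]\{1\}$, and the right saddle $D_{10}\to D_{11}$ contributes an identity component into the $\ls D\rs[2]\{1\}$ summand (the newly-created circle again labelled by~$\mathbf{1}$).

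Two successive applications of Lemma~\ref{lcommutecone} then cancel these matched pairs: the summand $\ls D\rs\{1\}$ at position $(0,0)$ cancels with $\ls D\rs[1]\{1\}$ at $(0,1)$, and $\ls D\rs[1]\{1\}$ at $(1,0)$ cancels with $\ls D\rs[2]\{1\}$ at $(1,1)$. What remains is precisely $\ls D\rs\{-1\}\oplus\ls D\rs[2]\{3\}$, which proves the theorem. The main technical hurdle is verifying that the two identity components used in the cancellations are actually nonzero; this reduces via the neck-cutting relation (N) to the elementary calculation that saddles correspond, under the delooping basis, to the Khovanov multiplication and comultiplication $\Delta_{\Kh}(\mathbf{1})=\mathbf{1}\otimes X+X\otimes\mathbf{1}$ in $A_{\Kh}$.
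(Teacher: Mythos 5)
Your proof takes a genuinely different route from the paper's. The paper passes to the $(1,1)$--tangle $H'$ whose insertion realizes Hopf link addition, applies the spanning-tree model for $(1,1)$--tangles to see that $\ls H'\rs$ destabilizes to a two-term complex concentrated in homological degrees $0$ and $2$, observes that this two-term complex therefore has vanishing differential, and then glues $\ls D\rs$ back on via the planar composition $\sharp$. You instead expand the $2\times 2$ Hopf cube inside $\ls D\#H\rs$ directly, deloop, and cancel. Both routes are sound, and your delooping square, your identification of the four smoothings, and your reading-off of the identity components of the merge and split saddles are all correct. The trade-off is that the paper's tangle-level argument gets the vanishing of the residual differential for free from a degree count, whereas your argument must supply it explicitly.

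It is precisely that step which is missing from your write-up. After the two cancellations you assert that ``what remains is precisely $\ls D\rs\{-1\}\oplus\ls D\rs[2]\{3\}$,'' but cancelling cone-of-isomorphism summands (or applying Lemma~\ref{lcommutecone}) in general modifies the differential between the surviving pieces, so what remains is a priori a mapping cone $\G(g_{11})$ for some chain transformation $g_{11}\colon\ls D\rs\{-1\}\to\ls D\rs[1]\{3\}$, not automatically a direct sum. Here $g_{11}=0$, but for a reason you need to state: writing $\ls D\#H\rs=\G\bigl(g_a\colon\ls(D\#H)_{a=0}\rs\to\ls(D\#H)_{a=1}\rs\{1\}\bigr)$ with $g_a$ the saddle at the first Hopf crossing, $g_a$ is block-diagonal, sending $\ls D_{00}\rs$ into $\ls D_{10}\rs\{1\}$ and $\ls D_{01}\rs\{1\}[1]$ into $\ls D_{11}\rs\{2\}[1]$; the surviving summand $\ls D\rs\{-1\}$ lies entirely inside the block $\ls D_{00}\rs$ while the surviving summand $\ls D\rs[1]\{3\}$ lies entirely inside the block $\ls D_{11}\rs\{2\}[1]$, so the $(1,1)$--component of $g_a$ vanishes identically. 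Equivalently, the two survivors sit at the diagonally opposite cube corners $(0,0)$ and $(1,1)$ while both eliminated pairs exhaust cube degree $1$, leaving no slot through which a Gaussian-elimination zigzag could contribute. Adding this observation closes the proof. A small slip worth flagging but not affecting the argument: in the second cancellation the newly created circle is labelled $X$, not $\mathbf 1$, since the identity component of $\D_{\Kh}$ lands in the $A_{\Kh}\otimes X$ summand, which is the $\{-1\}$--shifted piece and hence $\ls D\rs[2]\{1\}$.
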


\begin{figure}[H]
\centerline{\psfig{figure=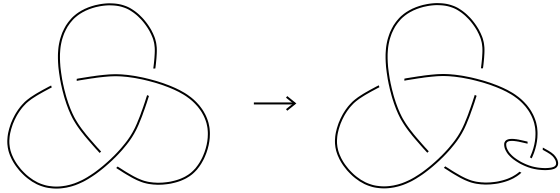,height=2.5cm}}

\caption{Hopf link addition.}\label{fighopfadd}
\end{figure}

To prove the theorem, we observe
that the spanning tree model extends
to $(1,1)$--tangles, i.e. tangles having exactly
two boundary points, as in Figure~\ref{fighprime}. The only
difference is that
the Tait graph of a $(1,1)$--tangle has
a distinguished vertex (the vertex which
corresponds to the black region
adjacent to the dotted circle),
and hence the spanning trees are rooted.

\begin{figure}[H]
\centerline{\psfig{figure=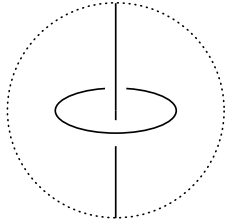,height=2.9cm}}

\caption{A $(1,1)$--tangle.}\label{fighprime}
\end{figure}

Let $H'$ denote the $(1,1)$--tangle shown in
Figure~\ref{fighprime}.
Inserting $H'$ into an edge of $D$ has the
same effect as summing a Hopf link to that
edge of $D$.
The Tait graph of $H'$ has exactly two
spanning trees.

\begin{figure}[H]
\centerline{\psfig{figure=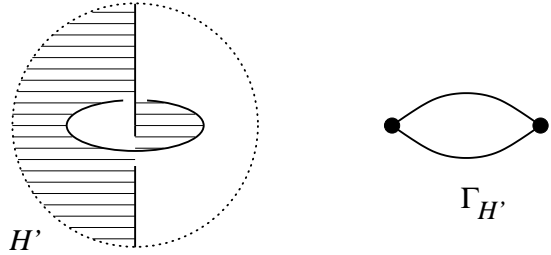,height=3.3cm}}

\caption{Tait graph of $H'$.}
\end{figure}
\noindent
Now the spanning tree model
tells us that $\ls H'\rs$ destabilizes to
a subcomplex $ST(H')$, which is isomorphic on
the level of objects to
$\ls\; |\;\rs[0]\{-1\}\oplus\ls\; |\;\rs[2]\{3\}$. Here
``$\, |\,$'' denotes the trivial $(1,1)$--tangle, consisting
of a single vertical line.
Note that the homological gradings of the two
summands
in $\ls\; |\;\rs[0]\{-1\}\oplus\ls\; |\;\rs[2]\{3\}$
differ by two. Therefore, $ST(H')$ must have trivial differential,
and so the isomorphism
$ST(H')\cong\ls\; |\;\rs[0]\{-1\}\oplus\ls\; |\;\rs[2]\{3\}$
is actually an isomorphism of complexes.
Using the good composition properties of the Khovanov
bracket with respect to gluing of tangles, we get
the theorem.

\section{Alternating knots}\label{s4}
\noindent
The theorems in this section were conjectured by D.~Bar--Natan,
S.~Garoufalidis
and M.~Khovanov \cite{ba:first}
and proved by E.~S.~Lee \cite{le1}. We give new proofs
using the spanning tree model.
For short proofs, see also \cite{ap}.

A knot diagram is said to be {\it alternating}
if one alternately over-- and undercrosses
other strands as one goes along the knot in that diagram.
A knot is called {\it alternating}
if it possesses an alternating diagram.

\begin{lemma}\label{lone}
Let $D$ be an alternating knot diagram.
Then the number of $1$--smoothings
in $S$ is the same for all $S\in\K_1(D)$.
\end{lemma}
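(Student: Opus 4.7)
The plan is to reduce the lemma to Lemma~\ref{lblack} by exploiting a feature that is special to alternating diagrams: the notion of ``$1$-smoothing'' coincides globally with one of the two color-based notions of smoothing (black or white) from Subsection~\ref{sreltrees}. The first and main step is to verify this uniform identification. At a single crossing, once one fixes the checkerboard coloring (with the unbounded region white), it is immediate by inspection that the black smoothing of that crossing equals either $\smoothing$ or $\hsmoothing$ depending on which strand is the overstrand. I would then argue that the alternating condition forces one consistent choice across the entire diagram: as one follows a strand from one crossing to an adjacent one, the over/under roles swap, but the black/white labeling of the two regions touched by that strand also swaps, so the local equation ``black smoothing $=$ $0$-smoothing'' (or its opposite) that holds at one crossing persists at every other crossing.

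Granting this global $0/1$ versus black/white correspondence, the lemma follows in one step. For any $S\in\K_1(D)$, the number $r(D,S)$ of $1$-smoothings in $S$ equals either the number of black smoothings or the number of white smoothings in $S$ (the same choice for every $S$). By Lemma~\ref{lblack}, the number of black smoothings is independent of $S\in\K_1(D)$, and hence so is the number of white smoothings (which is the total number of crossings minus the number of black ones). Thus $r(D,S)$ is the same for all $S\in\K_1(D)$.

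The only real obstacle is the first step: carefully checking the global identification between $\{0,1\}$-smoothings and \{black, white\}-smoothings for an alternating diagram. This is a local verification plus a propagation argument, and once it is in hand the conclusion is an immediate corollary of Lemma~\ref{lblack}.
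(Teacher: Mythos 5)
Your proposal is correct and follows essentially the same route as the paper: both reduce the claim to Lemma~\ref{lblack} via the observation that for an alternating diagram the $1$--smoothings coincide globally with either the black smoothings or the white smoothings. The paper simply asserts this observation, whereas you additionally sketch the local-to-global propagation argument that justifies it.
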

\begin{proof}
Since $D$ is alternating, we necessarily have
one of the following two situations:
either the $1$--smoothings coincide with the black smoothings,
or the $1$--smoothings coincide with the white smoothings.
Hence Lemma~\ref{lone} follows from Lemma~\ref{lblack}
of Subsection~\ref{sreltrees}.
\end{proof}

Given an an alternating knot diagram $D$,
we denote by $n_1(D):=r(D,S)$ the number of $1$--smoothings
in any $S\in\K_1(D)$. Corollary~\ref{ctreeestimate}
implies:

\begin{theorem}\label{talternating}
Let $D$ be an alternating knot diagram.
$\HCo^{i,j}(D)$ is zero
unless the pair $(i,j)\in\Z^2$ lies on one of the
two lines $j=2i-n_1(D)\pm 1$.
\end{theorem}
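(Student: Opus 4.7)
The plan is to deduce the support theorem as a direct consequence of the spanning tree model (Corollary~\ref{cspanningtree}) combined with Lemma~\ref{lone}. The key observation is that, while Corollary~\ref{ctreeestimate} already gives a rank bound that vanishes off the two lines, the theorem asserts the full homology group (not just its rational rank) vanishes, so I need to work at chain level rather than just with ranks.

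First I would invoke Corollary~\ref{cspanningtree} to replace $\Co(D)$ by the destabilized subcomplex $\F_{\Kh}(ST(D))$; since destabilization only discards contractible summands, the bigraded homology is preserved, so it suffices to show that the bigraded module $\F_{\Kh}(ST(D))$ itself vanishes outside the two lines. By Corollary~\ref{cspanningtree}, we have
$$
\F_{\Kh}(ST(D))\;\cong\;\bigoplus_{S\in\K_1(D)}A_{\Kh}[r(D,S)-w(D_S)]\{r(D,S)-2w(D_S)\}\; .
$$
Since $A_{\Kh}=\Z{\bf 1}\oplus\Z X$ with $\deg({\bf 1})=+1$ and $\deg(X)=-1$, the summand corresponding to a state $S$ is concentrated in homological degree $i(S):=r(D,S)-w(D_S)$ and in Jones degrees $j_\pm(S):=r(D,S)-2w(D_S)\pm 1$.

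Next I would apply Lemma~\ref{lone}: because $D$ is alternating, $r(D,S)=n_1(D)$ for every $S\in\K_1(D)$. Substituting this into the expressions above, $i(S)=n_1(D)-w(D_S)$, so $w(D_S)=n_1(D)-i(S)$, and therefore
$$
j_\pm(S)\;=\;n_1(D)-2w(D_S)\pm 1\;=\;n_1(D)-2\bigl(n_1(D)-i(S)\bigr)\pm 1\;=\;2\,i(S)-n_1(D)\pm 1\; .
$$
Hence every generator of $\F_{\Kh}(ST(D))$ sits in a bidegree $(i,j)$ with $j=2i-n_1(D)\pm 1$, so the bigraded chain group $\F_{\Kh}(ST(D))^{i,j}$ is already zero for $(i,j)$ off the two lines, and a fortiori so is $\HCo^{i,j}(D)$.

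The only obstacle worth noting is really a bookkeeping one: I need to make sure the grading conventions in Corollary~\ref{cspanningtree} are applied correctly (in particular that the homological shift $[\,\cdot\,]$ and the quantum shift $\{\,\cdot\,\}$ interact with the internal grading of $A_{\Kh}$ as expected), and that I argue at the level of the chain groups rather than just their ranks in order to exclude possible torsion contributions outside the two lines. Everything else is an immediate substitution, with the substantive content packed into Theorem~\ref{t2.2} and Lemma~\ref{lone}.
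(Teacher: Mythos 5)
Your proof is correct and follows the paper's argument: the paper also derives the support constraint from the spanning tree model (Corollary~\ref{cspanningtree}/Corollary~\ref{ctreeestimate}) combined with Lemma~\ref{lone}, and the paper makes your very point explicit in the lead-in to Theorem~\ref{talternatingtorsion}, where it notes that the spanning tree subcomplex $\F_{\Kh}(ST(D))$ is itself concentrated on the two lines. You are right to emphasize working at the level of the destabilized chain groups rather than just ranks, since Corollary~\ref{ctreeestimate} as literally stated would only rule out free summands, not torsion, off the two lines.
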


Let $i_-$ and $i_+$ denote
the smallest and largest integer $i$ for
which there is an $S\in\K_1(D)$ such
that $r(D,S)-w(D_S)=i$.
Let $j_-:=2i_--n_1(D)-1$ and $j_+:=2i_+-n_1(D)+1$.

Since the spanning tree subcomplex $\F_{\Kh}(ST(D))$
of an alternating knot diagram $D$ is concentrated
on the two lines $j=2i-n_1(D)\pm 1$, and
since the differential has bidegree $(1,0)$,
we get the following theorem.
\begin{theorem}\label{talternatingtorsion}
Let $D$ be an alternating knot diagram. Then
\begin{enumerate}
\item
$\HCo^{i,j}(D)$ is zero unless $i_-\leq i\leq i_+$.
\item
$\HCo^{i,j}(D)$ is torsion free unless $j=2i-n_1(D)-1$.
\item
$\HCo^{i_-,j_-}(D)$ and $\HCo^{i_+,j_+}(D)$ are
non--zero and torsion free.
\end{enumerate}
\end{theorem}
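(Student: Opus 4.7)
The plan is to work throughout with the spanning tree subcomplex $\F_{\Kh}(ST(D))$ of $\Co(D)$ provided by Corollary~\ref{cspanningtree}, which, being a destabilization of $\Co(D)$, computes $\HCo(D)$. As already noted just before the theorem statement, this subcomplex is concentrated on the two lines $j = 2i - n_1(D) \pm 1$, and in homological degree $i$ it equals $\bigoplus_S A_{\Kh}$ summed over those $S \in \K_1(D)$ with $r(D,S) - w(D_S) = i$. Part (1) is then immediate, since this chain group is zero unless $i_- \leq i \leq i_+$.

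For (2) and (3) I would split the chain complex line-wise. Let $C^+$ be the subgroup generated by the ${\bf 1}_S$ (quantum degree $2i - n_1(D) + 1$) and $C^-$ the subgroup generated by the $X_S$ (quantum degree $2i - n_1(D) - 1$), so that the total chain group in degree $i$ is $(C^+)^i \oplus (C^-)^i$. The Khovanov differential $d$ decomposes into four blocks $d_{\pm\pm}$. Because $d$ has bidegree $(1,0)$ while stepping from homological degree $i$ to $i+1$ shifts each of the two lines vertically by $+2$ in quantum degree, only the mixed block $d_{-+}:(C^+)^i \to (C^-)^{i+1}$ respects the quantum grading; the other three blocks vanish by a short degree count. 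Thus the internal differentials on $C^+$ and on $C^-$ are both zero, and the whole complex collapses to a single $\Z$-linear map
$$
d_{-+} : C^+ \longrightarrow C^-[1].
$$

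From this the conclusions read off directly. On the upper line,
$$
\HCo^{i,\, 2i - n_1(D) + 1}(D) = \ker\bigl(d_{-+} : (C^+)^i \to (C^-)^{i+1}\bigr),
$$
which is torsion free as the kernel of a map between finitely generated free abelian groups; on the lower line,
$$
\HCo^{i,\, 2i - n_1(D) - 1}(D) = \operatorname{coker}\bigl(d_{-+} : (C^+)^{i-1} \to (C^-)^i\bigr),
$$
which is allowed to carry torsion. This gives (2). For (3), at the top bidegree $(i_+, j_+)$ the target $(C^-)^{i_+ + 1}$ is zero by maximality of $i_+$, so the kernel equals the whole free module $(C^+)^{i_+}$; dually, at $(i_-, j_-)$ the source $(C^+)^{i_- - 1}$ is zero, so the cokernel equals the whole free module $(C^-)^{i_-}$. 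Both are nonzero and torsion free.

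The only nonroutine point is the quantum-degree bookkeeping that kills three of the four blocks of $d$; it rests on $\deg({\bf 1}) = 1$, $\deg(X) = -1$, and the constancy $r(D,S) = n_1(D)$ for all $S \in \K_1(D)$ established in Lemma~\ref{lone}. Once this is in place, parts (1), (2), (3) are formal consequences of the $C^+ \to C^-[1]$ structure.
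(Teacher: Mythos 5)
Your proof is correct and fills in exactly the details the paper leaves implicit in its one-sentence justification (``Since the spanning tree subcomplex\dots is concentrated on the two lines\dots and since the differential has bidegree $(1,0)$, we get the following theorem''): the splitting $A_{\Kh} = \Z\,\mathbf{1} \oplus \Z X$ gives the $C^+ \oplus C^-$ decomposition, the bidegree argument kills three of the four blocks of $d$, and the kernel/cokernel reading of homology yields all three parts. This is the same approach as the paper, carried out with the missing bookkeeping made explicit.
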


Recall that a crossing of $D$ is called {\it splitting}
if it connects two otherwise disconnected parts of $D$.

\begin{theorem}\label{tminmax}
Let $D$ be an alternating knot diagram with $c$ crossings.
Assume that no crossing of $D$ is splitting.
Then $\HCo^{i_-,j_-}(D)=\HCo^{i_+,j_+}(D)=\Z$.
Moreover, $i_-=0$ and $i_+=c$.
\end{theorem}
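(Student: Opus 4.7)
The plan is to prove the theorem by direct analysis of the chain complex $\Co(D)$ at the two extreme homological degrees $i=0$ and $i=c$. By the proof of Lemma~\ref{lone}, for alternating $D$ the $1$-smoothings are uniformly either black or white in the checkerboard coloring; by symmetry of the arguments below I may assume the $0$-smoothings are all black smoothings. Write $b$ and $w$ for the numbers of black and white regions of $D$, so $b+w=c+2$. With this convention, the all-$0$ state $D_{0\ldots 0}$ has $n(D_{0\ldots 0})=w$ circles (one around each white region), and dually $n(D_{1\ldots 1})=b$. Hence $n_1=c-b+1$, so $j_-=-n_1-1=-w$ and $j_+=2c-n_1+1=c+b$.

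To prove $\HCo^{0,j_-}(D)=\Z$, I will show that $X^{\otimes w}\in \Co^{0,-w}(D)$ generates the homology. The chain group $\Co^{0,-w}(D)$ is the one-dimensional $\Z$-module $\Z\cdot X^{\otimes w}$ (unique minimum-$q$-grading element of $A_{\Kh}^{\otimes w}$). The differential $d^0$ is a signed sum, over the $c$ crossings of $D$, of local saddle maps. At each crossing $k$, since $k$ is not nugatory, its two adjacent white regions are distinct and therefore correspond to two distinct circles of $D_{0\ldots 0}$; switching the smoothing at $k$ from $0$ to $1$ then merges these two circles, so the local chain map is a merge saddle $m:A_{\Kh}^{\otimes 2}\to A_{\Kh}$. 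Since $m(X\otimes X)=X^2=0$ in $A_{\Kh}$, each of the $c$ summands of $d^0(X^{\otimes w})$ vanishes. As there is no $\Co^{-1}$, the cycle $X^{\otimes w}$ survives in homology, giving $\HCo^{0,-w}(D)=\Z$.

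The proof that $\HCo^{c,j_+}(D)=\Z$ is dual. The chain group $\Co^{c,c+b}(D)$ is generated by ${\bf 1}^{\otimes b}$, and this class is automatically a cycle because $\Co^{c+1}=0$. To see it is not a boundary, observe that for any state $s$ with $r(s)=c-1$, obtained from $D_{1\ldots 1}$ by switching one crossing $k$ from $1$ to $0$, the $0$-smoothing at $k$ merges the two (distinct, by non-nugatoryness) black regions there; hence $n(D_s)=b-1$, and the maximum $q$-grading occurring in $\F_{\Kh}(D_s)\{c-1\}$ is $(b-1)+(c-1)=c+b-2<c+b$. Thus $\Co^{c-1,c+b}(D)=0$, the image of $d^{c-1}$ is zero in this bidegree, and ${\bf 1}^{\otimes b}$ represents a non-zero class, yielding $\HCo^{c,c+b}(D)=\Z$.

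Finally, since $\Co(D)$ is supported in $0\le i\le c$, we have $i_-\ge 0$ and $i_+\le c$; the non-vanishing results above combined with Theorem~\ref{talternatingtorsion}(1) force $i_-=0$ and $i_+=c$. The main technical point is the combinatorial claim that in a connected alternating diagram with no nugatory crossings every saddle emanating from the all-$0$ (resp.\ all-$1$) state is a merge, not a split; this is what makes the relations $X^2=0$ and the $q$-grading estimate sharp enough to pin down the extreme homology groups on the nose.
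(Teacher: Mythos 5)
Your proof is correct, and it takes a genuinely different route from the paper's. The paper derives Theorem~\ref{tminmax} from the spanning tree model: it invokes part~3 of Theorem~\ref{talternatingtorsion} for the ``rank at least one'' statement, and then, via the combinatorial Lemma~\ref{lnumbering}, constructs for each $S\in\K_1(D)$ a numbering of the crossings under which $S$ is the unique spanning tree contributing to the extremal homological degree. Your argument bypasses the spanning tree subcomplex entirely and works with the full chain complex $\Co(D)$ at $i=0$ and $i=c$. The key combinatorial inputs are that $n(D_{0\ldots 0})=w$ and $n(D_{1\ldots 1})=b$ (these are the counts of faces, resp.\ vertices, of the Tait graph $\G_D$), and that every saddle leaving the all-$0$ vertex (resp.\ entering the all-$1$ vertex) of the cube is a merge --- this is exactly where the ``no splitting crossing'' hypothesis enters, since bridges in $\G_D$ are excluded for the all-$0$ end and loops for the all-$1$ end. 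With that, the relation $X^2=0$ in $A_{\Kh}$ kills $d^0(X^{\otimes w})$, and the grading bound $n(D_s)\le b-1$ for $r(s)=c-1$ shows $\mathbf{1}^{\otimes b}$ is not a boundary; combining with Theorem~\ref{talternatingtorsion}(1) and the trivial bound $0\le i_-\le i_+\le c$ then forces $i_-=0$, $i_+=c$ and identifies $(j_-,j_+)=(-w,c+b)$. The trade-off: your approach is more explicit, exhibiting concrete generators of the two extremal homology groups, and it avoids the rather delicate crossing-numbering argument of Lemma~\ref{lnumbering}; the paper's approach, on the other hand, stays entirely within the spanning tree framework and so is more uniform with the rest of Section~\ref{s4}.
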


\begin{proof}
By part 3 of the previous theorem, we know
that $\HCo^{i_-,j_-}(D)$ and $\HCo^{i_+,j_+}(D)$
are free abelian groups of rank at least one.
To show that the rank is exactly one, it
suffices to show that there is only
one $S\in\K_1(D)$ contributing to
the lowest degree $i_-$, i.e. such that
$r(D,S)-w(D_S)=i_-$, and likewise only
one $S\in\K_1(D)$ contributing
to the highest degree $i_+$, i.e. such that
$r(D,S)-w(D_S)=i_+$.

Actually, we prove something slightly different.
Recall that the spanning tree construction depends
on a numbering of the crossings of $D$.
In particular, the diagram $D_S$ associated to
$S\in\K_1(D)$
depends on the numbering of the crossings.
What we show is that for any $S\in\K_1(D)$, there exists
a numbering such that $S$ is the unique state
contributing to lowest/highest degree.
This is the content of the following lemma.
\end{proof}

\begin{lemma}\label{lnumbering}
Let $D$ be an alternating knot diagram with $c$ crossings,
all of which are non--splitting,
and let $S$ be an element of $\K_1(D)$.
\begin{enumerate}
\item
There is a numbering of the crossings of
$D$ such that $-w(D_S)=-n_1(D)$, and $-w(D_{S'})>-n_1(D)$
for all $S'\in\K_1(D)$ with $S'\neq S$.
\item
Likewise, there is a numbering of the crossings
of $D$ such that $-w(D_S)=c-n_1(D)$, and
$-w(D_{S'})<c-n_1(D)$ for all $S'\in\K_1(D)$ with $S'\neq S$.
\end{enumerate}
\end{lemma}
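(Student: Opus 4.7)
My plan is to construct explicit numberings realizing the two extremal values, and to verify uniqueness by a ``first-deviation'' argument inside the spanning tree algorithm.

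Invoke the bijection from Subsection~\ref{sreltrees} between $\K_1(D)$ and the set of spanning trees of the Tait graph $\G_D$, and write $S = S_T$ for the corresponding tree. Let $A \subset E(\G_D)$ be the set of crossings $1$-smoothed in $S$; by Lemma~\ref{lone}, $|A| = n_1(D)$. For any Kauffman state $S'' \in \K_1(D)$, let $\alpha(S'')$, $\beta(S'')$, $\gamma(S'')$ be the numbers of branched $1$-smoothings, branched $0$-smoothings, and skipped crossings along the branch of the algorithm leading to $S''$. Combining $c_+(D_{S''}) = r(D,S'') - \alpha(S'') = n_1(D) - \alpha(S'')$ (which follows from $r(D,D_{S''}) = r(D,S'') - c_+(D_{S''})$ derived in Section~\ref{streeKauffman} together with Lemma~\ref{lone}) with $c_+(D_{S''}) + c_-(D_{S''}) = \gamma(S'') = c - \alpha(S'') - \beta(S'')$ yields the bookkeeping identity
\begin{equation}\label{pwformula}
w(D_{S''}) \;=\; 2n_1(D) - c - \alpha(S'') + \beta(S'').
\end{equation}

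For part~(1), I number the crossings so that every edge of $E(\G_D) \setminus A$ precedes every edge of $A$. Since $\G_D$ has neither bridges nor loops (as $D$ has no splitting crossings), and since the $A$-edges remain intact throughout the first phase, each edge of $E(\G_D) \setminus A$ is at its moment of processing neither a bridge nor a loop, so is branched on and $0$-smoothed in the $S$-branch. After this first phase the residual graph consists only of the edges of $A$: either a spanning tree (all edges bridges) or a bouquet of loops at a single vertex, depending on whether the $0$-smoothings of $S$ correspond to deletions or contractions in $\G_D$. In either case every remaining edge is a bridge or a loop, hence skipped. This gives $\alpha(S) = 0$, $\beta(S) = c - n_1(D)$, and by~\eqref{pwformula}, $w(D_S) = n_1(D)$. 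For uniqueness, let $S' = S_{T'} \neq S$ with $1$-smoothing set $A' \neq A$, and let $g$ be the first edge of $A' \setminus A$ processed; it lies in $E(\G_D) \setminus A$ and is therefore handled in the first phase. At the moment $g$ is processed, no prior edge of $A' \setminus A$ has been handled, so in the $S'$-branch no $1$-smoothing has yet occurred and the current graph still contains every edge of $A$ intact; thus $g$ is neither a bridge nor a loop. It is therefore branched on and, since $g \in A'$, $1$-smoothed in the $S'$-branch. Hence $\alpha(S') \geq 1$; combined with $\beta(S') \leq c - n_1(D)$, formula~\eqref{pwformula} yields $w(D_{S'}) \leq n_1(D) - 1 < n_1(D)$, i.e.\ $-w(D_{S'}) > -n_1(D)$.

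Part~(2) is dual: reverse the numbering so that $A$ precedes $E(\G_D) \setminus A$. A symmetric argument gives $w(D_S) = n_1(D) - c$, and the first edge of $A \setminus A'$ is branched on and $0$-smoothed in the $S'$-branch, forcing $\beta(S') \geq 1$ and hence $w(D_{S'}) > n_1(D) - c$. The main obstacle in both parts is the verification that the first deviation edge is branched on rather than skipped; this is precisely where the no-splitting-crossings hypothesis on $D$ is essential, since it guarantees that no edge of $\G_D$ is a bridge or a loop before the algorithm starts.
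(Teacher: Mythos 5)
Your proof is correct and follows essentially the same strategy as the paper's: number the $0$-smoothings of $S$ before its $1$-smoothings (reversed for part~2), and show that the first crossing where $S'$ deviates from $S$ is branched on rather than skipped, forcing the strict inequality. The bookkeeping formula $w(D_{S''}) = 2n_1(D) - c - \alpha(S'') + \beta(S'')$ and the Tait-graph bridge/loop framing are a clean repackaging of the identity $r(D,D_S)=r(D,S)-c_+(D_S)$ and of the spanning-tree connectivity argument that the paper sketches and partly leaves to the reader.
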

\begin{proof}
1. Let $S$ be an element of $\K_1(D)$.
Assume that the crossings of $D$ are numbered in such a
way that the crossings which are $0$--smoothings in $S$
precede those which are $1$--smoothings in $S$.
We claim that for this numbering,
the relations in part~1
of Lemma~\ref{lnumbering} are satisfied, i.e.
$-w(D_S)=-n_1(D)$, and $-w(D_{S'})>-n_1(D)$
for all $S'\in\K_1(D)$ with $S'\neq S$.

To see this, we consider the link diagrams
$D_k$, $0\leq k\leq c-n_1(D)$, obtained
from $D$ by replacing the first $k$
crossings of $D$ by their $0$--smoothings,
while leaving the remaining $c-k$
crossings unchanged.
We denote by $D'$ the diagram $D':=D_{c-n_1(D)}$.
Note that if one replaces
all crossings in $D'$ by their
$1$--smoothings, the result is the state $S$.
Since $S$ is connected, so is $D'$, and so
are all $D_k$ with $k\leq c-n_1(D)$.

Because $D$ is alternating, we may
assume without loss of generality that the
$1$--smoothings in $S$ are the black smoothings,
and hence correspond to the edges of the
spanning tree associated to $S$. Using that
every edge in a tree connects two otherwise
disconnected parts, we get that every
crossing of $D'$ is splitting, i.e.
connects to otherwise disconnected parts
of $D'$.

\begin{claim}
$D_S=D'$.
\end{claim}
\begin{proofc}
Recall the binary tree of Figure~\ref{figimproved},
which was
used to deduce the spanning tree expansion.
If at all $D'$ appears in this tree,
then the afore mentioned properties
of $D'$ imply that it must
be the leaf $D_S$.

Thus, it suffices
show that the sequence
$D=D_0,D_1,\ldots,D_{c-n_1(D)}=D'$
appears along a path going down the
binary tree.
Since $D_{k+1}$ results from
$D_k$ by resolving the first
crossing of $D_k$,
we only have to check that for all
$k< c-n_1(D)$ the first crossing of
$D_k$ is non--splitting.
This can be done by observing
that the last $n_1(D)$ crossings
of $D_k$ form the edges of a spanning tree
(same argument as used above for $D'$),
and using that the Tait graph
of $D$ is loop--less because
all crossings of $D$ are non--splitting.
We leave the details to the reader.
\end{proofc}

So we have that $D_S=D'$, and we
also know that the (unsmoothened)
crossings in $D'$
are the $1$--smoothings in $S$.
Since $D_S$ is connected and since all
of its crossings are splitting, this implies that
all crossings of $D_S$ must be positive
with respect to an arbitrary
orientation of $D_S$.
We conclude $w(D_S)=c_+(D_S)=n_1(D)$.

Now consider $S'\in\K_1(D)$ with $S'\neq S$.
Recall that $S$ and $S'$ both have exactly
$n_1(D)$ $1$--smoothings.
In $S$ the $1$--smoothings come
after the $0$--smoothings.
Therefore, the first crossing of $D$
where $S$ and $S'$ differ has to be a $0$--smoothing
in $S$ and a $1$--smoothing in $S'$.
Being a $0$--smoothing in $S$,
this crossing is smoothened in $D_S=D'$.
We leave it to the reader to conclude
that it also has to be smoothened in $D_{S'}$.
Thus we have found a $1$--smoothing in $S'$
which is smoothened in $D_{S'}$.
This implies $c_+(D_{S'})<n_1(D)$
(cf. previous paragraph)
and hence
$-w(D_{S'})\geq -c_+(D_{S'})>-n_1(D)$.

2. The second part of the lemma is proved analogously,
by numbering the crossings of $D$
in such a way that the crossings which are $1$--smoothings
in $S$ precede those which are $0$--smoothings in $S$.
\end{proof}

The above proof was inspired by \cite{th}.
For a different proof of a similar statement,
see \cite[Section~7.7]{kh:first}.

\begin{corollary}
If a knot possesses an alternating diagram with $c$ crossings, all of
which are non--splitting, then the knot does not admit a diagram with fewer than
$c$ crossings.
\end{corollary}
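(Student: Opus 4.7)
The plan is to extract a numerical knot invariant from the homological support of $\HC$, and to show that this invariant equals $c$ for the given alternating diagram while being bounded above by the crossing number of any other diagram.

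First I would make precise the notion of \emph{homological width}. For any link diagram $D'$ with $c'$ crossings, the chain groups $\Co^{i,j}(D')$ are nonzero only for $0 \le i \le c'$, simply because $r(s) \in \{0,1,\ldots,c'\}$ for each Kauffman state. Since $\Kh(D') = \ls D'\rs [-c_-(D')]\{c_+(D')-2c_-(D')\}$, the nonzero part of $\HC^{i,j}(D')$ lives in the homological range $-c_-(D') \le i \le c_+(D')$, a range whose length is at most $c'$. Define the \textbf{homological width} of the knot $K$ by
$$w(K) := \max\{i : \HC^{i,*}(K) \ne 0\} - \min\{i : \HC^{i,*}(K) \ne 0\}.$$
By Theorem~\ref{tkhovanovinvariant}, $w(K)$ is a well-defined knot invariant, and by the observation above it satisfies $w(K) \le c'$ for \emph{any} diagram $D'$ of $K$ with $c'$ crossings.

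Next I would compute $w(K)$ using the given alternating diagram $D$. By Theorem~\ref{tminmax}, because $D$ is alternating with all $c$ crossings non-splitting, we have $\HCo^{0,j_-}(D) \ne 0$ and $\HCo^{c,j_+}(D) \ne 0$; in particular $\HCo^{i,*}(D)$ achieves its extreme values at $i=0$ and $i=c$. Translating this by the homological shift $[-c_-(D)]$ in the definition of $\Kh(D)$, the minimum and maximum homological degrees of nonzero $\HC^{i,*}(D)$ are exactly $-c_-(D)$ and $c_+(D)$, whose difference is $c_+(D)+c_-(D)=c$. Therefore $w(K) = c$.

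Combining the two steps, any diagram $D'$ of $K$ with $c'$ crossings must satisfy $c' \ge w(K) = c$, which is the desired conclusion. The only subtle point, and the one I would handle with care, is the bookkeeping of the homological shift: it must be verified that Theorem~\ref{tminmax} indeed pinpoints $i_- = 0$ and $i_+ = c$ as actual extrema (not merely candidates), so that after the shift by $-c_-(D)$ the full length $c_+(D)+c_-(D) = c$ is realized as the width of nonvanishing Khovanov homology and not reduced by cancellation at the endpoints; this is exactly what part~3 of Theorem~\ref{talternatingtorsion} together with Theorem~\ref{tminmax} ensures. No further computation is needed, and the argument is essentially one line once the width invariant is set up.
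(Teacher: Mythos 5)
Your proof is correct and follows essentially the same route as the paper: both arguments define the homological width (the difference between the maximal and minimal homological degrees supporting nonzero Khovanov homology), observe that this width is a knot invariant bounded above by the crossing number of any diagram, and then compute it to be exactly $c$ for the given alternating diagram using Theorem~\ref{tminmax} together with Theorem~\ref{talternatingtorsion}. The only cosmetic difference is that you phrase the argument in terms of the normalized homology $\HC$ (carrying the shift $[-c_-(D)]$ through explicitly), whereas the paper works directly with $\HCo$; since the width is shift-invariant, these are equivalent.
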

\begin{proof}
By part 1 of Theorem~\ref{talternatingtorsion},
$i_+(D)$ and $i_-(D)$ are equal to the highest and the lowest
homological degree in which $\HCo(D)$ is non--zero.
Therefore,
the difference $i_+(D)-i_-(D)$ is a lower bound
for the number of crossings of $D$.
Moreover, $i_+(D)-i_-(D)$ is a knot invariant.
Now assume that $D$ is an alternating diagram with $c$
crossings, all of which are non--splitting.
By Theorem~\ref{tminmax}, we have
$i_+(D)-i_-(D)=c$, and hence
the corollary follows.
\end{proof}

\begin{remark}
For alternating knots,
the spanning tree model allows
to calculate the reduced Khovanov homology
completely.
Indeed, for an alternating knot the reduced spanning tree
subcomplex is supported on a single line $j=2i+const$
in the $ij$--plane.
Since the differential has bidegree $(1,0)$, it must
vanish.
Therefore,
the reduced spanning tree subcomplex
is isomorphic to the reduced Khovanov
homology of the alternating knot.
\end{remark}

\begin{remark}
We can
also
consider the subcomplex
$\F_{\Lee}(ST(D))\subset\C'(D)$.
While we know
explicit generators for Lee homology from
Section~\ref{scanonical},
the spanning tree description
of Lee's complex has the advantage
that it also makes a statement about the filtration,
and that it works well for Lee homology over
$\Z$ coefficients.
Theorem~\ref{talternating} and part 1
of Theorem~\ref{talternatingtorsion} remain
valid for Lee homology.
\end{remark}
\begin{example}
Let $D$ be a standard diagram of the left
handed trefoil. Let $\F^{\Z}_{\Lee}$
denote Lee's functor with $\Z$ coefficients,
and let $A^{\Z}_{\Lee}:=\F^{\Z}_{\Lee}(\bigcirc)$
(so $A^{\Z}_{\Lee}=A_{\Kh}$,
except that $A^{\Z}_{\Lee}$ is filtered whereas
$A_{\Kh}$ is graded).
We have
$$\F^{\Z}_{\Lee}(ST(D))\cong
A^{\Z}_{\Lee}[0]\{-1\}\oplus
A^{\Z}_{\Lee}[2]\{3\}\oplus A^{\Z}_{\Lee}[3]\{5\}
$$
The differential is zero on $A^{\Z}_{\Lee}[0]\{-1\}$,
and it maps ${\bf 1},X\in A^{\Z}_{\Lee}[2]\{3\}$ to
$2X,2\cdot {\bf 1}\in A^{\Z}_{\Lee}[3]\{5\}$.
Hence
$$
\HC'(D;\Z)\cong
A^{\Z}_{\Lee}[0]\{-1\}\oplus
(A^{\Z}_{\Lee}/2 A^{\Z}_{\Lee})[3]\{5\}
$$
Note that there is $2$--torsion in
bidegree $(3,6)$, despite the fact that
the pair $(3,6)$ lies on the upper of the two
lines mentioned in Theorem~\ref{talternating},
and despite the fact that $(3,6)=(i_+,j_+)$.
Hence parts 2 and 3 of Theorem~\ref{talternatingtorsion}
do not transfer to Lee homology with $\Z$ coefficients.
\end{example}

\setcounter{footnotebuffer}{\value{footnote}}
\chapter{Framed link cobordisms}\label{cframedlinkcobordisms}
\setcounter{footnote}{\value{footnotebuffer}}
In this chapter, we introduce movie presentations
and movie moves for framed link cobordisms.

\section{Framed links}\label{sframedlinks}
\noindent
Let $L\subset\R^3$ be a link.
A {\it framing} of $L$ is a homotopy class of
trivializations
of the normal bundle of $L$ in $\R^3$. Equivalently,
a framing can be defined as homotopy
class of non--singular normal vector fields on $L$.
A link equipped with a framing is called
a {\it framed link}.

Let $K\subset\R^3$ be a knot and let
$f$ be a framing of $K$.
Represent $f$ by a non--singular
normal vector field, and
assume that
the vectors are sufficiently short,
so that
their tips trace out a knot $K'$ parallel
to $K$. The {\it framing coefficient} of $f$
is the linking number
$n(f):=\lk(K,K')$
of $K$ and $K'$.
One can show
that $f$ is completely determined
by its framing coefficient $n(f)$.

If $L$ is a link, a framing of $L$ can
be specified
by specifying a framing $f_i$
for each component $L_i$ of $L$.
The {\it total framing coefficient} of $L$
is defined by
$$
n(f):=\sum_i n(f_i)+2\sum_{i<j}\lk(L_i,L_j)\; .
$$

There are several methods for describing framed links.
One possibility is to take an ordinary link
diagram $D$ and then think of it as presenting a framed link,
framed by the {\it blackboard framing},
i.e. by the framing which is given by a vector field which is
everywhere parallel to the plane of the picture.

\begin{figure}[H]
\centerline{\psfig{figure=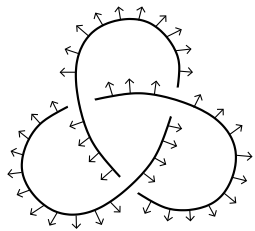,height=3cm}}

\caption{The blackboard framing.}
\end{figure}
It is easy to see that
the framing coefficient
of the blackboard framing
is equal to the writhe of $D$.
Since the writhe
changes by $\pm 1$ under move R1,
the blackboard framing is not
invariant under this move.
However, it is invariant under the
move FR1 shown in Figure~\ref{figframedR1}.
In fact, if one uses the blackboard framing
to present framed links,
then two link diagrams represent isotopic framed
links if and only if they are related by a finite
sequence of the moves FR1, R2 and R3.
\begin{figure}[H]
\centerline{\psfig{figure=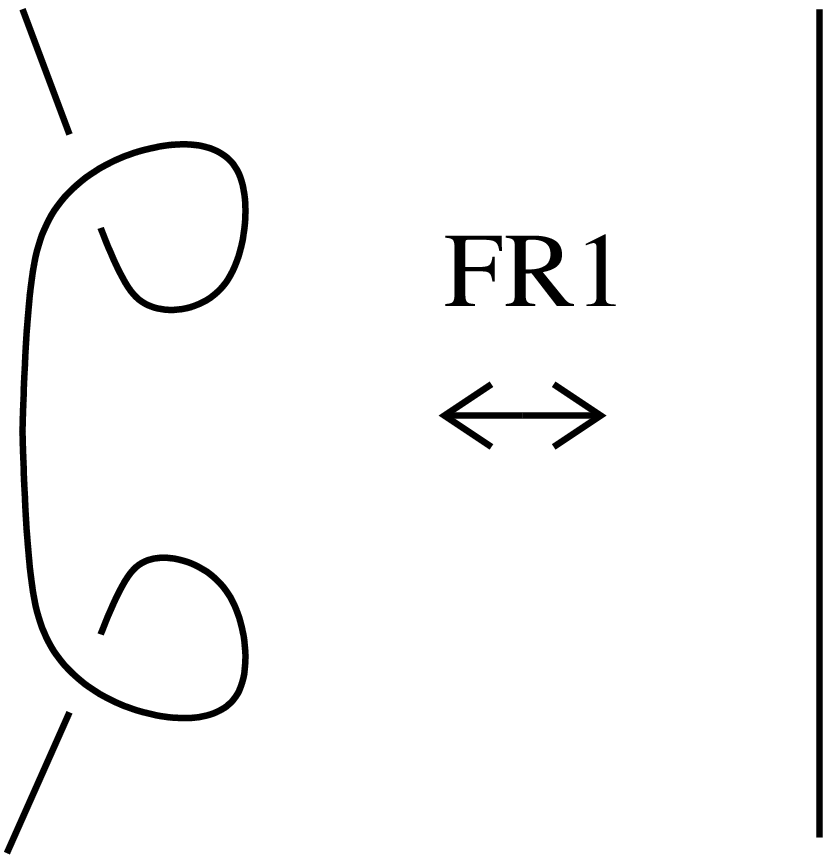,height=2.8cm}}

\caption{The framed Reidemeister move FR1.}
\label{figframedR1}
\end{figure}
\noindent

Another way of presenting framed links
uses
{\it link diagrams with signed points}\footnote{
Link diagrams with signed points were
introduced in \cite{bw}, where they were
called ``link diagrams with marked points''.}.
A link diagram with signed points is
a link diagram $D$, together with a finite collection
of distinct points, lying on the interiors of the edges
of $D$, and labelled by $+$ or $-$.
Such a diagram presents a framed link,
with framing $f_D$ given as follows:
$f_D$ is represented
by a vector field which is everywhere
parallel to the drawing plane, except
in a small neighborhood of the signed points, where
it winds around the link, in
such a way that each positive point contributes $+1$
to $n(f_D)$ and each negative point contributes
$-1$.
Note that $n(f_D)=w(D)+t(D)$, where
$t(D)$ denotes the difference between the
numbers of positive and negative signed points
in $D$.

\begin{figure}[H]
\centerline{\psfig{figure=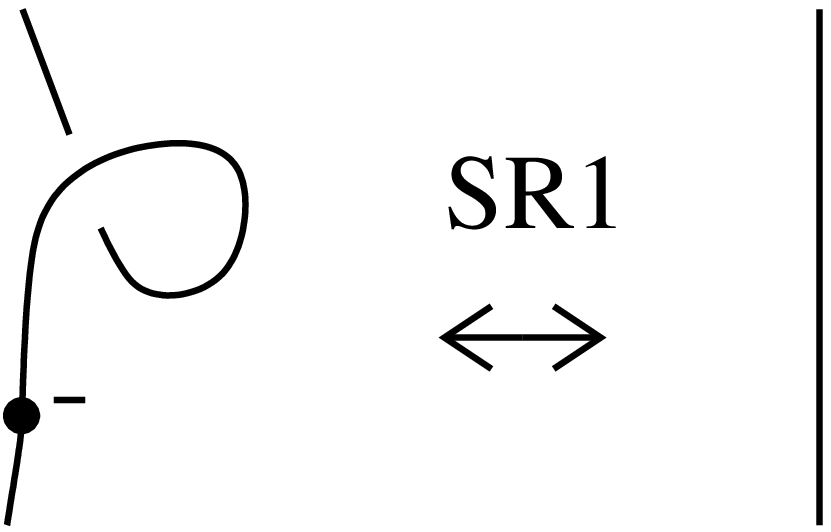,height=2.2cm}}

\caption{The signed Reidemeister move SR1.}
\label{figsignedmove}
\end{figure}
\noindent
The signed first Reidemeister move SR1, shown above,
leaves $n(f_D)$ unchanged. It follows that two link diagrams
with signed points describe isotopic framed links if
and only if they are related by a finite sequence of the
following moves: the moves
SR1, R2 and R3, as well as creation/annihilation of
pairs of nearby oppositely signed points,
and sliding signed points past crossings.

The $m$--{\it cable} of a framed knot $K$
is the
$m$--component link $K^m$, obtained
by replacing $K$ by $m$ parallel strands,
pushed off in the direction of the framing vector field.

\begin{figure}[H]
\centerline{\psfig{figure=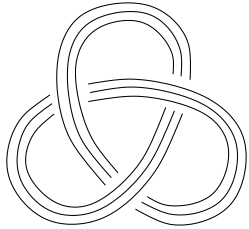,height=3cm}}

\caption{$3$--cable of a framed knot (framed by the blackboard framing).}
\end{figure}

\section{Framings on submanifolds of codimension $2$}
\label{scodimensiontwo}
\noindent
The concept of framings is not restricted to links.
In this section, we study framings on
arbitrary submanifolds of codimension $2$.

Let $M$ be a smooth oriented $(n+2)$--manifold and let $N\subset M$ be
a smooth oriented compact submanifold of $M$ of dimension $n$.
A {\it framing} of $N$ is a homotopy class of trivialization of the
normal bundle $\nu_N$ of $N\subset M$.
If $N$ has non--empty boundary and
a trivialization $t$ of $\nu_N|_{\partial N}$
is specified,
we define a
{\it relative framing} of $N$ (relative to $t$)
as a homotopy class of trivializations of $\nu_N$ which agree
with $t$ over $\partial N$.
\begin{lemma}\label{affine}
Let $t$ be a trivialization of $\nu_N|_{\partial N}$.
If non--empty, the set of relative framings of $N$
(relative to $t$) is an affine space over $H_{n-1}(N)$.
\end{lemma}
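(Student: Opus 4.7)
The plan is to interpret the set of relative framings as a torsor over an appropriate cohomology group, and then use Poincaré--Lefschetz duality to rewrite that cohomology group as $H_{n-1}(N)$.

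First, I would observe that since both $M$ and $N$ are oriented, the rank $2$ normal bundle $\nu_N$ inherits a canonical orientation, so its structure group reduces to $SO(2)$. Consequently, a trivialization of $\nu_N$ is the same as a pair of orthonormal (or just linearly independent, up to homotopy) sections of the appropriate orientation, and any two trivializations differ by a map $N \to SO(2) \simeq S^1$. Fixing one trivialization $F_0$ extending $t$, any other relative framing $F$ determines a comparison map $\varphi_{F,F_0} \colon N \to S^1$ which is constant equal to the identity on $\partial N$ (since both trivializations agree with $t$ there). Passing to homotopy classes rel.\ $\partial N$, this assignment is a bijection between the set of relative framings and $[(N,\partial N),(S^1,*)]$.

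Next, since $S^1 = K(\Z,1)$, one has the standard identification
$$
[(N,\partial N),(S^1,*)] \;\cong\; H^1(N,\partial N;\Z).
$$
At this point I would apply Poincar\'e--Lefschetz duality for the compact oriented $n$--manifold $N$ with boundary, which yields
$$
H^1(N,\partial N;\Z) \;\cong\; H_{n-1}(N;\Z).
$$
Composing this chain of bijections gives the desired free and transitive action of $H_{n-1}(N)$ on the set of relative framings, verifying the affine-space structure.

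The only delicate point is to check that the action is well defined and group-theoretic: given a relative framing $F_0$ and a class $\alpha \in H^1(N,\partial N;\Z)$, represented by a map $\varphi \colon N \to S^1$ trivial on $\partial N$, the new framing $F = \varphi \cdot F_0$ obtained by applying the $SO(2)$--rotation $\varphi(x)$ to the frame at $x$ is again a relative framing, and its homotopy class depends only on $\alpha$ and on the class of $F_0$. This is the main (though still routine) verification; everything else is packaging. Independence of the base point $F_0$ in the bijection is immediate once one checks that the difference cocycle $\varphi_{F,F_0}$ is additive under change of reference, which is the affine-space axiom.
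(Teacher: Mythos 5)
Your proof is correct and follows essentially the same route as the paper: reduce the structure group to $SO(2)$, identify the difference of two relative framings with a class in $[N,\partial N;SO(2),1]\cong H^1(N,\partial N)$ via $SO(2)=K(\Z,1)$, and apply Poincar\'e--Lefschetz duality to get $H_{n-1}(N)$. The closing remarks about well-definedness of the action are a reasonable elaboration but add nothing beyond what the paper leaves implicit.
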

\begin{proof} Since $\nu_N$ is an oriented $2$--plane bundle,
its structural group is $SO(2)$. Therefore
the difference between two relative framings is given
by a homotopy class of maps from the pair $(N,\partial N)$
to the pair $(SO(2),1)$, i.e. by an element of $[N,\partial N;SO(2),1]$.
Using that $SO(2)$ is a $K(\Z,1)$ space, we can identify
$[N,\partial N;SO(2),1]$ with $H^1(N,\partial N)$.
And by Poincar\'e
duality, $H^1(N,\partial N)$ is isomorphic to $H_{n-1}(N)$.
\end{proof}

Let us
consider pairs $(E,t)$ where $E$ is an oriented $2$--plane bundle
over $N$, and $t$ is a trivialization of $E|_{\partial N}$.
We call two such pairs $(E,t)$ and $(E',t')$ {\it isomorphic} if there
is an isomorphism $F:E\rightarrow E'$ of oriented $2$--plane bundles
such that $t'\circ F=t$ over $\partial N$.
\begin{lemma}
Isomorphism classes of pairs $(E,t)$ correspond
bijectively to elements of $H_{n-2}(N)$.
\end{lemma}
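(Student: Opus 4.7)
The plan is to mimic the argument of the preceding Lemma~\ref{affine}, but working with bundles rather than with sections of a fixed bundle. Oriented $2$--plane bundles have structural group $SO(2)$, so they are classified by homotopy classes of maps into $BSO(2)$. Since $SO(2)=K(\Z,1)$, we have $BSO(2)=K(\Z,2)$, and therefore isomorphism classes of oriented $2$--plane bundles over $N$ correspond bijectively to $[N,BSO(2)]\cong H^2(N;\Z)$.

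Incorporating the trivialization $t$ over $\partial N$ amounts to working relative to the boundary. More precisely, a trivialization of $E|_{\partial N}$ is the same datum as a lift of the classifying map $f\colon N\to BSO(2)$ together with a null--homotopy of $f|_{\partial N}$, i.e. an extension to a map of pairs $(N,\partial N)\to (BSO(2),\ast)$. Hence isomorphism classes of pairs $(E,t)$ correspond to elements of
$$
[N,\partial N;BSO(2),\ast]\;\cong\; H^2(N,\partial N;\Z).
$$
Finally, applying Poincar\'e--Lefschetz duality to the oriented compact $n$--manifold $N$ with boundary, we identify $H^2(N,\partial N;\Z)\cong H_{n-2}(N;\Z)$, which gives the claimed bijection.

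I would carry out the steps in this order: first fix the classifying--space setup for $BSO(2)$, then justify the translation from the pair $(E,t)$ to a relative homotopy class, then apply the Eilenberg--MacLane identification and Poincar\'e--Lefschetz duality. The main point requiring care is the middle step, namely the precise correspondence between pairs $(E,t)$ (up to the notion of isomorphism defined in the excerpt, which requires the isomorphism $F$ to respect $t$ on the boundary) and relative homotopy classes of classifying maps. I would argue this by using a universal bundle $ESO(2)\to BSO(2)$: a trivialization of $E|_{\partial N}$ is equivalent to a lift of $f|_{\partial N}$ to the contractible total space $ESO(2)$, and contracting $ESO(2)$ to a point turns such a lift into a null--homotopy of $f|_{\partial N}$; conversely, a relative map $(N,\partial N)\to(BSO(2),\ast)$ pulls back the universal bundle together with its canonical trivialization over the basepoint. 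The equivalence of $F$--isomorphism on pairs with homotopy through relative maps then follows from the usual homotopy classification argument, and composing with the Hurewicz/Eilenberg--MacLane identification yields the group $H^2(N,\partial N;\Z)$, to which Poincar\'e--Lefschetz duality can be applied exactly as in the proof of Lemma~\ref{affine}.
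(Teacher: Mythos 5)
Your proposal is correct and follows essentially the same route as the paper: classify oriented $2$--plane bundles with boundary trivialization by relative homotopy classes of maps to $(BSO(2),\ast)$, identify this set with $H^2(N,\partial N)$ via the Eilenberg--MacLane property of $BSO(2)=K(\Z,2)$, and conclude by Poincar\'e--Lefschetz duality. The paper states this more tersely; your elaboration of the middle step (using the universal bundle to turn a trivialization into a null--homotopy of the classifying map over $\partial N$) supplies exactly the justification the paper leaves implicit.
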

\begin{proof}
Isomorphism classes of pairs $(E,t)$ are classified by
homotopy classes of maps from the pair $(N,\partial N)$ to
the pair $(BSO(2),p_0)$, where $p_0\in BSO(2)$ is an arbitrary basepoint.
Since $BSO(2)$ is a $K(\Z,2)$ space,
we obtain $[N,\partial N;BSO(2),p_0]=H^2(N,\partial N)=H_{n-2}(N)$.
\end{proof}

Let $e(E,t)\in H_{n-2}(N)$ denote the homology class
corresponding to the pair $(E,t)$. We immediately obtain:
\begin{lemma}
$N$ admits a framing (relative to $t$) if and
only if $e(\nu_N,t)=0$.
\end{lemma}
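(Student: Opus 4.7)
The plan is to reduce the question to the classification of pairs $(E,t')$ provided by the preceding lemma, by recognizing a relative framing as nothing other than an isomorphism of pairs $(\nu_N,t)\cong(\underline{\R^2},t_0)$, where $\underline{\R^2}=N\times\R^2$ is the trivial oriented $2$--plane bundle and $t_0$ is its canonical trivialization over $\partial N$.

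First I would unwind the definitions. A relative framing is a homotopy class of trivializations $\tau:\nu_N\to\underline{\R^2}$ with $\tau|_{\partial N}=t$. Any such $\tau$ is, in particular, an isomorphism of oriented $2$--plane bundles satisfying $t_0\circ\tau=t$ over $\partial N$, hence by definition an isomorphism of pairs $(\nu_N,t)\to(\underline{\R^2},t_0)$. Conversely, any isomorphism $F$ of pairs provides a trivialization $\tau:=t_0\circ F$ of $\nu_N$ which extends $t$. Therefore the existence of a relative framing of $N$ (relative to $t$) is equivalent to the existence of an isomorphism $(\nu_N,t)\cong(\underline{\R^2},t_0)$.

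Next, by the preceding lemma, isomorphism classes of pairs $(E,t')$ are in bijection with $H_{n-2}(N)$ via the invariant $e(\cdot,\cdot)$. So the pair $(\nu_N,t)$ is isomorphic to the trivial pair $(\underline{\R^2},t_0)$ if and only if $e(\nu_N,t)=e(\underline{\R^2},t_0)$. It therefore remains to compute $e(\underline{\R^2},t_0)=0$. This is essentially tautological: under the classification $[N,\partial N;BSO(2),p_0]\cong H^2(N,\partial N)\cong H_{n-2}(N)$ used in the proof of the previous lemma, the trivial bundle (together with its canonical trivialization over the boundary) is classified by the constant map to the basepoint $p_0\in BSO(2)$, which represents the zero class. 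Combining the two equivalences gives the desired conclusion.

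The proof is essentially formal, extracting the content of the preceding classification lemma; the only point that requires any care is to make sure that the bijection of the previous lemma sends the isomorphism class of $(\underline{\R^2},t_0)$ to $0\in H_{n-2}(N)$. This is just the observation that the trivial bundle has trivial classifying map, so I do not expect any genuine obstacle.
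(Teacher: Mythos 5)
Your argument is exactly what the paper intends: the paper gives no proof, merely stating the lemma follows immediately from the preceding classification of pairs $(E,t)$, and you have simply spelled out why — a relative framing is an isomorphism of pairs with the trivial pair $(\underline{\R^2},t_0)$, which is classified by $0\in H_{n-2}(N)$. Your proof is correct and matches the paper's (implicit) reasoning.
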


We are mainly interested in
the case where $N$ is a connected surface $S$,
embedded in a $4$--manifold $M$.
In this case, $e(\nu_S,t)$ is an integer
$e(\nu_S,t)\in H_0(S)=\mathbb{Z}$ which can
be described as follows:
identify $S$ with the zero section of $\nu_S$,
and consider a section $S'$ of $\nu_S$,
whose restriction to the boundary $\partial S$
is non--vanishing and constant with
respect to the trivialization $t$.
Then $e(\nu_S,t)=S\cdot S'$ where $S\cdot S'$ denotes
the algebraic intersection number of the surfaces $S$ and $S'$
in the total space of $\nu_S$.
Since $S$ has a tubular neighborhood in $M$
which is diffeomorphic to the total space of $\nu_S$,
we can view $e(\nu_S,t)$ as a
relative self--intersection number of $S$ in $M$.

Now assume $e(\nu_S,t)=0$. Then the set of relative
framings on $S$ is non--empty and hence
an affine space over $H_1(S)$ (by Lemma~\ref{affine}).
The action of $H_1(S)$ on framings
can be described as follows.
Let $c$ be an oriented simple closed curve on
$S$ representing an element of $H_1(S)$.
Consider a tubular neighborhood $U\subset S$ of
$c$, diffeomorphic to $c\times [0,2\pi]$.
Let $\chi_c$ be the map from $S$ to $SO(2)$
which is trivial on the complement of $U$ and maps
a point $(\theta,\varphi)\in U=c\times [0,2\pi]$ to
rotation by $\varphi$. Now
$[c]$ acts on framings by sending the
framing given by a vector field $v(z)$ to
the framing given by the vector field $\chi_c(z)v(z)$.
Note that the
Poincar\'e dual $\operatorname{PD}^{-1}[c]\in H^1(S,\partial S)$
has the following interpretation: let $c'$ be a properly
embedded simple curve on $S$ representing an element
of $H_1(S,\partial S)$. The restriction $\chi_c|_{c'}$
is a closed curve in $SO(2)$,
which winds around $SO(2)$
once at every intersection point of $c'$ with $c$.
Hence the class of $\chi_c|_{c'}$
in $\pi_1(SO(2),1)=\mathbb{Z}$
is given by $[\chi_c|_{c'}]=c\cdot c'=
\langle\operatorname{PD}^{-1}[c],[c']\rangle$.

\section{Framed link cobordisms}
\noindent
Now let $S\subset\R^3\times [0,1]$ be a connected
link cobordism between two oriented framed links $L_0$ and $L_1$.
\begin{lemma}
$S$ admits a relative framing,
relative to the given framings of $L_0$ and $L_1$,
if and only if the
total framing coefficients of $L_0$ and $L_1$ agree.
\end{lemma}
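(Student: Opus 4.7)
The plan is to reduce the theorem, via the last lemma of Section~\ref{scodimensiontwo}, to the identity
\[
e(\nu_S,t)=n(f_1)-n(f_0),
\]
where $t$ is the trivialization of $\nu_S|_{\partial S}$ induced by the framings of $L_0$ and $L_1$. First I would check that $t$ is indeed well defined: near $\partial S$ the cobordism $S$ is parallel to $[0,1]$, so at a point $p\in L_i\times\{i\}$ the plane $\nu_S|_p$ in $T_{(p,i)}(\R^3\times[0,1])$ coincides with the normal plane of $L_i$ in $\R^3\times\{i\}$; hence the framing vector fields $v_0,v_1$ on $L_0,L_1$ do trivialize $\nu_S|_{\partial S}$, and by the cited lemma $S$ admits a relative framing extending $t$ iff $e(\nu_S,t)=0$.

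Next I would compute $e(\nu_S,t)$ using its interpretation from the end of Section~\ref{scodimensiontwo} as the relative self-intersection number $S\cdot S'$, where $S'$ is the pushoff of $S$ by a generic section $\sigma$ of $\nu_S$ which restricts to $t$ on the boundary. By a small generic perturbation one may assume that $S\cap S'$ is a finite set of transverse double points in the interior of $S$, that the projection $\pi\colon\R^3\times[0,1]\to[0,1]$ restricts to Morse functions on both $S$ and $S'$, and that the critical values of $\pi|_S$, the critical values of $\pi|_{S'}$, and the projections $\pi(S\cap S')$ are all distinct in $(0,1)$. Observe that $\partial S'=-L_0'\times\{0\}\sqcup L_1'\times\{1\}$, where $L_i'$ is the parallel copy of $L_i$ obtained by pushing along $v_i$, and that by the very definition of the total framing coefficient $\lk(L_i,L_i')=n(f_i)$.

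The core of the proof is a one-parameter sweep of linking numbers. For each regular value $t\in(0,1)$ of $\pi|_S$ and $\pi|_{S'}$ not lying in $\pi(S\cap S')$, the slices $L_t:=S\cap\pi^{-1}(t)$ and $L_t':=S'\cap\pi^{-1}(t)$ are disjoint oriented links in $\R^3$, so $\lambda(t):=\lk(L_t,L_t')$ is defined. I would claim that $\lambda$ is locally constant away from $\pi(S\cap S')$. Between consecutive special values the pair $(L_t,L_t')$ evolves by an ambient isotopy of $\R^3$, which preserves $\lk$. At a critical value of $\pi|_S$ alone, the modification of $L_t$ (a cap, cup, or saddle) takes place in a small ball $B\subset\R^3$ disjoint from $L_t'$; choosing a Seifert surface $F$ for $L_t'$ avoiding $B$ (possible since $B$ is contractible in $\R^3\setminus L_t'$), one has $\lambda(t)=L_t\cdot F$ for $t$ close to the critical value, and this signed intersection number is unaffected by modifications of $L_t$ inside $B$. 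Critical values of $\pi|_{S'}$ are handled analogously. Finally, at each $t^{*}\in\pi(S\cap S')$, the pushoff $L_t'$ crosses $L_t$ transversely at exactly one point, and $\lambda$ jumps by the local intersection sign of $S\cap S'$ there.

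Taking limits at the endpoints gives $\lim_{t\to 0^+}\lambda(t)=n(f_0)$ and $\lim_{t\to 1^-}\lambda(t)=n(f_1)$, and summing all the jumps between $0$ and $1$ yields
\[
n(f_1)-n(f_0)=\sum_{p\in S\cap S'}\operatorname{sign}(p)=S\cdot S'=e(\nu_S,t),
\]
which is the identity we need. The main technical obstacle in this plan is verifying the local constancy of $\lambda$ across critical values of $\pi|_S$ and $\pi|_{S'}$, since at a saddle the link $L_t$ actually changes topological type; the Seifert-surface argument resolves this, but it relies crucially on the genericity assumption that the topological modification occurs in a ball disjoint from the complementary link.
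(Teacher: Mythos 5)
Your argument is correct in outline but takes a genuinely different route from the paper. The paper caps off $S$ and its pushoff $S'$ by surfaces $S_0,S_1$ and $S_0',S_1'$ in $\R^3\times(-\infty,0]$ and $\R^3\times[1,\infty)$, forming closed surfaces $F=S_0\cup S\cup S_1$ and $F'=S_0'\cup S'\cup S_1'$ in $\R^4$, and then appeals to the single global fact that a closed surface in $\R^4$ is null--homologous, so $0=F\cdot F'=S_0\cdot S_0'+S\cdot S'+S_1\cdot S_1'=n(f_0)+e(\nu_S,t)-n(f_1)$. You instead compute $S\cdot S'$ directly by a time--slice sweep, tracking $\lambda(t)=\lk(L_t,L_t')$ across $[0,1]$ and showing it jumps exactly at $\pi(S\cap S')$ with the local intersection sign. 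The sweep is more elementary and handles links directly (the paper explicitly restricts to knots ``for the sake of simplicity''), and it exhibits the identity $e(\nu_S,t)=n(f_1)-n(f_0)$ as a genuine cobordism invariance of the linking number rather than as a consequence of $H_2(\R^4)=0$. What the paper's approach buys is that it hides all the Morse--theoretic bookkeeping inside one well--known fact; what your approach buys is a more transparent local picture of where the obstruction lives.

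The one place your write--up is thinner than it should be is the local--constancy claim at a critical value $t_0$ of $\pi|_S$. You assert that the Morse modification of $L_t$ happens in a small ball $B\subset\R^3$ disjoint from $L_t'$, but $S'$ lies in a tubular neighborhood of $S$, so $L_t'$ runs $\epsilon$--close to $L_t$ and hence close to the critical point; the listed genericity assumptions (distinctness of critical values and of $\pi(S\cap S')$) do not by themselves give this disjointness. It does hold, but only after observing that at a critical point of $\pi|_S$ the tangent plane of $S$ is horizontal, so the normal directions (including the pushoff direction) are transverse to the slice $\R^3\times\{t_0\}$, and then choosing the time window $\delta$ around $t_0$ small compared to the pushoff size $\epsilon$ so that $B$ of radius $\sqrt{\delta}<r$ can be squeezed between the modification locus and $L_t'$. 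You should either say this explicitly, or replace the Seifert--surface argument by the cleaner slab computation: for a slab $\R^3\times[t_1,t_2]$ containing no points of $S\cap S'$, the intersection number of $S$ and $S'$ restricted to the slab vanishes, and this intersection number equals $\lambda(t_2)-\lambda(t_1)$ by the geometric interpretation of linking number used elsewhere in the chapter. That version needs no appeal to Morse theory at all and is really just the paper's argument applied to a slab.
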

\begin{proof}
For the sake of simplicity, we restrict
to the case where $S$ is a cobordism between
knots $K_0$ and $K_1$. Let
$K_0'$ and $K_1'$ be parallels of $K_0$ and $K_1$,
which are pushed off in the direction of the framing.
Choose a cobordism
$S_0\subset\R^3\times (-\infty,0]$
from the empty link to $K_0\subset\R^3\times\{0\}$
and a cobordism $S_1\subset\R^3\times[1,\infty)$ from
$K_1\subset\R^3\times\{1\}$ to the empty link.
Consider small perturbations $S',S_0',S_1'$
of $S,S_0,S_1$, whose boundaries are $K_0'$ and $K_1'$.
Then $F:=S_0\cup S\cup S_1$ and
$F':=S_0'\cup S'\cup S_1'$
are closed oriented surfaces in $\R^4$.
Since $F$ is null--homologous in $\R^4$,
we obtain
$$
0=F\cdot F'=S_0\cdot S_0'+S\cdot
S'+S_1\cdot S_1'
=n(f_0)+e(\nu_S,f_0\cup f_1)-n(f_1)
$$
where $f_0$ and $f_1$ denote the framings
of $K_0$ and $K_1$, respectively.
Hence we have $n(f_0)=n(f_1)$ if and only
if $e(\nu_S,f_0\cup f_1)=0$, if and only if
$S$ admits a relative framing.
\end{proof}

\section[Movies for framed link cobordisms]%
{Movie presentations for framed link cobordisms}
\noindent
A {\it framed movie}
is a sequence of oriented link diagrams, such that
any two consecutive diagrams differ either
by isotopy, a Morse move, a Reidemeister move R2 or R3,
or the framed Reidemeister move FR1.
We can use
such a sequence to describe
a framed link cobordism. Indeed,
it is clear that such as sequence
presents a link cobordism, and
a framing can be specified
by equipping every
diagram of the sequence with the vector
field which is everywhere perpendicular
to the plane of the picture.

{\it Signed movies} are defined
similarly to framed movies.
The only difference is that here the link
diagrams contain signed points and that two
consecutive diagrams may differ by SR1 instead
of FR1, and also by annihilation/creation
of signed points and by sliding signed
points past a crossing.
Like framed movies, signed movies can be
used to present framed link cobordisms.

\begin{theorem}[\cite{bw}]\label{signedmovie}
1.~Every framed link cobordism has a signed movie presentation.
2.~Two signed movies present isotopic framed link cobordisms if an only
if there is a sequence of signed movie moves SM1--SM20
which takes one movie to the other.
\end{theorem}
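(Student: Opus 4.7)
The plan is to prove the two parts of the theorem separately, in each case reducing to the corresponding unframed statement (the Carter--Saito Theorem~\ref{tmovie}) and then handling the extra framing datum with the cohomological calculus of Section~\ref{scodimensiontwo}.

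For part~1 (existence), I would first forget the framing and apply Theorem~\ref{tmovie} to obtain an ordinary movie presentation $\{D_{t_i}\}$ of the underlying unframed cobordism~$S$. Reading each still as a signed diagram with no signed points endows $S$ with the ``blackboard framing along time slices'', call it $f_0$. By inserting some framed Reidemeister moves FR1 near $t=0$ and $t=1$, one can arrange that $f_0$ restricts to the prescribed framings $f|_{L_0}$ and $f|_{L_1}$ on the boundary. Then the difference $f-f_0$ is a relative framing, i.e.\ by Lemma~\ref{affine} an element $[\alpha]\in H_1(S)\cong H^1(S,\partial S)$. Choose a finite collection of simple closed curves $c_1,\dots,c_k$ on $S$ whose homology classes generate $[\alpha]$. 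Put each $c_j$ in general position with respect to the time projection and the projection to $\R^2$, and at a regular time slice for $c_j$ insert a pair of opposite signed points on the appropriate strand, then slide them apart through the movie following $c_j$. Using the description of the $H_1$--action via the map $\chi_c$ from Section~\ref{scodimensiontwo} and the identification $[\chi_c|_{c'}]=\langle\operatorname{PD}^{-1}[c],[c']\rangle$, one checks that the net effect on the framing is exactly $\operatorname{PD}^{-1}[c_j]$. Performing these insertions for each $c_j$ produces a signed movie presenting $(S,f)$.

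For part~2 (uniqueness), suppose two signed movies present the same framed cobordism $(S,f)$. Erasing the signed points yields two ordinary movies of $S$, so by Theorem~\ref{tmovie} they are connected by a sequence of Carter--Saito movie moves and time reorderings. I would then verify, move by move, that each Carter--Saito move lifts to a (sequence of) signed movie moves from the list SM1--SM20, where the signed points present before the move are carried along via the sliding and creation/annihilation rules that are themselves among SM1--SM20. This reduces to the situation in which the two signed movies have identical underlying unframed movies and differ only by the placement and signs of their dots. At this point the difference of the two induced framings, viewed in $H_1(S)$, is zero, so the signed points of one movie can be cancelled against those of the other. The final task is to show that this cancellation can be realised by SM1--SM20; the key is that a null--homologous configuration of signed points bounds a $2$--chain on $S$, which can be pushed across the movie by a sequence of signed slides and FR1/SR1--type moves already in the list.

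The main obstacle is the second half: establishing completeness of the twenty moves SM1--SM20. The existence half is essentially ``put the framing correction into the movie by hand'', but the uniqueness half requires a careful case analysis, for every Carter--Saito move and every admissible decoration of its frames by signed points, that the resulting ambiguity is captured by one of SM1--SM20. I would organise this analysis by first proving a small catalogue of derived moves (such as pushing a signed point through a Reidemeister~R2 or R3 configuration, or transporting an oppositely signed pair across a Morse move) from the basic list, and then checking the Carter--Saito moves modulo those derived moves. The $H_1(S)$--computation at the end, which shows that any remaining discrepancy in signed points is realisable by the moves, will follow from the same circle--of--curves argument used in part~1, applied now to a null--homologous collection of curves where the correction vanishes.
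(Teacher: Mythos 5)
Your overall strategy matches the paper's proof closely: for both parts you forget the framing, invoke the unframed Carter--Saito theorem (Theorem~\ref{tmovie}), and then handle the residual framing ambiguity via the identification of relative framings with $H_1(S)$ from Section~\ref{scodimensiontwo}, realised geometrically by inserting oriented curves of signed points into the movie. The structure of Part 2 --- lift the Carter--Saito sequence move by move, pushing interfering signed points out of the way, then finish with the homology argument for the remaining discrepancy in signed points --- is exactly what the paper does, and you are correctly candid that the real content is the per-move completeness check against SM1--SM20.

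There is, however, a genuine gap in your Part~1. Reading the stills of the unsigned movie $M$ ``as signed diagrams with no signed points'' does \emph{not} produce a signed movie, because $M$ may contain Reidemeister-1 moves, which are not permitted transitions in a signed movie (only SR1 is). The correct first step, as the paper does, is to convert every R1 move into an SR1 move, inserting a signed point for each; this is what yields a legitimate signed movie $M'$ presenting $S$ with some framing $f'$ in the first place. Your later insertion of curve-pairs only corrects interior framing data and cannot substitute for this step. Related to this, your suggestion to ``insert framed Reidemeister moves FR1 near $t=0$ and $t=1$'' to match the prescribed boundary framings does not work: FR1 is precisely the move that \emph{preserves} the blackboard framing (it adds two oppositely curled arcs), so it cannot change the boundary framing coefficients. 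To match $f|_{L_0}$ and $f|_{L_1}$ you should instead place signed points directly on the boundary stills (and carry them through the movie); only after the boundary framings agree does the relative-framing difference live in $H_1(S)$, which your closed-curve insertion then corrects.
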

\noindent
The signed movie moves SM1--SM20
are shown in
Figures~\ref{figsigned15} and \ref{figsigned20}.

\vsp\vsp\par

\begin{figure}
\centerline{\psfig{figure=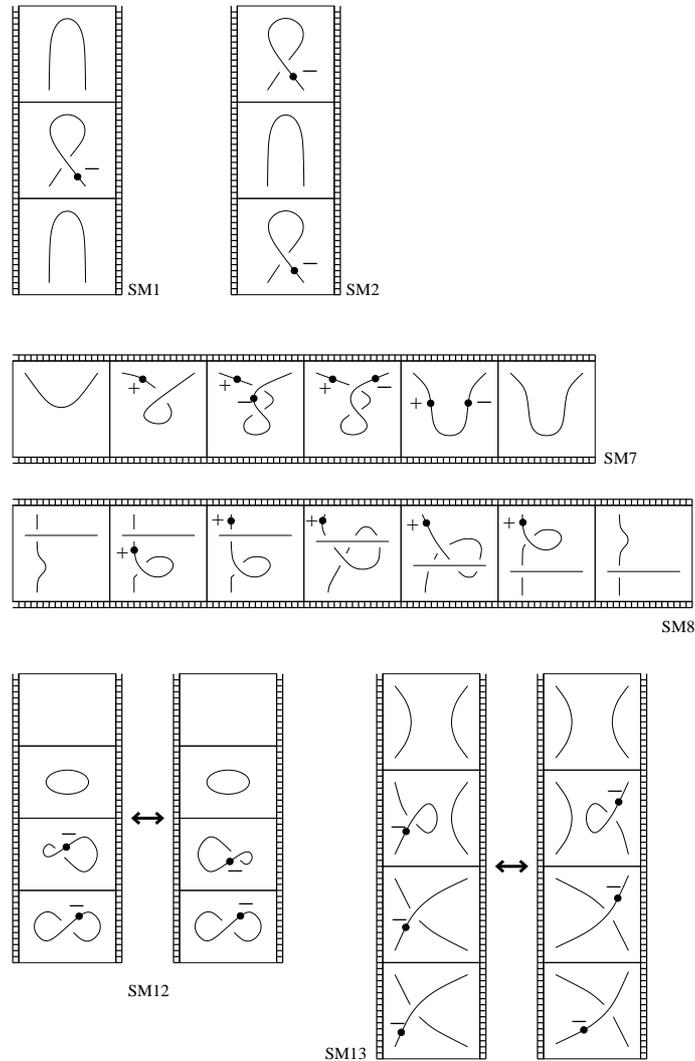,height=14cm}}
\vsp\vsp\par

\caption{Signed movie moves SM1--SM15. These moves
are obtained by inserting signed points into
the Carter--Saito moves MM1--MM15,
in such a way that
each R1 move becomes an SR1 move.
The moves SM3--SM6, SM9--SM11, SM14 and SM15
are not displayed
because they are identical with
the corresponding unsigned moves.
When lifting an R1 move to an SR1 move,
one has two possibilities where to insert
the signed point (one can place
the signed point
on either of the two sides of the curl).
Only one possibility is shown above.
}
\label{figsigned15}
\end{figure}
\begin{figure}[t]
\centerline{\psfig{figure=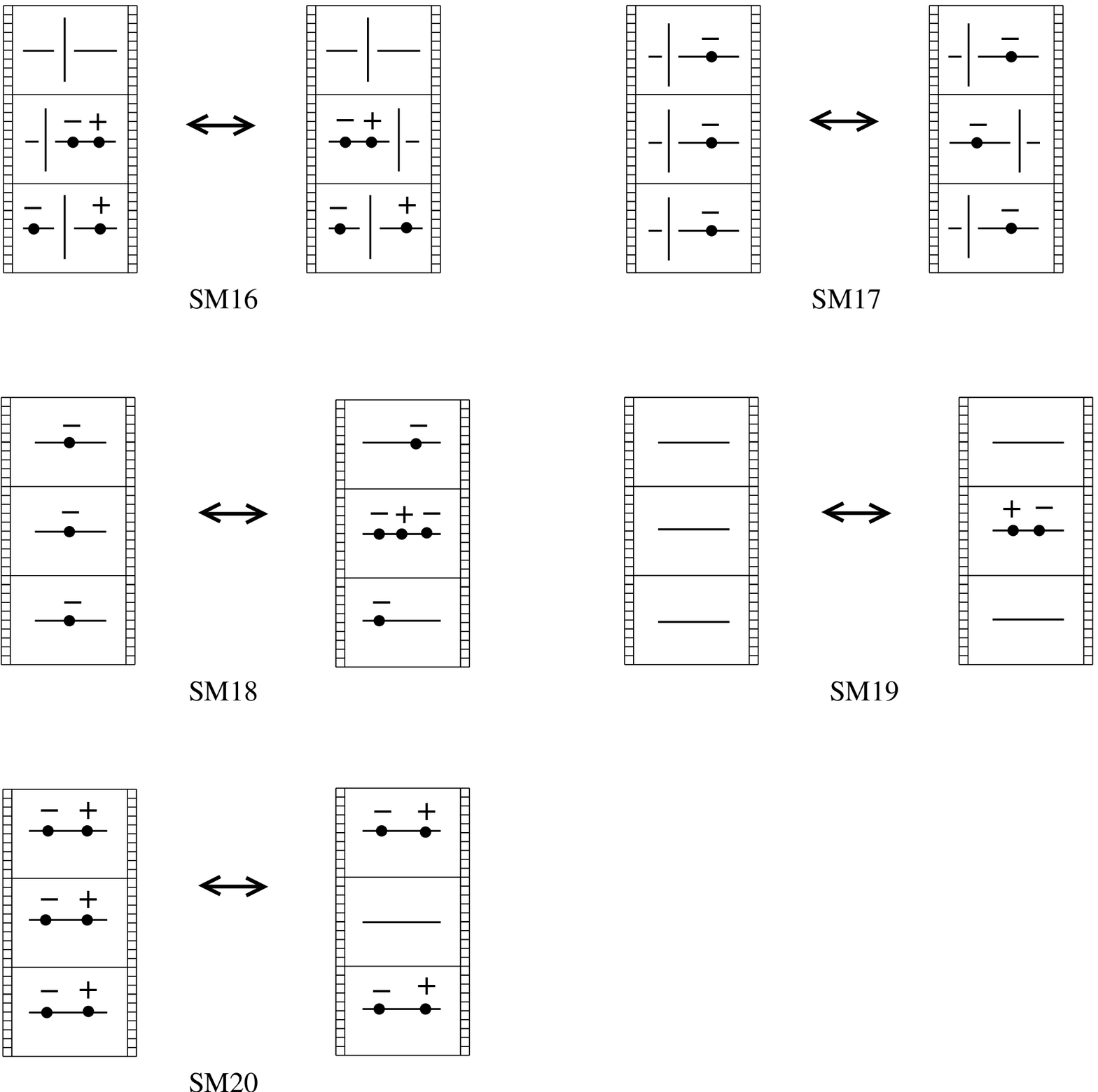,height=9.5cm}}
\vsp\par

\caption{Signed movie moves SM16--SM20.}
\label{figsigned20}
\end{figure}

\begin{proofth}{signedmovie}
1. Let $S$ be a link cobordism and let $f$ be a framing on $S$.
By Theorem~\ref{tmovie}, there is
an unsigned movie $M$
representing
the unframed cobordism $S$.
Inserting signed points into $M$,
in such a way that
every R1 move in $M$ becomes an SR1 move,
we obtain a signed movie $M'$,
which represents the cobordism $S$,
framed by some framing $f'$.
It remains to show
that $f'$ can be changed to
$f$ by inserting additional signed points into $M'$.

To see this, note that the signed points in
the movie $M'$ trace out curves on the cobordism $S$.
These curves can be oriented consistently,
by declaring that positive (negative) points ``move''
backwards (forwards) in time.
Conversely,
if $c$ is an oriented closed
curve on $S$,
we can think of $c$
as being traced out by signed points.
By inserting these points into the movie $M'$,
we can change the framing represented by $M'$.
\begin{figure}[H]
\centerline{\psfig{figure=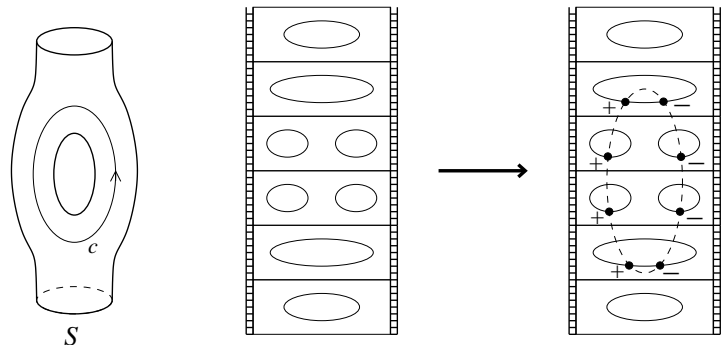,height=4.5cm}}
\caption{Inserting an oriented closed curve $c$ into a movie.}
\end{figure}
\noindent
Hence we obtain an action of oriented closed curves
on the set of framings of $S$. It is easy to see
that this action coincides with the $H_1(S)$--action
discussed in Section~\ref{scodimensiontwo}. Since the latter
action is transitive,
it follows that
we can find a configuration
of signed points whose insertion into $M'$ changes $f'$ into $f$.

\vsp

2. Let  $M'$ and $M''$ be two signed movies
representing isotopic framed cobordisms.
Let $U(M')$ and $U(M'')$ denote the unsigned
movies underlying
$M'$ and $M''$ (i.e. the movies $M'$
and $M''$ without the signed points).
By Theorem~\ref{tmovie},
there is a sequence of unsigned
movies $M_1,M_2,\ldots,M_m$, such that
$M_1=U(M')$ and $M_m=U(M'')$, and such that $M_i$
differs from $M_{i-1}$ by one of
the Carter--Saito moves MM1--MM15.

By definition,
the moves SM1--SM15 are signed
analogues of the moves MM1--MM15.
Hence we can lift
the sequence $U(M')=M_1,M_2,\ldots,M_m$
movie by movie to
a sequence of signed movies $M'=M'_1,M'_2,\ldots,M'_m$,
such that $U(M'_i)=M_i$ and
such that $M'_i$ differs from $M'_{i-1}$
by one of the moves SM1--SM15, and possibly
some of the additional moves SM16--SM20.

Let us explain the role of the additional moves.
Assume we have already lifted the first $i-1$ movies
$M_1,M_2,\ldots,M_{i-1}$
to a sequence $M'_1,M'_2,\ldots,M'_{i-1}$.
Since $M_i$ differs from $M_{i-1}$ by one of
the moves MM1--MM15,
it should be possible to insert signed points
into $M_i$,
so that the result is a signed movie $M'_i$
differing from
$M'_{i-1}$ by
one of the signed moves SM1--SM15.
However, it might happen that the signed move
is not directly applicable,
for example because
$M'_{i-1}$ contains unwanted signed points,
lying in the region of the cobordism
where the signed move should take place.
In this case, it is helpful to
think of the unwanted points
as oriented curves on the cobordism,
as in the proof of part 1.
By performing an isotopy,
we can remove
these curves from 
the relevant region of the cobordism.
Back on the level of movies, 
this isotopy becomes 
a sequence of additional
moves SM16--SM18.
There are other cases,
where moves SM19--SM20 are needed
as well.

Now assume that we have lifted the entire sequence.
Then it remains to show that $M'_m$ 
and $M''$ are related by signed movie moves.
Being lifts of the movie $M_m$, the movies
$M'_m$ and $M''$ agree, except
possibly for the signed points.
Moreover,
since $M'_m$ and $M''$
represent equivalent framings,
the oriented curves
$c'_m$ and $c''$
coming from signed
points in $M'_m$ and in $M''$
must be homologous.
To complete the proof,
verify that any two homologous curves
on a link cobordism
can be related by a sequence of
local modifications, which
become the moves SM16--SM20
when seen on the level of movie presentations.
\end{proofth}

Let FM1--FM20 denote the framed movie moves, obtained
by replacing the signed points in SM1--SM20 by curls.
Note that FM19 and FM20 are identical
with FM1 and FM2.

\newpage
\begin{corollary} 1. Every framed link cobordism
has a framed movie presentation.
2. Two framed movies present isotopic framed link cobordisms
if and only if there is a sequence of framed movie moves
FM1--FM18 which takes one movie to the other.
\end{corollary}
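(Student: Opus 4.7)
The plan is to deduce the corollary directly from Theorem~\ref{signedmovie}, via the natural correspondence between signed movies and framed movies. Recall that a positive (resp.\ negative) signed point on a diagram contributes $+1$ (resp.\ $-1$) to the total framing coefficient, just as a positive (resp.\ negative) curl produced by an FR1 move does. This suggests a translation procedure: replace each signed point in a signed movie by a small curl of the corresponding sign, and conversely interpret every small isolated curl in a framed movie as a signed point of the matching sign.

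The first step is to verify that this translation preserves the framed link cobordism represented. A signed point contributes a vector field which winds once around the link in a small neighborhood of the point, while a curl at the same location contributes the same amount of twisting to the blackboard framing. Hence, after the translation, the two conventions give rise to equivalent framings on the link at every still of the movie, and the two presentations describe the same framed link cobordism.

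For part~1, I would apply part~1 of Theorem~\ref{signedmovie} to obtain a signed movie presentation of the given framed link cobordism, and then convert each signed point to a curl of the corresponding sign. The resulting framed movie presents the same framed link cobordism by the previous paragraph. For part~2, suppose $M_1$ and $M_2$ are two framed movies representing isotopic framed link cobordisms. Convert them to signed movies $M_1'$ and $M_2'$ by replacing each FR1--type curl by a signed point of matching sign. By part~2 of Theorem~\ref{signedmovie}, there is a sequence of signed movie moves SM1--SM20 taking $M_1'$ to $M_2'$. Translate this sequence back into a sequence of framed movie moves by replacing signed points by curls throughout; by definition, the move SM$i$ becomes FM$i$. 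Since FM19 and FM20 coincide with FM1 and FM2 (as observed immediately before the corollary), the resulting sequence only involves FM1--FM18. The converse direction is immediate, because each framed movie move is a valid local transformation of framed movie presentations.

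The main obstacle will be to carry out the translation dictionary carefully, in particular verifying that a signed point and a small curl produce homotopic framings on the presented link, and that the signed moves SM19 and SM20, once translated, literally coincide with FM1 and FM2 rather than with some other nontrivial framed moves. One must also check that the intermediate stills arising during the translation still form a valid framed movie, i.e.\ that consecutive stills differ by one of the allowed moves (isotopy, a Morse move, R2, R3, or FR1). Once these matters are settled, the corollary follows formally from Theorem~\ref{signedmovie}.
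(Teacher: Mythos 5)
Your overall strategy — deduce the corollary from Theorem~\ref{signedmovie} by replacing signed points with curls — is exactly what the paper intends (the paper records only the observation that FM19 and FM20 collapse to FM1 and FM2 and leaves the deduction to the reader). The forward translation (signed $\to$ framed, needed for part~1 and the ``if'' direction of part~2) is essentially fine and you correctly flag the things that need to be checked, e.g.\ that SR1 becomes a trivial move and that annihilation of an oppositely signed pair becomes FR1.

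The one step that does not quite parse as written is the reverse translation used in the ``only if'' direction of part~2: ``convert $M_1$ and $M_2$ to signed movies by replacing each FR1--type curl by a signed point of matching sign.'' A framed movie does not come with a distinguished class of ``FR1--type'' curls; a curl may be present in the initial still, persist through R2/R3 moves, and only later be cancelled (or not) by an FR1 move, so the recipe is ambiguous. The cleaner construction is to leave the stills of $M_i$ untouched and instead expand each FR1 move $D\to D'$ (cancellation of a pair of opposite curls) into the signed-movie-legal sequence $D\to D_1\to D_2\to D'$, using SR1 twice to convert the two curls into a pair of oppositely signed points and then annihilating that pair. This produces a bona fide signed movie $M_i'$, and applying ``signed point $\mapsto$ curl'' to $M_i'$ recovers $M_i$ up to trivially repeated stills (since SR1 projects to the identity move). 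With that replacement, Theorem~\ref{signedmovie} applies to $M_1'$ and $M_2'$, and pushing the resulting sequence of moves SM1--SM20 through the projection yields FM1--FM20, hence FM1--FM18 by the noted coincidences, completing your argument.
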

\newpage\thispagestyle{empty}
\setcounter{footnotebuffer}{\value{footnote}}
\chapter{The colored Khovanov bracket}\label{ccoloredKhovanovbracket}
\setcounter{footnote}{\value{footnotebuffer}}
The {\it colored Jones polynomial} is the Reshetikhin--Turaev
invariant \cite{rt} for
oriented framed
links whose components are colored
by irreducible representations of $U_q(\sltwo)$.
If
all components are colored by the fundamental
representation $V_1$, the colored Jones polynomial
specializes to the ordinary Jones polynomial.
The colored Jones polynomial plays an important
role in the definition of the $\sltwo$ quantum
invariant for $3$--manifolds
and is conjecturally related to the hyperbolic
volume of the knot complement.

Khovanov \cite{kh:colored}
proposed two homology theories
which
have the colored Jones polynomial
as the Euler characteristic.

In this chapter, we focus on
Khovanov's first theory,
for the non--reduced
colored Jones polynomial.
We introduce a generalization
of Khovanov's theory,
which we call the {\it colored
Khovanov bracket}.
We show that this
theory is well--defined over $\Z$.
Further, we introduce
modifications of the
colored Khovanov bracket,
and study conditions under which
colored framed link cobordisms induce
chain transformations between
our modified colored Khovanov brackets.

\section{Colored Jones polynomial}
\noindent
Let $\fn=(n_1,\ldots,n_l)$ be a finite sequence of non--negative
integers. Let $(L,\fn)$ denote an oriented framed $l$--component
link $L$ whose $n_i$--th component is colored by the
$(n_i+1)$--dimensional irreducible representation $V_{n_i}$
of quantum $\sltwo$.
Given a sequence $\mathbf{m}=(m_1,\ldots,m_l)$ of
non--negative integers, we denote by $L^{\mathbf{m}}$ the
$\mathbf{m}$--cable of $L$.
When forming the $m_i$--cable of a component, we orient
the strands by alternating the original and the opposite
direction (starting with the original direction), so
that neighbored strands are always oppositely oriented.
The colored Jones polynomial $J(L,\fn)$ of the link $L$
can be expressed in terms of the Jones polynomial of
its cables:
\begin{equation}\label{fcoloredJones}
J(L,\fn)=\sum^{\lfloor {\fn}/2\rfloor}_{{\fk}=\mathbf{0}}
(-1)^{|\fk|} \left(\begin{array}{c} \fn-\fk \\ \fk\end{array}\right)
J(L^{\fn-2\fk})
\end{equation}
where $|\fk|=\sum_i  k_i$ and
$$
\left(
\begin{array}{c} \fn-\fk \\ \fk\end{array}\right)=\prod^l_{i=1}
\left(
\begin{array}{c} n_i-k_i \\ k_i\end{array}\right)\, .
$$
In \eqref{fcoloredJones}
the sum ranges over all $\fk=(k_1,\ldots,k_l)$
such that $0\leq k_i\leq\lfloor n_i/2\rfloor$
for all $i$.
Formula~\eqref{fcoloredJones} is a consequence
of the following relation,
which holds in the representation ring of $U_q(\sltwo)$
(for generic $q$), and which can be proved inductively
using $V_n\otimes V_1\cong V_{n+1}\oplus V_{n-1}$:
$$
V_n=\sum^{\lfloor n/2\rfloor}_{k=0}
(-1)^k \left(\begin{array}{c} n-k \\ k\end{array}\right)
V_1^{\otimes(n-2k)}\; .
$$
Note that for $\fn=(1,\ldots,1)$, we have
$J(L,\fn)=J(L)$.

\subsection[Graph $\G_{\fn}$]{Graph $\G_{\fn}$.}
The binomial coefficient
$\left(\begin{array}{c} n-k \\ k\end{array}\right)$
equals the number of ways to select $k$ pairs of neighbors from $n$
dots placed on a vertical line, such that each dot appears in at most
one pair. Analogously,
$\left(\begin{array}{c} \fn-\fk \\ \fk\end{array}\right)$
is the number of ways to select $\fk$ pairs of neighbors
on $l$ lines. We call such a selection of $\fk$ pairs a
$\fk$--{\it pairing}. Given a
$\fk$--pairing $\fs$, we denote by $D^{\fs}$
the cable diagram containing only components corresponding
to unpaired dots. Hence $D^{\fs}$ is isotopic to
$D^{\fn-2\fk}$.

\begin{figure}[H]
\centerline{\epsfysize=8cm \epsffile{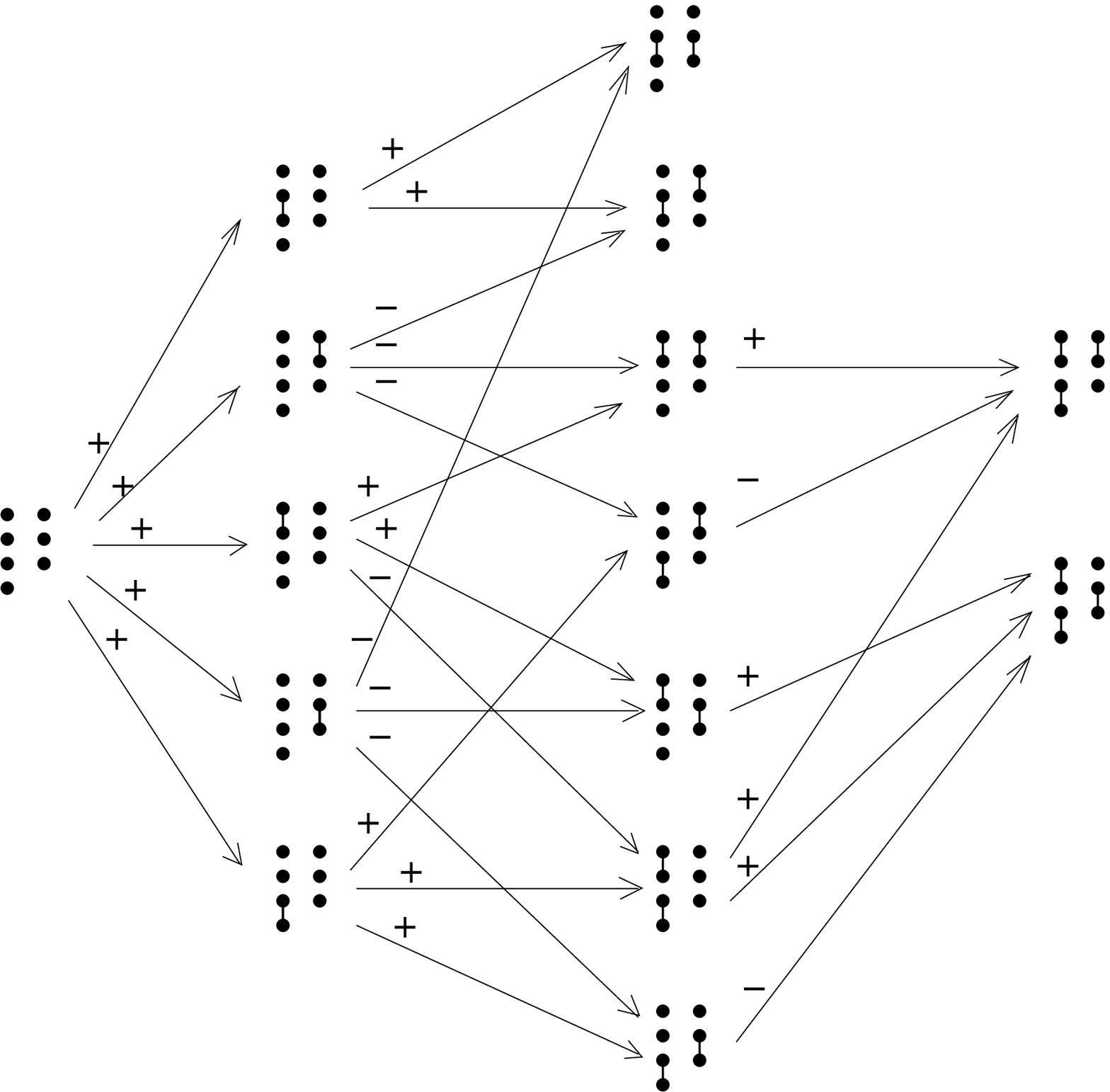}}

\caption{The graph $\G_{4,3}$.}
\label{figgraph}
\end{figure}

Let $\G_\fn$ be the graph, whose vertices correspond to $\fk$--pairings.
Two vertices of $\G_\fn$ are connected by an edge if the corresponding
pairings can be related to each other by adding/removing  one pair of
neighboring points. The ``degree'' of a vertex labeled by a
$\fk$--pairing is equal $|\fk|$. The edges are directed
towards increasing of degrees (see Figure~\ref{figgraph}).

\section{Colored Khovanov bracket}
\noindent
Let $D$ be a diagram of an oriented framed link $L$
(framed by the blackboard framing) and let $\fn=(n_1,\ldots,n_l)$
be a coloring of the components of $L$ by non--negative integers.
To $(D,\fn)$ we associate a complex $\Kh(D,\fn)$ in the category
$\Mat(\gKobdh)$. The construction goes as follows:

At each vertex of the graph $\G_{\fn}$ labeled by a
$\fk$--pairing $\fs$ we put the complex $\Kh(D^{\fs})$ defined
as in \eqref{fKhdef}, but viewed as an object of the
homotopy category $\gKobdh$. With an edge $e$ of $\G_{\fn}$
connecting a $\fk$--pairing $\fs$ to a $\fk'$--pairing
$\fs'$, we associate a morphism
$A_e:\Kh(D^{\fs})\rightarrow\Kh(D^{\fs'})$
in the category $\gKobdh$, as follows.
Let $C$ and $C'$ denote the two neighbored strands of
the cable $L^{\fs}$ which form a pair in $\fs'$ but
not in $\fs$.
Consider a standard annulus glued between
$C$ and $C'$ (i.e. such that $C$ and $C'$
are its boundary components, see \cite{kh:colored}).
Assume that $L^{\fs}$ is embedded in
$\R^3\times[0,1]$ as $L^{\fs}\times\{0\}$, and
that the interior of the annulus is pushed
into the interior of $\R^3\times[0,1]$.
Let $S_e$ denote the link cobordism
from $L^{\fs}$ to $L^{\fs'}$
which is given by the annulus on
$C$ and $C'$
and by the identity cobordism
on all other strands of $L^{\fs}$.
By
Section~\ref{sfunctoriality},
$S_e$
induces a morphism
$\Kh(S_e):\Kh(D^{\fs})\rightarrow\Kh(D^{\fs'})$
in $\Kobdh$, which is well--defined
up to sign. Define $A_e:=\Kh(S_e)$ to
be this morphism. Note that
$A_e$ is graded of Jones degree zero because the
Euler characteristic of an annulus is zero.

The sign of $A_e$ depends on the choice of
the movie presentation for the annulus, but
for any choice of movie presentations,
the squares of $\G_{\fn}$ commute
up to sign, because cobordisms given by gluing
of annuli in a different order are isotopic.
We call a choice of signs for the morphisms $A_e$
{\it satisfactory} if all squares anticommute.

Given a satisfactory choice of signs, we define
a chain complex $\Kh(D,\fn)$ in the category
$\Mat(\gKobdh)$ as follows. The $k$--th
``chain space'' is given by
$$
\Kh(D,\fn)^k:=\bigoplus\Kh(D^{\fs})\;\in\Ob(\Mat(\gKobdh))
$$
where the sum ranges over all $\fk$--pairings
$\fs$ with $|\fk|=k$. The $k$--th differential
$d_{\fn}^k:\Kh(D,\fn)^k\rightarrow\Kh(D,\fn)^{k+1}$ is
given by $(d^k_{\fn})_{\fs',\fs}:=A_e$ whenever $\fs$
and $\fs'$ are connected by an edge $e$, and
$(d^k_{\fn})_{\fs',\fs}:=0$ otherwise
(here $\fs$ denotes a $\fk$--pairing with
$|\fk|=k$, and $\fs'$ denotes a $\fk'$--pairing
with $|\fk'|=k+1$).
Since all squares of $\G_{\fn}$ anticommute, we
get $d_{\fn}^{k+1}\circ d_{\fn}^k=0$.

\begin{lemma}\label{signs1}
There exists a satisfactory choice of signs making all
squares of $\G_{\fn}$ anticommute.
\end{lemma}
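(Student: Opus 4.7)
The plan is to mimic the standard sign convention for the Khovanov cube of resolutions, regarding $\G_\fn$ as a subgraph of the Boolean cube on the finite set $P$ of all ``pair positions'' (that is, choices of two consecutive dots on one of the $l$ lines, so that $|P|=\sum_i(n_i-1)$). Each $\fk$--pairing $\fs$ is identified with the subset of $P$ it determines, and every square of $\G_\fn$ has the form
$(\fs,\ \fs\cup\{p\},\ \fs\cup\{q\},\ \fs\cup\{p,q\})$
with $p,q\in P$ sharing no dot and both compatible with $\fs$. Fix once and for all a total order on $P$.

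For each edge $e\colon \fs\to \fs\cup\{p\}$, I pick an arbitrary lift $A_e^0\in\Mor_{\gKobdh}(\Kh(D^\fs),\Kh(D^{\fs\cup\{p\}}))$ of the morphism $\Kh(S_e)\in\Mor_{\gKobdpr}$, which is well--defined up to an overall sign. I then set
$$
A_e\ :=\ (-1)^{N(\fs,p)}\,A_e^0,\qquad N(\fs,p)\ :=\ |\{q\in\fs\,:\,q<p\}|.
$$
The key step is to check that the lifts $A_e^0$ can be chosen so that around every square of $\G_\fn$ the two compositions strictly agree, i.e.\ $A_{e_2}^0\circ A_{e_1}^0=A_{e_4}^0\circ A_{e_3}^0$ as elements of $\gKobdh$, and not merely up to a sign. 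Geometrically, for disjoint pairs $p,q$ the annuli $S_p$ and $S_q$ can be chosen to lie in disjoint regions of $\R^3\times[0,1]$, so that the composite cobordisms $S_q\circ S_p$ and $S_p\circ S_q$ admit movie presentations which differ only by time--reordering parts of the movie that ``happen'' at different places. By Theorem~\ref{tmovie} this is precisely the distant--commutation clause of the movie calculus, and Bar--Natan's proof of Theorem~\ref{tfunctor} assigns to it the identity on the nose. Choosing each $A_e^0$ to be represented by a movie whose non--trivial part is supported in a small neighborhood of the corresponding pair, the raw compositions then agree as claimed.

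Granting strict commutativity of the raw lifts, anticommutation of the signed compositions is a routine exponent count. For a square with $p<q$ one has $N(\fs\cup\{p\},q)=N(\fs,q)+1$ and $N(\fs\cup\{q\},p)=N(\fs,p)$, so the two oriented paths from $\fs$ to $\fs\cup\{p,q\}$ acquire total signs $(-1)^{N(\fs,p)+N(\fs,q)+1}$ and $(-1)^{N(\fs,q)+N(\fs,p)}$, which differ by $-1$; this gives the lemma. The main obstacle I expect is the strict--commutativity claim in the previous paragraph: one must be sure that Bar--Natan's functoriality assigns the sign $+1$ (and not $-1$) to the distant--commutation movie move. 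Should a direct verification prove delicate, an alternative is to observe that modifying the lifts $A_e^0$ by signs acts on the collection of square--signs through the coboundary of a $1$--cochain on $\G_\fn$, so that the obstruction to a satisfactory assignment lies in a quotient of $H^2(\G_\fn;\mathbb{F}_2)$, which vanishes because $\G_\fn$ is a subcomplex of a Boolean lattice and the obstruction class can be shown to be the restriction of the trivial cocycle on the ambient cube.
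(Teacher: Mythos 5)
Your overall strategy matches the paper's in both of its steps: first arrange signs so that all squares of $\G_{\fn}$ commute, then twist by a Koszul--type exponent to flip commutation to anticommutation. Your anticommutation step, using $(-1)^{N(\fs,p)}$ with a fixed total order on the set of pair positions, is the same standard device as the paper's $(-1)^{(\fs,\fs')}$ (which counts pairs of $\fs$ above or to the right of the new pair); the two sign conventions differ only in the choice of order.

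The genuine difference is in how each proof obtains commutation. The paper proceeds abstractly: using the Claim it defines a $\Z/2\Z$--valued function on the set of squares recording which are commutative and which are anticommutative, then ``extends'' this to a $1$--cochain $\zeta\in C^1(\G_{\fn},\Z/2\Z)$ and corrects $A_e$ by $(-1)^{\zeta(e)}$. You instead argue geometrically: choose each lift $A_e^0$ to be a movie with support in a small neighbourhood of the relevant pair of strands, and observe that for disjoint pairs $p$ and $q$ the composites $S_q\circ S_p$ and $S_p\circ S_q$ are related by time--reordering of distant events, so the squares commute on the nose in $\gKobdh$. You correctly flag the delicate point — that time--reordering of distant events should produce $+\Id$ and not $-\Id$. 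This does hold in Bar--Natan's tangle framework: two chain transformations supported in disjoint discs act through different tensor factors of the planar composition and therefore genuinely commute; the projectivization in Theorem~\ref{tfunctor} is forced only by the Carter--Saito moves, not by time--reordering. Your fallback argument, locating the obstruction in a quotient of $H^2(\G_{\fn};\mathbb{F}_2)$, is closer in spirit to the paper's cohomological route, but as written it is not sufficient: $\G_{\fn}$ being a subcomplex of a Boolean cube does not by itself kill $H^2$, and the asserted extension of the obstruction class to the (contractible) ambient cube is exactly the thing requiring proof, since the $A_e$ are only defined on edges of $\G_{\fn}$. Worth noting for fairness: the paper's own commutation step also elides a point — extending the function on squares to a $1$--cochain $\zeta$ requires that the function be consistent across linear relations among the square boundaries (these do exist, as the square boundaries over--generate $Z_1(\G_{\fn};\Z/2\Z)$), and the Claim alone only guarantees well--definedness on each individual square, not the linear consistency. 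So you and the paper take genuinely different routes to commutation, with a comparable amount of detail left to the reader in each.
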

For the proof of Lemma~\ref{signs1}, we need the following auxiliary
observation.
\begin{claim}
Let $e_1,e_2,\ldots,e_m$ be a sequence of oriented edges in $\G_\fn$,
such that the starting point $e_{i+1}$ agrees with the
endpoint of $e_i$. Then the composition
$A:=A_{e_m}\circ\ldots\circ A_{e_2}\circ A_{e_1}$
is non--zero in $\gKobdh$.
More generally, $N$ times $A$
is non--zero for any integer $N\in\Z$, $N\neq 0$.
In particular, $A$ 
is not equal to its negative.
\end{claim}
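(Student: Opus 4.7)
The plan is to express $A$ as a cobordism-induced chain map $\pm\Kh(S)$, and then detect its non-triviality via Lee's functor. Since any null-homotopy of $NA$ over $\Z$ descends to a null-homotopy over $\Q$, it suffices to prove that $N\cdot\F_{\Lee}(A)$ induces a non-zero map on Lee homology for each $N\in\Z\setminus\{0\}$.

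By construction every $A_{e_i}$ equals $\pm\Kh(S_{e_i})$ in $\Kobdh$, so functoriality of $\Kh$ up to sign gives $A = \pm\Kh(S)$ for the composed link cobordism $S := S_{e_m}\circ\cdots\circ S_{e_1}$ from $L^{\fs_0}$ to $L^{\fs_m}$. As an abstract surface, $S$ decomposes as a disjoint union of identity cylinders on the cable strands common to $L^{\fs_0}$ and $L^{\fs_m}$, together with $m$ annuli, one per edge $e_i$, each capping off the pair of neighboring parallel strands paired at that edge; in particular $\chi(S)=0$.

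Orient $L^{\fs_0}$ by the cabling orientation $o_0$, so that any pair of neighboring strands is oppositely oriented, and let $o_m$ denote the induced cabling orientation on $L^{\fs_m}$. This opposite-orientation condition is precisely what allows each annulus of $S$ to be oriented coherently, making $S$ into an oriented cobordism $(L^{\fs_0}, o_0)\to(L^{\fs_m}, o_m)$. I now trace $[\s_{o_0}]$ through $\F_{\Lee}(S)$: each annulus factors into a merge-saddle followed by a cap, which, using \eqref{fLee} together with $\e_{\Lee}(a)=\e_{\Lee}(b)=1$, sends $a\otimes a\mapsto 2$ and $b\otimes b\mapsto -2$ while killing mixed terms. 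The essential geometric observation is that two parallel strands with opposite orientations have the same ambient checkerboard color on their right, so the Lee rule assigns them the same label in $\s_{o_0}$; hence every annulus contributes a factor $\pm 2$ and no annulus kills the generator. Combining with the identity action on cylindrical pieces, and invoking the standard fact that Lee cobordism maps between compatibly oriented links carry canonical generators to canonical generators up to scalar (\cite{le2},\cite{ra}), one obtains
$$
\F_{\Lee}(S)[\s_{o_0}] = \pm 2^m\,[\s_{o_m}]\, .
$$

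Since $[\s_{o_m}]$ is one of the $2^{|L^{\fs_m}|}$ basis elements of $\HC'(L^{\fs_m})$ supplied by Theorem~\ref{tdegeneration}, the class $\pm N\cdot 2^m\,[\s_{o_m}]$ is non-zero for every $N\in\Z\setminus\{0\}$; therefore $NA$ is not null-homotopic in $\gKobdh$, and the case $N=2$ yields $A\neq -A$. The main obstacle is the local geometric step guaranteeing that paired strands receive identical $a/b$-labels in the canonical Lee generator; once this is in place, the algebraic computation $\pm 2^m\neq 0$ together with Theorem~\ref{tdegeneration} finishes the argument.
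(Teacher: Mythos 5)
Your proposal is correct in spirit but takes a genuinely different route from the paper. The paper's proof pre- and post-composes $A$ with auxiliary cobordisms so that the result is induced by a disjoint union of trivially embedded tori in $\R^4$ (plus identity pieces), evaluates each torus to $\pm 2$ via the $(T)$ relation in $\Cobdl^3$, and then invokes (in a footnote) the non-vanishing of Lee homology to conclude that a non-zero integer multiple of the identity morphism is non-zero in $\gKobdh$. You instead work directly on the Lee side from the start: you identify $A$ with $\pm\Kh(S)$ for the composed cobordism $S$, observe that $S$ is a disjoint union of $m$ annuli and cylinders, and trace the canonical generator $[\s_{o_0}]$ through $\F_{\Lee}(S)$, so that each annulus contributes a factor $\pm 2$. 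Both approaches ultimately detect non-triviality through Lee homology, but the mechanisms differ: the paper stays categorical, using the local relations of $\Cobdl^3$ on a closed surface, while your argument is computational and produces the explicit scalar $\pm 2^m$ without doubling the cobordism.

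The one place where your argument requires more care than you give it is the ``essential geometric observation.'' The labels $a$, $b$ are attached not to the cable components $C$, $C'$ of $L^{\fs}$ but to the circles of the orientation-consistent Kauffman state $D_o$ of $D^{\fs}$, which are in general different objects (a single cable strand may be distributed over several Kauffman-state circles, and vice versa). What one actually needs is a local statement at the moment of the merge saddle in the movie: the two arcs being joined belong to two circles of the current Kauffman state, and because $C$ and $C'$ are oppositely oriented parallels, the region between them lies to the right of both arcs, so both circles see the same checkerboard color on their right and hence carry the same label. Once phrased this way the observation is correct, but it is a statement about arcs of the Kauffman state near the saddle, not about the components $C$ and $C'$ themselves, and one should also say a word about why the intervening Reidemeister moves in the movie presentation of each annulus (needed to shrink and cap the merged circle) preserve the canonical Lee class up to a non-zero scalar, which is the content of Rasmussen's Lemma~4.2. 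With those two points spelled out the argument goes through and gives, if anything, sharper information than the paper's version.
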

\begin{proofc}
We only prove the first statement (i.e. that
$A\neq 0$), but
the more general statement follows
in exactly the same way.

Let us start with the case where $D$ is the diagram
of a knot colored by $n=2$. In this case, $\Gamma_\fn$ has a single edge
$e$, and we have to show that $A_e\neq 0$. Recall that $A_e$ is
given by an embedded annulus in $\R^3\times[0,1]$. Let $\bar{A}_e$
denote the chain transformation obtained by turning $A_e$ upside--down
(i.e. by reflecting $A_e$ along $\R^3\times\{1/2\}\subset \R^3\times[0,1]$).
Then $\bar{A}_e \circ A_e:\Kh(\emptyset)\rightarrow
\Kh(\emptyset)$ is induced by an embedded torus, which
is isotopic in $\R^4$ to
a trivially embedded torus. Using the (T) relation, we get
$\bar{A}_e \circ A_e=\pm 2 \Id$, where $\Id$ denotes the
identity morphism of $\Kh(\emptyset)$,
and hence $A_e\neq 0$
\footnote{
Here we use the following fact:
assume $D$ is any link diagram
and $N$ any non--zero integer.
Then $N$ times the
identity morphism of $\Kh(D)$
is non--zero in $\gKobdh$. To see this,
use e.g. that $\HC'(D)\neq 0$
(see Chapter~\ref{cRasmussenforlinks}).
}.

Similarly, if $D$ is a diagram of a knot colored by $n>2$,
we can pre- and postcompose $A_{e_m}\circ\ldots\circ A_{e_2}\circ
A_{e_1}$ with suitable cobordisms, in such a way that the result is either
isotopic to a trivially embedded torus in $\R^4$ or to the identity
cobordism of the knot. In both cases we get
$A_{e_m}\circ\ldots\circ A_{e_2}\circ A_{e_1}\neq 0$.

Finally, if $D$ represents a link with more than one component,
we can apply the above argument to the different components of the
link individually. If necessary, we can use
the (N)~relation to unlink the resulting
embedded tori from identity cobordisms. Details are left
to the reader.
\end{proofc}

\begin{proofle}{signs1}
Let us first show that we can make all squares commute.
We define a $1$--cochain $\zeta\in C^1(\G_\fn,\Z/2\Z)$
as follows. For any square $S\subset\G_\fn$, we put $\zeta(S)=0$ if
$S$ is commutative, and $\zeta(S)=1$ if $S$ is anticommmutative.
Note that $\zeta$ is well--defined
because of the above claim.
Using that $\Z/2\Z$ is a field, we can extend $\zeta$ to a
$1$--cochain. Now it is easy to see that
all squares of $\G_\fn$ become commutative
if we replace $A_e$ by $(-1)^{\zeta(e)}A_e$.

Once all squares commute, we can make
them anticommute as follows.
For each edge $e$, connecting two pairings $\fs$ and $\fs'$,
we multiply the morphism $A_e$ by $(-1)^{(\fs,\fs')}$, where
$(\fs,\fs')$ denotes the number of pairs in $\fs$, which lie
either on the same vertical line as unique pair in
$\fs'\backslash\fs$ and above that pair, or on one of the
vertical lines to the right of that pair (see Figure~\ref{figgraph}).
\end{proofle}

\begin{lemma}\label{signs2}
Different satisfactory choices of signs lead to isomorphic complexes.
Moreover, for any two satisfactory choices of signs there is a preferred
isomorphism between the corresponding complexes.
\end{lemma}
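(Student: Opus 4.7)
Given two satisfactory sign families $\epsilon = (\epsilon_e)$ and $\epsilon' = (\epsilon'_e)$ on the edges of $\G_\fn$, set $\delta_e := \epsilon_e\epsilon'_e \in \{\pm 1\}$. My plan is to produce the desired isomorphism in the form of a diagonal map $\psi = \bigoplus_{\fs}\phi_\fs\cdot\Id_{\Kh(D^\fs)}$ with signs $\phi_\fs \in \{\pm 1\}$ to be determined. In order for such a $\psi$ to intertwine the two differentials, the equation $\phi_{\fs'}\,\epsilon_e A_e = \epsilon'_e A_e\,\phi_\fs$ must hold on every directed edge $e:\fs\to\fs'$ of $\G_\fn$, which simplifies to $\phi_{\fs'} = \delta_e\phi_\fs$. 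Hence the task reduces to finding a $\{\pm 1\}$--valued $0$--cochain $\phi$ on the vertices of $\G_\fn$ whose coboundary on the $1$--skeleton equals $\delta$, and the preferred isomorphism should come from a canonical choice of such $\phi$.

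View $\G_\fn$ as a $2$--dimensional CW complex whose $2$--cells are the commutative squares of $\G_\fn$. Then $\delta$ is automatically a $1$--cocycle on this complex. Indeed, since both $\epsilon$ and $\epsilon'$ are satisfactory, the product of signs around any square of $\G_\fn$ equals $-1$ for each family, and so the product of the four $\delta_e$'s around the square equals $+1$. Thus the sought $\phi$ exists precisely when $[\delta] = 0 \in H^1(\G_\fn;\Z/2\Z)$.

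The main step, and in my view the principal obstacle, is to establish $H^1(\G_\fn;\Z/2\Z) = 0$. I would fix the empty pairing $\fs_0$ as basepoint and, for each pairing $\fs$, define $\phi_\fs := \prod_{e\in\gamma_\fs}\delta_e$ along an edge path $\gamma_\fs$ from $\fs_0$ to $\fs$; the issue is path--independence. For two degree--increasing paths (each step adding a single pair), the two paths realise the same set of pairs in possibly different orders, and any two orderings are related by a sequence of adjacent transpositions of disjoint pairs. Each such transposition fills a commutative square of $\G_\fn$, and the cocycle condition forces the two partial products of $\delta$ to agree at that step. A general edge path in $\G_\fn$ decomposes into ``up'' and ``down'' segments, and by repeatedly detouring through a common monotone refinement via the same square--filling procedure, path--independence reduces to the monotone case.

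Once path--independence is established, I would normalize by $\phi_{\fs_0} := +1$. This gives a distinguished $0$--cochain $\phi$ with $d\phi = \delta$, and hence a canonical morphism $\psi = \bigoplus_\fs\phi_\fs\cdot\Id$ between the two complexes; it is manifestly an isomorphism of complexes in $\Mat(\gKobdh)$, with inverse obtained by swapping $\epsilon$ and $\epsilon'$. Any other solution of $d\phi' = \delta$ differs from $\phi$ by a single global sign, since $\G_\fn$ is connected, so the normalization $\phi_{\fs_0} = +1$ pins down a unique \emph{preferred} isomorphism as asserted by the lemma.
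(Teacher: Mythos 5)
Your proof takes essentially the same route as the paper's: both reduce the lemma to the assertion that the $1$--cycles of $\G_\fn$ are generated by the boundaries of squares (over $\Z/2\Z$ this is exactly your $H^1=0$ claim, by universal coefficients), from which the difference cochain is a coboundary, and both normalize the resulting $0$--cochain at the leftmost vertex (empty pairing) to single out a preferred isomorphism. The one place you go beyond the paper is in attempting to \emph{prove} the generation-by-squares claim, which the paper simply asserts; that is a welcome addition. However, the way you handle non-monotone paths (``detouring through a common monotone refinement'') is too vague to stand on its own. A cleaner way to finish: given an edge loop based at the empty pairing, look at a vertex $\fs$ of maximal rank on the loop, entered by adding a pair $p_1$ and left by removing a pair $p_2$; if $p_1=p_2$ cancel the backtrack, and if $p_1\neq p_2$ replace the peak $\fs\!\setminus\!\{p_1\}\to\fs\to\fs\!\setminus\!\{p_2\}$ with the valley through $\fs\!\setminus\!\{p_1,p_2\}$, using the commuting square; this strictly decreases the number of maximal-rank vertices visited, and induction on the maximal rank finishes the argument. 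One small wrinkle worth flagging: the cancellation from $\phi_{\fs'}\epsilon_e A_e=\epsilon'_e A_e\phi_{\fs}$ to $\phi_{\fs'}=\delta_e\phi_{\fs}$ implicitly uses that $A_e\neq -A_e$ in $\gKobdh$, which is exactly what the Claim preceding Lemma~\ref{signs1} supplies; it would be worth citing it there.
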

\begin{proof}
Consider two choices of signs, given by two $1$--cochains $\zeta$ and $\zeta'$
as in the proof of Lemma~\ref{signs1}. If both choices of signs are
satisfactory, we must have $\zeta(S)=\zeta'(S)$ for all squares
$S\subset\G_\fn$. Since the space $Z_1(\G_\fn,\Z/2\Z)$ of
$1$--cycles of $\G_\fn$ is
generated by squares, $\zeta$ and $\zeta'$ must coincide on
$Z_1(\G_\fn,\Z/2\Z)$, and therefore $\zeta-\zeta'=\delta\g$ for a
$0$--chain $\g\in C^0(\G_\fn,\Z/2\Z)$.
Now
note that for every edge $e$ of $\G_\fn$
with boundary $s-s'$, we have $\zeta(e)-\zeta'(e)=\g(s)-\g(s')$.
Therefore, the morphisms
$(-1)^{\gamma(s)}\Id_{\Kh(D^s)}$ define an isomorphism between
the complex associated to $\zeta$ and the complex associated to $\zeta'$.

To see that there is a preferred choice for the isomorphism between the
$\zeta$-- and the $\zeta'$--complex, observe that any two $0$--cochains
$\g$ as above must differ by a
$0$--cocycle. Since the space of $0$--cocycles of $\G_\fn$ is
isomorphic to $\Z/2\Z$, there are only two possible choices for $\g$.
The preferred $\g$ is the one which maps the left--most vertex of
$\G_\fn$ to $0$.
\end{proof}
Alternatively, Lemma~\ref{signs2} can
be proved by
constructing the preferred isomorphism explicitly,
by defining it to be the identity on the
left--most vertex of $\G_\fn$ and
then extending it arrow by arrow to the right.

Lemmas~\ref{signs1} and \ref{signs2} show that $\Kh(D,\fn)$
is well--defined up to canonical isomorphism.
We call $\Kh(D,\fn)$ the {\it colored Khovanov bracket}
of $(D,\fn)$.

\begin{remark}
By definition,
the colored Khovanov bracket is an element
of $\Kom(\Mat(\Komh(\gCobdl^3)))$, and hence a
``complex of complexes''. However it is not a
bicomplex, because the $A_e$ are just homotopy
classes of chain transformations (rather
than honest chain transformations).
We do not know whether it is possible to lift
$\Kh(D,\fn)$ to a bicomplex by choosing suitable
representatives for the homotopy classes $A_e$.
\end{remark}

\begin{theorem}\label{tcoloredkhovanovinvariant}
The isomorphism class of $\Kh(D,\fn)$ is an invariant of
the colored oriented framed link $(L,\fn)$.
\end{theorem}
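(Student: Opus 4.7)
The plan is to verify invariance under the framed Reidemeister moves FR1, R2 and R3 applied to $D$, since any two diagrams of the same oriented framed link $(L,\fn)$ are related by a finite sequence of such moves and planar isotopy, and invariance under planar isotopy is tautological.

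Fix a framed Reidemeister move $D\to D'$. For each $\fk$--pairing $\fs$, the induced move on the cable presents $(D')^{\fs}$ as the result of a finite sequence of Reidemeister moves applied to $D^{\fs}$. By Theorem~\ref{tkhovanovinvariant}, this sequence yields an isomorphism $\phi_\fs:\Kh(D^{\fs})\rightarrow\Kh((D')^{\fs})$ in $\gKobdh$, well--defined up to sign. The goal is to assemble $\{\phi_\fs\}_{\fs}$, after adjusting signs, into an isomorphism $\Kh(D,\fn)\rightarrow\Kh(D',\fn)$ in $\Kom(\Mat(\gKobdh))$.

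For every edge $e:\fs\to\fs'$ of $\G_\fn$, both compositions $\phi_{\fs'}\circ A_e$ and $A_e'\circ\phi_\fs$ are induced by link cobordisms from $L^{\fs}$ to $(L')^{\fs'}$ built by stacking the annulus carrying $A_e$ with the cabled Reidemeister isotopy carrying $\phi_\fs$. These two stackings differ only by a time--reordering of events supported in disjoint regions of $\R^3\times[0,1]$, so the resulting cobordisms are isotopic rel boundary. Functoriality (Theorem~\ref{tfunctor}) therefore forces the two compositions to agree up to an overall sign $\epsilon(e)\in\{\pm 1\}$ in $\gKobdh$, and we encode these signs as a $1$--cochain $\zeta\in C^1(\G_\fn,\Z/2\Z)$, where $\zeta(e)=0$ iff the square at $e$ commutes on the nose.

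A direct computation around any $2$--face of $\G_\fn$, using the satisfactory sign conventions on both source and target, shows $\delta\zeta=0$; by the same cohomological argument as in Lemmas~\ref{signs1} and~\ref{signs2}, $\zeta=\delta\gamma$ for some $\gamma\in C^0(\G_\fn,\Z/2\Z)$. Lifting $\gamma(\fs)$ to $\{0,1\}\subset\Z$ and replacing each $\phi_\fs$ by $(-1)^{\gamma(\fs)}\phi_\fs$ produces the desired isomorphism of complexes. The main obstacle is the up-to-sign commutativity of the squares in the previous paragraph: while conceptually transparent from functoriality, it requires carefully identifying the two candidate movie presentations, handling the reversed orientations on alternating strands of the cable, and invoking Theorem~\ref{tfunctor}; once this is in place, the sign adjustment is purely formal.
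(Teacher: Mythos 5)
Your proof follows essentially the same route as the paper: vertex-by-vertex isomorphisms from the cable isotopies, commutativity of the squares up to sign from Theorem~\ref{tfunctor}, and the cohomological sign-fixing of Lemma~\ref{signs2}; the paper works with a general isotopy rather than reducing to FR1/R2/R3, but that is a superficial difference. One small imprecision: the two stackings $\phi_{\fs'}\circ A_e$ and $A'_e\circ\phi_\fs$ need not be supported in disjoint regions (the Reidemeister move on cables may directly involve the strands to which the annulus is attached); the correct justification, as in the paper, is that the ambient isotopy carries the annulus along, so the two cobordisms are isotopic rel boundary.
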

\begin{proof}
If $(D,\fn)$ and $(D',\fn)$ represent
isotopic colored framed links $(L,\fn)$ and $(L',\fn)$,
then the cables $L^{\fs}$ and $L'^{\fs}$ are isotopic
as well.
In particular, the complexes
$\Kh(D^{\fs})$ and $\Kh(D'^{\fs})$
are isomorphic as objects of $\gKobdh$.
This shows that $\Kh(D,\fn)$ and $\Kh(D',\fn)$
are isomorphic on the level of objects.
The isotopy between $L^{\fs}$ and ${L'}^{\fs}$
extends to an isotopy between the annuli
appearing in the definition of the differentials.
Hence Theorem~\ref{tfunctor}
and Lemma~\ref{signs2} imply that
$\Kh(D,\fn)$ and $\Kh(D',\fn)$ are isomorphic
as complexes.
\end{proof}

Let $\C(D,\fn):=\F_{\Kh}(\Kh(D,\fn))$ and
$\C'(D,\fn):=\F_{\Lee}(\Kh(D,\fn))$.
The total graded Euler characteristic
of $\C(D,\fn)$ is defined by
$$
\tilde{\chi}_q(\C(D,\fn))
:=\sum_{k,i,j}(-1)^{k+i}q^j\dim_{\Q}(\C^{k,i,j}(D,\fn)\otimes\Q)
$$
where $k$,$i$ and $j$ respectively refer to the
homological grading of $\C(D,\fn)$,
the homological grading of the complexes $\C(D^{\fn-2\fk})$,
and the Jones grading of the complexes $\C(D^{\fn-2\fk})$.

\begin{theorem}
The total graded Euler characteristic
of $\C(D,\fn)$
is equal to the colored Jones polynomial $J(L,\fn)$.
\end{theorem}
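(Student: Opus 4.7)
The plan is direct: combine the definition of $\Kh(D,\fn)$ with Theorem~\ref{tEuler} and the cabling expansion~\eqref{fcoloredJones}. First, I would unpack the trigrading. By construction, the $k$-th chain space of $\Kh(D,\fn)$ is $\bigoplus_{\fs:\,|\fk|=k}\Kh(D^{\fs})$, where the sum is over all $\fk$-pairings $\fs$ with $|\fk|=k$. Applying $\F_{\Kh}$ and isolating bidegree $(i,j)$ within each summand yields
$$
\C^{k,i,j}(D,\fn)=\bigoplus_{\fs:\,|\fk|=k}\C^{i,j}(D^{\fs})\; ,
$$
with $(i,j)$ the usual bigrading of the Khovanov complex of the cable $D^{\fs}$.

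Next, I would compute $\tilde{\chi}_q$ in two stages. For each fixed $k$, summing over $(i,j)$ and using that the number of $\fk$-pairings with prescribed $\fk$ equals $\binom{\fn-\fk}{\fk}$ gives
$$
\sum_{i,j}(-1)^i q^j \dim_\Q(\C^{k,i,j}(D,\fn)\otimes\Q)
=\sum_{\fk:\,|\fk|=k}\binom{\fn-\fk}{\fk}\chi_q(\C(D^{\fn-2\fk}))\; ,
$$
where I have used that $D^{\fs}$ is isotopic to $D^{\fn-2\fk}$ for any $\fk$-pairing $\fs$, so the graded Euler characteristic depends only on $\fk$. By Theorem~\ref{tEuler}, each $\chi_q(\C(D^{\fn-2\fk}))$ equals $J(L^{\fn-2\fk})$.

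Finally, multiplying by $(-1)^k$ and summing over $k$ collapses the double sum into a single sum over $\fk$, so that
$$
\tilde{\chi}_q(\C(D,\fn))=\sum_{\fk=\mathbf{0}}^{\lfloor\fn/2\rfloor}(-1)^{|\fk|}\binom{\fn-\fk}{\fk}J(L^{\fn-2\fk})\; ,
$$
which is precisely the definition~\eqref{fcoloredJones} of $J(L,\fn)$. No serious obstacle arises: the statement is essentially bookkeeping, with all nontrivial input already supplied by Theorem~\ref{tEuler} and by the representation-theoretic identity~\eqref{fcoloredJones}. The only point demanding care is to verify that the outer homological sign $(-1)^k$ and the internal sign $(-1)^i$ combine cleanly into the $(-1)^{k+i}$ of $\tilde{\chi}_q$; since these two gradings live on separate factors of the direct-sum decomposition above, this is immediate.
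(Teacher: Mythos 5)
Your proof is correct and follows the same route as the paper's: unpack the trigraded chain groups as a direct sum over pairings, use that the number of $\fk$-pairings is $\binom{\fn-\fk}{\fk}$ and that each $D^{\fs}$ is isotopic to $D^{\fn-2\fk}$, apply Theorem~\ref{tEuler} to each cable, and compare with formula~\eqref{fcoloredJones}. The argument matches the paper's computation line by line, so there is nothing further to add.
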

\begin{proof}
We have
\begin{equation*}
\begin{split}
\tilde{\chi}_q(\C(D,\fn))
 &=\sum_{k,i,j}(-1)^{k+i}q^j\dim_{\Q}
 (\C^{k,i,j}(D,\fn)\otimes\Q)\\
 &=
 \sum_k(-1)^k\sum_{|\fk|=k}\sum_{\fs\in {I_\fk}}
 \chi_q(\C(D^{\fn-2\fk}))\\
 &=\sum^{\lfloor\fn/2\rfloor}_{\fk=\bf 0} (-1)^{|\fk|}
  \left(
 \begin{array}{c} \fn-\fk \\ \fk\end{array}\right)
 \chi_q(\C(D^{\fn-2\fk}))
\end{split}
\end{equation*}
where in the second line $I_{\fk}$ denotes
the set of all $\fk$--pairings.
Taking into account that
 $\chi_q(\C(D^{\fn-2\fk}))=J(L^{\fn-2\fk})$
and comparing with \eqref{fcoloredJones}
we get the result.
\end{proof}

\section{Modified colored Khovanov bracket}
\noindent
In the following, we assume that the
additional relation
$\raisebox{-0.2cm}{\psfig{figure=figs/ddot.eps,height=0.6cm}}=0$
is imposed on the category
$\Cobdl^3$.

\subsection[Modified differentials]{Modified differentials.}
Let us generalize the definition of $\Kh(D,\fn)$ as follows.
As before, we put  $\Kh(D^{\fn-2\fk})$ at  vertices  of $\G_\fn$
labeled by $\fk$--pairings. But
we modify the morphisms associated to edges of $\G_\fn$.
With an edge $e$ connecting $\fk$-- and $\fk'$--pairings
we associate the morphism
$$
A'_e:=\al A_e+\be\Ad_e\; ,
$$
where $\al,\be\in\Z$ are fixed integers integers
and where $\Ad_e:=A_e\circ X_e$.
The morphism $X_e$ will be defined below.
Note that
the sign of $A_e$ depends on the choice
of a movie presentation for the annulus,
but
the relative sign between $A_e$ and $\Ad_e$
is independent of any choice.
Given a satisfactory choice of signs,
the result is
a chain complex which we denote $\Kh(D,\fn)_{\al,\be}$.
Observe that $\Kh(D,\fn)_{1,0}=\Kh(D,\fn)$.
The morphism $X_e$ is graded of
Jones degree $-2$. Hence if $\be$ is non--zero,
then the morphisms $A'_e$ do not respect the
Jones degree anymore.

\subsection[The morphism $X_e$]{The morphism $X_e$.}\label{smorphismxe}
$X_e$ is defined as follows. Let $C_i$ and $C_{i+1}$ be
the two strands of the cable $D^{\fn-2\fk}$
which are annihilated by $A_e$,
i.e. which do not appear in $D^{\fn-2\fk'}$
anymore. For a point $P$ on $C_i$,
let $X_P$ denote
the endomorphism of $\Kh(D^{\fn-2\fk})$ induced
``multiplying'' with a dot at the
point $P$ (i.e. $X_P$ is induced by the
identity cobordism of $D^{\fn-2\fk}$,
decorated by a single dot, located
near the point $P$).
According to Lemma~\ref{ldotslide} (Section~\ref{sfunctoriality}),
$X_P$ changes its sign when $P$ slides across
a crossing. To fix the sign,
we checkerboard color the regions of $D^{\fn-2\fk}$,
such that the unbounded region is colored
white, and we define $\sigma(P):=+1$ or $\sigma(P):=-1$,
depending on whether
the region between $C_i$ and $C_{i+1}$, which lies next
to $P$, is black or white.
Now the product
$\sigma(P)X_P$ is
independent
of the choice of $P$.
We define $X_e:=\sigma(P)X_P$.
If $C_i$ and $C_{i+1}$ belong to the cable
of a component $K$ of the link represented by $D$,
we will also use the notation $X(K,i)$ for $X_e$.

\section{Towards functoriality}
\noindent
Let $\Cobf^4$ be the category of colored framed movie presentations.
The objects are diagrams of colored links and the morphisms
movie presentations of colored framed links,
i.e. sequences of colored link diagrams,
where between two consecutive diagrams one of the following
transformations occurs: FR1, R2 or R3, or a saddle, a cap or a cup.
Note that here we need to distinguish between two saddle moves:
a ``splitting'' saddle which splits one colored component into
two of the same color, and a ``merging'' saddle which
merges two components of the same color into one component.

We are interested in a construction of a functor
$$
\Kh_{\al,\be}:\Cobf^4\rightarrow \Kom(\Mat(\Kobdh))\; .
$$
Given two colored link diagrams
$(D,\fn)$ and $(D_0,\fn_0)$ which are related by a
Reidemeister move, a cap, a cup or
a saddle, we would like to associate
a chain transformation
$$
F:\Kh_{\al,\be}(D,\fn)\longrightarrow\Kh_{\al,\be}(D_0,\fn_0)\; .
$$
We can do this by specifiying ``matrix elements''
$$
F_{\fs_0,\fs}:\Kh_{\al,\be}(D^{\fs})\longrightarrow\Kh_{\al,\be}(D_0^{\fs_0})\;
$$
for all pairings $\fs$ of $\fn$
and all pairings $\fs_0$ of $\fn_0$.
For Reidemeister moves, we can take the matrix elements
implicit in the proof of Theorem~\ref{tcoloredkhovanovinvariant}.
Subsections~\ref{scupcap}, \ref{smsaddle} and \ref{sssaddle} are
devoted to the definition of
matrix elements corresponding to cap, cup and saddles.

Throughout this section, we assume the
additional relation
$\raisebox{-0.2cm}{\psfig{figure=figs/ddot.eps,height=0.6cm}}=0$.
Moreover, we assume that $2$ is made invertible, i.e.
that the morphism sets of $\Cobdl^3$ are tensored
by $\Z[1/2]$.
\subsection[Cup and cap]{Cup and cap.}\label{scupcap}
Let $(D,\fn)$ and $(D_0,\fn_0)$ be two colored
link diagrams which are related by a cup
cobordism. Assume that $D_0$ is the disjoint union
of $D$ with a trivial component $K=\bigcirc$, and
that $\fn_0$ restricts
to $\fn$ on $D$
and to an arbitrary color $n$ on $K$.
Given a pairing $\fs$ of $\fn$ and a pairing $\fs_0$ of $\fn_0$,
we define a morphism
$$
\iota_{\fs_0,\fs}:\Kh(D^{\fs})\longrightarrow\Kh(D_0^{\fs_0})
$$
as follows: $\iota_{\fs_0,\fs}$ is non--zero only if
the restriction of $\fs_0$ to $K$ is the
empty pairing (no pairs) and if
$\fs_0$ agrees with $\fs$ on all other components.
In this case, we put $\iota_{\fs,\fs_0}:=G\circ C$,
where $C:\Kh(D^{\fs})\rightarrow\Kh(D^{\fs_0})$ is the morphism
induced by a union of $n$ cups whose
boundaries are the $n$ strands of the $n$--cable of $K$, and
$G$ is the endomorphism of $\Kh(D^{\fs_0})$ defined by
$$
G:=\sum_{j=1}^n Y_j\circ Z_j\; .
$$
Here,
$Y_j$ is the composition of all morphisms $(\al \Id-\be X(K,i))/2$ for
$1\leq i\leq j$, and $Z_j$ is the composition of
all morphisms $(\al \Id+\be X(K,i))/2$ for $j<i\leq n$.
$\al$ and $\be$ are the same integers as in
the definition of the modified differentials.

Now let $(D,\fn)$ and $(D_0,\fn_0)$ be two colored link diagrams related
by a cap cobordism.
Assume that $D$ is the disjoint union of $D_0$ with a trivial
component $K$, and that $\fn$ restricts to $\fn_0$ on $D_0$
and to an arbitrary color $n$ on $K$.
We define
$$
\epsilon_{\fs_0,\fs}:\Kh(D^{\fs})\longrightarrow\Kh(D_0^{\fs_0})
$$
as follows:
$\epsilon_{\fs_0,\fs}$ is non--zero only if the restriction of $\fs$ to $K$ is
the empty pairing and if $\fs$ agrees with $\fs_0$ on all other
components. In this case, we define $\epsilon_{\fs_0,\fs}:=\bar{C}\circ G$
where $G$ is the endomorphism of $\Kh(D^{\fs})$ defined as above,
and $\bar{C}$ is the morphism induced by $n$ caps whose boundaries are the $n$
strands of the $n$--cable of $K$.

\subsection[Merging saddle]{Merging saddle.}\label{smsaddle}
Let $(D,\fn)$ and $(D_0,\fn_0)$ be two
colored link diagrams which are related
by a saddle merging two components $K_1$ and $K_2$ of $D$ into
a single component $K$ of $D_0$.
Assume that $\fn$ and $\fn_0$ restrict to a color $n$
on the components $K_1$, $K_2$ and $K$, and that they
are identical on all other components.

Let $\fs$ be a pairing of the $\fn$--cable of $D$,
and let $s_1$ and $s_2$
denote its restrictions to $K_1$
and $K_2$, respectively.
Let $s_1\cup s_2$ denote the union of $s_1$ and
$s_2$, i.e. the pairing of $n$ which consists of
all pairs which are contained in either $s_1$ or in
$s_2$ or in both. Given $\gamma,\delta\in\Z$
and a pairing $\fs_0$ of
the $\fn_0$--cable of $D_0$,
we define a morphism
$$
m^{\gamma,\delta}_{\fs_0,\fs}:\Kh(D^{\fs})\longrightarrow \Kh(D^{\fs_0})
$$
as follows. $m^{\gamma,\delta}_{\fs_0,\fs}$
is zero unless the following is satisfied:
\begin{itemize}
\item
$s_1$ and $s_2$ have no common dot (meaning that there is no dot
which belongs to a pair both in $s_1$ and in $s_2$),
\item
 $\fs_0$ is the pairing
which restricts to $s_1\cup s_2$ on $K$ and which agrees with $\fs$ on all other
components.
\end{itemize}
If the above conditions are satisfied, we put
$m^{\gamma,\delta}_{\fs_0,\fs}:=
F_3\circ F_2\circ F_1,$
where $F_1,F_2$ and $F_3$
are defined as follows.
\begin{itemize}
\item
$F_1$ is the endomorphism of $\Kh(D^{\fs})$ defined by
$F_1:=X_1\circ X_2$, where $X_1$ is the composition of all
$(\g \Id+\delta X(K_1,i))/2$ such that the dots
numbered $i$ and $i+1$
form a pair in $s_2$, and $X_2$ is the composition
of all $(\g \Id+\delta X(K_2,i))/2$ such that
the dots numbered $i$ and $i+1$ form a pair in $s_1$.
\item
Let $\fs'$ be the pairing of $\fn$ which
restricts to $s_1\cup s_2$ on both
$K_1$ and $K_2$ and which agrees
with the pairing $\fs$ on all other
components of $D$.
$F_2:\Kh(D^\fs)\rightarrow\Kh(D^{\fs'})$
is the morphism
induced by attaching annuli to the
strands of $K_1^{s_1}$ and $K_2^{s_2}$ according
to the following rule. If the two dots numbered
$i$ and $i+1$ form a pair in $s_2$, we
attach an annulus to the strands numbered
$i$ and $i+1$ in $K_1^{s_1}$.
Similarly, if the two dots numbered
$i$ and $i+1$ form a pair in $s_1$, we
attach an annulus to the strands numbered
$i$ and $i+1$ in $K_2^{s_2}$.
\item
$F_3:\Kh(D^{\fs'})\rightarrow\Kh(D^{\fs_0})$ is the morphism
obtained by merging each strand of $K_1^{s_1\cup s_2}$
with the corresponding strand of $K_2^{s_1\cup s_2}$
by a saddle cobordism.
\end{itemize}

The above construction mimics
a construction of Khovanov \cite{kh:colored}.
Khovanov's map $\psi$ corresponds to our morphism
$m^{0,2}_{\fs_0,\fs}$.
Note that $m^{0,\delta}_{\fs_0,\fs}$ is graded
of Jones degree $\deg(m^{0,\delta}_{\fs_0,\fs})=-n$,
where $n$ is the color of the components
$K_1$,$K_2$ and $K$.
We denote by $m^{\g,\delta}$ the collection
of all morphisms $m^{\g,\delta}_{\fs_0,\fs}$.

\subsection[Splitting saddle]{Splitting saddle.}\label{sssaddle}
Suppose the diagrams $(D,\fn)$ and $(D_0,\fn_0)$
are related by a saddle which splits a component
$K$ of $D$ into two components $K_1$ and $K_2$ of $D_0$.
Assume that the colorings
$\fn$ and $\fn_0$ are consistent
with each other, in the obvious sense.
Consider a pairing $\fs$ of
the $\fn$--cable of $D$
which restricts to a $k$--pairing $s$ on $K$.
Given $\g,\delta\in\mathbb{Z}$
and a pairing $\fs_0$ of the $\fn_0$--cable of $D_0$,
we define a morphism
$$
\D^{\g,\delta}_{\fs_0,\fs}:\Kh(D^{\fs})
\longrightarrow \Kh(D_0^{\fs_0})
$$
as follows.
$\D^{\g,\delta}_{\fs_0,\fs}$
is zero unless $\fs_0$ has the following properties:
\begin{itemize}
\item
 the restrictions $s_1$ and $s_2$ of
$\fs_0$ to $K_1$ and $K_2$ have no common dot,
\item
 the union of $s_1$ and $s_2$ is equal to $s$,
\item
 $\fs_0$ agrees with $\fs$ on all components of $D_0$ other
than $K_1$ and $K_2$.
\end{itemize}
If $\fs_0$ satisfies the above
properties, we define
$\D^{\g,\delta}_{\fs_0,\fs}:=
2^k\bar{F}_1\circ\bar{F}_2\circ\bar{F}_3$,
where $\bar{F}_1,\bar{F}_2$ and $\bar{F}_3$ are
the morphisms obtained
by turning the morphisms $F_1,F_2$ and $F_3$ of
Subsection~\ref{smsaddle}
upside down (i.e. by reflecting the
link cobordisms appearing in the definition
of $F_1,F_2$ and $F_3$
along the hyperplane $\R^3\times\{1/2\}$).

\subsection[Criteria for chain transformations]%
{Criteria for chain transformations.}
In this subsection, we give criteria under
which the matrix elements $m^{\g,\delta}_{\fs_0,\fs}$
and $\D^{\g,\delta}_{\fs_0,\fs}$ induce chain transformations.
To simplify the notation, we will drop the superscripts
$\g,\delta$ in $m^{\g,\delta}$ and $\D^{\g,\delta}$
and just write $m$ and $\D$.

Let us first consider the case of merging saddles.
Let $(D,\fn)$ and $(D_0,\fn_0)$ be two
colored link diagrams which are related by a merging
saddle, and let $d$ and $d_0$ denote the differentials
of $\Kh(D,\fn)_{\al,\be}$ and
$\Kh(D_0,\fn_0)_{\al,\be}$, respectively.
Let $\fs$ be a pairing of $\fn$, and let $\fs_0$ be
the pairing of $\fn_0$ which restricts to $s_1\cup s_2$
on $K$ and which agrees with $\fs$ on all other
components (here, $s_1$, $s_2$ and $K$ are
defined as in Subsection~\ref{sssaddle}).
For a pairing $\fs_0'$ of $\fn_0$, we wish to
compare the matrix elements $(d_0\circ m)_{\fs_0',\fs}$
and $(m\circ d)_{\fs_0',\fs}$.
Assume that at least one of these matrix elements
is non--zero.
This is only possible
if $s_1$ and $s_2$ have no common dot.
Moreover,
$\fs_0'$ must contain a unique pair $p$ which
does not appear in $\fs_0$, and otherwise be
identical with $\fs_0$.
We assume that $p$ lies on $K$
(for otherwise
$(d_0\circ m)_{\fs_0',\fs}=\pm(m\circ d_0)_{\fs_0',\fs}$
is trivially satisfied). Then we are in the
situation of \eqref{emerge}, where we have left away
all dots corresponding to strands on which
$(d_0\circ m)_{\fs_0',\fs}$
and $(m\circ d_0)_{\fs_0',\fs}$
agree trivially, and where
$d':=d_{\fs',\fs}$,
$d'':=d_{\fs'',\fs}$ and
$d_0':=(d_0)_{\fs_0',\fs_0}$.
Note that $p$ is the pair in
the upper right corner.
\begin{equation}\label{emerge}
\begin{split}
\epsfysize=4.8cm \epsffile{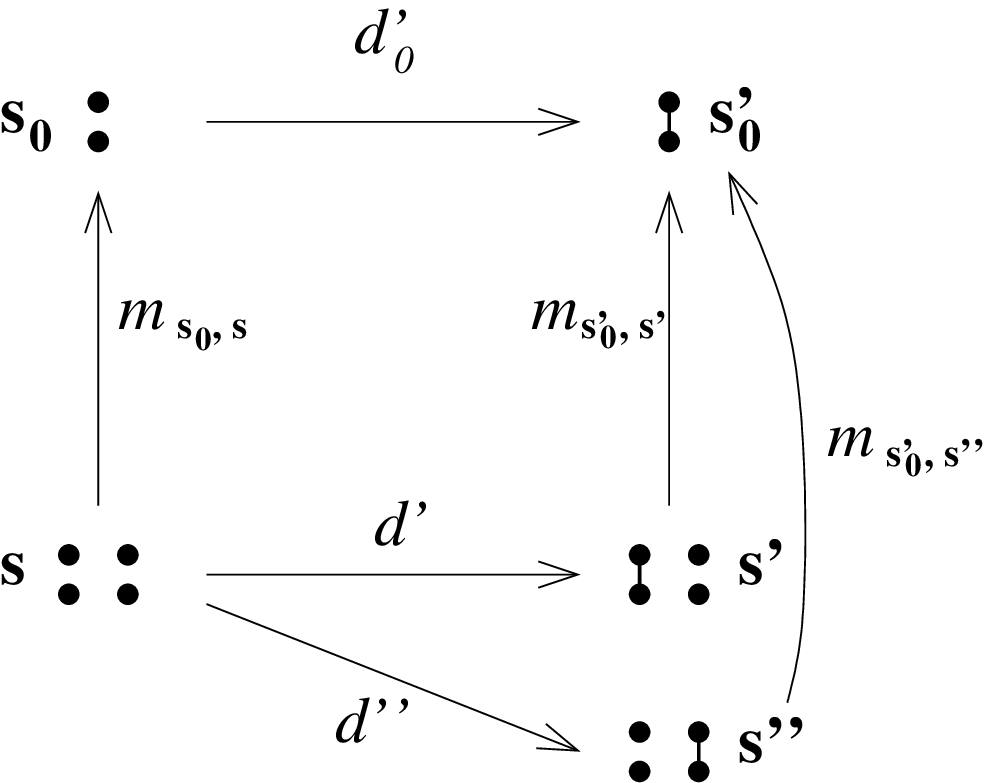}
\end{split}
\end{equation}

\begin{lemma}\label{lmsaddle}
Assume that
$d'_0\circ m_{\fs_0,\fs}=
\pm (m_{\fs_0',\fs'}\circ d'+m_{\fs_0',\fs''}\circ d'')$
for all diagrams as in \eqref{emerge}.
Then there is a $0$--cochain $\g\in
C^0(\G_{\fn},\mathbb{Z}/2\mathbb{Z})$ such that the morphisms
$F_{\fs_0,\fs}:=(-1)^{\g(\fs)}m_{\fs_0,\fs}$
determine a chain transformation between
$\Kh(D,\fn)_{\al,\be}$ and $\Kh(D_0,\fn_0)_{\al,\be}$, i.e.
such that $d_0\circ F=F\circ d$.
\end{lemma}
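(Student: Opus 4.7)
The plan is to reduce the problem to a cohomological sign--fixing argument, directly parallel to the proofs of Lemmas~\ref{signs1} and \ref{signs2}.

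Step 1. Define a $1$--cochain $\eta \in C^1(\G_\fn, \Z/2\Z)$ that records the hypothesis's $\pm$ sign. For each edge $e$ from $\fs$ to $\fs'$ in $\G_\fn$, let $\fs_0$ and $\fs_0'$ denote the corresponding pairings of $\fn_0$ obtained by merging $K_1$ and $K_2$, and let $\eta(e) \in \Z/2\Z$ be determined by the identity
\[
d_0' \circ m_{\fs_0, \fs} = (-1)^{\eta(e)}\bigl(m_{\fs_0', \fs'} \circ d_{\fs', \fs} + m_{\fs_0', \fs''} \circ d_{\fs'', \fs}\bigr),
\]
where $\fs''$ is the partner (if any) obtained by adding the same pair on the opposite merging component. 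A short computation shows that $F_{\fs_0, \fs} := (-1)^{\g(\fs)} m_{\fs_0, \fs}$ satisfies $d_0 \circ F = F \circ d$ if and only if $\delta \g = \eta$; so it suffices to exhibit $\eta$ as a $1$--coboundary.

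Step 2. Verify that $\eta$ is a $1$--cocycle on the $2$--complex obtained from $\G_\fn$ by filling in every commuting square of edges. Given such a square with corners $\fs, \fs_1, \fs_2, \fs_\square$ (corresponding to adding two disjoint pairs $p_1, p_2$ in either order), apply the hypothesis at each corner, compose the resulting two--step identities, and compare. Using $d_0^2 = 0$, $d^2 = 0$, and the anticommutativity of the squares of $\G_\fn$ and $\G_{\fn_0}$ chosen in Lemma~\ref{signs1}, the four signs must satisfy $\eta(\fs \to \fs_1) + \eta(\fs_1 \to \fs_\square) \equiv \eta(\fs \to \fs_2) + \eta(\fs_2 \to \fs_\square) \pmod 2$. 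The verification splits into cases by whether $p_1, p_2$ lie on the merged components $K_1, K_2$ or elsewhere, the latter being essentially immediate.

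Step 3. Since $\G_\fn$ with its squares filled is simply connected, $H^1(\G_\fn, \Z/2\Z) = 0$, and every $1$--cocycle is a $1$--coboundary. Equivalently, one can construct $\g$ inductively starting from the empty pairing: set $\g(\varnothing) = 0$ and extend along edges by $\g(\fs') := \g(\fs) + \eta(e)$; well--definedness follows because any two edge--paths between a pair of vertices in $\G_\fn$ differ by a sequence of squares, on each of which $\eta$ sums to zero by Step 2.

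The main obstacle will be Step 2, specifically the case when $p_1$ or $p_2$ lies on one of $K_1, K_2$: one must unwind the three--stage definition $m_{\fs_0, \fs} = F_3 \circ F_2 \circ F_1$, tracking how the dot multiplications $F_1$, the annulus attachments $F_2$, and the merging saddles $F_3$ interact with the edges of $\G_\fn$. The verification ultimately reduces to the anticommutation of distant cobordisms in $\gKobdh$ together with the dot--slide identity of Lemma~\ref{ldotslide}, but the bookkeeping is nontrivial and each combinatorial configuration of $(p_1, p_2)$ relative to the pairings $s_1, s_2$ needs to be checked separately.
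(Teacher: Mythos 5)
Your strategy---building a sign $1$--cochain $\eta$ from the hypothesis, proving it is a cocycle on the filled--in $\G_\fn$, and taking $\g$ to be a primitive---is the right template and mirrors Lemmas~\ref{signs1} and~\ref{signs2}. Since the thesis defers the actual argument to \cite{bw}, I cannot directly compare routes, but two gaps in the proposal should be flagged. First, the identity in Step~1 does not pin down $\eta(e)$ on every edge of $\G_\fn$: when $s_1,s_2$ share a common dot (so $m_{\fs_0,\fs}=0$), or when the added pair $p$ lies on a component other than $K_1,K_2$, the hypothesis is either silent or degenerate and $\eta(e)$ must be supplied by other means, and you need to check that those freely assigned values can be reconciled with the forced ones before Step~3 can be invoked. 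Moreover, for a pair $p$ on $K$ the hypothesis produces a \emph{single} sign for the two partner edges $\fs\to\fs'$ and $\fs\to\fs''$; you are implicitly declaring $\eta$ to agree on both, which should be made explicit, because it is precisely what guarantees $\g(\fs')=\g(\fs'')$ and lets the common prefactor $(-1)^{\g(\fs')}$ be extracted from the two--term right--hand side when verifying $d_0\circ F=F\circ d$.

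Second, the cocycle verification in Step~2 is asserted rather than carried out. The two--step composition argument does close cleanly when neither added pair sits on $K_1$ or $K_2$: there are no partner terms, so applying the hypothesis twice along each side of a square and invoking the anticommutativity of the squares of $\G_\fn$ and $\G_{\fn_0}$ (together with the nonvanishing of the relevant compositions) yields the cocycle relation directly. But when one or both pairs lie on the merging components, the right--hand side of the hypothesis is a two--term sum at each stage, and showing that these signs close up requires the case--by--case analysis interlacing $m=F_3\circ F_2\circ F_1$ with the modified differential $\al A_e+\be\Ad_e$ and the dot--slide of Lemma~\ref{ldotslide} that you acknowledge as the ``main obstacle.'' That case analysis is the proof; as written, the proposal identifies the problem and stops short of solving it.
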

The proof of Lemma~\ref{lmsaddle}
is quite technical, so we skip it here
and instead refer to \cite{bw}.

Now assume that $(D,\fn)$ and $(D_0,\fn_0)$
are related by a splitting saddle.
Let $\fs$ be a pairing
of the $\fn$--cable of $D$ and let $\fs_0'$ be a pairing of the
$\fn_0$--cable of $D_0$, such that at least one of the morphisms
$(d_0\circ \D)_{\fs_0',\fs}$ and $(\D\circ d)_{\fs_0',\fs}$
is non--zero. Let $K$ denote the component of $D$ which is involved
in the saddle and let $s$ be the restriction of $\fs$ to $K$.
Similarly, let $K_1$ and $K_2$ be the components of $D_0$ which
are involved in the saddle and let $s_1'$ and $s_2'$ denote the
restrictions of $\fs_0'$ to $K_1$ and $K_2$.
Then every pair of $s$ must also appear in the
union $s_1'\cup s_2'$.
If $s_1'$
and $s_2'$ have a common pair, we are in the situation
of \eqref{esplit3}.

\begin{equation}\label{esplit3}
\begin{split}
\epsfysize=4.2cm \epsffile{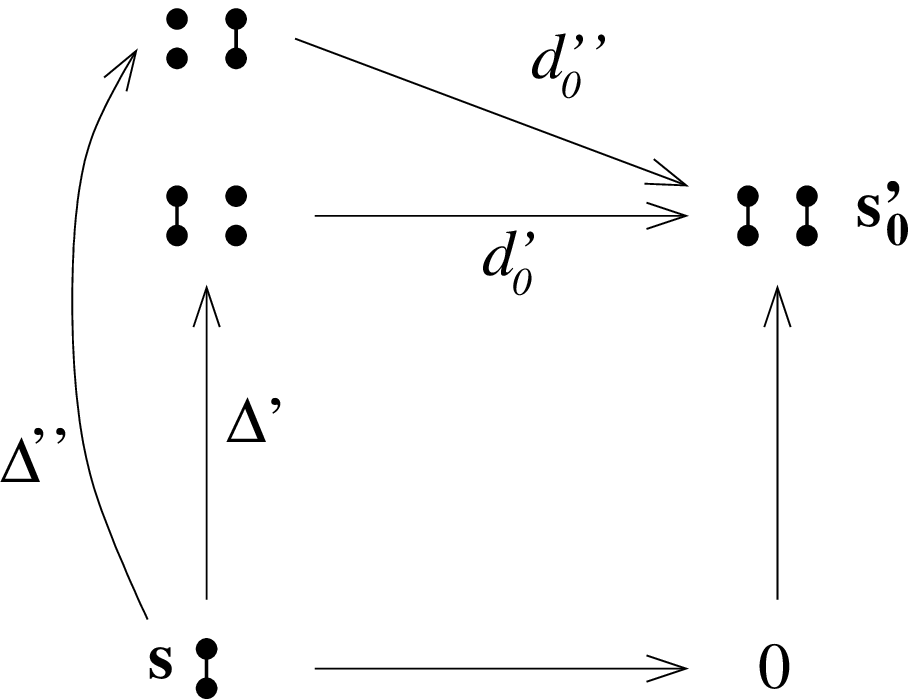}
\end{split}
\end{equation}

Now assume that $s_1'$ and $s_2'$ have no common
pair. Let $s_1$ and $s_2$ denote the intersections $s_1:=s\cap s_1'$
and $s_2:=s\cap s_2'$. Let $\fs_0$ denote the pairing of
the $\fn_0$--cable of $D_0$ which restricts to $s_1$ and $s_2$
on the components $K_1$ and $K_2$ and which agrees with $\fs$ on
all  other components of $D_0$. Then every pair of $\fs_0$ must also
be a pair of $\fs_0'$. Moreover, $\fs_0'$ has to contain a unique
pair $p$
which is not contained in $\fs_0$. We assume that $p$ belongs
to $K_1$ or $K_2$ (for otherwise
$(d_0\circ \D)_{\fs_0',\fs} =\pm(\D\circ d)_{\fs_0',\fs}$
is trivially satisfied).
If $p$ is disjoint from all pairs of $s_1\cup s_2$, we
are in the situation of \eqref{esplit1}, where $p$ is the pair
in the upper right corner.

\begin{equation}\label{esplit1}
\begin{split}
\epsfysize=3.5cm \epsffile{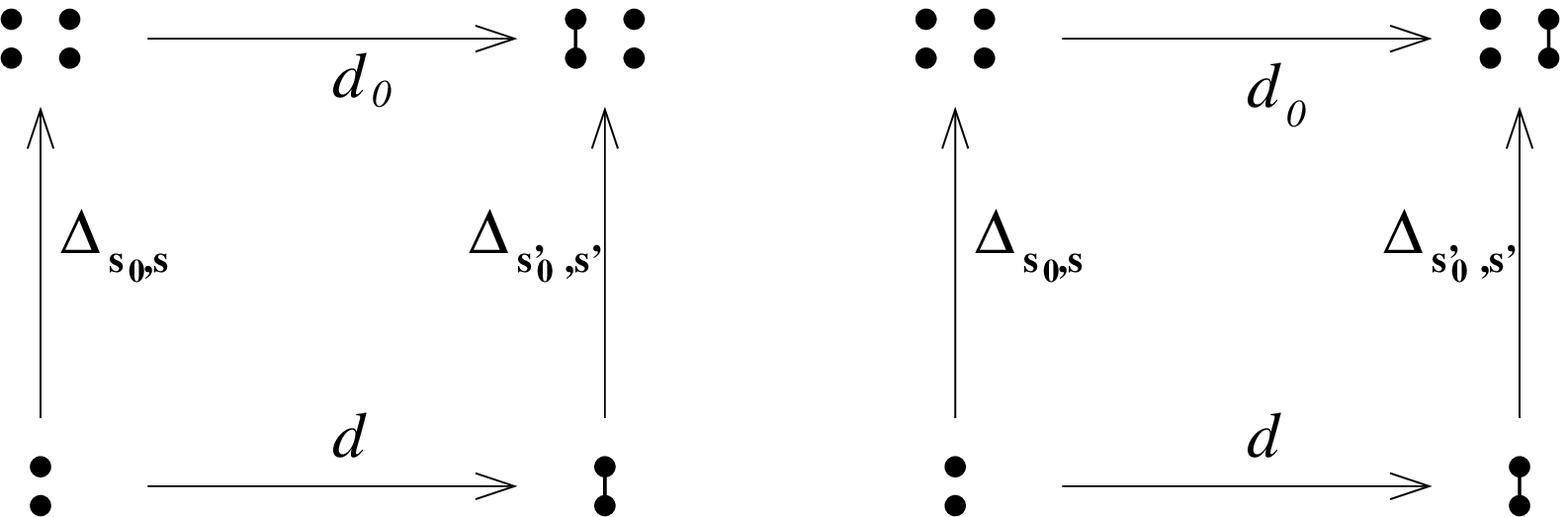}
\end{split}
\end{equation}

\noindent
It is also possible that $p$ has a common dot with
a pair of $s_1\cup s_2$. Examples of this case are
shown in \eqref{esplit2}.

\begin{equation}\label{esplit2}
\begin{split}
\epsfysize=3.9cm \epsffile{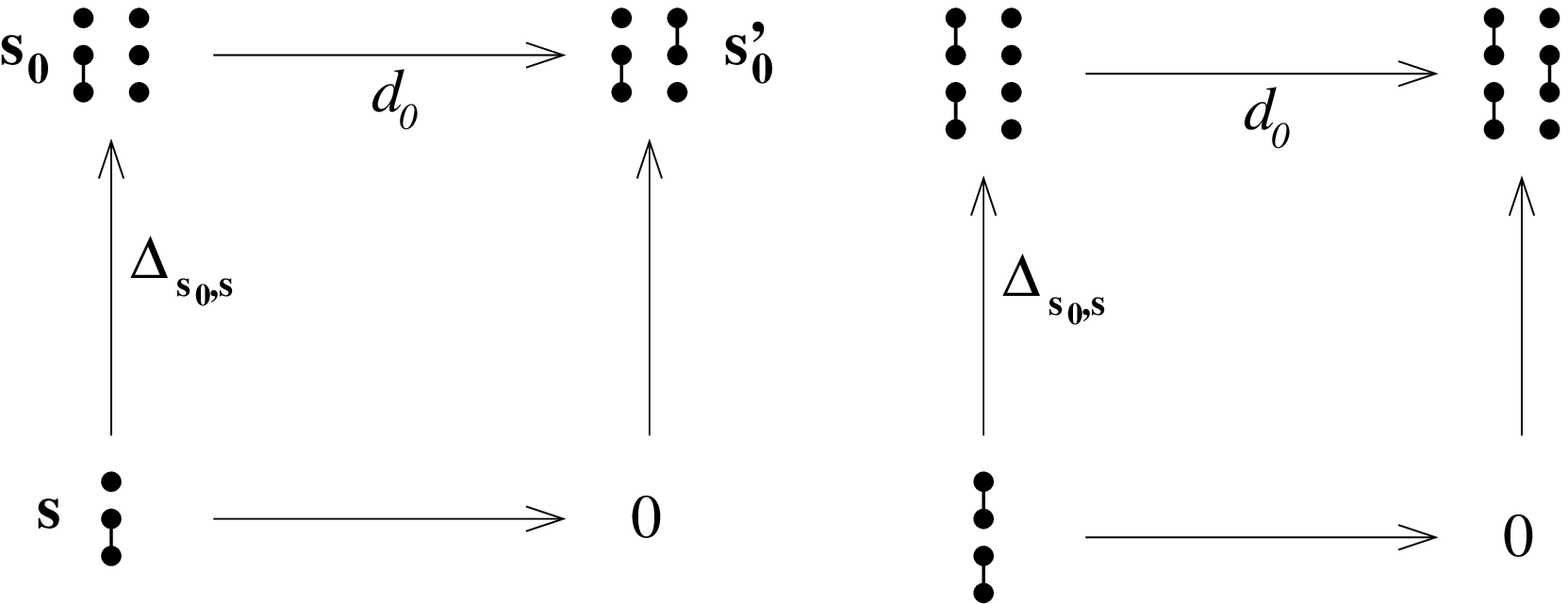}
\end{split}
\end{equation}

\begin{lemma}\label{lssaddle}
Assume that the squares of \eqref{esplit1} commute, up to sign,
and assume that $d_0\circ \D_{\fs_0,\fs}=0$
for all squares as in \eqref{esplit2}. Then there is a
$0$--cochain $\gamma\in C^0(\G_{\fn_0}, \Z/2\Z)$ such that the morphisms
$(-1)^{\g(\fs_0)}\D_{\fs_0,\fs}$
determine a chain transformation
between $\Kh(D,\fn)_{\al,\be}$ and $\Kh(D_0,\fn_0)_{\al,\be}$.
\end{lemma}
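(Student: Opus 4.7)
The plan is to verify the chain transformation condition element by element (for each pair consisting of a source pairing $\fs$ of $\fn$ and a target pairing $\fs_0'$ of $\fn_0$) and then extract the global sign cochain $\g$ by a cohomological argument modeled on Lemma~\ref{signs2}, paralleling the proof of Lemma~\ref{lmsaddle}. For fixed $\fs,\fs_0'$, comparison of the matrix coefficients $(d_0\circ\D)_{\fs_0',\fs}$ and $(\D\circ d)_{\fs_0',\fs}$ reduces, after discarding the components on which the two sides agree for trivial reasons, to the three mutually exclusive cases \eqref{esplit1}, \eqref{esplit2}, \eqref{esplit3}, distinguished by whether the restrictions $s_1':=\fs_0'|_{K_1}$ and $s_2':=\fs_0'|_{K_2}$ share a pair, and (if they do not) whether the pair $p'$ differentiating the intermediate $\fs_0$ from $\fs_0'$ is disjoint from $s_1\cup s_2$ or not.

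Cases \eqref{esplit1} and \eqref{esplit2} are exactly the hypotheses of the lemma and require no further argument. The main obstacle is case \eqref{esplit3}, which is not hypothesized and therefore has to hold automatically. Here $(\D\circ d)_{\fs_0',\fs}$ vanishes tautologically because $\D_{\fs_0',-}$ is identically zero by the very definition of $\D$ (the restrictions of $\fs_0'$ share a dot), whereas $(d_0\circ\D)_{\fs_0',\fs}$ receives exactly two contributions, from the intermediate pairings $\fs_0^{(1)},\fs_0^{(2)}$ obtained from $\fs_0'$ by deleting the shared pair $p$ either from its $K_1$--part or from its $K_2$--part (any other choice of $p'\neq p$ would force $\fs_0$ itself to have a common dot, contradicting the definition of $\D$). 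I would prove that these two contributions cancel by unfolding each $\D_{\fs_0^{(j)},\fs}=2^k\bar{F}_1^{(j)}\circ\bar{F}_2^{(j)}\circ\bar{F}_3^{(j)}$, post-composing with the annulus--merging morphism realizing $d_0$, and identifying the two resulting compositions as the same embedded dotted cobordism in $\Cobdl^3$ carrying opposite signs. The relative sign of $-1$ enters through the region-coloring convention $\sigma(P)$ of Subsection~\ref{smorphismxe}: the planar region adjacent to the dot coming from the $X(K_i,\cdot)$--factor flips from black to white when one exchanges the roles of $K_1$ and $K_2$, and the normalizing factor $2^k$ guarantees that the coefficients match.

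Once signed commutativity has been verified in all three cases, extracting the sign cochain $\g$ is a formal step along the lines of Lemma~\ref{signs2}. The signs recorded in case \eqref{esplit1} define a $1$--cochain $\eta\in C^1(\G_{\fn_0},\Z/2\Z)$ which vanishes on every square by commutativity; since $Z_1(\G_{\fn_0},\Z/2\Z)$ is generated by squares, $\eta$ is a $1$--cocycle and hence a coboundary $\eta=\delta\g$, yielding the required $0$--cochain $\g\in C^0(\G_{\fn_0},\Z/2\Z)$. The morphisms $(-1)^{\g(\fs_0)}\D_{\fs_0,\fs}$ then assemble into the desired chain transformation between $\Kh(D,\fn)_{\al,\be}$ and $\Kh(D_0,\fn_0)_{\al,\be}$. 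Throughout, the technically delicate step is the automatic cancellation in case \eqref{esplit3}, which is precisely where the $2^k$ normalization and the region-coloring sign $\sigma(P)$ are indispensable.
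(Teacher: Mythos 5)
Your overall architecture matches the paper: decompose the verification of $d_0\circ\D=\pm\D\circ d$ into the three mutually exclusive cases \eqref{esplit1}, \eqref{esplit2}, \eqref{esplit3}; treat the first two as hypotheses; establish the third; and then extract the sign cochain $\g\in C^0(\G_{\fn_0},\Z/2\Z)$ by a cohomological argument in the style of Lemma~\ref{signs2}. (Minor imprecision in that last step: what you want to say is that $\eta$ vanishes on $Z_1(\G_{\fn_0},\Z/2\Z)$ because that space is generated by squares, and a $1$--cochain vanishing on all $1$--cycles is a coboundary; the phrase ``$\eta$ is a $1$--cocycle'' is not what is being used, since being a cocycle concerns the value of $\delta\eta$ on $2$--chains, not the values of $\eta$ on $1$--cycles.)

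The substantive issue is your treatment of case \eqref{esplit3}. The paper handles it in the remark immediately following the lemma: no extra input is needed, because once the \eqref{esplit1} squares commute, the anticommutativity of the squares of $\G_{\fn_0}$ — which is \emph{built into} the differential $d_0$ by the satisfactory sign choice of Lemma~\ref{signs1} — already forces the two contributions $(d_0)_{\fs_0',\fs_0^{(1)}}\circ\D_{\fs_0^{(1)},\fs}$ and $(d_0)_{\fs_0',\fs_0^{(2)}}\circ\D_{\fs_0^{(2)},\fs}$ to cancel. You instead propose a direct geometric identification of the two composites as ``the same embedded dotted cobordism carrying opposite signs,'' with the relative $-1$ coming from the region--coloring sign $\sigma(P)$ of Subsection~\ref{smorphismxe}. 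That cannot be the mechanism: $\sigma(P)$ only enters the $\be$--summand $\be\Ad_e$ of the modified differential $A'_e=\al A_e+\be\Ad_e$, so for general $\al$ your explanation leaves the $\al A_e$ contributions uncancelled. Moreover, the anticommutativity of the square $\fs_0\to\fs_0^{(1)},\fs_0^{(2)}\to\fs_0'$ is not a geometric fact about the underlying cobordisms (up to isotopy they compose to the same thing); it is a sign deliberately inserted when passing from the commutative cube to the chain complex. A purely geometric unfolding of the two composites, done honestly, would therefore produce two \emph{equal} contributions rather than two opposite ones. The sign that does the cancelling is precisely the formal one the paper points to, and your proof of case \eqref{esplit3} has a genuine gap until you replace the $\sigma(P)$--based explanation with (or reinterpret it through) the anticommutativity of $\G_{\fn_0}$.
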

In Lemma~\ref{lssaddle}, no assumption has
to be made about the diagrams of \eqref{esplit3}.
Indeed,
if the squares of \eqref{esplit1} commute, then
the anticommutativity of the squares of $\G_{\fn_0}$
implies
$d_0'\circ\D'+d_0''\circ\D'=0$ for all diagrams
as in \eqref{esplit3}.

\subsection[Chain transformations]{Chain transformations.}

\begin{theorem}\label{tfunctorLeesaddle}
For $\al=\be=1$, the maps
$\F_{\rm Lee}(m_{\fs_0,\fs}^{1,1})$
and $\F_{\rm Lee}(\D_{\fs_0,\fs}^{1,1})$
induce chain transformations.
\end{theorem}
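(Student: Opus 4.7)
Proof plan: By Lemmas \ref{lmsaddle} and \ref{lssaddle}, it suffices to verify the following after applying Lee's functor $\F_{\Lee}$: for merging saddles, the identity $d_0' \circ m_{\fs_0,\fs} = \pm(m_{\fs_0',\fs'}\circ d' + m_{\fs_0',\fs''}\circ d'')$ in the diagrams \eqref{emerge}; for splitting saddles, the commutativity up to sign of the squares \eqref{esplit1}, together with the vanishing $d_0 \circ \Delta_{\fs_0,\fs} = 0$ for the squares \eqref{esplit2}. The plan is to reduce each of these local identities to a pointwise calculation in the Lee basis $\{a,b\}$ of $A_{\Lee}$.

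The first step is to record two elementary calculations. Since multiplication by $X$ acts on $A_{\Lee}$ by $X\cdot a = a$ and $X\cdot b = -b$, the endomorphisms $(\Id + X(K,i))/2$ and $(\Id - X(K,i))/2$ become, under $\F_{\Lee}$, the projections onto the $a$-summand and $b$-summand of the $i$-th strand of the cable of $K$; in particular this is where the hypothesis $\alpha=\beta=1$ is essential, since for other $\alpha,\beta$ these operators are not idempotent. Second, an annulus cobordism $A_e$ between two cable strands is Morse-equivalent to a merging saddle followed by a cap; by the Frobenius formulas \eqref{fLee} and the evaluations $\epsilon_{\Lee}(a)=\epsilon_{\Lee}(b)=1$, its Lee image sends $a\otimes a \mapsto 2$, $b\otimes b \mapsto -2$, and kills the mixed tensors $a\otimes b$, $b\otimes a$.

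With these in hand, verification of the merging-saddle hypothesis proceeds as follows. The factor $F_1$ in $m^{1,1}$ projects the tensor factors on the to-be-merged strands onto specific $a$- or $b$-summands dictated by the pairings $s_1,s_2$; the factor $F_2$ then applies annuli that on $\F_{\Lee}$ pick out the diagonal $a\otimes a$ and $b\otimes b$ subspaces with prefactors $\pm 2$; finally $F_3$ merges the surviving strands via Lee multiplication, which is again diagonal in the $\{a,b\}$ basis. Tracing the two compositions of \eqref{emerge}, the factors of $2$ from annuli and $1/2$ from projections cancel cleanly, and both sides reduce to the same basis-selection-and-gluing operation, up to a sign absorbed by Lemma \ref{lmsaddle}. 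For splitting saddles, the maps $\bar F_1,\bar F_2,\bar F_3$ defining $\Delta^{1,1}$ are upside-down versions of $F_1,F_2,F_3$, and the same diagonal calculation yields commutativity of the squares \eqref{esplit1}. For the squares \eqref{esplit2}, the extra pair $p$ shares a dot with a pair of $s_1\cup s_2$, so the projection in $F_1$ forces the relevant pair of strands entering the subsequent annulus into a mixed $a\otimes b$ or $b\otimes a$ configuration; by the second calculation above, the annulus annihilates this, giving $d_0 \circ \Delta_{\fs_0,\fs}=0$.

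The main obstacle will be the combinatorial sign bookkeeping: the Lee annulus contributes a $-2$ on $b\otimes b$; the sign $\sigma(P)$ in the definition of $X_e$ depends on the checkerboard coloring near $C_i,C_{i+1}$; and the relative signs arising from the orderings of annuli in $F_2$ and saddles in $F_3$ must match those arising from the analogous orderings on the other side of each square. To handle this cleanly, I would set up notation once by writing a Lee basis vector of $\C'(D^{\fs})$ as a labeling of the circles of $D^{\fs}$ by letters in $\{a,b\}$, and then show that both sides of each required identity act on such a labeling by the same selection-and-gluing rule with the same overall sign depending only on the pair data. Once this local matching is established, the existence of the global sign $0$-cochain $\gamma$ making $(-1)^{\gamma(\fs)} m_{\fs_0,\fs}$ and $(-1)^{\gamma(\fs_0)} \Delta_{\fs_0,\fs}$ into honest chain transformations is then a formal consequence of Lemmas \ref{lmsaddle} and \ref{lssaddle}.
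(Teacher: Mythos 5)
Your proposal is correct and takes essentially the same approach as the paper: both reduce Theorem~\ref{tfunctorLeesaddle} to verifying the hypotheses of Lemmas~\ref{lmsaddle} and~\ref{lssaddle} by a local computation in Lee's Frobenius algebra. Your formulation in the $\{a,b\}$--basis (the operators $(\Id\pm X)/2$ acting as orthogonal idempotents, annuli annihilating mixed tensors) is a restatement of the paper's key step $\F_{\Lee}(\Id - X^2) = 0$, reached there by sliding the dot across the annulus $\bar A_i$ to replace $X(K_2,i)$ with $-X(K_2,i+1)$; your ``sign bookkeeping'' caveat is exactly where that sign flip lives.
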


\begin{proofsk}
We have to show that for $\al=\be=1$,
the morphisms $\F_{\Lee}(m_{\fs_0,\fs}^{1,1})$
and $\F_{\Lee}(\D_{\fs_0,\fs}^{1,1})$
satisfy the conditions of Lemmas \ref{lmsaddle}
and \ref{lssaddle}.

We start with the proof of
$\F_{\Lee}(d_0\circ\D^{1,1}_{\fs_0,\fs})=0$
for the left square of \eqref{esplit2}.
Assume that the three dots in the
lower left corner of the square
are numbered from bottom to top from $i$ to $i+2$.
Moreover, assume that these dots
lie on a component $K$ of $D$,
and that the saddle cobordism
splits $K$ into components
$K_1$ and $K_2$. (In the left square of
\eqref{esplit2}, $K_1$ and $K_2$
correspond to the left and the right
column of dots in the upper left corner).
Let $s_2$ denote the restriction of $\fs_0$
to $K_2$, and
let $C_i$, $C_{i+1}$ and $C_{i+2}$ denote
the strands of $K_2^{s_2}$
corresponding
to the dots $i$, $i+1$ and $i+2$, respectively.
For $\al=\be=1$,
$d_0$ is given by
$$
A_{i+1}\circ(\Id+X(K_2,i+1))
$$
where $A_{i+1}$ is induced by
an annulus attached to the components
$C_{i+1}$ and $C_{i+2}$ of $K_2^{s_2}$.
Similarly, $\D^{1,1}_{\fs_0,\fs}$
is given by some saddle cobordisms,
composed with
$$
(\Id+X(K_2,i))\circ \bar{A}_i
$$
where $\bar{A}_i$ is induced by
an annulus attached to $C_i$ and $C_{i+1}$.
We can replace $X(K_2,i)$
by $-X(K_2,i+1)$ because we can
move the point $P$ used in the definition of
$X(K_2,i)=\sigma(P)X_P$ across the annulus.
The minus sign appears because
of the definition of $\sigma(P)$.
Summarizing, we see that
$d_0\circ\D^{1,1}_{\fs_0,\fs}$
factors through
$$
(\Id+X(K_2,i+1)))\circ
(\Id-X(K_2,i+1))
=\Id-X(K_2,i+1)^2\, .
$$
Now recall that
in Lee's Frobenius algebra, we have the relation
$X^2={\bf 1}$.
Hence $\F_{\Lee}(\Id-X(K_2,i+1)^2)=0$
and therefore
$\F_{\Lee}(d_0\circ\D^{1,1}_{\fs_0,\fs})=0$.
So the left square of \eqref{esplit2}
commutes. The proof for the right square is
analogous.

To show that the squares of \eqref{esplit1}
commute up to sign,
one has to apply isotopies, the (N) relation
and the defining relations for Lee's functor
to the cobordisms corresponding to
$(d_0\circ \D^{1,1})_{\fs_0',\fs}$,
$(\D^{1,1}\circ d)_{\fs_0',\fs}$.
Similarly, to prove that the assumption
of Lemma~\ref{lmsaddle} is satisfied,
on has to apply the same relations to
$(d_0\circ m^{1,1})_{\fs_0',\fs}$,
and $(m^{1,1}\circ d)_{\fs_0',\fs}$.
\end{proofsk}

\begin{theorem}\label{tfunctorLeecup}
For $\al=\be=1$, the maps
$\F_{\Lee}(\e_{\fs_0,\fs})$
and $\F_{\Lee}(\iota_{\fs_0,\fs})$
associated to caps and cups induce
chain transformations.
\end{theorem}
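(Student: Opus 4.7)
The plan is to adapt the strategy of Theorem~\ref{tfunctorLeesaddle}. As a first step I would formulate cup and cap analogues of Lemmas~\ref{lmsaddle} and \ref{lssaddle}, stating that the matrix elements $\F_{\Lee}(\iota_{\fs_0,\fs})$ and $\F_{\Lee}(\e_{\fs_0,\fs})$ assemble into chain transformations between $\Kh(D,\fn)_{1,1}$ and $\Kh(D_0,\fn_0)_{1,1}$, after twisting by a suitable $0$--cochain $\gamma\in C^0(\G,\Z/2\Z)$ (as in Lemma~\ref{signs2}), provided certain local squares commute or vanish in $\gCobdl^3$ after applying $\F_{\Lee}$.

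For the cup $\iota$, the relevant squares fall into two cases. For edges of $\G_{\fn_0}$ not touching the new component $K$, the associated annulus acts on strands disjoint from the support of $C$ and $G$, so the corresponding squares commute by naturality---the annulus can be performed before or after the cups and before or after $G$. For pair--adding edges lying on $K$, the target pairing $\fs_0'$ has nonempty restriction to $K$, so $\iota_{\fs_0',\fs'}=0$ for every pairing $\fs'$ of $\fn$, and the chain map condition reduces to $d_0\circ\iota_{\fs_0,\fs}=0$. The cap $\e$ is handled symmetrically by duality.

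The essential algebraic input for this vanishing is the relation $X^2={\bf 1}$ in Lee's Frobenius algebra, which makes $\frac{\Id\pm X}{2}$ complementary orthogonal idempotents---the projections onto the $\pm 1$--eigenlines of multiplication by $X$, spanned by Lee's basis vectors $a$ and $b$. Under $\F_{\Lee}$, the operators $Y_j$ and $Z_j$ become tensor products of such projections along the cable of $K$, so $G$ becomes a sum of mutually orthogonal projections onto specific color patterns along the cable. The modified differential on a pair--adding edge has the form $A_e\circ(\Id+X_e)$; the factor $(\Id+X_e)$ kills one of the two eigenlines at the paired strands (with the choice of eigenline determined by the $\sigma(P)$ sign), and the annulus $A_e=\e_{\Lee}\circ m_{\Lee}$ further vanishes on mixed $a\otimes b$ or $b\otimes a$ inputs. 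Combined with the alternating signs built into $G$, these two constraints should yield the required term--by--term cancellation.

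The main obstacle will be the sign bookkeeping. Three families of signs interact: the $\sigma(P)$ signs in the definition of $X(K,i)$, which alternate with $i$ in step with the alternating black/white coloring of the regions between adjacent cabled strands; the alternating $(-1)^j$ signs built into $G$; and the $\zeta$--signs chosen to make the squares of $\G_{\fn_0}$ anticommute. Verifying that these signs combine consistently to produce a globally well--defined chain transformation is the substantive content of the proof, and requires a case analysis in the spirit of the proofs of Lemmas~\ref{lmsaddle}--\ref{lssaddle}; once the local vanishing conditions hold up to sign, the cocycle argument of Lemma~\ref{signs2} supplies the required $0$--cochain $\gamma$.
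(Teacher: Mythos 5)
Your overall decomposition is on the right track and matches the paper: split $d_0 = d_0' + d_0''$ with $d_0'$ the part of the differential contracting pairs on the new component $K$, verify $d_0'' \circ \iota = \iota \circ d$ by naturality (the annulus, the cups and $G$ live on disjoint components), and prove $\F_{\Lee}(d_0' \circ \iota) = 0$ using that the operators $\tfrac{\Id \pm X}{2}$ become orthogonal idempotents under $\F_{\Lee}$. However, your account of the key vanishing contains a genuine gap, and the mechanism you propose is not the one that actually makes the computation work.

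Two misconceptions stand out. First, $G = \sum_j Y_j \circ Z_j$ has no ``alternating $(-1)^j$ signs''; every summand appears with coefficient $+1$. Second, there is no ``term--by--term cancellation'' between different values of $j$, and the fact that $A_e$ kills mixed $a \otimes b$ and $b\otimes a$ inputs is not used at all. Instead, each individual term $\F_{\Lee}(A'_i \circ Y_j \circ Z_j)$ vanishes on its own. The decisive observation you omit is that the modified differential has two expressions,
\[
A'_i \;=\; A_i \circ (\Id + X(K,i)) \;=\; A_i \circ (\Id - X(K,i+1)),
\]
obtained by sliding the dot defining $X_e$ across the annulus $A_i$ from strand $i$ to strand $i+1$; the sign flips by the $\sigma(P)$ convention in the definition of $X_e$. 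With this in hand: for $i \le j$ the product $Y_j$ contains the factor $\tfrac{\Id - X(K,i)}{2}$, the idempotent complementary to $(\Id + X(K,i))$, so $\F_{\Lee}\bigl((\Id + X(K,i))\circ Y_j\bigr)=0$; for $i \ge j$ the product $Z_j$ contains $\tfrac{\Id + X(K,i+1)}{2}$, so $\F_{\Lee}\bigl((\Id - X(K,i+1))\circ Z_j\bigr)=0$. Since every pair $(i,j)$ satisfies $i\le j$ or $i\ge j$, all terms of $\F_{\Lee}(d_0'\circ\iota)$ vanish individually. Consequently no sign bookkeeping between terms is needed, and neither is the twist by a $0$--cochain $\gamma$ via Lemma~\ref{signs2} that you invoke: the identity $\F_{\Lee}(d_0\circ\iota)=\F_{\Lee}(\iota\circ d)$ holds as stated. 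Without identifying the dual expression for $A'_i$, your argument as written cannot close, because the factor $(\Id + X(K,i))$ alone does not annihilate the terms with $j < i$.
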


\begin{proof}
The case of caps is easy, so we only discuss the case of cups.
Let $D$ and $D_0$ be two link diagrams which are related by a
cup cobordism, i.e. $D_0=D\sqcup K$ for a trivial
component $K$.

Let $d$ and $d_0$ denote the differentials
of $\Kh(D,\fn)_{\al,\be}$ and
$\Kh(D_0,\fn_0)_{\al,\be}$, respectively.
We write $d_0$ as $d_0=d_0'+d_0''$, where $d_0'$
denotes the sum of all $A'_e$ which contract
a pair on $K$, and $d_0''$
denotes the sum of all $A'_e$ which contract
a pair on
one of the other components of $D_0$.
Then
$d_0''\circ\iota=\iota\circ d$,
so we must show that
$\F_{\Lee}(d_0'\circ\iota)=0$.

For $\al=\be=1$,
$d_0'$ is a sum of morphisms
$$
A'_i=A_i\circ (\Id +X(K,i))=A_i\circ (\Id-X(K,i+1))\, ,
$$
where $A_i$ is induced by
an annulus glued to the strands $i$ and $i+1$
of the cable of $K$.
$\iota$ is equal to
$G\circ C$, where $G=\sum_{j=0}^nY_j\circ Z_j$.
Using $X^2={\bf 1}$ and
the definitions of $Y_j$ and $Z_j$ with $\al=\be=1$,
we get
$$
\F_{\rm Lee}((\Id+X(K,i))\circ Y_j)=0
$$
for $i\leq j$, and
$$\F_{\rm Lee}((\Id-X(K,i+1)) \circ Z_j)=0
$$
for $i\geq j$.
Hence $\F_{\rm Lee}(A'_i\circ Y_j\circ Z_j)=0$
for all $i,j$, and therefore $\F_{\rm Lee}(d_0'\circ\iota)=0$.
\end{proof}

\noindent
Assume we can make the definition of the
chain transformations in Theorems \ref{tfunctorLeesaddle}
and \ref{tfunctorLeecup} canonical, i.e.
independent of any sign choices.
Then $\F_{\Lee}(\Kh_{1,1}(D,\fn))$ extends
to a well--defined functor
$\F_{\Lee}\circ\Kh_{1,1}:\Cob^4_f\rightarrow
\Kom(\Komh(\Z\mbox{-}\MOD))$.
Let $\Cobfi^4$ denote the quotient of
$\Cobf^4$ by framed movie moves.
We expect
\begin{conjecture}
The functor $\F_{\Lee}\circ\Kh_{1,1}$ descends to
a functor $\F_{\Lee}\circ\Kh_{1,1}:\Cobfi^4\rightarrow
\Kom_{/\pm h}(\Komh(\Z\mbox{-}\MOD))
$.
\end{conjecture}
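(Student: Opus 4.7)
The plan is to reduce the conjecture to checking invariance under each framed movie move FM1--FM18 individually, leveraging the cable construction of $\Kh_{1,1}(D,\fn)$ and Bar--Natan's proof of Theorem~\ref{tfunctor} in the uncolored setting. First I would verify that the chain transformations of Theorems~\ref{tfunctorLeesaddle} and \ref{tfunctorLeecup} (and those implicit in the Reidemeister invariance of $\Kh(D,\fn)$) can be made canonical up to an overall sign depending only on the movie segment, by showing that the $0$--cochains $\gamma\in C^0(\G_\fn,\Z/2\Z)$ appearing in Lemmas~\ref{lmsaddle} and \ref{lssaddle} can be fixed once and for all by normalizing at the left--most vertex of $\G_\fn$, exactly as in the proof of Lemma~\ref{signs2}. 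With this in place, for any framed movie $M$ one obtains a chain transformation $\F_{\Lee}(\Kh_{1,1}(M))$, and the task reduces to comparing the two chain transformations associated to the two sides of each framed movie move.

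For each framed movie move FM$i$ ($1\leq i\leq 18$), I would fix a pairing $\fs$ on one side and argue matrix element by matrix element. The key observation is that a framed movie $M$ between $(D,\fn)$ and $(D_0,\fn_0)$ induces, for each compatible pair of pairings $(\fs,\fs_0)$, an \emph{unsigned} movie $M^{\fs,\fs_0}$ between $D^{\fs}$ and $D_0^{\fs_0}$, obtained by cabling $M$ and inserting the appropriate annuli, cups, caps and dots dictated by the definitions in Subsections~\ref{scupcap}--\ref{sssaddle}. For movie moves MM3--MM15 that do not involve R1, both sides of FM$i$ translate into movies on cables which are related by Carter--Saito moves MM3--MM15, and Theorem~\ref{tfunctor} (applied after $\F_{\Lee}$) gives the required equality up to sign. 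For FM1, FM2 I would use the same reduction but additionally invoke that the two possible placements of a signed point in SR1 differ by moving a dot across a crossing, which is a homotopy in $\F_{\Lee}\circ\Kh$ by Lemma~\ref{ldotslide} (recall that we work under the assumption \raisebox{-0.2cm}{\psfig{figure=figs/ddot.eps,height=0.6cm}}$=0$).

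The interesting moves are those inherited from SM16--SM20, which describe how oriented curves on the cobordism can be modified within a homology class. On cables, these correspond to sliding the dots produced by the operators $X(K,i)$, multiplying the morphisms $F_1$, $Y_j$, $Z_j$ of Subsections~\ref{scupcap}--\ref{smsaddle} around the cobordism. Here one uses crucially the Lee relations: the identity $X^2={\bf 1}$ together with the (N) relation forces any closed curve of dots that bounds on the cobordism to act trivially up to homotopy, so that only the homology class of the curve of signed points matters. This is exactly what is needed for FM16--FM18 invariance, and combined with Theorem~\ref{signedmovie} this should give descent to $\Cobfi^4$, modulo an overall scalar in the ring generated by $h$ (whence the target category $\Kom_{/\pm h}$).

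The hard part will be the sign ambiguity. The $A_e$, and consequently $m^{\g,\delta}$ and $\Delta^{\g,\delta}$, are only well--defined up to sign, and the $0$--cochains of Lemmas~\ref{lmsaddle} and \ref{lssaddle} introduce a further layer of sign choices; ensuring that all these choices can be made simultaneously in a way that is stable under framed movie moves is a nontrivial bookkeeping problem. I expect to handle it by mimicking Bar--Natan's more conceptual strategy from his proof of Theorem~\ref{tfunctor}: show that for each movie move the difference between the two induced chain transformations lives in a $\Z/2\Z\oplus\langle h\rangle$--valued cocycle on the $2$--complex of framed movies, and compute its class by evaluating on a finite list of closed loops. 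The need to work modulo $\pm h$ rather than just $\pm 1$ in the target category reflects precisely the failure of these cocycles to vanish strictly, coming from the non--triviality of the framing change under FR1 absorbed into Lee's filtered structure.
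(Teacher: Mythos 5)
This statement is a \emph{conjecture} in the paper, not a theorem: the text immediately before it says ``Assume we can make the definition of the chain transformations in Theorems~\ref{tfunctorLeesaddle} and \ref{tfunctorLeecup} canonical\ldots\ We expect\ldots'', and the paper offers no proof. So there is no author's argument to compare against, and your first paragraph is addressing a difficulty the paper's formulation explicitly assumes away rather than asks you to resolve. Beyond that, your plan contains a notational misreading that infects the last paragraph: the target $\Kom_{/\pm h}(\Komh(\Z\mbox{-}\MOD))$ does not mean ``up to multiplication by elements of a ring generated by $h$''. In this paper the subscript $/h$ is shorthand for ``modulo homotopy'' (see the definition $\Komh = \Kom_{/h}$ in Subsection~\ref{scomplexes}) and $/\pm h$ means ``modulo sign and homotopy'' (see the definition of $\Kobdpr$ in Section~\ref{sfunctoriality}). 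The ambiguity the conjecture builds into its target is purely the $\pm 1$ sign and chain homotopy, exactly as in Theorem~\ref{tfunctor}; it has nothing to do with the Frobenius parameter $h\in\Z[h,t]$, which is in any case set to zero throughout this chapter via $\raisebox{-0.2cm}{\psfig{figure=figs/ddot.eps,height=0.6cm}}=0$. Your explanation of ``why $h$ appears'' is therefore not pointing at a real feature of the conjecture.

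There is also a genuine gap in the middle step. You claim that for movie moves not involving R1, ``both sides of FM$i$ translate into movies on cables which are related by Carter--Saito moves MM3--MM15, and Theorem~\ref{tfunctor}\ldots gives the required equality up to sign.'' Cabling a single Reidemeister or Morse move does not in general produce a single Carter--Saito move on the cables: an R3 move on an $m$--cabled strand decomposes into $O(m^3)$ R3 moves and many R2 moves, a saddle becomes $m$ saddles threaded with the annuli, dots, $Y_j$ and $Z_j$ of Subsections~\ref{scupcap}--\ref{smsaddle}, and the two sides of a framed movie move on cables are related by a long but unspecified chain of Carter--Saito moves that still needs to be exhibited and controlled together with the extra structure (the $X(K,i)$ operators and the $0$--cochain renormalizations of Lemmas~\ref{lmsaddle} and~\ref{lssaddle}). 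This bookkeeping is precisely where the difficulty lies, and it is why the authors leave the statement as a conjecture; your sketch concedes the point (``the hard part will be the sign ambiguity\ldots a nontrivial bookkeeping problem'') but does not resolve it, so what you have is a reasonable outline of an attack rather than a proof.
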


In \cite{bw}, we also defined
chain transformations for
$\F_{\Kh}(\Kh(D,\fn)_{0,1})$:

\begin{theorem}\label{tfu}
For $\al=0$, $\be=1$, the maps
$\F_{\Kh}(m_{\fs_0,\fs}^{0,1})$,
$\F_{\Kh}(\D_{\fs_0,\fs}^{0,1})$,
$\F_{\Kh}(\e_{\fs_0,\fs})$
and $\F_{\Kh}(\iota_{\fs_0,\fs})$
induce chain transformations.
\end{theorem}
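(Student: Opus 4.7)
The plan is to mirror the strategy of Theorems~\ref{tfunctorLeesaddle} and~\ref{tfunctorLeecup}, replacing the Lee relation $X^2={\bf 1}$ by the Khovanov relation $X^2=0$ coming from $A_{\Kh}=\Z[X]/(X^2)$. In each case the criteria of Lemmas~\ref{lmsaddle} and~\ref{lssaddle} reduce the problem to local computations with composites of cobordisms decorated by dots.

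First I would treat the splitting saddle. With $\al=0$, $\be=1$, the modified differential of $\Kh(D_0,\fn_0)_{0,1}$ is $A'_e=A_e\circ X_e$, and $\D^{0,1}_{\fs_0,\fs}$ is, up to a sign, a product of $X(K_j,i)$'s precomposed with the reversed annulus and saddle cobordisms. On a square of type~\eqref{esplit2}, the composition $d_0\circ\D^{0,1}_{\fs_0,\fs}$ produces a local factor $X(K_2,i+1)\circ(-X(K_2,i+1))=-X(K_2,i+1)^2$ after sliding the dot from position $i$ across the annulus (the minus sign coming from the definition of $\sigma(P)$); this vanishes under $\F_{\Kh}$ because $X^2=0$. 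On squares of type~\eqref{esplit1}, the relevant $X$'s act on disjoint strands and the distant cobordisms commute by isotopy, so the squares commute up to sign. Lemma~\ref{lssaddle} then yields a chain transformation after a sign adjustment.

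For the merging saddle I would verify the hypothesis of Lemma~\ref{lmsaddle} by an analogous local analysis. In each of the diagrams in~\eqref{emerge} the discrepancy between $d_0\circ m^{0,1}$ and $m^{0,1}\circ d$ boils down to commuting a dot past a merge saddle or an annulus, which holds up to sign by planar isotopy together with the (N), (D) and (T) relations. For cup and cap, I would repeat the argument of Theorem~\ref{tfunctorLeecup} in Khovanov's Frobenius algebra. When $\al=0$, $\be=1$, every term $Y_j\circ Z_j$ in $G$ equals, up to a sign and the universal factor $2^{-n}$, the full product $X(K,1)\cdots X(K,n)$, so $\iota$ places a dot on each cable strand of $K$. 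The summand $d_0'$ of $d_0$ that contracts a pair on $K$ adds a further $X(K,i)$ to strand $i$, hence two dots on one strand, which is zero under $\F_{\Kh}$ because $X^2=0$; the summand $d_0''$ trivially satisfies $d_0''\circ\iota=\iota\circ d$. The cap $\e$ is handled identically by turning the cobordism upside down.

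The main obstacle will be the sign bookkeeping: Lemmas~\ref{lmsaddle} and~\ref{lssaddle} each produce a $0$--cochain $\g$ making one given family of squares commute, but functoriality ultimately demands a coherent global choice of signs across all four moves and their compositions. Disentangling the various sign cocycles, showing that there exist canonical, mutually compatible sign lifts of $\F_{\Kh}(m^{0,1})$, $\F_{\Kh}(\D^{0,1})$, $\F_{\Kh}(\iota)$ and $\F_{\Kh}(\e)$, and checking that they are compatible with the chain transformations already assigned to Reidemeister moves, is the heart of the work --- everything else is a structured accounting of dot slides, planar isotopies, and the single relation $X^2=0$.
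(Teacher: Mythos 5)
Your proposal follows exactly the paper's route: the paper simply says the proof is ``analogous to the proofs of Theorems~\ref{tfunctorLeesaddle} and~\ref{tfunctorLeecup},'' and you fill in precisely what that analogy amounts to --- replace the Lee relation $X^2={\bf 1}$ by the Khovanov relation $X^2=0$ in the local computations for \eqref{esplit2}, \eqref{esplit1}, \eqref{emerge} and the cup/cap. Your computation for the cup ($Y_j\circ Z_j = (-1)^j 2^{-n}\prod_i X(K,i)$, so $d_0'\circ\iota$ lands in $X^2=0$) and for the splitting saddle (sliding the dot across the annulus to obtain $-X(K_2,i+1)^2$) are exactly the intended substitutions, and Lemmas~\ref{lmsaddle} and~\ref{lssaddle} then supply the chain transformations, as you say.

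One small over-reach: you describe ``a coherent global choice of signs across all four moves and their compositions'' as ``the heart of the work.'' For Theorem~\ref{tfu} that is not needed. The theorem only asserts that each of the four maps, individually, induces \emph{a} chain transformation, which is exactly what Lemmas~\ref{lmsaddle} and~\ref{lssaddle} deliver once their hypotheses are verified. Canonicity of the sign choices and their mutual compatibility (also with the chain transformations assigned to Reidemeister moves) is the content of the subsequent Conjecture, which the paper explicitly leaves open; it is not part of what Theorem~\ref{tfu} claims or proves. Apart from this conflation, the argument you give is correct and matches the paper's.
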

The proof of Theorem~\ref{tfu} is analogous
to the proofs of Theorems \ref{tfunctorLeesaddle}
and \ref{tfunctorLeecup}.

\begin{remark}
We do not know how to extend the original  colored
Khovanov bracket $\Kh(D,\fn)=\Kh(D,\fn)_{1,0}$
to a functor.
For the original colored Khovanov bracket,
the morphisms $m_{\fs_0,\fs}^{0,2}$ induce
chain transformations
(cf. \cite{kh:colored}),
but there is no choice of $\g,\delta$ for which
the $\D_{\fs_0,\fs}^{\g,\delta}$
induce chain transformations.
\end{remark}



\begin{thebibliography}{00000}
\bibitem[AP]{ap} M.~Asaeda and J.~Przytycki, {\it Khovanov
homology: torsion and thickness}, arXiv:math.GT/0402402.

\bibitem[B1]{ba:first} D.~Bar--Natan, {\it On Khovanov's Categorification of the
Jones  polynomial}, Algebraic and Geometric Topology 2 (2002)
337--370, arXiv:math.QA/0201043.

\bibitem[B2]{ba:tangles} D.~Bar--Natan, {\it Khovanov's homology for tangles
and cobordisms}, Geometry \& Topology, vol. 9 (2005) 1443--1499,
arXiv: math.GT/0410495.

\bibitem[B3]{ba:computations} D.~Bar--Natan, {\it Fast Khovanov homology
computations}, arXiv: math.GT/0606318.

\bibitem[B4]{ba:mutation} D.~Bar--Natan,
{\it Mutation Invariance of Khovanov Homology},
http://katlas.math.toronto.edu/drorbn/index.php?title=Mutation\_
Invariance\_of\_Khovanov\_Homology

\bibitem[BM]{bm} D.~Bar--Natan, S.~Morrison, {\it The Karoubi envelope
and Lee's degeneration of Khovanov homology}, arXiv:math.GT/0606542.

\bibitem[BW]{bw} A.~Beliakova, S.~Wehrli, {\it Categorification of the colored
Jones polynomial and Rasmussen invariant of links},
to appear in the Canadian Math. J., arXiv:math.QA/0510382.

\bibitem[CF]{cf} D.~Cimasoni, V.~Florens, {\it Generalized Seifert surfaces
and signature of colored links}, to appear in Trans. AMS,
arXiv:math.GT/0505185.

\bibitem[CK]{ck}
A.~Champanerkar, I.~Kofman,
{\it Spanning trees and Khovanov homology},
arXiv:math.GT/0607510.

\bibitem[Co]{co} John H. Conway, {\it An enumeration of knots and links and
some of their related properties}, Computational problems in Abstract
Algebra, Pergamon Press, New York, 329-358, 1970.

\bibitem[CS]{cs}
S.~Carter, M.~Saito, {\it Reidemeister moves for surface
isotopies and their interpretation as moves to movies},
J. Knot Theory Ramific. 2 (1993) 251--284

\bibitem[Ja]{ja} M.~Jacobsson, {\it An invariant of link cobordisms from
Khovanov homology}, Algebr. Geom. Topol. 4 (2004) 1211--1251.

\bibitem[Jo]{jo} V.~Jones, {\it A polynomial invariant for knots via
von Neumann algebras}, Bull. Amer. Math. Soc., vol.12
(1985) no. 1, 103--111.

\bibitem[Ka1]{ka:formal} L.~H.~Kauffman, {\it Formal Knot
Theory}, Mathematical Notes 30, Princeton University Press (1983).

\bibitem[Ka2]{ka:bracket} L.~H.~Kauffman, {\it State models and the Jones
polynomial}, Topology 26 (1987) 395--407.

\bibitem[Kaw]{kaw} A.~Kawauchi, {\it A Survey of Knot Theory},
Birkh\"auser 1996.

\bibitem[Kh1]{kh:first} M.~Khovanov, {\it A categorification of the
Jones polynomial}, Duke Math. J. 101 (2000) no. 3, 359--426,
arXiv:math.QA/9908171.

\bibitem[Kh2]{kh:cobordisms} M.~Khovanov, {\it An invariant of tangle
cobordisms}, Trans. Amer. Math. Soc. 358 (2006) 315--327,
arXiv:math.QA/0207264.

\bibitem[Kh3]{kh:colored} M.~Khovanov, {\it Categorifications of the colored
Jones polynomial}, J. Knot Theory Ramific. 14 (2005) 111--130,
arXiv:math.QA/0302060.

\bibitem[Kh4]{kh:frobenius} M.~Khovanov, {\it Link homology and Frobenius
extensions}, Fund. Math., 190 (2006) 179--190,
arXiv: math.QA/0411447.

\bibitem[KR1]{kr1} M.~Khovanov, L.~Rozansky,
{\it Matrix factorizations and link homology},
arXiv:math.QA/0401268.

\bibitem[KR2]{kr2} M.~Khovanov, L.~Rozansky,
{\it Matrix factorizations and link homology II},
arXiv:math.QA/0505056.

\bibitem[L1]{le1} E.~S.~Lee, {\it The support of Khovanov's
invariants for alternating knots}, arXiv:math.GT/0201105.

\bibitem[L2]{le2} E.~S.~Lee, {\it On Khovanov invariant for alternating
links}, arXiv: math.GT/0210213.


\bibitem[Mu]{mu} J. Murakami, {\it The Parallel Version of Polynomial
Invariants of Links}, Osaka J. Math. 26 (1989) 1--55.

\bibitem[OS1]{os} P.~Ozsv\'ath and Z.~Szab\'o, {\it Knot Floer homology,
genus bounds, and mutation}, arXiv:math.GT/0303225.

\bibitem[OS2]{os:sequence} P.~Ozsv\'ath and Z.~Szab\'o
{\it On the Heegaard Floer homology of branched double--covers},
Adv. Math. 194 (2005) no. 1, 1--38,
arXiv: math.GT/0309170.

\bibitem[Ra]{ra} J.~Rasmussen, {\it Khovanov
homology and the slice genus}, arXiv: math.GT/0402131.

\bibitem[RT]{rt} N.~Reshetikhin, V.~Turaev, {\it Invariants of $3$--manifolds
via link polynomials and quantum groups}, Invent. Math. 103 (1991)
547--598.

\bibitem[S1]{sh:khoho} A.~Shumakovitch, {\texttt KhoHo}, Basel 2002, see
http://www.  geometrie.ch/KhoHo.


\bibitem[S2]{sh3} A.~Shumakovitch, {\it Rasmussen invariant,
slice--Bennequin inequality, and sliceness of knots},
arXiv:math.GT/0411643.

\bibitem[T]{th} M.~B.~Thistlethwaite, {\it A spanning tree expansion of
the Jones polynomial}, Topology 26 (1987) 297--309.

\bibitem[V]{vi} O.~Viro, {\it Remarks on the definition of Khovanov
Homology}, arXiv:math.GT/0202199.

\bibitem[We1]{we:mutation} S.~Wehrli, {\it Khovanov Homology and Conway
Mutation}, arXiv: math.GT/0301312.

\bibitem[We2]{we:trees} S.~Wehrli, {\it A spanning tree model for Khovanov
homology}, arXiv: math.GT/0409328.

\bibitem[Wu]{wu} H.~Wu, {\it Legendrian Links and the Spanning Tree Model
for Khovanov Homology}, arXiv:math.GT/0605630.

\end{thebibliography}
\end{document}